\numberwithin{equation}{section}
\newtheorem{thm}{Theorem}[section]
\newtheorem{prop}[thm]{Proposition}
\newtheorem{lem}[thm]{Lemma}
\newtheorem{cor}[thm]{Corollary}
\theoremstyle{remark}
\newtheorem{rem}[thm]{Remark}
\newtheorem{defn}{Definition}
\newcommand{\BBB}{\mathbb}
\newcommand{\R}{{\BBB R}}
\newcommand{\Z}{{\BBB Z}}
\newcommand{\T}{{\BBB T}}
\newcommand{\N}{{\BBB N}}
\newcommand{\C}{{\BBB C}}
\newcommand{\lec}{{\ \lesssim \ }}
\newcommand{\gec}{{\ \gtrsim \ }}
\newcommand{\al}{\alpha}
\newcommand{\be}{\beta}
\newcommand{\ga}{\gamma}
\newcommand{\vp}{\varphi}
\newcommand{\e}{\varepsilon}
\newcommand{\p}{\partial}
\newcommand{\la}{\lambda}
\newcommand{\La}{\Lambda}
\newcommand{\de}{\delta}
\newcommand{\supp}{\operatorname{supp}}
\newcommand{\dis}{\displaystyle}
\newcommand{\mT}{\mathcal{T}}
\newcommand{\EQ}[1]{\begin{equation} \begin{split} #1
 \end{split} \end{equation}}
\newcommand{\EQS}[1]{\begin{align} #1 \end{align}}
\newcommand{\EQQ}[1]{\begin{equation*} \begin{split} #1
 \end{split} \end{equation*}}
\newcommand{\ti}{\widetilde}
\newcommand{\ha}{\widehat}
\title[LWP of fifth order mKdV type equations on the torus]
{Cancellation properties and unconditional well-posedness for fifth order modified KdV type equations with periodic boundary conditions
}
\author[T. K.  Kato]{Takamori Kato}
\author[K. Tsugawa]{Kotaro Tsugawa}
\address[T.K. Kato]{Mathematical Science Course, Faculty of Science and Engineering, Saga University, Saga, 840-8502, 
Japan}
\address[K. Tsugawa]{Department of Mathematics, Faculty of Science and Engineering, Chuo University,
Bunkyo-ku, Tokyo, 112-8551, Japan}
\email[T.K. Kato]{tkkato@cc.saga-u.ac.jp}
\email[K. Tsugawa]{tsugawa@math.chuo-u.ac.jp}
\keywords{fifth order modified KdV, normal form, well-posedness, Cauchy problem, low regularity, unconditional}
\begin{document}

\begin{abstract}
We prove the unconditional well-posedness result for fifth order modified KdV type equations in $H^s(\T)$ when $s \ge 3/2$, which includes non-integrable cases.
By the conservation laws, we also obtain the global well-posedness result when $s = 2$, which also includes non-integrable cases.
The main idea is to employ the normal form reduction and a kind of cancellation properties to deal with the derivative losses. 
\end{abstract}

\maketitle
\setcounter{page}{001}

\section{Introduction}

We consider the Cauchy problem of the fifth order modified KdV type equations:
\EQS{
&\p_t u +\p_x^5 u +\al(\p_x u)^3+\be\p_x(u(\p_xu)^2)+\ga\p_x(u\p_x^2( u^2))+6\de \p_x (u^5)=0,\label{5mKdV1}\\
&u(0,x)=\vp(x)\in H^s(\T)\label{initial}
}
where $\al, \be, \ga, \de \in\R$, $u(t,x): \R\times \T\to \R$ and $\vp(x): \T \to \R$.
There are many results on the well-posedness of the Cauchy problem for the KdV equations and the modified KdV equations.
However, relatively few results are known for the fifth-order KdV and modified KdV type equations.
This is because the presence of strong singularities in the nonlinear terms makes the problem difficult. 
See \cite{BKV}, \cite{GKK}, \cite{KM}, \cite{TKK}, \cite{KTT}, \cite{KP}, \cite{Kwa}, \cite{Kwo}, \cite{Mc} and \cite{Ponce} for the fifth order KdV type equations.
Here, we state the known results for the fifth order modified KdV type equations.
For the case of $x\in \R$, in \cite{Kwo2}, Kwon proved the local well-posedness in $H^s$ for $s\ge 3/4$ by the Fourier restriction norm method, which was introduced by Bourgain in \cite{Bo}.
When $x\in \T$, the linear part does not have any smoothing effects and that makes the problem more difficult.
For the case of $x\in \mathbb{T}$, in \cite{Kwa2}, Kwak proved the local well-posedness in $H^s$ for $s>2$ by combining the modified energy method and the short time Fourier restriction norm method when the initial data are on a level set and the conditions $A_0$, $A_1$ and $A_2$ below hold, which is the integrable case.
It was refined by Kwak and Lee in \cite{Kwa3} to $s \ge 2$, which include the global well-posedness in $H^s$ for $s = 2$.
There are few studies of \eqref{5mKdV1} without any conditions on $\al, \be, \ga$ and $\de$.
The second author studied general fifth order dispersive equations on $\T$ in \cite{T} and
proved the necessary and sufficient conditions on the nonlinear terms to assure the local well-posedness for sufficiently large $s$, which include the local well-posedness for \eqref{5mKdV1} without any condition on $\al, \be, \ga$ and $\de$.
In the present paper, we are especially interested in low regularity solutions.
Our main results are as follows.
\begin{thm} \label{thm_LWP}
Let $s \geq 3/2$ and $\alpha= \beta$ or $\gamma=0$. Then, for any $\varphi \in H^s (\mathbb{T})$, 
there exist $T=T(\| \varphi \|_{H^{s}}) >0$ and a unique solution $u \in C([-T,T]: H^{s} (\mathbb{T}))$ to 
\eqref{5mKdV1}--\eqref{initial}. 
Moreover, the solution map $H^s(\mathbb{T}) \ni \varphi \mapsto u \in C([-T,T]: H^s (\mathbb{T}))$ is continuous.  
\end{thm}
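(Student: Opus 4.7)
The plan is to treat \eqref{5mKdV1} directly in $C([-T,T];H^s(\T))$ without any auxiliary Bourgain-type norm, so that existence and unconditional uniqueness emerge from the same set of estimates. The mechanism is a normal form reduction carried out in the interaction representation, combined with an algebraic cancellation triggered by the hypothesis $\al=\be$ or $\ga=0$. After Fourier expansion and the change of variables $v(t,k)=e^{itk^5}\ha u(t,k)$, Duhamel's formula rewrites $v$ as the initial datum plus cubic and quintic multilinear oscillatory integrals whose cubic phase
\[
 \Phi_3=k^5-k_1^5-k_2^5-k_3^5=5(k_1+k_2)(k_2+k_3)(k_3+k_1)(k_1^2+k_2^2+k_3^2+k_1k_2+k_2k_3+k_3k_1),
\]
evaluated at $k=k_1+k_2+k_3$, supplies the denominator gain to be exploited in the normal form.

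The key step is the symbol analysis of the cubic multiplier $m_3$. Symmetrization shows that the contribution from $\al(\p_x u)^3+\be\p_x(u(\p_x u)^2)$ is a linear combination of $k_1k_2k_3$ and $(k_1+k_2)(k_2+k_3)(k_3+k_1)$, whereas $\ga\p_x(u\p_x^2(u^2))$ carries three derivatives on a single factor and is genuinely dangerous. The hypothesis $\al=\be$ is precisely what is needed for the $\al,\be$-part of $m_3$, after division by the factor $(k_1+k_2)(k_2+k_3)(k_3+k_1)$ of $\Phi_3$, to leave a multiplier of order at most one in the largest frequency; alternatively, $\ga=0$ removes the three-derivative term outright. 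In either case $m_3/\Phi_3$ is manageable in $H^{3/2}$. Integrating by parts in time via $e^{it\Phi_3}=\p_t(e^{it\Phi_3}/(i\Phi_3))$ on the nonresonant set $\{\Phi_3\ne 0\}$ then converts the cubic Duhamel integral into trilinear boundary terms bounded in $H^s$ for $s\ge 3/2$ by Sobolev product estimates, plus a remainder containing $\p_tv$; resubstituting the equation turns the latter into a quintic or higher multilinear expression carrying a small-in-$T$ factor. The resonant contribution $\{\Phi_3=0\}$, where some $k_i+k_j=0$, collapses to a gauge-type phase modulation controlled by an $L^2$-conserved moment. The quintic term $\de\p_x(u^5)$ is treated analogously and is easier, since only one derivative is distributed over five factors.

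The reformulated equation has the schematic form $v(t)=\vp+B(v)(t)+\int_0^t R(v)(s)\,ds$, with $B$ a sum of bounded multilinear operators on $H^s$ and $R$ a higher-order remainder with small-in-$T$ bound. A Picard iteration on a ball of $C_T H^s$ produces a solution first for smooth data and, by the uniform estimate, for every $\vp\in H^s(\T)$ with $s\ge 3/2$. Because the normal form is purely algebraic, it applies to \emph{any} $C_T H^s$ solution; running the identical estimates on the difference of two such solutions with common initial data yields unconditional uniqueness and continuous dependence by Gronwall.

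The principal obstacle is the symbol step: one must verify the $\al=\be$ (or $\ga=0$) cancellation in every near-resonant sub-region where $|\Phi_3|$ is small but positive, so that the denominator gain is actually realized at $s=3/2$ and not lost to a blow-up of $m_3/\Phi_3$. In some of these sub-regions a single normal form iteration leaves residual derivatives; a second iteration is likely required to redistribute them across all factors before the trilinear/quintilinear estimate closes in $C_T H^{3/2}$.
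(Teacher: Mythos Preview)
Your proposal misidentifies the role of the hypothesis $\al=\be$ when $\ga\neq 0$. You claim it is needed so that the $\al,\be$-part of the cubic symbol becomes tractable after division by the factor $k_{1,2}k_{2,3}k_{1,3}$ of $\Phi_3$, but in fact the $\al$ and $\be$ contributions are individually no worse than one another, and no cancellation between them is used at the symbol level. The paper uses $\al=\be$ (or $\ga=0$) for a different purpose: it guarantees the conservation law $E_1(u(t))=E_1(\vp)$ (Lemma~\ref{lem_E1}), so that the resonant piece $2\ga E_1(u)\p_x^3u$ extracted from $\ga\p_x(u\p_x^2(u^2))$ can be rewritten as $2\ga E_1(\vp)\p_x^3u$ and absorbed into a modified linear phase $\phi_\vp(k)=-ik^5+2i\ga E_1(\vp)k^3$. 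Your interaction representation $v(t,k)=e^{itk^5}\ha u(t,k)$ omits this correction and therefore leaves an unremovable three-derivative resonant term when $\ga\neq 0$.

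Even with the correct phase, your sketch substantially underestimates the work required. Since the cubic nonlinearity carries three derivatives and one normal form step recovers only one (because $|\Phi_\vp^{(N)}|\gtrsim k_{\max}^4$ on the nonresonant set), the paper iterates the differentiation by parts \emph{three} times, generating quintic and septic terms. Several of these new terms are resonant with a genuine derivative loss and cannot be integrated by parts again; the paper handles them by explicit algebraic cancellations: $M_{1,\vp}^{(5)}+M_{9,\vp}^{(5)}$ and $M_{11,\vp}^{(5)}+M_{13,\vp}^{(5)}$ lose their bad pieces when summed (Propositions~\ref{prop_res1}, \ref{prop_res3}), and $M_{6,\vp}^{(7)}$ loses its bad piece upon symmetrization (Proposition~\ref{prop_res2}). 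The first of these cancellations only works because an artificial term $J_4(u)$ is deliberately added to the equation so as to produce $M_{1,\vp}^{(5)}$. None of this structure appears in your outline. Finally, the paper does not close by Picard iteration: a solution of the normal-form equation \eqref{NF21} is not known to yield a solution of \eqref{5mKdV3} (see the remark before the proof of Proposition~\ref{prop_LWP}), so existence is obtained instead by approximating $\vp$ by smooth data, invoking a prior high-regularity result (Proposition~\ref{prop_existence}), and passing to the limit via the a~priori estimates of Corollary~\ref{cor_mainest}.
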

\begin{rem}
\eqref{5mKdV1} makes sense as a distribution for $u \in C([-T,T]: H^{s} (\mathbb{T}))$ and $s \ge 7/6$ since $(\p_x u)^3 \in L^1(\mathbb{T})$
by the Sobolev inequality.
Moreover, if $\alpha=0$ then it makes sense as a distribution for $s \ge 1$ since $\p_x(u\p_x^2( u^2))=\p_x^2(u\p_x( u^2))-\p_x(\p_x u\p_x( u^2)) \in H^{-2}(\mathbb{T})$.
The condition $s \geq 3/2$ in Theorem \ref{thm_LWP} arises from the so-called high-high-high interaction for the cubic terms.
A similar issue arises for the modified KdV equation when $s<1/2$.
The difficulty was overcome by Takaoka and Tsutsumi in \cite{TaTs}
and they proved the local well-posedness for $s>3/8$.
Nakanishi, Takaoka and Tsutsumi in \cite{NTT}
improved it to have the well-posedness for $s>1/3$ and the existence of solutions for $s>1/4$.
Moreover, Molinet, Pliod and Vento in \cite{MPV} proved the unconditional well-posedness for $s \ge 1/3$.
Therefore, we conjecture that the condition $s \ge 3/2$ in Theorem \ref{thm_LWP} could be improved if we combine their method and our idea below.
\end{rem}
Here, we observe some conservation quantities. Define $\int_{\T} f \, dx:= \frac{1}{2 \pi} \int_0^{2\pi} f(x) \, dx$ for 
$2 \pi$-periodic function $f$. Put
\EQQ{
&E_0(f):=\int_{\mathbb{T}} f \, dx, \qquad E_1(f) :=\int_{\mathbb{T}} f^2 \, dx, \qquad
E_2(f):=\int_{\mathbb{T}} \frac{1}{2} (\p_x f)^2-\frac{\be+3\ga}{30} f^4 \, dx, \\
&E_3(f):= \int_{\mathbb{T}} \frac{1}{2} (\p^2_x f)^2- \ga f^2 (\p_x f )^2 + \de f^6 \, dx, \\
&A_0: \al=0,\qquad A_1: \al=\be,\qquad A_2: (\al-3\be+6\ga)(\be+3\ga)=450\de.
}
Let $u$ be a solution of \eqref{5mKdV1}--\eqref{initial}.
Then, for each $j=0,1,2$, we have the conservation law
$E_j(\vp)=[E_j(u)](t)$ in a formal sense when assumption $A_j$ holds.
We have $E_3(\vp)=[E_3(u)] (t)$ when both $A_0$ and $A_1$ hold.
In this case, \eqref{5mKdV1} can be regarded as the Hamiltonian PDE
$\p_t u =-\p_x E_3' (u)$. 
When all assumptions $A_0, A_1, A_2$ hold, it follows that
$\al=\be=0, \ga^2=25\de$.
In this case, \eqref{5mKdV1} is called the fifth order modified KdV equation,
which is a completely integrable system and have infinitely many conserved quantities including $E_0, E_1,E_2$ and $E_3$.
Note that by taking $u(t,x) \mapsto u(\la^{-5/2} t, \la^{-1/2}x)$ with $\la=|\ga|/5$, 
the coefficients of this case are normalized as $(\al,\be,\ga,\de)=(0,0,5,1)$ or $(0,0,-5,1)$. 
When $A_0$ and $A_1$ hold, by the conserved quantities $E_1$ and $E_3$, we have the following global result as a corollary of Theorem \ref{thm_LWP}.
\begin{cor}
Let $s = 2$, $\vp\in H^s(\T)$ and $\al=\be=0$. Then, the solution obtained in Theorem \ref{thm_LWP} can be extended to the solution on $t\in (-\infty,\infty)$.
\end{cor}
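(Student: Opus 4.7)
The plan is to derive a uniform-in-time a priori bound on $\|u(t)\|_{H^2}$ from the two conservation laws available under the hypothesis, and then iterate Theorem \ref{thm_LWP} with time steps depending only on this bound. Since $\al=\be=0$ both conditions $A_0$ and $A_1$ hold, so $E_1$ and $E_3$ are formally conserved along the flow of \eqref{5mKdV1}.

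The first step is to upgrade the formal conservation of $E_1$ and $E_3$ to a rigorous statement at the $H^2$ level. I would approximate $\vp \in H^2(\T)$ by smooth data $\vp_n \in H^\infty(\T)$, apply Theorem \ref{thm_LWP} together with its continuous-dependence part to produce corresponding solutions $u_n \to u$ in $C([-T,T];H^2)$, invoke classical conservation for the smooth $u_n$, and pass to the limit. This passage is legitimate because both $E_1$ and $E_3$ are continuous on $H^2(\T)$: the sextic term is controlled by the embedding $H^2(\T) \hookrightarrow L^\infty(\T)$, and the quartic-derivative term $\int u^2(\p_x u)^2\,dx$ by H\"older combined with the same embedding.

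The second step is to convert the conservation laws into the a priori bound. Conservation of $E_1$ immediately gives $\|u(t)\|_{L^2}=\|\vp\|_{L^2}$. Rewriting $E_3$ as
\EQQ{
\tfrac{1}{2}\|\p_x^2 u(t)\|_{L^2}^2 \,=\, E_3(\vp) + \ga\int_\T u^2(\p_x u)^2\,dx - \de\int_\T u^6\,dx,
}
I would bound the two nonlinear corrections via Gagliardo--Nirenberg-type interpolations such as $\|u\|_{L^\infty} \lec \|u\|_{L^2}^{3/4-\eps}\|u\|_{H^2}^{1/4+\eps}$ and $\|\p_x u\|_{L^2}^2 \lec \|u\|_{L^2}\|u\|_{H^2}$. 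Both corrections are then dominated by $C(\|\vp\|_{L^2})\|u\|_{H^2}^{2-\eta}$ for some $\eta>0$, so Young's inequality absorbs them into the leading $\|\p_x^2 u\|_{L^2}^2$ term, producing the desired bound $\|u(t)\|_{H^2} \le M(\|\vp\|_{H^2})$ on the maximal existence interval.

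Because the local existence time in Theorem \ref{thm_LWP} depends only on $\|u(t_0)\|_{H^s}$, one can restart the local theorem at each endpoint with the uniform step $T_0 = T(M)$ to extend the solution to all of $\R$. I expect the principal obstacle to be the first step rather than the second: at the borderline regularity $s=2$ the equation holds only distributionally, so the classical time-differentiation of $E_3$ is not justified directly and must be routed through the density argument, which relies crucially on the continuity of the solution map provided by Theorem \ref{thm_LWP}.
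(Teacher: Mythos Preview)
Your proposal is correct and is exactly the argument the paper has in mind: the paper does not give a detailed proof of this corollary but simply asserts that it follows from the conserved quantities $E_1$ and $E_3$ together with Theorem~\ref{thm_LWP}, and your sketch fills in precisely those standard details (rigorous conservation via smooth approximation and continuous dependence, Gagliardo--Nirenberg to extract the a priori $H^2$ bound from $E_3$, then iteration of the local result).
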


Our main tool to prove Theorem \ref{thm_LWP} is the normal form reduction.
It was introduced by Shatah in \cite{Sh} to study the existence of the global solution of quadratic nonlinear Klein-Gordon equations 
for small initial data.
It was used to recover derivative losses by Takaoka-Tsutusmi in \cite{TaTs} in the study of the well-posedness of the modified KdV equation 
and 
by Babin-Ilyin-Titi in \cite{BIT} in the study of unconditional uniqueness of the KdV equation. 
For the studies of unconditional uniqueness by the normal form reduction, 
see \cite{GKO}, \cite{Kn1}, \cite{Kn2}, \cite{Kn3}, \cite{Kn4}, \cite{Kn5}, \cite{KO}, \cite{KOY}, \cite{MoP} and \cite{MoY}.
In our proof, main difficulty comes from the loss of derivatives in the nonlinear terms.
We can extract a kind of smoothing effect for the non-resonant parts in the nonlinear terms by the normal form reduction.
Therefore, we need to eliminate the resonant parts with derivative losses in the nonlinear terms.
When $\al=\be$ or $\ga=0$, it follows that $\ga E_1(\vp)= \ga [E_1(u)](t)$ for $u\in C([-T,T]:H^{3/2}(\T))$ in the rigorous sense (see Lemma~\ref{lem_E1}). 
Thus, \eqref{5mKdV1} can be rewritten into
\EQ{
&\p_t u +\p_x^5 u +2\ga E_1(\vp)\p_x^3 u +\left\{(3 \alpha + \beta - 2 \gamma ) \int_{\mathbb{T}} (\p_x u)^2 \, dx
 + 30 \delta  \int_{\mathbb{T}} u^4 \, dx \right\}\p_x u\\
&= J_1(u) + J_2 (u)  +J_3 (u)\label{5mKdV21}
}
where 
 \begin{align*}
J_1(u)= & -30 \delta \Big( u^4- \int_{\mathbb{T}} u^4 \, dx \Big) \p_x u, \\ 
J_2(u)=& - \alpha (\p_x u)^3 - \beta \p_x (u (\p_x u)^2 ) + (3 \alpha + \beta) \int_{\mathbb{T}} (\p_x u)^2 \, dx \, \p_x u, \\
J_3(u)= & - \gamma \p_x (u \p_x^2 (u^2)  ) + 2 \ga  \int_{\mathbb{T}} u^2 \, dx \, \p_x^3 u 
-2 \ga  \int_{\mathbb{T}} (\p_x u)^2 \, dx \, \p_x u.
\end{align*}
Note that the resonant parts with one derivative loss in $J_1(u)$ have been removed (see \eqref{EE5}).
Roughly speaking Lemmas \ref{Le1}, \ref{Le2}, \ref{Le3} and \ref{Le4} mean
that only one derivative loss can be recovered by applying the normal form reduction once
because $\Phi_f^{(N)} \gec k_{\max}^4$ holds for non-resonant parts, the nonlinear terms include the third derivatives and $4-3=1$.
Since $J_1(u)$ has only one derivative loss, the loss can be recovered by applying the normal form reduction once.
$J_2(u)+J_3(u)$ still have resonant parts of high-high-high interaction (see the second term of \eqref{EE4}).
However, they are harmless since $H^s$ norms of them are estimated by $\|u\|_{H^s}^3$ when $s \ge 3/2$.
The non-resonant parts of $J_2(u)+J_3(u)$ (see the first term of \eqref{EE4}) have three derivative losses. 
Therefore, we need to apply the normal form reduction three times.
After we apply it once, quintic terms appear.
Most of the resonant parts with derivative losses of quintic terms can be cancelled by symmetry (see Proposition \ref{prop_res3}).
However, the resonant part represented by the symbol $M_{9,\vp}^{(5)}$ has one derivative loss and it cannot be cancelled by symmetry.
To overcome this difficulty, we add $J_4(u)$ as below.
Then, the derivative loss of $M_{9,\vp}^{(5)}$ can be cancelled by $M_{1,\vp}^{(5)}$, which comes form $J_4(u)$ (see Proposition \ref{prop_res1}) .
After applying the normal form reduction for the non-resonant parts of the quintic terms, septic terms with one derivative loss appear.
The resonant part of them, which represent $M_{6,\vp}^{(7)}$ is cancelled by symmetry (see Proposition \ref{prop_res2}).
For the non-resonant parts of septuple terms, we apply the normal form reduction.
Then, all derivative losses are recovered.
Therefore, all terms can be estimated by variants of the Sobolev inequalities
and we use only the Sobolev norm and do not use the Strichartz estimates or the Fourier restriction norm.
For that purpose, we rewrite \eqref{5mKdV21} into
\EQ{
\p_t u +\p_x^5 u +2\ga E_1(\vp)\p_x^3 u +K(u)\p_x u =J_1(u)+ J_2 (u)  +J_3 (u)+ J_4(u)\label{5mKdV2}
}
where 
 \begin{align*}
J_4(u)  = & -\frac{4}{5} \gamma^2 \Bigl\{ \Big(  \int_{\mathbb{T}} u^4 \, dx \Big) 
- \Big( \int_{\mathbb{T}} u^2 \, dx  \Big)^2 \Bigr\} \p_x u,\\
K(u)  = &(3 \alpha + \beta - 2 \gamma ) \int_{\mathbb{T}} (\p_x u)^2 \, dx
 + \frac{4}{5} \gamma^2 E_1^2(u) + \Big( 30 \delta- \frac{4}{5} \gamma^2  \Big)  \int_{\mathbb{T}} u^4 \, dx.
\end{align*}
Note that $K(u)$ does not depend on $x$.
Thus, by the changing variable 
\EQQ{
 u(t,x) \mapsto u\Big(t, x+ \int_0^t [K(u)](t')~dt'\Big),
}
\eqref{5mKdV2} is rewritten into 
\begin{align} \label{5mKdV3}
\p_t u + \p_x^5 u + 2 \gamma E_1(\varphi) \p_x^3 u = J_1(u)+ J_2(u) +J_3(u)+J_4(u).  
\end{align}
This is the main idea in this paper.
The same idea was used for the fifth order KdV type equations in \cite{KTT}.
For a more detailed explanation, see the introduction of \cite{KTT}.

Note that the case $\al \neq \be$ and $\ga \neq 0$ is excluded in Theorem \ref{thm_LWP}.
In this case, 
the phase function $\Phi_{u}^{(N)}$ defined in Section 2 depends on $t$
since we do not have the conservation low $E_1(\vp)=[E_1(u)](t)$.
Therefore, the formulas by the normal form reduction, that is \eqref{NF21} in Proposition \ref{prop_NF2} and \eqref{eq20} in Proposition \ref{prop_req1}
become more complicated and we need more regularity of the solution with respect to $t$ if we apply our method to this case.
This means that the assumption on $s$ must be higher.
Therefore, we do not discuss the case in the present paper.

\if0
Now, any resonant parts with no derivative loss do not exist in $J_1(u)$, $J_2(u)$ and $J_3(u)$.
Note that the standard iteration argument by the $X^{s,b}$ space which is introduced by Bourgain in \cite{Bo} does not work even for \eqref{5mKdV3}, since $J_3(u)$ have the third derivatives and the trilinear estimate for it fail by the so called high-low-low interaction.
We will use the normal form reduction to recover the derivative loss.
Since $J_1(u)$ has only one derivative loss, we only need to apply the normal form reduction once.
On the other hand, $J_2(u)$ and $J_3(u)$ have the third derivatives. Therefore, we need to apply it three times.
After we apply the normal form reduction to $J_3(u)$ the first time, some quintic terms with second derivatives appear.
For the resonant parts of them, which correspond to the symbol defined in Proposition, we use a kind of cancellation
\fi

Finally, we introduce some notations.
We write $k_{i,i+1,\ldots,j}$ to mean $k_i+k_{i+1}+\cdots+k_j$ for integers $i$ and $j$ satisfying $i < j$. 
$k_{\max}$ is denoted by $k_{\max}:= \max_{1 \le j \le N} \{ |k_j| \}$ and 
$\text{sec}_{1 \le j \le N} \{  |k_j|  \}$ is denoted by the second largest number among $|k_1|, \dots, |k_N|$. 
We will use $A\lesssim B$ to denote an estimate of the form $A \le CB$ for some positive constant $C$ and write $A \sim B$ 
to mean $A \lesssim B$ and $B \lesssim A$. 
If $a \in \R$, $a+$ will denote a number slightly greater than $a$.  
$\| \cdot \|_{L_T^{\infty} X }$ is denoted by $\| \cdot \|_{L_T^{\infty} X}:= \sup_{t \in [-T, T]} \| \cdot \|_{X}$ for a Banach space $X$. 
For $s \in \R$, $l_s^2$ is denoted by $l_s^2 := \{ f: \Z \to \C: \|  f \|_{l_s^2}:= \| \langle \cdot  \rangle^s f \|_{l^2} < \infty \}$.

\section*{Acknowledgement}
We would like to thank Professor Nobu Kishimoto for his valuable comments.


\section{notations and preliminary lemmas}

In this section, we prepare some lemmas to prove main theorem. 
First, we give some notations. 
For a $2 \pi$-periodic function $f$ and a function $g$ on $\Z$, we define the Fourier transform and the inverse Fourier transform by 
\begin{align*}
(\mathcal{F}_x  f)(k):= \ha{f}(k):= \int_{\T} e^{-ixk} f(x) \, dx, \hspace{0.5cm}
(\mathcal{F}^{-1} g) (x):= \sum_{k \in \Z} e^{ixk} g(k). 
\end{align*} 
Then we have 
\begin{align*}
f= \mathcal{F}^{-1} (\mathcal{F}_x f), \hspace{0.5cm} 
\| f \|_{L^2}:= \Big( \int_{\T} |f(x)|^2 \, dx \Big)^{1/2}= \Big( \sum_{k \in \Z} |\ha{f} (k)|^2 \Big)^{1/2}=\|\ha{f}\|_{l^2}. 
\end{align*}
We give some estimates on the phase function $\Phi_{\vp}^{(N)}$ defined as 
\begin{align*}
&\Phi_{\vp}^{(N)}:= \Phi_{\vp}^{(N)} (k_1, \dots, k_N):= \phi_{\vp}(k_{1,\dots, N})- \sum_{j=1}^N \phi_{\vp} (k_j),\\
&\phi_{\vp} (k):= -ik^5 + 2 i \ga  E_1(\vp) k^3 = \mathcal{F}_x(-\p_x^5 - 2 \ga E_1(\vp) \p_x^3)
\end{align*}
for $N \in \N$, which plays an important role to recover some derivatives when we estimate non-resonant parts of nonlinear terms.
A simple calculation yields that $\Phi_{f}^{(1)}=0$, 
\begin{align}
& \Phi_f^{(2)}= -\frac{5}{2} i k_1 k_2 k_{1,2} \big(k_{1}^2 +k_2^2 +k_{1,2}^2 - \frac{12}{5} \ga E_1(f) \big),   \notag \\
& \Phi_{f}^{(3)}= -\frac{5}{2} i k_{1,2} k_{2,3} k_{1,3} \big( k_{1,2}^2 + k_{2,3}^2 +k_{1,3}^2- \frac{12}{5} \ga E_1(f)  \big) \label{2eq2}
\end{align}
for $f \in L^2(\T)$. We easily prove the following lemmas by the factorization formula \eqref{2eq2}.
\begin{lem} \label{Le1}
Assume that $f,g \in L^2(\T)$, $k_{\max}> 8 \max\{ | \ga| E_1(f) , |\ga| E_1(g) \}$ and $k_{1,2} k_{2,3} k_{1,3} \neq 0$. If 
\begin{align} \label{rel1}
|k_1| \sim |k_2| \sim |k_3|,
\end{align}
then it follows that 
\begin{align}
& |\Phi_f^{(3)}|  \gtrsim \min\{ |k_{1,2} | |k_{1,3}|, |k_{1,2}| |k_{2,3}|, |k_{1,3}| |k_{2,3}|  \}  \, k_{\max}^3, \label{ff2} \\
& \Big| \frac{1}{\Phi_{f}^{(3)}}- \frac{1}{\Phi_g^{(3)}} \Big|   \lesssim \frac{|\ga| |E_1(f) -E_1(g) |}{ k_{\max}^2} 
\min \left\{  \frac{1}{|\Phi_f^{(3)}|}, \frac{1}{|\Phi_g^{(3)}|} \right\}.\label{ff3}
\end{align}
If \eqref{rel1} does not hold, then it follows that
\begin{align} \label{ff4}
|\Phi_f^{(3)}|  \gtrsim   \min \{  |k_{1,2}|, |k_{1,3}|, |k_{2,3}|  \} \, k_{\max}^4
\end{align}
and \eqref{ff3}.
\end{lem}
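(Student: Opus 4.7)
The plan is to derive every assertion directly from the factorization \eqref{2eq2}, which presents $\Phi_f^{(3)}$ as the product of $-\frac{5}{2}ik_{1,2}k_{2,3}k_{1,3}$ with the quadratic factor
\[
Q_f:=k_{1,2}^2+k_{2,3}^2+k_{1,3}^2-\tfrac{12}{5}\gamma E_1(f).
\]
Since the cubic factor $k_{1,2}k_{2,3}k_{1,3}$ is common to all computations, the lower bounds on $|\Phi_f^{(3)}|$ reduce to controlling $|Q_f|$ from below, while the difference $\Phi_f^{(3)}-\Phi_g^{(3)}$ simplifies because $Q_f-Q_g=-\tfrac{12}{5}\gamma(E_1(f)-E_1(g))$.

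First I would establish the geometric claim that at least two of $|k_{1,2}|,|k_{1,3}|,|k_{2,3}|$ are comparable to $k_{\max}$. If any one sum, say $|k_{1,2}|$, were $\ll k_{\max}$, then $k_2\approx -k_1$; combining this with $k_{1,3}=k_1+k_3$ and $k_{2,3}\approx -k_1+k_3$, together with the fact that at least one of $|k_1|,|k_3|$ equals $k_{\max}$, forces $|k_{1,3}|\sim k_{\max}$ and $|k_{2,3}|\sim k_{\max}$. Hence $k_{1,2}^2+k_{2,3}^2+k_{1,3}^2\gtrsim k_{\max}^2$, and the hypothesis $k_{\max}>8|\gamma|E_1(f)$ yields $\frac{12}{5}|\gamma|E_1(f)\le\frac{3}{10}k_{\max}$, which is dwarfed by $k_{\max}^2$ (since $k_{\max}\ge 1$); thus $|Q_f|\gtrsim k_{\max}^2$, and similarly $|Q_g|\gtrsim k_{\max}^2$.

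Then \eqref{ff2} and \eqref{ff4} follow by inserting this bound into \eqref{2eq2}. Under \eqref{rel1}, the largest of $|k_{1,2}|,|k_{1,3}|,|k_{2,3}|$ is $\gtrsim k_{\max}$, so
\[
|k_{1,2}k_{1,3}k_{2,3}|\gtrsim k_{\max}\cdot\min\bigl\{|k_{1,2}||k_{1,3}|,\,|k_{1,2}||k_{2,3}|,\,|k_{1,3}||k_{2,3}|\bigr\},
\]
yielding \eqref{ff2}. Without \eqref{rel1}, one $|k_j|$ strictly dominates the others, so two of the pairwise sums are $\sim k_{\max}$ and the third is the minimum, giving $|k_{1,2}k_{1,3}k_{2,3}|\sim k_{\max}^2 \min\{|k_{1,2}|,|k_{1,3}|,|k_{2,3}|\}$, which produces \eqref{ff4}. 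For \eqref{ff3} I would compute
\[
\Phi_f^{(3)}-\Phi_g^{(3)}=6i\gamma(E_1(f)-E_1(g))\,k_{1,2}k_{2,3}k_{1,3}
\]
directly from the factorization, form the quotient $|\Phi_f^{(3)}-\Phi_g^{(3)}|/(|\Phi_f^{(3)}||\Phi_g^{(3)}|)$, and invoke the already-proved bound $|\Phi_{f,g}^{(3)}|\gtrsim|k_{1,2}k_{2,3}k_{1,3}|\,k_{\max}^2$: the factor $|k_{1,2}k_{2,3}k_{1,3}|$ in the numerator cancels against $\max\{|\Phi_f^{(3)}|,|\Phi_g^{(3)}|\}$ divided by $k_{\max}^2$, producing exactly the right-hand side of \eqref{ff3}.

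The main (and only) obstacle is the case analysis for the geometric claim: one must check every sign and size configuration of $k_1,k_2,k_3$ under the non-degeneracy $k_{1,2}k_{2,3}k_{1,3}\ne 0$ to confirm the two-large-sums conclusion. Once that combinatorial bookkeeping is in place, the rest is formal manipulation of \eqref{2eq2} and an elementary comparison of the quadratic and constant terms in $Q_f$.
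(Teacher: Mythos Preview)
Your approach is the same as the paper's (which simply says the lemma follows easily from the factorization \eqref{2eq2}), and the argument for \eqref{ff3} and for \eqref{ff4} is fine. There is, however, a genuine gap in your ``geometric claim'' that at least two of $|k_{1,2}|,|k_{1,3}|,|k_{2,3}|$ are comparable to $k_{\max}$. This is \emph{false} in the regime \eqref{rel1}: take $k_1=k_3=N$, $k_2=-N+1$. Then $|k_1|\sim|k_2|\sim|k_3|\sim N$, but $k_{1,2}=1$, $k_{2,3}=1$, $k_{1,3}=2N$, so two of the three pairwise sums are of size $1$. Your justification (``$k_2\approx -k_1$ forces $|k_{1,3}|\sim k_{\max}$ and $|k_{2,3}|\sim k_{\max}$'') fails precisely when $k_3\approx k_1$, which is allowed under \eqref{rel1}. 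When \eqref{rel1} fails your two-large-sums claim \emph{is} correct, so \eqref{ff4} goes through as you wrote it.

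The fix for $|Q_f|\gtrsim k_{\max}^2$ (and hence for \eqref{ff2}) is immediate once you use the algebraic identity
\[
k_{1,2}^2+k_{2,3}^2+k_{1,3}^2=(k_{1,2,3})^2+k_1^2+k_2^2+k_3^2\ \ge\ k_{\max}^2,
\]
which follows by writing each pairwise sum as $k_{1,2,3}-k_l$ and expanding. Then $\tfrac{12}{5}|\gamma|E_1(f)<\tfrac{3}{10}k_{\max}\le\tfrac{3}{10}k_{\max}^2$ (since $k_{\max}\ge 1$, as $k_{1,2}k_{2,3}k_{1,3}\neq 0$ forces some $k_j\neq 0$), so $|Q_f|\ge\tfrac{7}{10}k_{\max}^2$. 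This identity also gives directly that the largest pairwise sum is $\ge k_{\max}/\sqrt{3}$, which is all you actually use for \eqref{ff2}. With this one-line replacement the rest of your argument is correct.
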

We need estimates similar to \eqref{ff2} and \eqref{ff4} for $N\ge 4$ to recover derivative losses. 
However, no factorization formulas are known for $N \ge 4$ and the following proposition holds. 
\begin{prop} \label{prop_counter}
Let $N \ge 4$, $f \in L^2(\T)$ and $\ga \in \R$. 
Then, there exists a $(k_1, \dots, k_{N}) \in \Z^{N}$ such that 
\begin{align*}
& |k_{N}| \gg \max_{1 \le j \le N-1} \{ |k_j| \}, \hspace{0.5cm} k_{1,\dots, N-1} \neq 0, 
\hspace{0.5cm} |k_{N}| \gg |\ga| E_1(f), \\
& |\Phi_{f}^{(N)}| \ll |k_{1, \dots, N-1}| |k_{N}|^4. 
\end{align*}
\end{prop}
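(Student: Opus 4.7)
The plan is to exhibit an explicit family of tuples $(k_1, \ldots, k_N) \in \Z^N$ parametrised by a large positive integer $N_0$, for which the ratio $|\Phi_f^{(N)}| / (|k_{1,\ldots,N-1}| |k_N|^4)$ tends to $0$. Setting $M := k_N$ and $m := k_{1,\ldots,N-1}$, a direct computation from the definition gives
\begin{align*}
\Phi_f^{(N)} = -i\bigl[(M+m)^5 - M^5 - \sum_{j=1}^{N-1} k_j^5\bigr] + 2i\gamma E_1(f)\bigl[(M+m)^3 - M^3 - \sum_{j=1}^{N-1} k_j^3\bigr].
\end{align*}
Under the size constraint $|k_j| \ll M$ for $j < N$, the dominant contribution is $5M^4 m$ coming from $(M+m)^5 - M^5$, and to beat the naive bound $|m|M^4$ we must arrange $\sum_{j=1}^{N-1} k_j^5 \approx 5M^4 m$. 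This forces $\max_{j<N}|k_j| \gtrsim M^{4/5}|m|^{1/5}$, which is compatible with $\max|k_j| \ll M$ whenever $|m| \ll M$.

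Concretely, I would fix $m=1$, pick a large positive integer $N_0$, and take
\begin{align*}
k_1 = k_2 = -N_0, \qquad k_3 = 1+2N_0, \qquad k_4 = \cdots = k_{N-1} = 0, \qquad k_N = M,
\end{align*}
where $M$ is the nearest positive integer to $6^{1/4} N_0^{5/4}$; the zeros are vacuous when $N=4$. A direct binomial expansion yields
\begin{align*}
\sum_{j=1}^{N-1} k_j^5 = -2N_0^5 + (1+2N_0)^5 = 30 N_0^5 + 80 N_0^4 + O(N_0^3),
\end{align*}
and the choice of $M$ gives $|M^4 - 6N_0^5| \lesssim M^3$, hence $|5M^4 - 30 N_0^5| \lesssim M^3$ and consequently $|5 M^4 - \sum_{j=1}^{N-1} k_j^5| \lesssim N_0^4 \sim M^{16/5}$.

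It remains to verify the four conditions for $N_0$ sufficiently large. Clearly $k_{1,\ldots,N-1} = 1 \neq 0$. Since $M \sim N_0^{5/4}$ and $\max_{j<N}|k_j| \le 2N_0+1$, we have $|k_N|/\max_{j<N}|k_j| \sim N_0^{1/4} \to \infty$, and $|k_N| = M \to \infty$ handles $|k_N| \gg |\gamma| E_1(f)$. Combining the above with $(M+1)^5 - M^5 = 5M^4 + O(M^3)$ and $(M+1)^3 - M^3 = O(M^2)$, the quintic part of $\Phi_f^{(N)}$ has magnitude $\lesssim N_0^4 \sim M^{16/5}$, and the cubic part contributes $O(|\gamma| E_1(f)(M^2+N_0^3)) = O(M^{12/5})$. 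Thus $|\Phi_f^{(N)}|/(|m|M^4) \lesssim M^{-4/5} \to 0$ as $N_0 \to \infty$, which produces the required $|\Phi_f^{(N)}| \ll |m||k_N|^4$.

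The main conceptual point is that exact integer matching of $\sum k_j^5$ to $5M^4 m$ is a hopeless Diophantine problem, but only approximate matching to leading order in $M$ is needed, and choosing the two integer parameters $N_0$ and $M$ independently gives enough flexibility. This contrasts sharply with the $N=3$ case, where the factorization formula \eqref{2eq2} rigidly forces $|\Phi_f^{(3)}|$ to be of the expected size on the non-degenerate set; no such rigidity survives for $N \ge 4$.
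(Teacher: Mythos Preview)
Your proof is correct and takes a genuinely different route from the paper's. Both constructions arrange $k_{1,\ldots,N-1}=1$ and force the dominant term $5k_N^4$ in the quintic part to be (nearly) cancelled by $\sum_{j<N}k_j^5$, but the mechanisms differ. The paper chooses all of $k_{N-2},k_{N-1},k_N$ as explicit monomials in a large parameter $n$ (with degrees depending on $N$) and uses the factorization $\Phi_f^{(N)}=\Phi_f^{(2)}(k_{1,\ldots,N-1},k_N)+\Phi_f^{(3)}(k_{1,\ldots,N-3},k_{N-2},k_{N-1})+\Phi_f^{(N-3)}(k_1,\ldots,k_{N-3})$ so that the leading $n^{8N-4}$ contributions from the first two pieces cancel \emph{exactly}; this yields a decay rate $n^{-(2N-6)}$ that improves with $N$. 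Your approach is more elementary: you use only three nonzero $k_j$ with $j<N$ (padding with zeros for $N\ge 5$), compute $\sum k_j^5$ directly as a polynomial in $N_0$, and then match only approximately by rounding $M$ to the nearest integer to $6^{1/4}N_0^{5/4}$. This avoids any appeal to the $\Phi^{(3)}$ factorization and gives a uniform decay $M^{-4/5}$ independent of $N$. The paper's construction is more structural and perhaps illuminates why the factorization fails for $N\ge4$; yours is shorter and shows more transparently that an approximate Diophantine match already suffices. One tiny quibble: your final sentence says $N_0$ and $M$ are chosen ``independently'', but in fact $M$ is determined by $N_0$; the real point is that rounding to an integer costs only $O(M^3)$ in $M^4$, which is harmless.
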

\begin{proof}
A simple calculation yields that 
\begin{align} \label{fact1}
& \Phi_f^{(N)}(k_1, \dots, k_{N}) \notag \\
& = \Phi_f^{(2)} (k_{1, \dots, N-1}, k_{N})+ \Phi_{f}^{(3)} (k_{1, \dots, N-3}, k_{N-2}, k_{N-1})
+\Phi_f^{(N-3)} (k_1, \dots, k_{N-3}) \notag \\
&= \big\{ \Phi_0^{(2)} (k_{1, \dots, N-1}, k_{N})+ \Phi_0^{(3)} (k_{1, \dots, N-3}, k_{N-2}, k_{N-1})
+\Phi_0^{(N-3)} (k_1, \dots, k_{N-3})  \big\} \notag \\
& \hspace{0.5cm} + 2 i \ga E_1(f) \big\{ 3 k_{N} k_{1,\dots , {N-1} } (k_{1, \dots, N-1}+k_{N}) \notag \\
& \hspace{0.5cm} 
+ 3 (k_{N-2} + k_{N-1}) (k_{1, \dots , N-3}+ k_{N-2}) (k_{1, \dots, N-3} + k_{N-1})
+ (k_{1, \dots, N-3}^3- \sum_{j=1}^{N-3} k_j^3)  \big\} \notag\\
&=: \psi_1(k_1, \dots, k_{N})+ 2 i \ga E_1(f) \psi_2(k_1, \dots, k_{N}). 
\end{align} 
Now, we take 
\begin{equation*}
k_{N}=n^{2N-1}, \hspace{0.5cm} k_{N-1}=n^{2N-2}+n^4 , \hspace{0.5cm} k_{N-2}=-n^{2N-2}
\end{equation*}
and $(k_1, \dots, k_{N-3}) \in \Z^{N-3}$ satisfying $|k_1| \le |k_2| \le \dots \le |k_{N-3}|$,
\begin{equation*}
|k_{N-3}| \sim n^{N}, \hspace{0.5cm} k_{1,\dots , N-3}=-n^4+1
\end{equation*}
for sufficiently large number $n$. Then, it follows that $k_{N-2}+k_{N-1}=n^4$ and $k_{1, \dots, N-1}=1$, 
which leads 
\begin{align*}
& \Phi_0^{(2)} (k_{1, \dots, N-1}, k_{N})= -5 i \, k_{1,\dots, N-1} k_{N}^4  +R_1=-5i \, n^{8N-4}+ R_1, \\
& \Phi_0^{(3)} (k_{1,\dots, N-3} , k_{N-2}, k_{N-1})\\
&\hspace{0.3cm} = - \frac{5}{2} i \, (k_{N-2}+k_{N-1} ) k_{N-2}  k_{N-1} (k_{N-2}^2 + k_{N-1}^2)+R_2 \\
& \hspace{0.3cm} =5i \, n^{8N-4}+10i \, n^{6N+2}+ \frac{15}{2} i \, n^{4N+8}+ \frac{5}{2} i \, n^{2N+14}+R_2
\end{align*}
where $|R_1| \sim n^{6N-3}$ and $|R_2| \sim n^{4N+8}$. 
Thus, by $|\Phi_0^{(N-3)} (k_1, \dots, k_{N-3})| \lesssim n^{5N} $, it follows that 
\begin{equation} \label{fact2}
|\psi_1(k_1, \dots, k_{N} )| \sim n^{6N+2}.
\end{equation}
Since
\begin{align*}
& |k_{1,\dots, N-1} k_{N} (k_{1, \dots , N-1} + k_{N}) | \sim n^{4N-2}, \\
& | (k_{N-2} +k_{N-1} ) (k_{1, \dots, N-3} + k_{N-2}) (k_{1, \dots, N-3} + k_{N-1})   | \sim n^{4N}, \\
& \Big| k_{1, \dots, N-3}^3- \sum_{j=1}^{N-3} k_j^3 \Big| \lesssim n^{3N},
\end{align*}
it follows that  $|\psi_2(k_1, \dots, k_{N})| \sim n^{4N}$. 
By taking $n$ sufficiently large, we have $|\ga| E_1(f) \ll n^{2N-1}=|k_N| $ and  
\begin{equation} \label{fact3}
| 2 i \ga E_1(f) \psi_2(k_1, \dots, k_{N}) | \ll n^{6N-1}.
\end{equation}
Collecting \eqref{fact1}--\eqref{fact3}, we obtain 
\begin{equation*}
|\Phi_{f}^{(N)} | \sim n^{6N+2} \ll n^{8N-4} = |k_{1, \dots , N-1}| |k_{N}|^4. 
\end{equation*}
\end{proof}
For instance, when $N=4$, we take 
\begin{equation*} 
k_4= n^{7}, \hspace{0.3cm} k_3=n^{6}+n^4, \hspace{0.3cm} k_2=-n^{6}, \hspace{0.3cm} k_1=-n^4+1
\end{equation*}
for sufficiently large number $n$. Then, it  follows that 
\begin{equation*}
|\Phi_f^{(4)}| \sim n^{26} \ll n^{28}=|k_{1,2,3}| |k_5|^4.
\end{equation*}
Moreover, when $N=5$, we take 
\begin{equation*}
k_1=n^{9}, \hspace{0.3cm} k_2=n^8 +n^4, \hspace{0.3cm} k_3=-n^8, \hspace{0.3cm} k_2=-n^5-n^4+1, \hspace{0.3cm} k_1=n^5
\end{equation*}
for sufficiently large number $n$. Then, it follows that 
\begin{equation*}
|\Phi_f^{(5)}| \sim n^{32} \ll n^{36}= |k_{1,2,3,4}| |k_5|^4. 
\end{equation*}
\begin{lem} \label{Le2}
Assume $f, g \in L^2(\T)$ and $k_{\max} > 16 \max\{1, |\ga| E_1(f), |\ga| E_1(g)  \}$. Then the following hold. \\
{\upshape (i)} If 
\begin{equation} 
|k_5| > 6 |k_4| > 96 \max_{1 \le j \le 3} \{  |k_j| \} \label{rel3} 
\end{equation}
holds, then it follows that 
\begin{align}
& |\Phi_f^{(5)}| > |k_4| |k_5|^4, \label{2eq5} \\
& \Big| \frac{1}{\Phi_f^{(5)}}- \frac{1}{ \Phi_{g}^{(5)}} \Big| \lesssim \frac{ |\ga| |E_1(f) -E_1(g) |  }{ k_{\max} } \,
\min \left\{ \frac{1}{|\Phi_f^{(5)}|}, \frac{1}{ |\Phi_g^{(5)}|}  \right\}. \label{2eq6}
\end{align}
{\upshape (ii)} If 
\begin{equation} \label{rel30}
|k_5|^{4/5} > 8^3 \max_{1 \le j \le 4} \{ |k_j| \}, \hspace{0.5cm} k_{1,2,3,4} \neq 0
\end{equation}
holds, then it follows that \eqref{2eq6} and
\begin{equation}
|\Phi_f^{(5)} | > |k_{1,2,3,4}| |k_5|^4. \label{2eq7}
\end{equation}
{\upshape (iii)} If 
\begin{equation} \label{rel14}
|k_5| > 8^3 \max_{1 \le j \le 4} \{|k_j| \}, \hspace{0.5cm} 16|k_{1,2}| < |k_{3,4}|
\end{equation}
or 
\begin{equation} \label{rel15}
|k_5| > 8^3 \max_{1 \le j \le 4} \{|k_j| \}, \hspace{0.5cm} 16|k_{3,4}| < |k_{1,2}|
\end{equation}
holds, then it follows that \eqref{2eq6} and 
\begin{equation} \label{2eq8}
|\Phi_f^{(5)}| \gtrsim \max \{ |k_{1,2}|,  |k_{3,4}| \} \, |k_5|^4. 
\end{equation}
\end{lem}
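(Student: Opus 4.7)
The plan is to reduce all three parts to a lower bound on the leading summand of the identity
\begin{equation*}
\Phi_f^{(5)}(k_1,\dots,k_5) = \Phi_f^{(2)}(k_{1,2,3,4},k_5) + \Phi_f^{(3)}(k_{1,2},k_3,k_4) + \Phi_f^{(2)}(k_1,k_2),
\end{equation*}
which appeared in the proof of Proposition \ref{prop_counter}. In every hypothesis of (i)--(iii) the frequency $|k_5|$ equals $k_{\max}$ and is much larger than $\max_{1 \le j\le 4}|k_j|$, so the first summand should dominate.

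The key computation is to expand
\begin{equation*}
\Phi_f^{(2)}(a,b) = -5i\,ab(a+b)(a^2+ab+b^2) + 6i\gamma E_1(f)\,ab(a+b)
\end{equation*}
and specialize to $a=k_{1,2,3,4}$, $b=k_5$. Using $|b|=k_{\max}>16|\gamma|E_1(f)$ to absorb the $E_1$-piece and $|a|\ll|b|$ to approximate $a^2+ab+b^2\approx b^2$ and $a+b\approx b$, one obtains a uniform lower bound
\begin{equation*}
|\Phi_f^{(2)}(k_{1,2,3,4},k_5)| \ge 5|k_{1,2,3,4}||k_5|^4 \bigl(1- O(|k_{1,2,3,4}|/|k_5|) - O(|\gamma|E_1(f)/|k_5|^2)\bigr).
\end{equation*}
The remaining two summands are bounded by $C\max_{1 \le j\le 4}|k_j|^5 + C\max_{1 \le j\le 4}|k_j|^3 \cdot |\gamma|E_1(f)$, which is negligible compared to $|k_{1,2,3,4}||k_5|^4$ in each regime. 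Concretely: in (i) one uses $|k_{1,2,3,4}|\ge(31/32)|k_4|$ (from $|k_5|>6|k_4|>96\max_{1 \le j\le 3}|k_j|$) together with $\max_{1 \le j \le 4}|k_j|\le |k_4|<|k_5|/6$ to conclude $|\Phi_f^{(5)}|>|k_4||k_5|^4$; in (ii) the integrality condition $k_{1,2,3,4}\neq 0$ gives $|k_{1,2,3,4}|\ge 1$, while $\max_{1 \le j \le 4}|k_j|^5\le |k_5|^4/8^{15}$, so the main term wins by a wide margin; in (iii), under \eqref{rel14} one has $|k_{1,2,3,4}|=|k_{1,2}+k_{3,4}|\ge (15/16)|k_{3,4}|\sim\max\{|k_{1,2}|,|k_{3,4}|\}$, and \eqref{rel15} is symmetric, yielding \eqref{2eq8}.

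For the difference estimate \eqref{2eq6}, I would write
\begin{equation*}
\Phi_f^{(5)}-\Phi_g^{(5)} = 2i\gamma(E_1(f)-E_1(g))\Bigl[k_{1,2,3,4,5}^3 - \sum_{j=1}^5 k_j^3\Bigr],
\end{equation*}
expand $k_{1,2,3,4,5}^3-k_5^3 = k_{1,2,3,4}(k_{1,2,3,4}^2+3k_{1,2,3,4}k_5+3k_5^2)$, and bound the residual $\sum_{j=1}^4 k_j^3$ by $C\max_{1 \le j\le 4}|k_j|^3$, so that the bracket is of size $O(|k_{1,2,3,4}||k_5|^2 + \max_{1 \le j\le 4}|k_j|^3)$. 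Dividing by the lower bound on $|\Phi_f^{(5)}\Phi_g^{(5)}|$ already obtained, one checks in each of (i), (ii), (iii) that the resulting ratio is $\lesssim |\gamma||E_1(f)-E_1(g)|/k_{\max}$, which is \eqref{2eq6}.

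The main obstacle is (ii): the separation hypothesis only forces $\max_{1 \le j\le 4}|k_j|\le|k_5|^{4/5}/8^3$, so a naive estimate of the error $\max_{1 \le j\le 4}|k_j|^5$ is only a constant factor below $|k_5|^4$; to close the bound one must exploit both the explicit numerical constant $8^{15}$ and the integer lower bound $|k_{1,2,3,4}|\ge 1$. A companion subtlety appears in the difference estimate for (ii), where the $\sum_{j \le 4}k_j^3$ contribution can reach $|k_5|^{12/5}$; this is saved by the fractional gain $|k_5|^{12/5}/(|k_{1,2,3,4}||k_5|^4)\le |k_5|^{-8/5}\le k_{\max}^{-1}$, which is exactly the factor needed for \eqref{2eq6}.
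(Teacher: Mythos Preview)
Your approach is sound and uses a legitimate alternative decomposition: you split
\[
\Phi_f^{(5)}=\Phi_f^{(2)}(k_{1,2,3,4},k_5)+\Phi_f^{(3)}(k_{1,2},k_3,k_4)+\Phi_f^{(2)}(k_1,k_2),
\]
whereas the paper uses $\Phi_f^{(5)}=\Phi_f^{(3)}(k_{1,2,3},k_4,k_5)+\Phi_f^{(3)}(k_1,k_2,k_3)$ and isolates the $E_1$-dependence in a single remainder $R^{(5)}$. Both choices make the leading term carry a factor $k_{1,2,3,4}$ of size $\sim|k_5|$ times a fourth power of $|k_5|$, and for parts (i), (ii) your crude error bound $C\max_{j\le4}|k_j|^5$ is indeed enough, as is your treatment of \eqref{2eq6}.

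There is, however, a genuine gap in your argument for part (iii). Your stated bound on the two tail summands, $C\max_{1\le j\le 4}|k_j|^5$, does \emph{not} beat $|k_{1,2,3,4}|\,|k_5|^4\sim|k_{3,4}|\,|k_5|^4$ in the regime \eqref{rel14}: take for instance $k_3=N$, $k_4=-N+1$, $k_1=k_2=0$, $|k_5|\sim 8^3N$. Then $|k_{3,4}|=1$, so the target lower bound is $\sim 8^{12}N^4$, while your error estimate gives $\sim N^5$, which is much larger for $N\gg 8^{12}$. The paper avoids this by observing that its error term $\Phi_0^{(3)}(k_1,k_2,k_3)$ carries an explicit factor $k_{1,2}$, which is small by hypothesis.

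The fix within your decomposition is immediate once you use the factorizations rather than the crude size: $\Phi_0^{(3)}(k_{1,2},k_3,k_4)$ contains the factor $k_{3,4}$, and $\Phi_0^{(2)}(k_1,k_2)$ contains the factor $k_{1,2}$, so in fact
\[
\bigl|\Phi_0^{(3)}(k_{1,2},k_3,k_4)\bigr|+\bigl|\Phi_0^{(2)}(k_1,k_2)\bigr|\lesssim\bigl(|k_{3,4}|+|k_{1,2}|\bigr)\max_{1\le j\le 4}|k_j|^4\lesssim |k_{3,4}|\,|k_5|^4/8^{12}
\]
under \eqref{rel14}, which closes the estimate. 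You should replace the blanket $\max_{j\le4}|k_j|^5$ bound by this refined one in part (iii); the rest of your outline, including the difference estimate \eqref{2eq6}, then goes through.
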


\begin{proof}
A simple calculation yields that 
\begin{align} \label{L1}
 &\Phi_f^{(5)} (k_1, k_2, k_3, k_4, k_5) = \Phi_{f}^{(3)} (k_{1,2,3}, k_4, k_5) + \Phi_{f}^{(3)} (k_1, k_2, k_3) \notag \\
& = \Phi_{0}^{(3)} (k_{1,2,3}, k_4, k_5) + \Phi_{0}^{(3)} (k_1, k_2, k_3) + \ga E_1(f) R^{(5)}(k_1, k_2, k_3, k_4, k_5)
\end{align}
where  
\begin{equation} \label{Rem1}
R^{(5)}(k_1, k_2, k_3, k_4, k_5): = 6 i ( k_{1,2,3,4} k_{1,2,3,5} k_{4,5}  + k_{1,2} k_{2,3} k_{1,3}  ).
\end{equation}
By \eqref{2eq2}, we have
\begin{align} \label{exc}
&\Phi_0^{(3)} (k_{1,2,3}, k_4, k_5) =- \frac{5}{2} i k_{1,,2,3,4} k_{1,2,3,5} k_{4,5} (k_{1,2,3,4}^2 + k_{1,2,3,5}^2 + k_{4,5}^2), \notag \\
& \Phi_0^{(3)} (k_1, k_2, k_3) = - \frac{5}{2} i k_{1,2} k_{2,3} k_{1,3} (k_{1,2}^2 + k_{2,3}^2 + k_{1,3}^2).
\end{align}
(i) By \eqref{rel3}, we easily check that 
\begin{align}
& |\Phi_0^{(3)} (k_{1,2,3}, k_4, k_5)| > 2 |k_4| |k_5|^4, \hspace{0.3cm} 
|\Phi_{0}^{(3)} (k_1, k_2, k_3)| \le 240 \max_{j=1,2,3} \{ |k_j| ^5\} < 8^{-6} |k_4| |k_5|^4, \notag \\
& |R^{(5)} (k_1k_2, k_3, k_4,  k_5)| < 12 |k_4| |k_5|^2 \label{L21}
\end{align}
and by $|k_5| > 16 \max \{ 1, |\ga| E_1(f) \}$
\begin{equation*}
|\ga| E_1(f) |R^{(5)} (k_1, k_2, k_3, k_4, k_5)| < 16^{-1} |k_4| |k_5|^2.
\end{equation*}
Thus, by \eqref{L1}, we get \eqref{2eq5}. Since
\begin{equation} \label{L22}
\frac{1}{\Phi_f^{(5)}}- \frac{1}{ \Phi_g^{(5)}}= \ga (E_1(g) - E_1(f)) \, \frac{ R^{(5)} (k_1, k_2, k_3, k_4, k_5) }{ \Phi_f^{(5)} \Phi_{g}^{(5)} }, 
\end{equation}
by \eqref{2eq5} and \eqref{L21}, we get \eqref{2eq6}. \\
(ii) By \eqref{rel30}, we can easily check that 
\begin{align}
& |\Phi_0^{(3)} (k_{1,2,3}, k_4, k_5) | > 2 |k_{1,2,3,4}| |k_5|^4, \hspace{0.3cm} 
|\Phi_0^{(3)} (k_1, k_2, k_3) | \le 8^{-12} |k_5|^4 \le 8^{-12} |k_{1,2,3,4}| |k_5|^4, \notag \\
& |R^{(5)} (k_1, k_2, k_3, k_4, k_5) | 
< 8 |k_{1,2,3,4}| |k_5|^2 + 8^{-7} |k_5|^3 \label{L23}
\end{align}
and by $|k_5| > 16 |\ga| E_1(f)$ 
\begin{equation*}
|\ga| E_1(f) |R^{(5)}(k_1, k_2, k_3, k_4, k_5)| \le 8^{-3} |k_{1,2,3,4}| |k_5|^4. 
\end{equation*}
Thus, by \eqref{L1}, we have \eqref{2eq7}. 
By  \eqref{2eq7}, \eqref{L22} and \eqref{L23}, we get \eqref{2eq6}. \\
(iii) Now, we prove 
\begin{equation} \label{L24}
|\Phi_f^{(5)}| > |k_{3,4}| |k_5|^4
\end{equation}
when \eqref{rel14} holds. 
By \eqref{rel14}, we can easily check that 
\begin{align}
& |\Phi_0^{(3)} (k_{1,2,3}, k_4, k_5) | > 2 |k_{3,4}| |k_5|^4, \notag \\
& |\Phi_0^{(3)} (k_1, k_2, k_3) | \le 120 |k_{1,2}| \max_{j=1,2,3,4} \{ |k_j|^4 \} \le 8^{-10} |k_{3,4}| |k_5|^4, \notag \\
& |R^{(5)} (k_1, k_2, k_3, k_4, k_5) | 
< 12 |k_{3,4}| |k_5|^2 \label{L25}
\end{align}
and by $|k_5| > 16 |\ga| E_1(f)$ 
\begin{equation*}
|\ga| E_1(f) |R^{(5)}(k_1, k_2, k_3, k_4, k_5)| \le 8^{-3} |k_{3,4}| |k_5|^4. 
\end{equation*}
Thus, by \eqref{L1}, we have \eqref{L24}. 
By \eqref{L22}, \eqref{L24} and \eqref{L25}, we get \eqref{2eq6}. 
In a similar manner as above, we obtain $|\Phi_f^{(5)}| \gtrsim |k_{1,2}| |k_5|^4$ and \eqref{2eq6} 
when \eqref{rel15} holds. 
\end{proof}

\begin{lem} \label{Le3}
Assume that $f, g \in L^2(\T)$ and $k_{\max} > 16 \max \{ 1, |\ga| E_1(f), |\ga| E_1(g) \}$. 
If either 
\begin{equation} \label{rel21}
|k_7| > 8^5 |k_6| > 16 \cdot 8^{5} \max_{1 \le j \le 5} \{ |k_j| \}
\end{equation}
or 
\begin{equation} \label{rel22}
|k_7|^{4/5} > 8^5 \max_{1 \le j \le  6} \{ |k_j| \} , \hspace{0.5cm} k_{1,2,3,4,5,6} \neq 0
\end{equation}
holds, then it follows that 
\begin{align}
& |\Phi_f^{(7)}| >|k_{1,2,3,4,5,6}| |k_7|^4, \label{C31} \\
& \left| \frac{1}{\Phi_f^{(7)}}-\frac{1}{\Phi_g^{(7)}}  \right| \lesssim \frac{ |\ga| |E_1(f)-E_1(g) | }{ k_{\max} } \,
 \min \left\{\frac{1}{ |\Phi_f^{(7)} | }, \, \frac{1}{ |\Phi_g^{(7)} | } \right\}. \label{C32}
\end{align}
\end{lem}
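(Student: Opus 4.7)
The plan is to imitate the proof of Lemma~\ref{Le2}, starting from the additive decomposition
\EQQS{
\Phi_f^{(7)}(k_1,\dots,k_7) = \Phi_f^{(3)}(k_{1,2,3,4,5},k_6,k_7) + \Phi_f^{(5)}(k_1,\dots,k_5),
}
which isolates the interaction of $k_7$ with $k_6$ inside a three-variable phase where the explicit factorization \eqref{2eq2} is available. Applying \eqref{2eq2} with $K_1:=k_{1,2,3,4,5}$, $K_2:=k_6$, $K_3:=k_7$, the relevant pairwise sums are $K_1+K_2=k_{1,2,3,4,5,6}$, $K_2+K_3=k_6+k_7$, $K_1+K_3=k_{1,2,3,4,5}+k_7$. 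Under \eqref{rel21} we have $|k_{1,2,3,4,5,6}|\sim |k_6|$, whereas under \eqref{rel22} we know only that $|k_{1,2,3,4,5,6}|\ge 1$; in both situations the assumption on $\max_{j\le 5}|k_j|$ (respectively $\max_{j\le 6}|k_j|$) gives $|k_6+k_7|,|k_{1,2,3,4,5}+k_7|\ge (1-O(8^{-5}))|k_7|$, so the quadratic factor $K_{1,2}^2+K_{2,3}^2+K_{1,3}^2$ exceeds $(3/2)|k_7|^2$.

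Next I would absorb the correction terms. The shift $-\frac{12}{5}\gamma E_1(f)$ inside the factorization of $\Phi_f^{(3)}$ is controlled by $|k_7|>16|\gamma|E_1(f)$ exactly as in \eqref{L21}--\eqref{L25}, contributing at most $O(|k_{1,2,3,4,5,6}||k_7|^3)$. For the second summand I would use the crude estimate $|\Phi_f^{(5)}(k_1,\dots,k_5)|\lesssim \max_{j\le 5}|k_j|^5+|\gamma|E_1(f)\max_{j\le 5}|k_j|^3$; under \eqref{rel21} this is $\lesssim |k_6|^5/16^5\ll |k_{1,2,3,4,5,6}||k_7|^4$ thanks to $|k_7|>8^5|k_6|$, while under \eqref{rel22} it is $\lesssim |k_7|^4/8^{25}\ll|k_{1,2,3,4,5,6}||k_7|^4$ since $|k_{1,2,3,4,5,6}|\ge 1$. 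Combining the three pieces yields \eqref{C31}.

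For \eqref{C32} I would exploit the identity
\EQQS{
\Phi_f^{(7)}-\Phi_g^{(7)} = 2i\gamma\bigl(E_1(f)-E_1(g)\bigr)R^{(7)},\qquad R^{(7)}:=k_{1,2,3,4,5,6,7}^3-\sum_{j=1}^7 k_j^3,
}
which follows directly from $\phi_f(k)=-ik^5+2i\gamma E_1(f)k^3$. Expanding $(k_{1,2,3,4,5,6}+k_7)^3$ gives $R^{(7)}=3k_{1,2,3,4,5,6}k_7 k_{1,2,3,4,5,6,7}+\bigl(k_{1,2,3,4,5,6}^3-\sum_{j=1}^6 k_j^3\bigr)$, whose first piece is bounded by $6|k_{1,2,3,4,5,6}||k_7|^2$ and whose second piece is bounded by $C\max_{j\le 6}|k_j|^3\lesssim|k_{1,2,3,4,5,6}||k_7|^3$ in both cases by the same arithmetic as above. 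Hence $|\Phi_f^{(7)}-\Phi_g^{(7)}|\lesssim |\gamma||E_1(f)-E_1(g)||k_{1,2,3,4,5,6}||k_7|^3$; dividing by $|\Phi_f^{(7)}\Phi_g^{(7)}|$ and invoking \eqref{C31} to extract one factor of $|k_7|=k_{\max}$ then produces \eqref{C32}.

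The main obstacle is the absence of a clean factorization of $\Phi_f^{(N)}$ for $N\ge 4$, as quantified in Proposition~\ref{prop_counter}; this is what forces the splitting above. The hypotheses \eqref{rel21} and \eqref{rel22} are precisely the regimes in which both the lower bound on the $\Phi_f^{(3)}$-piece and the crude upper bound on the leftover $\Phi_f^{(5)}$-piece are simultaneously usable.
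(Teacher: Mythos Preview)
Your proposal is correct and follows essentially the same approach as the paper. The only cosmetic difference is that the paper splits $\Phi_f^{(7)}$ into three $\Phi_0^{(3)}$ pieces plus a single remainder $\ga E_1(f)R^{(7)}$, whereas you stop at the coarser two-term splitting $\Phi_f^{(3)}(k_{1,\dots,5},k_6,k_7)+\Phi_f^{(5)}(k_1,\dots,k_5)$ and bound the latter crudely; both lead to the same lower bound \eqref{C31}, and your expansion of $R^{(7)}$ via $(k_{1,\dots,6}+k_7)^3$ is equivalent to the paper's triple-product form after the identity $(a+b+c)^3-a^3-b^3-c^3=3(a+b)(b+c)(c+a)$.
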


\begin{proof}
We notice that it follows that 
\begin{align}
\Phi_f^{(7)} &=\Phi_0^{(3)} (k_{1,2,3,4,5}, k_6, k_7)+ \Phi_0^{(3)}(k_{1,2,3}, k_4, k_5)+ \Phi_0^{(3)}(k_1, k_2, k_3)  \notag \\
& \hspace{0.5cm} + \ga E_1(f) R^{(7)}(k_1, k_2, k_3, k_4, k_5,k_6, k_7) \label{rel23}
\end{align}
where 
\EQQ{
& R^{(7)}(k_1, k_2, k_3,k_4, k_5, k_6, k_7) \\
& \hspace{1cm} = 6i (k_{1,2,3,4,5,6}k_{1,2,3,4,5,7} k_{6,7}+ k_{1,2,3,4} k_{1,2,3,5} k_{4,5}+k_{1,2} k_{2,3} k_{1,3} ).
} 
By \eqref{2eq2}, it follows that 
\begin{align*}
& \Phi_0^{(3)} (k_{1,2,3,4,5}. k_6, k_7)=- \frac{5}{2} i k_{1,2,3,4,5,6} k_{1,2,3,4,5,7} k_{6,7} (k_{1,2,3,4,5,6}^2 +k_{1,2,3,4,5,7}^2 +k_{6,7}^2), \\
& \Phi_{0}^{(3)} (k_{1,2,3},k_4, k_5)=-\frac{5}{2} i k_{1,2,3,4} k_{1,2,3,5} k_{4,5} (k_{1,2,3,4}^2 +k_{1,2,3,5}^2 + k_{4,5}^2), \\
& \Phi_{0}^{(3)} (k_1,k_2, k_3)= -\frac{5}{2} i k_{1,2} k_{2,3} k_{1,3} (k_{1,2}^2+k_{1,3}^2+k_{2,3}^2). 
\end{align*}
First, we suppose that \eqref{rel21} holds. 
By symmetry, we assume that $|k_1| \le |k_2| \le |k_3| \le |k_4| \le |k_5|$ holds. 
Then, by \eqref{rel21}, we can easily check that 
\begin{align}
& |\Phi_0^{(3)} (k_{1,2,3,4,5}, k_6, k_7)  |> 2 |k_6| |k_7|^4, \hspace{0.5cm}
|\Phi_0^{(3)} (k_{1,2,3}, k_4, k_5) | < 8^4 |k_5|^5 < 8^{-21} |k_6| |k_7|^4, \notag \\
& |\Phi_0^{(3)} (k_1, k_2, k_3) | < 240 |k_3|^5 < 8^{-22}  |k_6| |k_7|^4, \notag \\
& |R^{(7)} (k_1k_2, k_3, k_4, k_5, k_6, k_7) | < 12 |k_6| |k_7|^2 \label{LR1}
\end{align}
and, by $|k_7|>16  \max \{ 1, |\ga| E_1(f) \}$, 
\begin{equation*}
|\ga| E_1(f) |R^{(7)} (k_1, k_2, k_3, k_4,  k_5, k_6, k_7) |  < 8^{-5} |k_6| |k_7|^4. 
\end{equation*}  
Thus, by \eqref{rel23} and $|k_6|> 16 \max_{1 \le j \le 5} \{ |k_j| \}$, we have \eqref{C31}. Since 
\begin{equation} \label{LR2}
\frac{1}{\Phi_f^{(7)} }- \frac{1}{\Phi_g^{(7)}}= \ga (E_1(g)-E_1(f)) \,
\frac{ R^{(7)} (k_1, k_2, k_3, k_4, k_5, k_6, k_7)  }{ \Phi_f^{(7)} \Phi_g^{(7)} },
\end{equation}
by \eqref{C31} and \eqref{LR1}, we get \eqref{C32}.

Next, we suppose that \eqref{rel22} holds. 
By symmetry,  we assume that $|k_1| \le |k_2| \le |k_3| \le |k_4| \le |k_5| \le |k_6|$ holds. 
Then, by \eqref{rel22}, we can easily check that 
\begin{align}
& |\Phi_0^{(3)} (k_{1,2,3,4,5}, k_6, k_7)  |> 4 |k_{1,2,3,4,5,6}| |k_7|^4, \notag \\
& |\Phi_0^{(3)} (k_{1,2,3}, k_4, k_5) | < 8^4 |k_5|^5 < 8^{-21}  |k_7|^4 \leq  8^{-21} |k_{1,2,3,4,5,6}| |k_7|^4 , \notag \\
& |\Phi_0^{(3)} (k_1, k_2, k_3) | < 240 |k_3|^5 < 8^{-22}   |k_7|^4 \le 8^{-22} |k_{1,2,3,4,5,6}| |k_7|^4, \notag \\
& |R^{(7)} (k_1k_2, k_3, k_4, k_5, k_6, k_7) | < 8 |k_{1,2,3,5,6}| |k_7|^2 + 8^{-12} |k_7|^3 \label{LR3}
\end{align}
and, by $|k_7| > 16 |\ga|  E_1(f) $, 
\begin{equation*}
|\ga| E_1(f) |R^{(7)} (k_1, k_2, k_3, k_4, k_5,  k_6, k_7) | < 8^{-5} |k_{1,2,3,4,5,6}| |k_7|^4.
\end{equation*}
Thus, by \eqref{rel23}, we have \eqref{C31}. 
By \eqref{C31}, \eqref{LR2} and \eqref{LR3}, we get \eqref{C32}. 
\end{proof}

\begin{lem} \label{Le4}
Assume that $f,g \in  L^2 (\T)$ and $ k_{\max}> 16 \max \{  1, |\ga| E_1(f), |\ga| E_0(g) \}$. If 
\begin{equation} \label{L12}
8^3 \max \{ |k_1|, |k_2| \} \le |k_{3,4,5}|, \hspace{0.3cm} 16 |k_3|  < \min \{ |k_4| , |k_{4,5}| \}
\hspace{0.3cm} \text{and} \hspace{0.3cm} |k_4| \le |k_5| \le 8 |k_4|
\end{equation}
then, it follows that \eqref{2eq6} and
\begin{equation}
|\Phi_f^{(5)}| \gtrsim |k_{1,2,3,4,5}| |k_5|^4. \label{C42}
\end{equation}
\end{lem}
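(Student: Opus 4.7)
The strategy is parallel to Lemma~\ref{Le2}(iii): starting from the decomposition \eqref{L1}
\begin{align*}
\Phi_f^{(5)} = \Phi_0^{(3)}(k_{1,2,3}, k_4, k_5) + \Phi_0^{(3)}(k_1, k_2, k_3) + \ga E_1(f) R^{(5)}(k_1, \ldots, k_5),
\end{align*}
I plan to show that the first summand dominates under \eqref{L12}. First I would extract the scalar consequences of \eqref{L12}. The relation $|k_4|\le|k_5|\le 8|k_4|$ gives $|k_4|\sim|k_5|$ and $|k_{4,5}|\le 2|k_5|$; the inequality $16|k_3|<|k_{4,5}|$ forces $|k_{3,4,5}|\sim|k_{4,5}|$, and then $8^3\max\{|k_1|,|k_2|\}\le|k_{3,4,5}|$ produces the key refined bound
\begin{align*}
\max_{1\le j\le 3}|k_j| < \tfrac{1}{16}|k_{4,5}|,
\end{align*}
from which $|k_{1,2,3,4}|\sim|k_4|$, $|k_{1,2,3,5}|\sim|k_5|$ and $|k_{1,2,3,4,5}|\sim|k_{4,5}|$ follow.

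Plugging these estimates into \eqref{exc} and bounding the sum of squares below by $k_{1,2,3,5}^2\sim|k_5|^2$ yields
\begin{align*}
|\Phi_0^{(3)}(k_{1,2,3},k_4,k_5)| \gtrsim |k_4|\,|k_5|\,|k_{4,5}|\,|k_5|^2 \sim |k_{4,5}|\,|k_5|^4.
\end{align*}
The two error terms are then easily controlled. The refined bound yields $|\Phi_0^{(3)}(k_1,k_2,k_3)|\lesssim (|k_{4,5}|/16)^5$, which is $\ll|k_{4,5}||k_5|^4$ thanks to $|k_{4,5}|\le 2|k_5|$; the formula \eqref{Rem1} gives $|R^{(5)}|\lesssim|k_5|^2|k_{4,5}|$, so the hypothesis $k_{\max}>16|\ga|E_1(f)$ yields $|\ga|E_1(f)|R^{(5)}| \ll |k_{4,5}||k_5|^4$. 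Combining the three contributions and using $|k_{1,2,3,4,5}|\sim|k_{4,5}|$ then establishes \eqref{C42}. The bound \eqref{2eq6} follows from the identity \eqref{L22} together with $|R^{(5)}|/\max\{|\Phi_f^{(5)}|,|\Phi_g^{(5)}|\}\lesssim 1/|k_5|^2\le 1/k_{\max}$, which is immediate from the estimates above.

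The main subtlety, compared with Lemma~\ref{Le2}(iii), is that $|k_{4,5}|$ can be arbitrarily small relative to $|k_5|$, so every error bound must be expressed in terms of $|k_{4,5}|$ rather than the coarser $|k_5|$. This is precisely why \eqref{L12} demands $16|k_3|<|k_{4,5}|$ and $8^3\max\{|k_1|,|k_2|\}\le|k_{3,4,5}|$ (both tied to $|k_{4,5}|$ rather than $|k_5|$); without the refined bound $\max_{1\le j\le 3}|k_j|<|k_{4,5}|/16$ the cubic remainder $\Phi_0^{(3)}(k_1,k_2,k_3)$ would not be dominated by $|k_{4,5}||k_5|^4$, and the proof would fail.
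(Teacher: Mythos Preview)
Your proposal is correct and follows essentially the same route as the paper's proof: the same decomposition \eqref{L13}, the same lower bound $|\Phi_0^{(3)}(k_{1,2,3},k_4,k_5)|\gtrsim |k_{4,5}||k_4||k_5|^3$ (equivalent to your $|k_{4,5}||k_5|^4$ since $|k_4|\sim|k_5|$), the same control of $\Phi_0^{(3)}(k_1,k_2,k_3)$ via $\max_{j\le 3}|k_j|\lesssim|k_{4,5}|$, and the same treatment of $R^{(5)}$ and of \eqref{2eq6} via \eqref{L22}. Your added discussion of why the error bounds must be phrased in terms of $|k_{4,5}|$ rather than $|k_5|$ is a helpful remark that the paper leaves implicit.
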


\begin{proof}
A direct computation yields that 
\begin{align} \label{L13}
\Phi_f^{(5)}= \Phi_0^{(3)} (k_{1,2,3}, k_4, k_5)+ \Phi_0^{(3)} (k_1, k_2,k_3) + \ga E_1 (f) R^{(5)}(k_1, k_2, k_3, k_4, k_5)
\end{align}
where $R^{(5)} (k_1, k_2, k_3, k_4, k_5) $ is defined by \eqref{Rem1}. 
We notice that \eqref{exc} holds by \eqref{2eq2}. 
By \eqref{L12}, we easily check that 
\begin{align}
& |\Phi_0^{(3)} (k_{1,2,3}, k_4, k_5) | > \frac{5}{4} |k_{4,5}| |k_4| |k_5|^3, \notag \\
& |\Phi_0^{(3)} (k_1, k_2, k_3) | \le 240 \max_{j=1,2,3} \{ |k_j|^5 \} \le 8^{-4} |k_{4,5}| |k_4| |k_5|^3, \notag \\
& |R^{(5)} (k_1, k_2, k_3, k_4, k_5) | < 8 |k_{4,5}| |k_4| |k_5| \label{LR4}
\end{align}
and, by $|k_5| > 16 \max \{1, |\ga| E_1(f) \}$,
\begin{equation*}
|\ga| E_1(f) |R^{(5)} (k_1, k_2, k_3, k_4, k_5) | < \frac{1}{32} |k_{4,5}| |k_4| |k_5|^3. 
\end{equation*}
Thus, by \eqref{L13}, we have
\begin{equation*}
|\Phi_f^{(5)}|> \frac{19}{16} |k_{4,5}| |k_4| |k_5|^3 > \frac{1}{16} |k_{1,2,3,4,5}| |k_5|^4,
\end{equation*}
which implies \eqref{C42}. 
By \eqref{L22}, \eqref{C42} and \eqref{LR4}, we get \eqref{2eq6}. 
\end{proof}
\begin{rem} \label{rem_fac1}
It follows that 
\begin{equation*}
\Phi_0^{(5)}(k_1,, k_2, k_3, k_4, k_5)= -5 i k_{1,2,3,4} k_5^4+R_0(k_1, k_2, k_3, k_4, k_5)
\end{equation*}
where $R_0$ is a polynomial of degree $5$ and satisfies 
\begin{equation} \label{fac11}
|R_0(k_1, k_2, k_3, k_4, k_5)| \lesssim |k_{1,2,3,4}|^2 |k_5|^3
\end{equation}
when 
\begin{equation} \label{fac12}
|k_5|^{3/5} > 8^4 \max_{j=1,2,3,4} \{ |k_j| \}, \hspace{1cm} k_{1,2,3,4} \neq 0.
\end{equation}
In fact, by symmetry, we may assume $|k_1| \le |k_2| \le |k_3| \le |k_4|$. 
By direct computation, it follows that 
\begin{equation*}
\Phi_0^{(5)}(k_1, k_2, k_3, k_4, k_5)= \Phi_{0}^{(2)} (k_{1,2,3,4}, k_5)+ \Phi_{0}^{(3)} (k_{1,2}, k_3, k_4)
+\Phi_0^{(2)} (k_1, k_2)
\end{equation*}
and
\begin{align}
\Phi_0^{(2)} (k_{1,2,3,4}, k_5)& =-5ik_{1,2,3,4} k_5^4 -10i k_{1,2,3,4}^2 k_5^3 -10 i k_{1,2,3,4}^3 k_5^2 -5i k_{1,2,3,4}^4 k_5, \notag \\
\Phi_0^{(3)} (k_{1,2}, k_3, k_4)& =- \frac{5}{2} i k_{1,2,3} k_{1,2, 4} k_{3,4} (k_{1,2,3}^2 +k_{1,2,4}^2 + k_{3,4}^2),  \label{fac13} \\
\Phi_0^{(2)} (k_1,k_2) & = -\frac{5}{2} i k_{1} k_2 k_{1,2} (k_1^2+ k_2^2 +k_{1,2}^2). \label{fac14}
\end{align}
Thus, $R_0$ is equal to 
\begin{equation*} 
-10i k_{1,2,3,4}^2 k_5^3-10i k_{1,2,3,4}^3 k_5^2 -5i k_{1,2,3,4}^4 k_5+ \Phi_0^{(3)} (k_{1,2}, k_3, k_4) + \Phi_0^{(2)} (k_1, k_2). 
\end{equation*}
If \eqref{fac12} holds, by \eqref{fac13} and \eqref{fac14}, we have 
\begin{align*}
& |\Phi_{0}^{(3)} (k_{1,2}, k_3, k_4)| \le 990 |k_4|^5 \lesssim |k_5|^3 \le  |k_{1,2,3,4}|^2 |k_5|^3, \\
& | \Phi_0^{(2)} (k_1, k_2) | \le 30 |k_2|^5 \lesssim  |k_5|^3 \le  |k_{1,2,3,4}|^2 |k_5|^3, \\
& 10 |k_{1,2,3,4}^2 k_5^3 | + 10|k_{1,2,3,4}^3 k_5^{2}|+ 5 |k_{1,2,3,4}^4 k_5| \lesssim |k_{1,2,3,4}|^2 |k_3|^3,
\end{align*}
which leads that \eqref{fac11} holds. 
\end{rem}

\begin{defn}
For $f \in L^2(\T)$ and an $N$-multiplier $m^{(N)} (k_1, \dots, k_N)$, we define $N$-linear functional $\La_f^{(N)}$ by 
\begin{align*}
\La_f^{(N)} (m^{(N)}, \ha{v}_1, \dots, \ha{v}_N) (t,k)
= \sum_{k=k_{1,\dots, N}} e^{-t \Phi_{f}^{(N)} } m^{(N)} (k_1, \dots, k_N) \prod_{l=1}^N \ha{v}_l (k_l) 
\end{align*}
where $\ha{v}_1, \dots, \ha{v}_N$ are functions on $\Z$. 
$\La_f^{(N)} (m^{(N)}, \ha{v}, \dots, \ha{v} )$ may simply be written as $\La_f^{(N)} (m^{(N)}, \ha{v}) $. 
\end{defn}

\begin{defn}
We say an $N$-multiplier $m^{(N)}$ is symmetric if 
\begin{equation*}
m^{(N)}(k_1, \dots, k_N)= m^{(N)} (k_{\sigma(1)}, \dots, k_{ \sigma(N) })
\end{equation*}
for all $\sigma \in S_N$, 
the group of all permutations on $N$ objects. 
The symmetrization of an $N$-multiplier $m^{(N)}$ is the multiplier  
\begin{align*}
[m^{(N)} ]_{sym}^{(N)} (k_1, \dots, k_N):= \frac{1}{N!} \sum_{\sigma \in S_N} m^{(N)} (k_{\sigma(1)}, \dots, k_{ \sigma(N) }).
\end{align*}
We also use $\ti{m}^{(N)}:=[m^{(N)} ]_{sym}^{(N)}$ for short.
\end{defn}
\begin{defn}
We say an $N$-multiplier $m^{(N)}$ is symmetric with $(k_i, k_{i+1}, \dots, k_{j})$ if 
\begin{align*}
& m^{(N)} (k_1, \dots, k_{i-1}, k_{i}, k_{i+1}, \dots, k_{j}, k_{j+1}, \dots, k_{N}) \\
 & = m^{(N)}(k_1, \dots, k_{i-1}, k_{\tau(i)}, k_{\tau(i+1)}, \dots, k_{\tau(j)}, k_{j+1}, \dots, k_{N})
\end{align*}
for $1 \le i < j \le N$ and $\tau \in \ti{S}_{j-i+1}$ where $\ti{S}_{j-i+1}$ is the group of all permutations on $\{ i, i+1, \dots, j \}$. 
\end{defn}
\begin{defn}
We define $(N+j)$-extension operators of an $N$-multiplier $m^{(N)}$ with $j \in \N$ by 
\begin{align*}
[ m^{(N)}  ]_{ext1}^{(N+j)} (k_1, \dots, k_{N+j})& = m^{(N)} (k_1, \dots, k_{N-1}, k_{N, \dots, N+j}), \\
[ m^{(N)}  ]_{ext2}^{(N+j)} (k_1, \dots, k_{N+j})& = m^{(N)} (k_{j+1}, \dots, k_{j+N}). 
\end{align*}
\end{defn}
\begin{rem}\label{rem_sym}
For any multipliers $m_1, m_2$, it follows that
$$[m_1m_2]^{(N)}_{ext1}=[m_1]^{(N)}_{ext1}[m_2]^{(N)}_{ext1}, \ \ \ \ 
[m_1m_2]^{(N)}_{ext2}=[m_1]^{(N)}_{ext2}[m_2]^{(N)}_{ext2}.$$
\end{rem}

For $L>0$, we define some multipliers to restrict summation regions in the Fourier space as follows:
\begin{align*}
& \chi_{\le L}^{(2N+1)}:=
\begin{cases}
\dis 1, \hspace{0.2cm} \text{when} \hspace{0.2cm} \max_{1 \le j \le 2N+1}  \{  |k_j| \} \le L \\
0, \hspace{0.2cm} \text{otherwise}
\end{cases},
\hspace{0.3cm} 
\chi_{> L}^{(2N+1)}:=
\begin{cases}
\dis 1, \hspace{0.2cm} \text{when} \hspace{0.2cm} \max_{1 \le j \le 2N+1}  \{  |k_j| \} > L \\
0, \hspace{0.2cm} \text{otherwise}
\end{cases} \\ 
& \chi_{H1}^{(2N+1)}:=
\begin{cases}
\dis 1, \hspace{0.2cm} \text{when} \hspace{0.2cm} 8^{2N-1} \max_{1 \le l \le 2N} \{ |k_l|   \} < |k_{2N+1}| \\
0, \hspace{0.2cm} \text{otherwise}
\end{cases}, 
\end{align*}
with $N=1,2,3$ and 
\begin{align*}
& \chi_{H2,1}^{(3)}:=
\begin{cases}
\dis 1, \hspace{0.2cm} \text{when} \hspace{0.2cm} 16|k_1| < \min\{ |k_{2,3}|, |k_2|, |k_3|\}, 
\hspace{0.3cm} |k_2|/8 \le |k_3| \le 8 |k_2| \\
0, \hspace{0.2cm} \text{otherwise}
\end{cases}, \\
& \chi_{H2,2}^{(3)}:=
\begin{cases}
\dis 1, \hspace{0.2cm} \text{when} \hspace{0.2cm} |k_{2,3}| \le 16|k_1| < \min\{ |k_2|, |k_3| \}, 
\hspace{0.3cm} |k_2|/8 \le |k_3| \le 8 |k_2| \\
0, \hspace{0.2cm} \text{otherwise}
\end{cases}, 
\end{align*} 
$\chi_{H3}^{(3)}:=1-[3 \chi_{H1}^{(3)} ]_{sym}^{(3)}- [ 3\chi_{H2,1}^{(3)} ]_{sym}^{(3)}-[3 \chi_{H2,2}^{(3)}]_{sym}^{(3)}$. 
Note that $|k_1| \sim |k_2| \sim |k_3|$ for $(k_1, k_2, k_3) \in \supp \chi_{H3}^{(3)}$. 
We put
\begin{align*}
& \chi_{NR1}^{(3)}:=
\begin{cases}
\dis 1, \hspace{0.2cm} \text{when} \hspace{0.2cm} k_{1,2} k_{2,3} k_{1,3} \neq 0 \\
0, \hspace{0.2cm} \text{otherwise}
\end{cases},
\hspace{0.3cm}
\chi_{NR2}^{(5)}:=
\begin{cases}
\dis 1, \hspace{0.2cm} \text{when} \hspace{0.2cm} k_{1,2}k_{3,4} k_{1,2,3,4} \neq 0 \\
0, \hspace{0.2cm} \text{otherwise}
\end{cases},
\end{align*}
$\chi_{NR1}^{(5)}:=[\chi_{NR1}^{(3)}]_{ext1}^{(5)} [ \chi_{NR1}^{(3)} ]_{ext2}^{(5)}$ and 
\begin{align*}
& \chi_{R1}^{(2N+1)}:=
\begin{cases}
\dis 1, \hspace{0.2cm} \text{when} \hspace{0.2cm} k_{1,2,\dots, 2N}= 0 \\
0, \hspace{0.2cm} \text{otherwise}
\end{cases}
\end{align*}
with $N=1,2,3$ and 
\begin{align*}
& \chi_{R2}^{(5)}:=
\begin{cases}
\dis 1, \hspace{0.2cm} \text{when} \hspace{0.2cm} k_{1,2} k_{3,4} =0 \\
0, \hspace{0.2cm} \text{otherwise}
\end{cases},
\hspace{0.3cm}
\chi_{R3}^{(3)}:=
\begin{cases}
\dis 1, \hspace{0.2cm} \text{when} \hspace{0.2cm} k_1=-k_2=k_3 \\
0, \hspace{0.2cm} \text{otherwise}
\end{cases}, \\
& \chi_{R4}^{(5)}:=
\begin{cases}
\dis 1, \hspace{0.2cm} \text{when} \hspace{0.2cm}  |k_5|^{4/5} \le 8^3 \max_{1 \le j \le 4} \{ |k_j| \}, \hspace{0.3cm} 
\max_{1 \le j \le 4} \{ |k_j| \} \le 16  \text{sec}_{1 \le j \le 4} \{ |k_j|  \}, \\
\hspace{2cm} |k_{1,2}| /16 \le |k_{3,4}| \le 16|k_{1,2}| \\
0, \hspace{0.2cm} \text{otherwise}
\end{cases}, \\
& \chi_{R5}^{(2N+1)}:=
\begin{cases}
\dis 1, \hspace{0.2cm} \text{when} \hspace{0.2cm}  |k_{2N+1}|^{4/5} \le 8^{2N-1}  \max_{1\le j \le 2N} \{|k_j| \}, \\
\hspace{2cm} \max_{1 \le j \le 2N} \{ |k_j| \} \leq 16 \text{sec}_{1 \le j \le 2N} \{ |k_j| \} \\
0, \hspace{0.2cm} \text{otherwise}
\end{cases}
\end{align*}
with $N=2,3$. Note that $\chi_{NR2}^{(5)}= (1-\chi_{R1}^{(5)}) (1-\chi_{R2}^{(5)})$. We define 
\begin{align*} 
& \chi_{A1}^{(2N+1)}:=
\begin{cases}
1, \hspace{0.2cm} \text{when} \hspace{0.2cm}  16 \, \text{sec}_{1 \le j \le 2N} \{ |k_j| \} < \max_{1 \le j \le 2N} \{ |k_j| \}, \\
0, \hspace{0.2cm} \text{otherwise}
\end{cases}, 
\end{align*}
with $N=2,3$ and 
\begin{align*}
& \chi_{A2}^{(5)}:=
\begin{cases}
1, \hspace{0.2cm} \text{when} \hspace{0.2cm} 16 |k_{1,2}|< |k_{3,4}| \\
0, \hspace{0.2cm} \text{otherwise}
\end{cases}, \\
& \chi_{A3}^{(5)}:=
\begin{cases}
1, \hspace{0.2cm} \text{when} \hspace{0.2cm} 8^3 \max \{ |k_1|, |k_2| \} < |k_{3,4,5}| \\
0, \hspace{0.2cm} \text{otherwise}
\end{cases}, \\
& \chi_{A4}^{(7)}:=
\begin{cases}
1, \hspace{0.2cm} \text{when} \hspace{0.2cm}  8^5 \max_{1 \le j \le 6} \{ |k_j| \} < |k_7|^{3/5}  \\
0, \hspace{0.2cm} \text{otherwise}
\end{cases}. 
\end{align*}
Moreover, we define
\begin{align*}
& \chi_{NR(1,1)}^{(5)}:= [ \chi_{H1}^{(3)}]_{ext1}^{(5)} [ \chi_{H1}^{(3)} ]_{ext2}^{(5)}, 
\hspace{0.3cm}
\chi_{NR(1,2)}^{(5)}:= \chi_{H1}^{(3)} (k_{3,4,5}, k_2, k_1) \chi_{H1}^{(3)} (k_3,k_4, k_5), \\
& \chi_{NR(2,1)}^{(5)}:= [ \chi_{H1}^{(3)}]_{ext1}^{(5)} [ \chi_{H2,1}^{(3)} ]_{ext2}^{(5)}, 
 \hspace{0.3cm}
\chi_{NR(2,2)}^{(5)}:= \chi_{H1}^{(3)} (k_{3,4,5}, k_2, k_1) \chi_{H2,1}^{(3)} (k_3, k_4,  k_5), \\
& \chi_{NR(1,1)}^{(7)}:= [ \chi_{H1}^{(5)}]_{ext1}^{(7)} [ \chi_{H1}^{(3)} ]_{ext2}^{(7)}, 
\hspace{0.3cm}
\chi_{NR(1,2)}^{(7)}:= \chi_{H1}^{(5)} (k_{5,6,7}, k_2, k_3, k_4, k_1) \chi_{H1}^{(3)} (k_5, k_6,  k_7), \\
& \chi_{NR(2,1)}^{(7)}:=[ \chi_{H1}^{(5)}]_{ext1}^{(7)} [ \chi_{H2,1}^{(2)} ]_{ext2}^{(7)}, 
\hspace{0.3cm}
\chi_{NR(2,2)}^{(7)}:= \chi_{H1}^{(5)} (k_{5,6,7}, k_2, k_3, k_4, k_1) \chi_{H2,1}^{(3)} (k_5,k_6,  k_7).
\end{align*}

\begin{lem} \label{Le6} 
Let $3$-multipliers $m_1^{(3)}$, $m_2^{(3)}$ and a $5$-multiplier $m_1^{(5)}$ be symmetric 
and a $5$-multiplier $m_3^{(5)}$ be symmetric with $(k_1, k_2, k_3, k_4)$. 
Then, it follows that 
\begin{align}
&\big[ [ m_1^{(3)} [3 \chi_{H1}^{(3)}]_{sym}^{(3)} ]_{ext1}^{(5)} 
[m_2^{(3)} ([3\chi_{H1}^{(3)}]_{sym}^{(3)} + [3 \chi_{H2,1}^{(3)}]_{sym}^{(3)} ) ]_{ext2}^{(5)}  \big]_{sym}^{(5)}  \nonumber \\
 & = 3 \big[ [ m_1^{(3)} ]_{ext1}^{(5)} [m_2^{(3)}]_{ext2}^{(5)} \chi_{NR(1,1)}^{(5)} \big]_{sym}^{(5)} 
+  6 \big[ [ m_1^{(3)} ]_{ext1}^{(5)} [m_2^{(3)}]_{ext2}^{(5)}  \chi_{NR(1,2)}^{(5)} \big]_{sym}^{(5)} \nonumber \\
& + 3 \big[ [ m_1^{(3)} ]_{ext1}^{(5)} [m_2^{(3)}]_{ext2}^{(5)} \chi_{NR(2,1)}^{(5)} \big]_{sym}^{(5)} 
+  6 \big[ [ m_1^{(3)} ]_{ext1}^{(5)} [m_2^{(3)}]_{ext2}^{(5)} \chi_{NR(2,2)}^{5)} \big]_{sym}^{(5)} 
\label{le211} 
\end{align}
and
\begin{align}
&\big[ [ m_1^{(5)}  [ 5  m_3^{(5)} \chi_{H1}^{(5)} ]_{sym}^{(5)} ]_{ext1}^{(7)} 
[m_2^{(3)} ( [3\chi_{H1}^{(3)}]_{sym}^{(3)}+ [3 \chi_{H2,1}^{(3)}]_{sym}^{(3)})   ]_{ext2}^{(7)}  \big]_{sym}^{(7)} \nonumber \\
& =3 \big[ [ m_1^{(5)} m_3^{(5)} ]_{ext1}^{(7)} [m_2^{(3)}]_{ext2}^{(7)} \chi_{NR(1,1)}^{(7)}  \big]_{sym}^{(7)}  \nonumber \\
& + 12 \big[ [  [5 m_1^{(5)} m_{3}^{(5)} \chi_{H1}^{(5)} ]_{sym}^{(5)} ]_{ext1}^{(7)} [ m_2^{(3)} ]_{ext2}^{(7)} \chi_{NR(1,2)}^{(7)} \big]_{sym}^{(7)} \notag \\
& + 3 \big[ [m_1^{(5)} m_3^{(5)} ]_{ext1}^{(7)} [m_2^{(3)}]_{ext2}^{(7)} \chi_{NR(2,1)}^{(7)}  \big]_{sym}^{(7)} \notag \\
& + 12 \big[ [ [5 m_1^{(5)} m_3^{(5)} \chi_{H1}^{(5)} ]_{sym}^{(5)} ]_{ext1}^{(7)} [ m_2^{(3)} ]_{ext2}^{(7)} \chi_{NR(2,2)}^{(7)} \big]_{sym}^{(7)}.
\label{le212} 
\end{align}
\end{lem}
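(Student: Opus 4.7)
The plan is to reduce both identities to explicit combinatorial bookkeeping, after which the outer symmetrizations $[\cdot]_{sym}^{(2N+1)}$ collapse equivalent configurations into the canonical $\chi_{NR(i,j)}^{(2N+1)}$ forms on the right-hand side.

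First, I would expand each inner symmetrized cutoff as a sum of distinct ``orientations''. Using that $\chi_{H1}^{(3)}$ is symmetric in its first two arguments and $\chi_{H2,1}^{(3)}$ in its last two, a direct computation shows that $[3\chi_{H1}^{(3)}]_{sym}^{(3)}$ is a sum of three indicators, each singling out a specific ``heavy'' index in $\{k_1,k_2,k_3\}$, while $[3\chi_{H2,1}^{(3)}]_{sym}^{(3)}$ is a sum of three indicators singling out a specific ``light'' index. For \eqref{le212}, using that $m_3^{(5)}$ and $\chi_{H1}^{(5)}$ are both symmetric in their first four arguments, $[5m_3^{(5)}\chi_{H1}^{(5)}]_{sym}^{(5)}$ decomposes into five pieces, one for each choice of which of $k_1,\ldots,k_5$ occupies the distinguished ``position $5$'' slot.

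Next, I would apply Remark \ref{rem_sym} to pull each cutoff through its extension operator. The LHS of \eqref{le211} then becomes the outer symmetrization of $[m_1^{(3)}]_{ext1}^{(5)}[m_2^{(3)}]_{ext2}^{(5)}$ multiplied by a product of two cutoffs: the first selects a ``heavy'' variable in the ext1 triple $\{k_1,k_2,k_{3,4,5}\}$, and the second selects either a ``heavy'' variable (via $\chi_{H1}^{(3)}$) or a ``light'' variable (via $\chi_{H2,1}^{(3)}$) in the ext2 triple $\{k_3,k_4,k_5\}$. Expanding yields $3 \times 6 = 18$ elementary terms. Because $m_1^{(3)}$ and $m_2^{(3)}$ are symmetric, any two elementary terms related by a permutation of the five integration variables produce the same value after $[\cdot]_{sym}^{(5)}$; partitioning the $18$ terms into $S_5$-orbits under this equivalence yields exactly four classes of sizes $3, 6, 3, 6$, matching the four targets $\chi_{NR(i,j)}^{(5)}$.

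The argument for \eqref{le212} is parallel and produces $5 \times 6 = 30$ elementary terms. The five ext1 pieces split into one ``matched'' case, where the heavy index coincides with the combined variable $k_{5,6,7}$, causing the inner $\chi_{H1}^{(5)}$ to be absorbed by $\chi_{NR(\cdot,1)}^{(7)}$ and leaving the simpler form $[m_1^{(5)}m_3^{(5)}]_{ext1}^{(7)}$ on the right, and four ``unmatched'' cases where the heavy index is one of $k_1,\ldots,k_4$; in the latter, the factor $\chi_{H1}^{(5)}$ inside $\chi_{NR(\cdot,2)}^{(7)}$ selects exactly one of the five summands of $[5m_1^{(5)}m_3^{(5)}\chi_{H1}^{(5)}]_{sym}^{(5)}$ (the other four give incompatible heavy-variable constraints and vanish pointwise), so the wrapped form on the RHS is consistent. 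Counting then gives classes of sizes $3, 12, 3, 12$. The main obstacle throughout is the careful bookkeeping of which $S_{2N+1}$-permutation relates each pair of elementary terms, and in particular verifying that the partial symmetry of $m_3^{(5)}$ (symmetric only in its first four arguments) is preserved by these permutations; no nontrivial analytic estimate is required.
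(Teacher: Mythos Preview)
Your proposal is correct and follows essentially the same approach as the paper: expand the inner symmetrized cutoffs into their explicit orientations, use Remark~\ref{rem_sym} to factor through the extension operators, then exploit the partial symmetries of $[m_1^{(3)}]_{ext1}^{(5)}[m_2^{(3)}]_{ext2}^{(5)}$ (resp.\ of $[m_1^{(5)}]_{ext1}^{(7)}[m_2^{(3)}]_{ext2}^{(7)}[[5m_3^{(5)}\chi_{H1}^{(5)}]_{sym}^{(5)}]_{ext1}^{(7)}$) in $(k_1,k_2)$ and $(k_3,k_4,k_5)$ (resp.\ $(k_1,\dots,k_4)$ and $(k_5,k_6,k_7)$) to collapse the $18$ (resp.\ $30$) elementary terms into orbits of sizes $3,6,3,6$ (resp.\ $3,12,3,12$). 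The paper carries out exactly this bookkeeping, including your ``matched'' observation for the $\chi_{NR(\cdot,1)}^{(7)}$ terms (its identity $[[5m_3^{(5)}\chi_{H1}^{(5)}]_{sym}^{(5)}]_{ext1}^{(7)}\chi_{NR(1,1)}^{(7)}=[m_3^{(5)}]_{ext1}^{(7)}\chi_{NR(1,1)}^{(7)}$, which uses the partial symmetry of $m_3^{(5)}$ precisely where you flagged it).
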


\begin{proof}
First, we prove (\ref{le211}). 
It suffices to show 
\begin{align}
& \big[ [ m_1^{(3)} [3 \chi_{H1}^{(3)}]_{sym}^{(3)} ]_{ext1}^{(5)} 
[m_2^{(3)} [3 \chi_{H1}^{(3)} ]_{sym}^{(3)} ]_{ext2}^{(5)}  \big]_{sym}^{(5)} \nonumber \\
& =  3 \big[ [ m_1^{(3)} ]_{ext1}^{(5)} [m_2^{(3)}]_{ext2}^{(5)} \chi_{NR(1,1)}^{(5)} \big]_{sym}^{(5)} 
 +  6 \big[ [ m_1^{(3)} ]_{ext1}^{(5)} [m_2^{(3)}]_{ext2}^{(5)}  \chi_{NR(1,2)}^{(5)} \big]_{sym}^{(5)}, \label{le221} \\
& \big[ [ m_1^{(3)} [3 \chi_{H1}^{(3)}]_{sym}^{(3)} ]_{ext1}^{(5)} [m_2^{3)} [3 \chi_{H2,1}^{(3)}]_{sym}^{(3)} ]_{ext2}^{(5)} 
\big]_{sym}^{(5)} \nonumber \\
& = 3\big[ [ m_1^{(3)} ]_{ext1}^{(5)} [m_2^{(3)}]_{ext2}^{(5)} \chi_{NR(2,1)}^{(5)} \big]_{sym}^{(5)} 
+  6\big[ [ m_1^{(3)} ]_{ext1}^{(5)} [m_2^{(3)}]_{ext2}^{(5)}  \chi_{NR(2,2)}^{(5)} \big]_{sym}^{(5)}. \label{le222}
\end{align}
Put $M:= [ m_1^{(3)} [3 \chi_{H1}^{(3)}]_{sym}^{(3)} ]_{ext1}^{(5)} [m_2^{(3)} [3 \chi_{H1}^{(3)} ]_{sym}^{(3)} ]_{ext2}^{(5)}$. 
Then, by Remark~\ref{rem_sym}, 
\begin{align*}
M =&  [ m_1^{(3)}]_{ext1}^{(5)}  [m_2^{(3)} ]_{ext2}^{(5)} 
[ [ 3 \chi_{H1}^{(3)} ]_{sym}^{(3)} ]_{ext1}^{(5)}  [ [ 3 \chi_{H1}^{(3)} ]_{sym}^{(3)} ]_{ext2}^{(5)} \\
=&  [ m_1^{(3)}]_{ext1}^{(5)}  [m_2^{(3)} ]_{ext2}^{(5)} 
\{ \chi_{H1}^{(3)} (k_1, k_2, k_{3,4,5}) + \chi_{H1}^{(3)} (k_{3,4,5}, k_2, k_1) +\chi_{H1}^{(3)} (k_{3,4,5}, k_1, k_2) \} \\
& \times \{ \chi_{H1}^{(3)} (k_3, k_4, k_5) + \chi_{H1}^{(3)} (k_5, k_4, k_3)+ \chi_{H1}^{(3)} (k_3, k_5, k_4) \}.
\end{align*}
Since $m_1^{(3)}$ and $m_2^{(3)}$ are symmetric, 
$ [ m_1^{(3)}]_{ext1}^{(5)}  [m_2^{(3)} ]_{ext2}^{(5)}$ is symmetric with $(k_1, k_2)$ and $(k_3, k_4, k_5)$. 
Thus, $[M]_{sym}^{(5)}$ is equal to
\begin{align*}
& \big[ [ m_1^{(3)}]_{ext1}^{(5)}  [m_2^{(3)} ]_{ext2}^{(5)} 
\{ \chi_{H1}^{(3)} (k_1, k_2, k_{3,4,5}) + 2 \chi_{H1}^{(3)} (k_{3,4,5}, k_2, k_1 ) \} 3 \chi_{H1}^{(3)}(k_3, k_4, k_5) \big]_{sym}^{(5)} \\
& = 3 \big[ [ m_1^{(3)}]_{ext1}^{(5)}  [m_2^{(3)} ]_{ext2}^{(5)} \chi_{NR(1,1)}^{(5)} \big]_{sym}^{(5)}
+ 6 \big[ [ m_1^{(3)}]_{ext1}^{(3)}  [m_2^{(3)} ]_{ext2}^{(5)} \chi_{NR(1,2)}^{(5)} \big]_{sym}^{(5)},
\end{align*}
which implies (\ref{le221}). Similarly, we obtain (\ref{le222}). 

Next, we prove (\ref{le212}). 
It suffices to show 
\begin{align}
& \big[  [  m_1^{(5)} [5  m_3^{(5)} \chi_{H1}^{(5)}   ]_{sym}^{(3)} ]_{ext1}^{(7)} [m_2^{(3)} [3 \chi_{H1}^{(3)} ]_{sym}^{(3)} ]_{ext2}^{(7)}  
\big]_{sym}^{(7)} 
\nonumber \\
&=3 \big[ [ m_1^{(5)} m_3^{(5)} ]_{ext1}^{(7)} [m_2^{(3)} ]_{ext2}^{(7)} \chi_{NR(1,1)}^{(7)}  \big]_{sym}^{(7)} \nonumber \\
& + 12 \big[ [ [ 5 m_1^{(5)} m_3^{(5)} \chi_{H1}^{(5)} ]_{sym}^{(5)} ]_{ext1}^{(7)} [m_{2}^{(3)}]_{ext2}^{(7)}
 \chi_{NR(1,2)}^{(7)} \big]_{sym}^{(7)}, \label{le231} \\
& \big[ [m_1^{(5)}  [5  m_3^{(5)} \chi_{H1}^{(5)}   ]_{sym}^{(5)} ]_{ext1}^{(7)} [m_2^{(3)} [ 3m_{H2,1}^{(3)} ]_{sym}^{(3)} ]_{ext2}^{(7)}  \big]_{sym}^{(7)}  \nonumber \\
&= 3 \big[ [m_1^{(5)} m_3^{(5)} ]_{ext1}^{(7)} [m_2^{(3)} ]_{ext2}^{(7)} \chi_{NR(2,1)}^{(7)} \big]_{sym}^{(7)} \nonumber \\
& + 12 \big[ [ [5 m_1^{(5)} m_3^{(5)} \chi_{H1}^{(5)} ]_{sym}^{(5)} ]_{ext1}^{(7)} [m_{2}^{(3)}]_{ext2}^{(7)}
 \chi_{NR(2,2)}^{(7)} \big]_{sym}^{(7)}. \label{le232} 
\end{align}
We only show (\ref{le231}) since (\ref{le232}) follows in a similar manner. 
Put 
\begin{equation*}
M:=[ m_1^{(5)} [ 5 m_3^{(3)} \chi_{H1}^{(5)}   ]_{sym}^{(5)} ]_{ext1}^{(7)} [m_2^{(3)} [3 \chi_{H1}^{(3)} ]_{sym}^{(3)} ]_{ext2}^{(7)}.
\end{equation*}
Since $\supp \,  [5 m_3^{(5)} \chi_{H1}^{(5)}]_{sym}^{(5)} \subset \supp \, [5 \chi_{H1}^{(5)}]_{sym}^{(5)}$, by Remark~\ref{rem_sym}, 
\begin{align*} 
[ [5 m_3^{(5)} \chi_{H1}^{(5)}]_{sym}^{(5)} ]_{ext1}^{(7)} 
& =[ [5 m_{3}^{(5)} \chi_{H1}^{(5)}]_{sym}^{(5)} [ 5 \chi_{H1}^{(5)} ]_{sym}^{(5)}  ]_{ext1}^{(7)}  \nonumber \\
& =[ [5 m_{3}^{(5)} \chi_{H1}^{(5)}]_{sym}^{(5)} ]_{ext1}^{(7)} [ [ 5 \chi_{H1}^{(5)} ]_{sym}^{(5)}  ]_{ext1}^{(7)}.
\end{align*}
Thus, by Remark~\ref{rem_sym}, it follows that 
\begin{align*}
M & = [m_1^{(5)}]_{ext1}^{(7)}  [m_2^{(3)}]_{ext2}^{(7)}  [ [5 m_{3}^{(5)} \chi_{H1}^{(5)} ]_{sym}^{(5)} ]_{ext1}^{(7)} 
 [ [3\chi_{H1}^{(3)} ]_{sym}^{(3)} ]_{ext2}^{(7)} \\
& = [m_1^{(5)}]_{ext1}^{(7)}  [m_{2}^{(3)} ]_{ext2}^{(7)}  [ [5 m_{3}^{(5)} \chi_{H1}^{(5)} ]_{sym}^{(5)} ]_{ext1}^{(7)} 
[ [5 \chi_{H1}^{(5)}]_{sym}^{(5)} ]_{ext1}^{(7)} [ [3 \chi_{H1}^{(3)} ]_{sym}^{(3)} ]_{ext2}^{(7)} \\
& = [m_1^{(5)}]_{ext1}^{(7)} [m_{2}^{(2)} ]_{ext2}^{(7)}  [ [5 m_{3}^{(3)} \chi_{H1}^{(5)} ]_{sym}^{(5)} ]_{ext1}^{(7)} \\
& \times \{ \chi_{H1}^{(3)}(k_5, k_6, k_7) + \chi_{H1}^{(3)} (k_7, k_6, k_5) +\chi_{H1}^{(3)} (k_5, k_7, k_6)   \} \\
& \times \{ \chi_{H1}^{(5)} (k_1, k_2, k_3, k_4, k_{5,6,7}) +\chi_{H1}^{(5)} (k_{5,6,7}, k_2, k_3, k_4, k_1) 
+ \chi_{H1}^{(5)} (k_{5,6,7}, k_1, k_3, k_4 , k_2) \\
& \hspace{0.5cm}+ \chi_{H1}^{(5)}(k_{5,6,7}, k_2, k_1, k_4, k_3) + \chi_{H1}^{(5)} (k_{5,6,7}, k_2, k_3, k_1, k_4) \}.
\end{align*}
Since $m_1^{(5)}$ and $m_2^{(3)}$ are symmetric, 
$ [ m_1^{(5)}]_{ext1}^{(7)} [m_2^{(3)}]_{ext2}^{(7)}  [[ 5 m_3^{(5)} \chi_{H1}^{(5)} ]_{sym}^{(5)} ]_{ext1}^{(7)}$ 
is symmetric with $(k_1, k_2, k_3, k_4)$ and $(k_5, k_6,  k_7)$. 
Therefore, we have  
\begin{align} 
[M]_{sym}^{(7)} & = 
\big[  [ m_1^{(5)} ]_{ext1}^{(7)} [m_2^{(3)} ]_{ext2}^{(7)} [ [5 m_{3}^{(5)} \chi_{H1}^{(5)} ]_{sym}^{(5)} ]_{ext1}^{(7)} \nonumber \\
&  \times \{ \chi_{H1}^{(5)} (k_1, k_2, k_3,  k_4, k_{5,6,7})+  4\chi_{H1}^{(5)} (k_{5,6,7}, k_2, k_3, k_4, k_1) \}  \notag \\
& \times  3 \chi_{H1}^{(3)} (k_5, k_6, k_7) \big]_{sym}^{(7)} \nonumber \\
&= 3 \big[  [m_1^{(5)}]_{ext1}^{(7)}  [m_{2}^{(3)} ]_{ext2}^{(7)}  [ [5 m_{3}^{(5)} \chi_{H1}^{(5)} ]_{sym}^{(5)} ]_{ext1}^{(7)} 
\chi_{NR(1,1)}^{(7)} \big]_{sym}^{(7)}  \label{le234} \\
& + 12 \big[  [ m_1^{(5)} ]_{ext1}^{(7)} [m_{2}^{(3)} ]_{ext2}^{(7)} [ [5 m_{3}^{(5)} \chi_{H1}^{(5)} ]_{sym}^{(5)} ]_{ext1}^{(7)}
\chi_{NR(1,2)}^{(7)} ]_{sym}^{(7)}. \label{le235}   
\end{align}
Since $m_1^{(5)}$ is symmetric, by Remark~\ref{rem_sym}, 
\begin{equation*}
[ m_1^{(5)} ]_{ext1}^{(7)} [ [5 m_{3}^{(5)} \chi_{H1}^{(5)} ]_{sym}^{(5)} ]_{ext1}^{(7)} = 
[ m_1^{(5)} \, [5 m_{3}^{(5)} \chi_{H1}^{(5)} ]_{sym}^{(5)}]_{ext1}^{(7)}= 
[ [5 m_1^{(5)} m_{3}^{(5)} \chi_{H1}^{(5)} ]_{sym}^{(5)} ]_{ext1}^{(7)},
\end{equation*}
which leads that (\ref{le235}) is equal to 
\begin{equation*}
12\big[ [ [5 m_1^{(5)} m_3^{(5)} \chi_{H1}^{(5)} ]_{sym}^{(5)}  ]_{ext1}^{(7)}  [m_2^{(3)}]_{ext2}^{(7)} \chi_{NR(1,2)}^{(7)}  \big]_{sym}^{(7)}.
\end{equation*}
By $\chi_{NR(1,1)}^{(7)}= [\chi_{H1}^{(5)}]_{ext1}^{(7)} \chi_{NR(1,1)}^{(7)}$ and Remark~\ref{rem_sym}, 
\begin{align}
& [ [ 5m_{3}^{(5)} \chi_{H1}^{(5)} ]_{sym}^{(5)} ]_{ext1}^{(7)} \chi_{NR(1,1)}^{(7)}
= [ [ 5m_{3}^{(5)} \chi_{H1}^{(5)} ]_{sym}^{(5)} ]_{ext1}^{(7)} [\chi_{H1}^{(5)} ]_{ext1}^{(7)} \chi_{NR(1,1)}^{(7)} \notag \\
&  = [ [ 5m_{3}^{(5)} \chi_{H1}^{(5)} ]_{sym}^{(5)} \chi_{H1}^{(5)} ]_{ext1}^{(7)} \chi_{NR(1,1)}^{(7)}
= [m_3^{(5)} \chi_{H1}^{(5)} ]_{ext1}^{(7)} \chi_{NR(1,1)}^{(7)} \notag \\
&= [ m_3^{(5)} ]_{ext1}^{(7)} [\chi_{H1}^{(5)}]_{ext1}^{(7)} \chi_{NR(1,1)}^{(7)}
=[m_3^{(5)}]_{ext1}^{(7)} \chi_{NR(1,1)}^{(7)}. \label{le236}
\end{align}
Here we used that $m_3^{(5)}$ is symmetric with $(k_1, k_2, k_3, k_4)$ in the third equality. 
By \eqref{le236} and Remark~\ref{rem_sym}, \eqref{le234} is equal to 
\begin{equation*}
3\big[ [ m_1^{(5)} m_3^{(5)} ]_{ext1}^{(7)} [ m_2^{(3)} ]_{ext2}^{(7)} \chi_{NR(1,1)}^{(7)} \big]_{sym}^{(7)}.
\end{equation*}
Therefore, we obtain \eqref{le231}.
\end{proof}
Finally, we show variants of Sobolev's inequalities.
\begin{lem} \label{lem_nl2}
Let $s \ge 3/2$. 
Then, for any $\{ v_l \}_{l=1}^N \subset H^s(\T)$, we have
\begin{align} \label{nl11}
\Big\| \sum_{k=k_{1,\cdots, N}} \langle k_{\max}  \rangle^{3} \prod_{l=1}^N |\ha{v}_l(k_l)|  \Big\|_{l_{s-3}^2  } \lesssim \prod_{l=1}^N \| v_l \|_{H^s}. 
\end{align}
\end{lem}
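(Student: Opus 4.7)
The plan is to recognize \eqref{nl11} as a Sobolev multiplication estimate in disguise. First, I would bound $\langle k_{\max}\rangle^3 \le \sum_{l=1}^N \langle k_l\rangle^3$, so that by the triangle inequality and by symmetry in the $v_l$'s it suffices to control the model quantity
\[
h(k) := \sum_{k = k_{1,\ldots,N}} \langle k_1\rangle^3 \prod_{j=1}^N |\ha{v}_j(k_j)|
\]
in $l_{s-3}^2$. Introducing auxiliary functions $F_j$ defined by $\ha{F_j}(k):=|\ha{v}_j(k)|$, so that $\|F_j\|_{H^\sigma}=\|v_j\|_{H^\sigma}$ for every $\sigma$, the sum $h(k)$ is precisely the $k$-th Fourier coefficient of the pointwise product $(\langle D\rangle^3 F_1)\cdot F_2\cdots F_N$, where $\langle D\rangle$ denotes the Fourier multiplier with symbol $\langle k\rangle$. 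Hence
\[
\|h\|_{l_{s-3}^2} = \bigl\|(\langle D\rangle^3 F_1)\cdot F_2\cdots F_N\bigr\|_{H^{s-3}(\T)}.
\]

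Next I would combine the algebra property $\|F_2\cdots F_N\|_{H^s}\lesssim \prod_{j=2}^N \|F_j\|_{H^s}$, valid for $s>1/2$, with the Sobolev multiplication inequality
\[
\|fg\|_{H^{s-3}(\T)}\lesssim \|f\|_{H^{s-3}(\T)}\|g\|_{H^s(\T)},
\]
which on the one-dimensional torus holds precisely under the conditions $s\ge 3/2$ (which is exactly $(s-3)+s\ge 0$) and $s>1/2$. Applying this inequality with $f=\langle D\rangle^3 F_1$ and $g=F_2\cdots F_N$, and noting $\|\langle D\rangle^3 F_1\|_{H^{s-3}}=\|F_1\|_{H^s}=\|v_1\|_{H^s}$, produces the bound $\prod_{j=1}^N\|v_j\|_{H^s}$ claimed in \eqref{nl11}.

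The main obstacle is the multiplication inequality itself in the range $3/2\le s<3$, where the target index $s-3$ is negative. I would handle it by duality:
\[
\|fg\|_{H^{s-3}}=\sup_{\|h\|_{H^{3-s}}=1}|\langle f,gh\rangle|\le \|f\|_{H^{s-3}}\sup_{\|h\|_{H^{3-s}}=1}\|gh\|_{H^{3-s}},
\]
which reduces matters to the positive-index bound $\|gh\|_{H^{3-s}}\lesssim \|g\|_{H^s}\|h\|_{H^{3-s}}$. The latter follows from a Kato--Ponce-type estimate together with the Sobolev embedding $H^s(\T)\hookrightarrow L^\infty(\T)$, exploiting precisely the relation $3-s\le s$, that is $s\ge 3/2$. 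For $s\ge 3$ the target index $s-3\ge 0$ is non-negative, and the multiplication inequality is a direct consequence of Kato--Ponce, so no duality argument is needed.
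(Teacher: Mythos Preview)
Your argument is correct and takes a genuinely different route from the paper. The paper works directly on the Fourier side: after ordering $|k_1|\le\cdots\le|k_N|$ it splits into the two cases $|k_{1,\dots,N}|\sim|k_N|$ and $|k_{1,\dots,N}|\ll|k_N|$, in the latter exploiting $|k_{N-1}|\sim|k_N|$ and the inequality $\langle k\rangle^{2s-3}\lesssim\langle k_N\rangle^{2s-3}$ (valid precisely for $s\ge 3/2$), and closes each case with Young's inequality and Cauchy--Schwarz on convolutions. Your approach instead recognizes the estimate as a Sobolev product bound $H^{s-3}\cdot H^s\hookrightarrow H^{s-3}$ combined with the algebra property of $H^s$, and establishes the former by duality plus the positive-index estimate $\|gh\|_{H^{3-s}}\lesssim\|g\|_{H^s}\|h\|_{H^{3-s}}$. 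The paper's proof is fully self-contained and uses only elementary convolution inequalities, which fits the style of the surrounding lemmas; your proof is shorter and more conceptual, and it makes the threshold $s=3/2$ (exactly the condition $(s-3)+s\ge 0$ for the product rule) completely transparent. One minor remark: the Kato--Ponce form $\|gh\|_{H^{3-s}}\lesssim\|g\|_{L^\infty}\|h\|_{H^{3-s}}+\|g\|_{H^{3-s}}\|h\|_{L^\infty}$ does not immediately yield what you need when $3-s\le 1/2$, but the multiplication estimate $H^s\cdot H^{3-s}\hookrightarrow H^{3-s}$ (for $s>1/2$, $s\ge 3-s\ge 0$) is nonetheless standard and follows, for instance, from a paraproduct decomposition.
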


\begin{proof}
By symmetry, we assume $|k_1| \le |k_2| \le \dots \le |k_{N-1}| \le |k_N|$. 
First, we suppose that $|k_{1,\dots ,N}| \sim |k_N|$ holds. 
Since $\langle k_{1,\dots, N}  \rangle^{s-3} \langle k_{N} \rangle^3 \sim \langle k_N  \rangle^s $, 
by the Young inequality and the Schwarz inequality, the left hand side of \eqref{nl11} is bounded by
\begin{equation*}
\big\| |\ha{v}_1|* \dots * |\ha{v}_{N-1}|* \langle \cdot \rangle^s |\ha{v}_N|  \big\|_{l^2} 
\lesssim \prod_{l=1}^{N-1} \| \ha{v}_l \|_{l^1} \|  \langle \cdot \rangle^s |\ha{v}_N| \|_{l^2} \lesssim \prod_{l=1}^N \| v_l \|_{H^s}.
\end{equation*}  
Next, we suppose that $|k_{1,\dots, N}|  \ll |k_N|$ holds. Then, it follows that $|k_{N-1}| \sim |k_N|$, which leads that 
\begin{equation*}
\langle k_{1,\dots, N}  \rangle^{s-3} \langle k_N \rangle^3
\lesssim \langle k_{1,\dots, N} \rangle^{s-3} \langle k_{N-1} \rangle^s \langle k_N \rangle^s \langle k_N \rangle^{-2s+3} 
\lesssim \langle k_{1,\dots, N} \rangle^{-s} \langle k_{N-1} \rangle^s \langle k_N \rangle^s.  
\end{equation*}
Thus, by the Young inequality and the H\"{o}lder inequality, the left hand side of \eqref{nl11} is bounded by
\begin{align*} 
& \big\| \langle \cdot \rangle^{-s} \big( |\ha{v}_1|* \dots * |\ha{v}_{N-2}|* \langle \cdot  \rangle^s |\ha{v}_{N-1}| * \langle \cdot  \rangle^s |\ha{v}_N|  \big)  \big\|_{l^2} \\
& \lesssim \prod_{l=1}^{N-2} \| \ha{v}_l \|_{l^1} \| \langle \cdot \rangle^s| \ha{v}_{N-1}| * \langle \cdot \rangle^s |\ha{v}_N| \|_{l^{\infty}} 
\lesssim \prod_{l=1}^N \| v_l \|_{H^s}.
\end{align*}
\end{proof}
\begin{lem} \label{lem_nl3} 
Let $s >1$ and $i \in \{ 1, \dots, N-2 \}$. Then, for any $m \in \{ 1, \dots, N\}$, we have
\begin{align} \label{nl41}
\Big\| \sum_{k=k_{1,\dots, N}} \langle k_{1, \dots, N}  \rangle^{-1} \langle k_{\max} \rangle^{-2} \prod_{l=1}^N |\ha{v}_l (k_l) | \Big\|_{l_{s}^{2}} 
\lesssim \| v_m \|_{H^{s-3}} \prod_{l  \in \{ 1, \dots , N \} \setminus \{ m \} } \| v_l \|_{H^s}
\end{align}
and 
\begin{align} \label{nl42}
& \Big\| \sum_{k=k_{1, \dots, N}} \big[ \langle k_{i, i+1} \rangle^{-1} \langle k_i  \rangle \langle k_{i+1} \rangle \langle k_N \rangle^{-3} 
\, \chi_{H1}^{(N)}  \big]_{sym}^{(N)} \, \prod_{l=1}^N |\ha{v}_l (k_l)|  \Big\|_{l_s^2}  \notag  \\
& \hspace{1cm} \lesssim   \| v_m \|_{ H^{s-3} } 
\prod_{l  \in \{ 1, \dots , N \} \setminus \{ m \} } \| v_l \|_{H^s} 
\end{align}
\end{lem}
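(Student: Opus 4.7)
Both inequalities are proved along the same lines: pass to a physical-space representation on $\T$ after reducing to non-negative Fourier coefficients, then combine H\"older's inequality with the 1-D Sobolev product estimate on $\T$. For every $l$, set $\tilde v_l:=\mathcal F^{-1}|\hat v_l|\in H^s(\T)$; this preserves the $H^s$-norm, so I may assume $\hat v_l\ge 0$ throughout, and the multilinear sums become Fourier transforms of ordinary products, reducible by Plancherel to an $L^2(\T)$-norm.

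For \eqref{nl41}, the constraint $k=k_{1,\dots,N}$ yields $\langle k\rangle\lesssim\langle k_{\max}\rangle$, so for $s>1$ the $\langle k\rangle^s$-weighted multiplier is dominated by $\langle k_{\max}\rangle^{s-3}$. Split the sum by which index $m^*$ attains the maximum. When $m^*=m$, place $\langle k_m\rangle^{s-3}\hat v_m$ in $l^2$ (yielding $\|v_m\|_{H^{s-3}}$) and the remaining $\hat v_l$ in $l^1$ via $\|\hat v_l\|_{l^1}\lesssim\|v_l\|_{H^s}$ (Cauchy--Schwarz, $s>1/2$). When $m^*\ne m$ and $s\le 3$, the non-positive exponent together with $|k_{m^*}|\ge|k_m|$ gives $\langle k_{m^*}\rangle^{s-3}\le\langle k_m\rangle^{s-3}$, reducing to the previous case. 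When $m^*\ne m$ and $s>3$, put $\hat v_m$ in $l^2$ (so $\|v_m\|_{L^2}\le\|v_m\|_{H^{s-3}}$) and absorb $\langle k_{m^*}\rangle^{s-3}\hat v_{m^*}$ in $l^1$, bounded by $\|v_{m^*}\|_{H^s}$ since $s-3<s-1/2$.

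For \eqref{nl42}, the symmetrisation $[\,\cdot\,]_{sym}^{(N)}$ reduces, up to a combinatorial constant and relabelling, to the unsymmetrised multiplier under $\chi_{H1}^{(N)}$: then $|k_N|$ dominates and $\langle k_{1,\dots,N}\rangle\sim\langle k_N\rangle$, so the $\langle k\rangle^s$-weighted multiplier is at most
\[
\frac{\langle k_i\rangle\langle k_{i+1}\rangle\langle k_N\rangle^{s-3}}{\langle k_{i,i+1}\rangle}.
\]
Setting $u_j:=\langle\nabla\rangle v_j$ and using $\hat v_l\ge 0$, the sum is (by Plancherel) the $L^2(\T)$-norm of $\langle\nabla\rangle^{-1}(u_iu_{i+1})\cdot\langle\nabla\rangle^{s-3}v_N\cdot\prod_{l\ne i,i+1,N}v_l$. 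For $m=N$, H\"older with $\langle\nabla\rangle^{s-3}v_N$ in $L^2$ and every other factor in $L^\infty$, the 1-D product estimate $\|fg\|_{H^{-1}}\lesssim\|f\|_{H^{s-1}}\|g\|_{H^{s-1}}$ on $\T$ (valid for $s>1$, since the admissibility conditions $2(s-1)\ge 0$ and $-1\le 2(s-1)-1/2$ both hold), and $H^s\hookrightarrow L^\infty$ for $s>1/2$, close the argument. For $m\ne N$ with $m\notin\{i,i+1\}$, I swap the $\langle\cdot\rangle^{s-3}$ weight exactly as in \eqref{nl41}. For $m\in\{i,i+1\}$, I factor $\langle k_m\rangle=\langle k_m\rangle^{s-3}\langle k_m\rangle^{4-s}$ and exploit $\langle k_m\rangle^{4-s}\le\langle k_N\rangle^{4-s}$ (valid on $\chi_{H1}^{(N)}$ for $s\le 4$), so that the multiplier is bounded by $\frac{\langle k_m\rangle^{s-3}\langle k_{i+1}\rangle\langle k_N\rangle}{\langle k_{m,i+1}\rangle}$; then I rearrange H\"older by placing $\langle\nabla\rangle v_N$ in $L^2$ and the remaining $\langle\nabla\rangle^{-1}$-factor in $L^\infty$ through $H^{1/2+\varepsilon}\hookrightarrow L^\infty$, using the asymmetric product estimate $\|fg\|_{H^{-1/2+\varepsilon}}\lesssim\|f\|_{L^2}\|g\|_{H^{s-1}}$ which holds for $s>1$ and small $\varepsilon$.

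\textbf{Main obstacle.} The delicate step is verifying the 1-D Sobolev product estimates on $\T$ at the boundary of their admissible range when $s$ is only slightly larger than $1$: the index triples $(s-1,s-1,-1)$ and $(0,s-1,-1/2+\varepsilon)$ sit close to the endpoints of the standard Kato--Ponce-type admissibility region, so $\varepsilon$ has to be chosen depending on $s-1$. Once these two product estimates are in hand, the rest (Plancherel, H\"older, $H^s\hookrightarrow L^\infty$, and the weight-swapping lemmas) is routine.
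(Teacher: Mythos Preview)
Your overall strategy is sound and close in spirit to the paper's, but there is one concrete mistake and the execution is considerably more laborious than necessary.

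\textbf{The error.} In your treatment of \eqref{nl42} for $m=N$ you place $\langle\nabla\rangle^{-1}(u_iu_{i+1})$ in $L^\infty$ and then invoke the product estimate $\|fg\|_{H^{-1}}\lesssim\|f\|_{H^{s-1}}\|g\|_{H^{s-1}}$. But $\|\langle\nabla\rangle^{-1}h\|_{L^\infty}$ is \emph{not} controlled by $\|h\|_{H^{-1}}$: the Sobolev embedding $H^{1/2+\varepsilon}\hookrightarrow L^\infty$ only gives $\|\langle\nabla\rangle^{-1}h\|_{L^\infty}\lesssim\|h\|_{H^{-1/2+\varepsilon}}$, so you need the product to land in $H^{-1/2+\varepsilon}$, not $H^{-1}$. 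The corrected estimate $\|fg\|_{H^{-1/2+\varepsilon}}\lesssim\|f\|_{H^{s-1}}\|g\|_{H^{s-1}}$ does hold for $s>1$ with $\varepsilon$ chosen small enough depending on $s-1$ (the condition is $-1/2+\varepsilon<2(s-1)-1/2$), so the argument is salvageable. You also silently restrict to $s\le 4$ in the case $m\in\{i,i+1\}$ (the step $\langle k_m\rangle^{4-s}\le\langle k_N\rangle^{4-s}$ needs $4-s\ge 0$); the range $s>4$ must be handled separately.

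\textbf{Comparison with the paper.} The paper avoids all of your case analysis on $m$, on which index realises $k_{\max}$, and on whether $s\lessgtr 3$ or $s\lessgtr 4$, by a single observation: since $|k_m|\le k_{\max}$ always, one has $\langle k_{\max}\rangle^{-3}\le\langle k_m\rangle^{-3}$ uniformly. Absorbing $\langle k_m\rangle^{-3}$ into $v_m$ (i.e.\ setting $w_m=\langle\nabla\rangle^{-3}v_m$, $w_l=v_l$ otherwise) reduces both \eqref{nl41} and \eqref{nl42} to \emph{symmetric} estimates with all inputs in $H^s$, namely
\[
\Big\|\sum_{k=k_{1,\dots,N}}\langle k\rangle^{s-1}\langle k_{\max}\rangle\prod_l|\hat w_l|\Big\|_{l^2}\lesssim\prod_l\|w_l\|_{H^s},
\qquad
\Big\|\sum_{k=k_{1,\dots,N}}\langle k_{i,i+1}\rangle^{-1}\langle k_i\rangle\langle k_{i+1}\rangle\langle k_N\rangle^{s}\chi_{H1}^{(N)}\prod_l|\hat w_l|\Big\|_{l^2}\lesssim\prod_l\|w_l\|_{H^s}.
\]
These are dispatched by Young's inequality for convolutions directly in Fourier: for the second one, Young gives
\[
\prod_{l\notin\{i,i+1,N\}}\|\hat w_l\|_{l^1}\cdot\big\|\langle\cdot\rangle^{-1}\big(\langle\cdot\rangle|\hat w_i|*\langle\cdot\rangle|\hat w_{i+1}|\big)\big\|_{l^1}\cdot\|\langle\cdot\rangle^s\hat w_N\|_{l^2},
\]
and the middle factor is bounded by $\|w_i\|_{H^{1+}}\|w_{i+1}\|_{H^{1+}}$ via H\"older in $l^p$. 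No physical-space product rules are needed, and $s>1$ enters only through $\|\hat w_l\|_{l^1}\lesssim\|w_l\|_{H^{1/2+}}\le\|w_l\|_{H^s}$ and $1+<s$. Your route works once patched, but the paper's weight-swap is both shorter and uniform in $s$ and $m$.
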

\begin{proof}
Firstly, we prove (\ref{nl41}). It suffices to show 
\begin{equation} \label{nl410}
\Big\| \sum_{k=k_{1, \dots, N}} \langle k_{1, \dots, N}  \rangle^{s-1} \langle k_{\max} \rangle \, \prod_{l=1}^N |\ha{v}_l (k_l)| \Big\|_{l^2}
\lesssim \prod_{l=1}^N \| v_l \|_{H^s}
\end{equation}
for any $\{ v_l \}_{l=1}^N \subset H^s(\T)$. 
In a similar way to the proof of \eqref{nl11}, by the H\"{o}lder and Young inequalities, 
we obtain \eqref{nl410} for $s>1/2$. 

Secondly, we prove \eqref{nl42}. It suffices to show 
\begin{equation} \label{nl420}
\Big\| \sum_{k=k_{1, \dots, N}} \langle k_{i, i+1}  \rangle^{-1} \langle k_i \rangle \langle k_{i+1} \rangle \langle k_N \rangle^s 
\, \chi_{H1}^{(N)} \prod_{l=1}^N |\ha{v}_l (k_l)|   \Big\|_{l^2} \lesssim \prod_{l=1}^N \| v_l \|_{H^s}
\end{equation}
for any $\{ v_l \}_{l=1}^N \subset H^s(\T)$. 
By the Young inequality and the H\"{o}lder inequality, the left hand side of \eqref{nl420} is bounded by 
\begin{align*}
& \prod_{l \in \{1 , \dots, N-1 \} \setminus \{ i, i+1 \} } \| \ha{v}_l \|_{l^1} 
\| \langle \cdot \rangle^{-1} ( \langle \cdot \rangle |\ha{v}_i| * \langle \cdot \rangle |\ha{v}_{i+1}| )  \|_{l^1} 
\| \langle \cdot \rangle^s |\ha{v}_N| \|_{l^2} \\
& \lesssim \prod_{l \in \{1 , \dots, N-1 \} \setminus \{ i, i+1 \} } \| v_l \|_{H^{1/2+}} \| v_i \|_{H^{1+}} \| v_{i+1} \|_{H^{1+}} 
\| v_N \|_{H^s}, 
\end{align*}
which implies that \eqref{nl420} holds.
\end{proof}
\begin{lem} \label{Le7}
Let  $s \ge 3/2$, $i \in \{ 2, \dots, N \}$ and an $N$-multiplier $m^{(N)}$ satisfy 
\begin{equation} \label{sb11}
\langle k_{1, \dots, N} \rangle^{s} |m^{(N)} (k_1, \cdots, k_N)| \lesssim \langle k_1 \rangle^{s-1/2+1/2i} \prod_{l=2}^i \langle k_l \rangle^{1+1/2i}.
\end{equation}
Then, for any $\{ v_l \}_{l=1}^N \subset H^s(\T)$, it follows that 
\begin{equation} \label{sb12}
\Big\| \sum_{k=k_{1, \dots, N}} |m^{(N)} (k_1, \dots, k_N) | \, \prod_{l=1}^N |\ha{v}_l (k_l)|  \Big\|_{l_s^2} \lesssim \prod_{l=1}^N \| v_l \|_{H^s}
\end{equation}
\end{lem}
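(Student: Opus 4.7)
The plan is to absorb the weight $\LR{k}^s$ via the hypothesis \eqref{sb11} and then reduce to an $L^2(\T)$ estimate for a product of functions. By \eqref{sb11}, the left hand side of \eqref{sb12} is dominated by
\begin{equation*}
\Big\| \sum_{k = k_{1,\dots,N}} \LR{k_1}^{s-1/2+1/(2i)} \prod_{l=2}^i \LR{k_l}^{1+1/(2i)} \prod_{l=1}^N |\ha{v}_l(k_l)| \Big\|_{l^2}.
\end{equation*}
For each $l$, I would introduce the auxiliary function $U_l$ on $\T$ defined by $\ha{U}_l(k):=|\ha{v}_l(k)|$, so that $\| U_l \|_{H^\sigma} = \| v_l \|_{H^\sigma}$ for every $\sigma \in \R$. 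Denoting by $\LR{\p_x}^{\al}$ the Fourier multiplier with symbol $\LR{k}^\al$, Plancherel's identity shows that the displayed expression equals
\begin{equation*}
\big\| \LR{\p_x}^{s-1/2+1/(2i)} U_1 \cdot \prod_{l=2}^i \LR{\p_x}^{1+1/(2i)} U_l \cdot \prod_{l=i+1}^N U_l \big\|_{L^2(\T)}.
\end{equation*}

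Next, I would apply the $N$-fold H\"older inequality with the choice $p_1 = p_2 = \cdots = p_i = 2i$ and $p_{i+1} = \cdots = p_N = \infty$, which satisfies $\sum_{l=1}^N 1/p_l = i/(2i) = 1/2$. The critical one dimensional Sobolev embedding $H^{1/2 - 1/(2i)}(\T) \hookrightarrow L^{2i}(\T)$ then gives
\begin{equation*}
\| \LR{\p_x}^{s-1/2+1/(2i)} U_1 \|_{L^{2i}} \lesssim \| U_1 \|_{H^s}, \qquad \| \LR{\p_x}^{1 + 1/(2i)} U_l \|_{L^{2i}} \lesssim \| U_l \|_{H^{3/2}}
\end{equation*}
for $l = 2, \dots, i$, because the derivative orders add up to exactly $s$ and $3/2$ respectively. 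The remaining factors are controlled by $H^s(\T) \hookrightarrow L^\infty(\T)$ for $s \ge 3/2 > 1/2$, yielding $\| U_l \|_{L^\infty} \lesssim \| U_l \|_{H^s}$ for $l = i+1, \dots, N$. Combining these bounds and using $\| U_l \|_{H^s} = \| v_l \|_{H^s}$ together with $\| U_l \|_{H^{3/2}} \le \| U_l \|_{H^s}$ for $s \ge 3/2$ yields \eqref{sb12}.

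The delicate point is the balance of exponents, which I expect to be the only real obstacle. When $s = 3/2$ and $i = N$, the constraint $\sum 1/p_l = 1/2$ forces $p_l = 2N$ for every $l$, and every application of Sobolev embedding sits exactly at the endpoint $H^{(N-1)/(2N)}(\T) \hookrightarrow L^{2N}(\T)$. This is precisely why the extra $1/(2i)$ derivative budget in \eqref{sb11} is tuned as it is and why the lemma requires $s \ge 3/2$. For $s > 3/2$ or $i < N$ there is strict slack in the scaling, so the argument admits some flexibility in the choice of the $p_l$.
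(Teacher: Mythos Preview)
Your proof is correct and follows essentially the same route as the paper's: both reduce via \eqref{sb11} to an $l^2$ convolution, pass to physical space by Plancherel, and control the first $i$ factors using H\"older with exponent $2i$ together with the endpoint embedding $H^{1/2-1/(2i)}(\T)\hookrightarrow L^{2i}(\T)$. The only cosmetic difference is that the paper peels off the last $N-i$ factors in Fourier space via Young's inequality and $\|\ha{v}_l\|_{l^1}\lesssim\|v_l\|_{H^{1/2+}}$, whereas you keep them in physical space and use $L^\infty$; since $\|U_l\|_{L^\infty}\le\|\ha{U}_l\|_{l^1}$, these are equivalent.
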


\begin{proof}
By \eqref{sb11}, the Young and the Schwarz inequalities, the left hand side of \eqref{sb12} is bounded by 
\begin{align} \label{sb13}
& \big\|  \langle \cdot  \rangle^{s-1/2+1/2i} |\ha{v}_1|* \langle \cdot \rangle^{1+1/2i} |\ha{v}_2|* \dots 
* \langle \cdot \rangle^{1+1/2i} |\ha{v}_i|* |\ha{v}_{i+1}|* \dots * |\ha{v}_N|  \big\|_{l^2} \notag \\
& \lesssim \prod_{j=i+1}^N \| v_j \|_{H^{1/2+}}
\big\|  \langle \cdot  \rangle^{s-1/2+1/2i} |\ha{v}_1|* \langle \cdot \rangle^{1+1/2i} |\ha{v}_2|
* \dots * \langle \cdot \rangle^{1+1/2i} |\ha{v}_i| \big\|_{l^2}. 
\end{align}
By the Plancherel theorem, the H\"{o}lder inequality and the Sobolev inequality, we have  
\begin{align} \label{sb14}
& \big\|  \langle \cdot  \rangle^{s-1/2+1/2i} |\ha{v}_1|* \langle \cdot \rangle^{1+1/2i} |\ha{v}_2|* \dots * \langle \cdot \rangle^{1+1/2i} |\ha{v}_i| \big\|_{l^2} \notag  \\
& =\big\| \mathcal{F}^{-1} \big[ \langle \cdot  \rangle^{s-1/2+1/2i} |\ha{v}_1|* \langle \cdot \rangle^{1+1/2i} |\ha{v}_2|* \dots * \langle \cdot \rangle^{1+1/2i} |\ha{v}_i| \big] \big\|_{L^2} \notag \\
& \le \| \mathcal{F}^{-1}[ \langle \cdot \rangle^{s-1/2+1/2i} |\ha{v}_1| ] \|_{L^{2i}} \prod_{l=2}^{i} \| \mathcal{F}^{-1} [ \langle \cdot \rangle^{1+1/2i} |\ha{v}_l| ]   \|_{L^{2i}} \lesssim \| v_1 \|_{H^s}\, \prod_{l=2}^{i}\| v_l \|_{H^{3/2}}.
\end{align}
Combining \eqref{sb13} with \eqref{sb14}, we obtain \eqref{sb12}. 
\end{proof}

\begin{lem}\label{lem_go}
Let $f, g \in L^2(\T)$ and an $N$-multiplier $m^{(N)}$ satisfy
\begin{equation*} 
\Big\| \sum_{k=k_{1,\ldots,N}} |m^{(N)}(k_1,\ldots,k_N)| \prod_{l=1}^N |\ha{v}_l(t,k_l) | \Big\|_{L^\infty_Tl^2_s} \le C_0 \prod_{l=1}^N\|v_l\|_{L^\infty_TH^{s_l}}
\end{equation*}
for any $v_l \in C([-T,T];H^{s_l}(\T))$ with $l=1,2, \ldots,N$.
Then, for any \\
$v_l \in C([-T,T];H^{s_l}(\T))$ with $l=1,2,\ldots,N$, it follows that
\begin{equation*} 
\Big\| 
\Lambda_{f}^{(N)} ( m^{(N)} , \ha{v}_1,\ldots,\ha{v}_N ) -\Lambda_{g}^{(N)} ( m^{(N)} , \ha{v}_1,\ldots,\ha{v}_N )\Big\|_{L^\infty_Tl^2_s} \le C_*
\end{equation*}
where $C_*=C_*(C_0,v_1,\ldots,v_N,s_1,\ldots,s_N,|E_1(f)-E_1(g)|,|\ga|,T) \ge 0$ and $C_*\to 0$ when $|E_1(f)-E_1(g)|\to 0$.
\end{lem}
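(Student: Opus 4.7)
The plan is to reduce the problem to two pointwise bounds on the exponential difference. Since $\phi_\vp(k)=-ik^5+2i\ga E_1(\vp)k^3$ is purely imaginary, so is every $\Phi_\vp^{(N)}$; combining $|e^{ia}-e^{ib}|\le\min\{2,|a-b|\}$ with $\phi_f(k)-\phi_g(k)=2i\ga(E_1(f)-E_1(g))k^3$ yields
\begin{equation*}
\bigl|e^{-t\Phi_f^{(N)}}-e^{-t\Phi_g^{(N)}}\bigr|\;\le\;\min\Bigl\{2,\ 2T|\ga|\,|E_1(f)-E_1(g)|\,\Bigl|k_{1,\ldots,N}^3-\sum_{j=1}^N k_j^3\Bigr|\Bigr\}
\end{equation*}
uniformly for $t\in[-T,T]$. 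The second estimate supplies the desired small factor $|E_1(f)-E_1(g)|$ at the price of a degree-$3$ polynomial in the $k_j$'s, i.e.\ a three-derivative loss, whereas the first is dimensionally free but carries no decay.

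To absorb this loss we split each factor by the Fourier projector $P_{\le M}$ onto $\{|k|\le M\}$, writing $v_l=v_l^{\le M}+v_l^{>M}$, and expand the difference $\Lambda_f^{(N)}(m^{(N)},\ha v_1,\ldots,\ha v_N)-\Lambda_g^{(N)}(m^{(N)},\ha v_1,\ldots,\ha v_N)$ multilinearly into $2^N$ pieces. For the single piece in which every factor is $v_l^{\le M}$ one has $|k_{1,\ldots,N}^3-\sum_j k_j^3|\lesssim N^3M^3$, and the hypothesis applied to $\{v_l^{\le M}\}$ yields
\begin{equation*}
\|D_{\mathrm{low}}\|_{L^\infty_T l^2_s}\ \lesssim\ T|\ga|\,|E_1(f)-E_1(g)|\,M^3\,C_0\prod_{l=1}^N\|v_l\|_{L^\infty_T H^{s_l}}.
\end{equation*}
For each of the $2^N-1$ remaining pieces we use the crude bound $\le 2$ together with the hypothesis applied to the decomposed factors, producing at least one factor of the form $\|v_l^{>M}\|_{L^\infty_T H^{s_l}}$ in the resulting product.

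The crucial compactness ingredient is that $v_l\in C([-T,T];H^{s_l})$ makes the image $\{v_l(t):t\in[-T,T]\}$ compact in $H^{s_l}$; consequently $\|v_l^{>M}\|_{L^\infty_T H^{s_l}}\to 0$ as $M\to\infty$, by covering the image with finitely many $\eps$-balls and choosing $M$ so large that $\|P_{>M}\,\cdot\|_{H^{s_l}}$ is small at each centre. Given $\eps>0$, we first fix $M$ large enough that the $2^N-1$ high-frequency pieces together contribute less than $\eps/2$, and then require $|E_1(f)-E_1(g)|$ small enough that $D_{\mathrm{low}}$ contributes less than $\eps/2$. The resulting constant $C_*$ satisfies $C_*\to 0$ as $|E_1(f)-E_1(g)|\to 0$, as required.

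The only delicate point in this scheme is the uniform-in-$t$ convergence $\|P_{>M}v_l\|_{L^\infty_T H^{s_l}}\to 0$; here the continuity of $v_l$ in $t$ is indispensable, as a mere $L^\infty_T H^{s_l}$-bound would not guarantee a uniform tail estimate and the whole balancing argument would break down.
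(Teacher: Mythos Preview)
Your proof is correct and follows essentially the same approach as the paper's (which defers to \cite{KTT}, with the argument visible in the commented-out proof): split off the all-low-frequency piece and use the phase-difference bound $|\Phi_f^{(N)}-\Phi_g^{(N)}|\le |\ga||E_1(f)-E_1(g)|\,|k_{1,\ldots,N}^3-\sum_j k_j^3|$ there, while on the remaining pieces use the trivial bound $|e^{-t\Phi_f^{(N)}}-e^{-t\Phi_g^{(N)}}|\le 2$ together with the uniform-in-$t$ high-frequency tail decay coming from $v_l\in C([-T,T];H^{s_l})$. The only cosmetic difference is that the paper telescopes $\prod_l \ha v_l - \prod_l \chi_{\le K}\ha v_l$ into $N$ terms rather than expanding into $2^N$ pieces, but this is immaterial.
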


For details of the proof, see Lemma 2.11 in \cite{KTT}.

\section{the normal form reduction}

Our aim in this section is to remove the derivative losses in the right-hand side of \eqref{5mKdV3} by the normal form reduction, that is the differentiation by parts.
We  put
\EQQ{
& q_1^{(3)}:= -\frac{i}{3} k_{1,2,3} (k_{1,2}^2+ k_{2,3}^2  +k_{1,3}^2) , \quad 
q_2^{(3)}: = -\frac{i}{3} k_{1,2,3}(k_1k_2+k_2k_3+k_1 k_3),\\
& q_3^{(3)}:=-ik_1k_2 k_3,  \quad q_1^{(5)}:=ik_{1,2,3,4,5},\\
& Q_1^{(3)}:= \ga q_1^{(3)}, \quad  Q_2^{(3)}=  \be q_2^{(3)}, \quad Q_3^{(3)} := \al q_3^{(3)}, 
\quad Q^{(3)}:= Q_1^{(3)}+ Q_2^{(3)} + Q_3^{(3)}, \\
& Q_1^{(5)}:=  6 \de q_1^{(5)} (1-  [5\chi_{R1}^{(5)} ]_{sym}^{(5)}) 
+ \frac{4}{5} \ga^2 q_1^{(5)} [ \chi_{R1}^{(5)} (1-\chi_{R2}^{(5)} ) ]_{sym}^{(5)}.
} 
Note that all multipliers defined above are symmetric. Moreover, we define 
\EQQ{ 
q_2^{(5)}:= & \frac{9}{2} \Big\{ \frac{Q_1^{(3)}}{ \Phi_0^{(3)} } (k_1, k_2, k_{3,4,5}) Q_1^{(3)}(k_3, k_4, k_5)  
+ \frac{Q_1^{(3)}}{ \Phi_0^{(3)} } (k_3, k_4, k_{1,2,5}) Q_1^{(3)} (k_1, k_2, k_5)  \Big\}, \\
Q_2^{(5)} := &\frac{1}{3} \big\{ q_2^{(5)}\chi_{NR2}^{(5)} (1- \chi_{R4}^{(5)})  (k_1, k_2, k_3, k_4, k_5) \\
& +  q_2^{(5)} \chi_{NR2}^{(5)} (1- \chi_{R4}^{(5)})  (k_1, k_3, k_2, k_4, k_5) \\
& + q_2^{(5)} \chi_{NR2}^{(5)} (1- \chi_{R4}^{(5)}) (k_1, k_4, k_2, k_3, k_5) \big\}.
}
We notice that $Q_2^{(5)}$ is symmetric with $(k_1, k_2, k_3, k_4)$. 

The main proposition in this section is  as below. 
\begin{prop} \label{prop_NF2}
Let $s \ge 3/2$, $\vp \in L^2(\T)$, $L \gg \max \{ 1, |\ga| E_1(\vp) \}$, $T>0$ and 
$ u \in C([-T, T]:H^s(\T))$ be a solution of \eqref{5mKdV3}. 
Then $\ha{v}(t,k):= e^{-t \phi_{\vp}(k)} \ha{u} (t,k) $ satisfies the following equation for each $k \in \Z$:
\begin{align} \label{NF21}
\p_t (\ha{v}(t,k)+ F_{\vp, L} (\ha{v}) (t,k)) = G_{\vp, L} (\ha{v}) (t, k), 
\end{align} 
where
\begin{align*}
F_{\vp, L} (\ha{v})(t,k) := & \sum_{i=1}^{4} \La_{\vp}^{(3)} ( \ti{L}_{i, \vp}^{(3)} \chi_{>L}^{(3)},  \ha{v}(t) ) (t,k) 
+ \sum_{i=1}^{8} \Lambda_{\vp}^{(5)} ( \ti{L}_{i, \vp}^{(5)} \chi_{>L}^{(5)},  \ha{v}(t)) (t,k) \\ 
& + \sum_{i=1}^2  \Lambda_{\vp}^{(7)} ( \ti{L}_{i, \vp}^{(7)} \chi_{>L}^{(7)} ,  \ha{v} (t)) (t,k) 
\end{align*}
and 
\begin{align*}
& G_{\vp, L}(\ha{v}) (t,k) \\ 
& \hspace{0.5cm} := \sum_{i=1}^4 \Lambda_{\vp}^{(3)} ( \ti{L}_{i, \vp}^{(3)} \Phi_{\vp}^{(3)} \chi_{\le L}^{(3)} , \ha{v} (t) ) (t,k) 
+ \sum_{i=1}^{8} \Lambda_{\vp}^{(5)} (\ti{L}_{i, \vp}^{(5)} \Phi_{\vp}^{(5)} \chi_{\le L}^{(5)} ,  \ha{v} (t) ) (t,k) \\
& \hspace{0.5cm} + \sum_{i=1}^2 \Lambda_{\vp}^{(7)} (\ti{L}_{i, \vp}^{(7)} \Phi_{\vp}^{(7)} \chi_{ \le L}^{(7)}, \ha{v}(t)) (t,k)
+  \Lambda_{\vp}^{(3)} (\ti{M}_{1, \vp}^{(3)}, \ha{v}(t) ) (t,k) \\ 
& \hspace{0.5cm} + \sum_{i=1}^{23} \Lambda_{\vp}^{(5)} (\ti{M}_{i, \vp}^{(5)}, \ha{v} (t) ) (t,k)
+ \sum_{i=1}^{15} \Lambda_{\vp}^{(7)} (\ti{M}_{i,  \vp}^{(7)}, \ha{v}(t)) (t,k) \\
& \hspace{0.5cm} + \sum_{i=1}^3 \La_{\vp}^{(9)} (\ti{M}_{i, \vp}^{(9)}, \ha{v} (t) ) (t,k)
+  \Lambda_{\vp}^{(11)} (\ti{M}_{1, \vp}^{(11)} , \ha{v}(t) ) (t,k).
\end{align*}
(i) The multipliers $\{L_{i, \vp}^{(3)}\}_{i=1}^4$, $\{L_{i, \vp}^{(5)}\}_{i=1}^{8}$ and $\{ L_{i, \vp}^{(7)} \}_{i=1}^2$ are defined as below:
\begin{align*}
L_{1, \vp}^{(3)}  & := -Q^{(3)} \chi_{NR1}^{(3)} \, 3 \chi_{H1}^{(2)}/\Phi_{\vp}^{(3)}, \hspace{0.5cm} 
L_{2, \vp}^{(3)}  := -Q^{(3)} \chi_{NR1}^{(3)} \, 3 \chi_{H2,1}^{(3)}/\Phi_{\vp}^{(3)}, \\ 
L_{3, \vp}^{(3)}  & := -Q^{(3)} \chi_{NR1}^{(3)} \, 3\chi_{H2,2}^{(3)}/\Phi_{\vp}^{(3)},
\hspace{0.5cm} L_{4, \vp}^{(3)}  := -Q^{(3)} \chi_{NR1}^{(3)} \chi_{H3}^{(3)}  /\Phi_{\vp}^{(3)},  
\end{align*}
and 
\begin{align*}
L_{1, \vp}^{(5)}  &:= -30 \de q_1^{(5)} \chi_{H1}^{(5)} (1 -\chi_{R1}^{(5)}) (1- \chi_{R5}^{(5)}) /\Phi_{\vp}^{(5)}, \\
L_{2, \vp}^{(5)}  &:= q_2^{(5)} \chi_{H1}^{(5)} \chi_{NR2}^{(5)} (1-\chi_{R4}^{(5)})/ \Phi_{\vp}^{(5)}, \\
L_{3, \vp}^{(5)}&:=  9 \Big[ \frac{Q_1^{(3)}}{ \Phi_0^{(3)} } \chi_{NR1}^{(3)}  \Big]_{ext1}^{(5)} [ Q_2^{(3)} \chi_{NR1}^{(3)} ]_{ext2}^{(5)} 
\, \chi_{H1}^{(5)} (1 -\chi_{R1}^{(5)}) (1-\chi_{R4}^{(5)})/ \Phi_{\vp}^{(5)}, \\
L_{4, \vp}^{(5)}&:=  9 \Big[ \frac{Q_2^{(3)}}{ \Phi_0^{(3)} } \chi_{NR1}^{(3)}  \Big]_{ext1}^{(5)} [ Q_1^{(3)} \chi_{NR1}^{(3)} ]_{ext2}^{(5)} 
\, \chi_{H1}^{(5)} (1- \chi_{R1}^{(5)}) (1- \chi_{R4}^{(5)}) / \Phi_{\vp}^{(5)}, \\
L_{5, \vp}^{(5)}&:=  9 \Big[ \frac{Q_2^{(3)}}{ \Phi_0^{(3)} } \chi_{NR1}^{(3)}  \Big]_{ext1}^{(5)} [ (Q_2^{(3)}+Q_3^{(3)}) \chi_{NR1}^{(3)} ]_{ext2}^{(5)} 
\, \chi_{H1}^{(5)} \chi_{A1}^{(5)} / \Phi_{\vp}^{(5)}, \\
L_{6, \vp}^{(5)}&:=  9 \Big[ \frac{Q_1^{(3)}}{ \Phi_0^{(3)} } \chi_{NR1}^{(3)}  \Big]_{ext1}^{(5)} [ Q_3^{(3)} \chi_{NR1}^{(3)} ]_{ext2}^{(5)} 
\, \chi_{H1}^{(5)} \chi_{A2}^{(5)} / \Phi_{\vp}^{(5)}, \\
L_{7, \vp}^{(5)}&:=  9 \Big[ \frac{Q^{(3)}}{ \Phi_0^{(3)} } \chi_{NR1}^{(3)}  \Big]_{ext1}^{(5)} [ Q^{(3)} \chi_{NR1}^{(3)} ]_{ext2}^{(5)} 
\, \chi_{NR(1,1)}^{(5)} (1- \chi_{H1}^{(5)}) \chi_{A1}^{(5)} / \Phi_{\vp}^{(5)}, \\
L_{8, \vp}^{(5)} &:= 9 \Big[ \frac{Q^{(3)}}{\Phi_{\vp}^{(3)}} \chi_{NR1}^{(3)} \chi_{>L}^{(3)} \Big]_{ext1}^{(5)} 
[ Q^{(3)} \chi_{NR1}^{(3)} ]_{ext2}^{(5)} \, \chi_{NR(2,1)}^{(5)} \chi_{A3}^{(5)} / \Phi_{\vp}^{(5)}, \\
L_{1, \vp}^{(7)} &:= -3 \Big[ \frac{ Q_2^{(5)} }{ \Phi_0^{(5)}  } \Big]_{ext1}^{(7)} [ Q_1^{(3)} \chi_{NR1}^{(3)} ]_{ext2}^{(7)} 
\, \chi_{H1}^{(7)} (1-\chi_{R1}^{(7)}) (1-  \chi_{R5}^{(7)})/ \Phi_{\vp}^{(7)}, \\
L_{2, \vp}^{(7)} &:= -3 \Big[ \frac{ Q_2^{(5)} }{ \Phi_0^{(5)}  } \Big]_{ext1}^{(7)} [ (Q_2^{(3)}+Q_3^{(3)}) \chi_{NR1}^{(3)} ]_{ext2}^{(7)} 
\, \chi_{H1}^{(7)} \chi_{A1}^{(7)}/ \Phi_{\vp}^{(7)}.
\end{align*}
(ii) The multipliers $M_{1, \vp}^{(3)} $ and $\{M_{i, \vp}^{(5)}\}_{i=1}^{23}$ are defined as below:
\begin{equation*}
M_{1,\vp}^{(3)}:= Q^{(3)} \, 3\chi_{R3}^{(3)}
\end{equation*}
and
\begin{align*}
M_{1,\vp}^{(5)}&:= -\frac{4}{5} \ga^2 q_1^{(5)} \chi_{H1}^{(5)} \chi_{R1}^{(5)} (1- \chi_{R2}^{(5)}), \hspace{0.5cm}
M_{2, \vp}^{(5)} := - \frac{4}{5} \ga^2 q_1^{(5)} (1-\chi_{H1}^{(5)}) \chi_{R1}^{(5)} (1-\chi_{R2}^{(5)}), \\
M_{3, \vp}^{(5)}&:= -6 \de q_1^{(5)} (1- [5 \chi_{H1}^{(5)}]_{sym}^{(5)}), \hspace{0.5cm}
M_{4, \vp}^{(5)}:= -30 \de q_1^{(5)}  \chi_{H1}^{(5)} (1- \chi_{R1}^{(5)}) \chi_{R5}^{(5)}, \\
M_{5, \vp}^{(5)}&:= -30 \de q_1^{(5)} \chi_{R1}^{(5)} (1- \chi_{H1}^{(5)}), 
\end{align*}
\begin{align*}
M_{6, \vp}^{(5)} &:= [ 3 (\ti{L}_{1, \vp}^{(3)}+ \ti{L}_{2, \vp}^{(3)}+ \ti{L}_{3, \vp}^{(3)}+ \ti{L}_{4,  \vp}^{(3)} ) \chi_{>L}^{(3)} ]_{ext1}^{(5)} 
\big[Q^{(3)} [3 \chi_{R3}^{(3)}]_{sym}^{(3)} \big]_{ext2}^{(5)}, \\
M_{7, \vp}^{(5)} & := [ 3 ( \ti{L}_{2, \vp}^{(3)}+\ti{L}_{3, \vp}^{(3)}+ \ti{L}_{4, \vp}^{(3)}) \chi_{>L}^{(3)} ]_{ext1}^{(5)} 
\big[ - Q^{(3)} \chi_{NR1}^{(3)} \big]_{ext2}^{(5)}, \\
M_{8, \vp}^{(5)} &:= [ 3 \tilde{L}_{1, \vp}^{(3)} \chi_{>L}^{(3)} ]_{ext1}^{(5)} 
\big[ -Q^{(3)} \chi_{NR1}^{(3)} ( [3 \chi_{H2,2}^{(3)}]_{sym}^{(3)}+\chi_{H3}^{(3)}) \big]_{ext2}^{(5)}, \\ 
M_{9, \vp}^{(5)} &:= q_2^{(5)} \chi_{H1}^{(5)} \chi_{R1}^{(5)} (1- \chi_{R2}^{(5)}) , \hspace{0.5cm} 
M_{10, \vp}^{(5)}:= q_2^{(5)} \chi_{H1}^{(5)} \chi_{NR2}^{(5)} \chi_{R4}^{(5)},   \\
M_{11, \vp}^{(5)} &:= 9  \Big[  \frac{Q_1^{(3)}}{ \Phi_{0}^{(3)} } \chi_{NR1}^{(3)} \Big]_{ext1}^{(5)} [Q_2^{(3)} \chi_{NR1}^{(3)} ]_{ext2}^{(5)}  \, 
\chi_{H1}^{(5)} \chi_{R1}^{(5)}, \\
M_{12, \vp}^{(5)} &:= 9  \Big[  \frac{Q_1^{(3)}}{ \Phi_{0}^{(3)} } \chi_{NR1}^{(3)} \Big]_{ext1}^{(5)} [Q_2^{(3)} \chi_{NR1}^{(3)} ]_{ext2}^{(5)}  \, 
\chi_{H1}^{(5)} (1-\chi_{R1}^{(5)}) \chi_{R4}^{(5)}, \\
M_{13, \vp}^{(5)} &:= 9  \Big[  \frac{Q_2^{(3)}}{ \Phi_{0}^{(3)} } \chi_{NR1}^{(3)} \Big]_{ext1}^{(5)} [Q_1^{(3)} \chi_{NR1}^{(3)} ]_{ext2}^{(5)}  \, 
\chi_{H1}^{(5)} \chi_{R1}^{(5)}, \\
M_{14, \vp}^{(5)} &:= 9  \Big[  \frac{Q_2^{(3)}}{ \Phi_{0}^{(3)} } \chi_{NR1}^{(3)} \Big]_{ext1}^{(5)} [Q_1^{(3)} \chi_{NR1}^{(3)} ]_{ext2}^{(5)}  \, 
\chi_{H1}^{(5)} (1-\chi_{R1}^{(5)}) \chi_{R4}^{(5)}, \\
M_{15, \vp}^{(5)} &:= 9  \Big[  \frac{Q_2^{(3)}}{ \Phi_{0}^{(3)} } \chi_{NR1}^{(3)} \Big]_{ext1}^{(5)} 
[(Q_2^{(3)}+Q_3^{(3)} ) \chi_{NR1}^{(3)} ]_{ext2}^{(5)}  \, \chi_{H1}^{(5)} (1-\chi_{A1}^{(5)}), \\
M_{16, \vp}^{(5)} &:= 9  \Big[  \frac{Q_1^{(3)}}{ \Phi_{0}^{(3)} } \chi_{NR1}^{(3)} \Big]_{ext1}^{(5)} [Q_3^{(3)} \chi_{NR1}^{(3)} ]_{ext2}^{(5)}  \, 
\chi_{H1}^{(5)} (1-\chi_{A2}^{(5)}), \\
M_{17, \vp}^{(5)} &:= 9  \Big[  \frac{Q_3^{(3)}}{ \Phi_{0}^{(3)} } \chi_{NR1}^{(3)} \Big]_{ext1}^{(5)} [Q^{(3)} \chi_{NR1}^{(3)} ]_{ext2}^{(5)}  \, 
\chi_{H1}^{(5)}, \\
M_{18, \vp}^{(5)} &:= 9  \Big[  \frac{Q^{(3)}}{ \Phi_{0}^{(3)} } \chi_{NR1}^{(3)} \Big]_{ext1}^{(5)} [Q^{(3)} \chi_{NR1}^{(3)} ]_{ext2}^{(5)}  \, \chi_{NR(1,1)}^{(5)} (1-\chi_{H1}^{(5)}) (1-\chi_{A1}^{(5)}), \\
M_{19, \vp}^{(5)} &:= 9  \Big[   \Big( \frac{Q^{(3)}}{ \Phi_{\vp}^{(3)}}- \frac{Q^{(3)}}{ \Phi_0^{(3)} } \Big) \chi_{NR1}^{(3)} \chi_{>L}^{(3)} \Big]_{ext1}^{(5)} [Q^{(3)} \chi_{NR1}^{(3)} ]_{ext2}^{(5)}  \, \chi_{NR(1,1)}^{(5)}, \\
M_{20, \vp}^{(5)} &:= 9  \Big[   \Big(- \frac{Q^{(3)}}{ \Phi_{0}^{(3)}} \Big) \chi_{NR1}^{(3)} \chi_{\le L}^{(3)} \Big]_{ext1}^{(5)} 
[Q^{(3)} \chi_{NR1}^{(3)} ]_{ext2}^{(5)}  \, \chi_{NR(1,1)}^{(5)}, \\
M_{21, \vp}^{(5)} &:=  18 \Big[ \frac{Q^{(3)}}{ \Phi_{\vp}^{(3)} } \chi_{NR1}^{(3)} \chi_{>L}^{(3)}  \Big]_{ext1}^{(5)} [ -Q^{(3)} \chi_{NR1}^{(3)} ]_{ext2}^{(5)} \, \chi_{NR(1,2)}^{(5)}, \\
M_{22, \vp}^{(5)} &:=  9 \Big[ \frac{Q^{(3)}}{ \Phi_{\vp}^{(3)} } \chi_{NR1}^{(3)} \chi_{>L}^{(3)}  \Big]_{ext1}^{(5)} [ -Q^{(3)} \chi_{NR1}^{(3)} ]_{ext2}^{(5)} \, \chi_{NR(2,1)}^{(5)} (1- \chi_{A3}^{(5)})  , \\
M_{23, \vp}^{(5)} &:=  18 \Big[ \frac{Q^{(3)}}{ \Phi_{\vp}^{(3)} } \chi_{NR1}^{(3)} \chi_{>L}^{(3)}  \Big]_{ext1}^{(5)} [ -Q^{(3)} \chi_{NR1}^{(3)} ]_{ext2}^{(5)} \, \chi_{NR(2,2)}^{(5)}.
\end{align*}
(iii) The multipliers $\{M_{i, \vp}^{(7)}\}_{i=1}^{15}$ are defined as below:
\begin{align*}
M_{1, \vp}^{(7)} &:= [ 3 ( \tilde{L}_{1, \vp}^{(3)}+\ti{L}_{2, \vp}^{(3)}+ \ti{L}_{3, \vp}^{(3)}+ \ti{L}_{4, \vp}^{(3)} ) \chi_{>L}^{(3)}  ]_{ext1}^{(7)} 
[-Q_1^{(5)}]_{ext2}^{(7)} ,  \\
M_{2, \vp}^{(7)}& := 
\big[ 5 \sum_{i=1}^8 \ti{L}_{i,  \vp}^{(5)} \chi_{>L}^{(5)} \big]_{ext1}^{(7)} 
\big[Q^{(3)} [ 3 \chi_{R3}^{(3)} ]_{sym}^{(3)} \big]_{ext2}^{(7)} , \\  
M_{3, \vp}^{(7)}& := 
\big[5 \big( \ti{L}_{1, \vp}^{(5)}+ \ti{L}_{7, \vp}^{(5)}+ \ti{L}_{8, \vp}^{(5)} \big) \chi_{>L}^{(5)} \big]_{ext1}^{(7)} 
[-Q^{(3)} \chi_{NR1}^{(3)}  ]_{ext2}^{(7)} , \\  
M_{4, \vp}^{(7)}& := 
\big[5 \big( \ti{L}_{3, \vp}^{(5)}+ \ti{L}_{4, \vp}^{(5)}+ \ti{L}_{5, \vp}^{(5)} +\ti{L}_{6, \vp}^{(5)} \big) \chi_{>L}^{(5)} \big]_{ext1}^{(7)} 
[-Q^{(3)} \chi_{NR1}^{(3)}  ]_{ext2}^{(7)} , \\  
M_{5, \vp}^{(7)}& := [ 5 \ti{L}_{2, \vp}^{(5)} \chi_{>L}^{(5)}  ]_{ext1}^{(7)} 
[ -Q^{(3)} \chi_{NR1}^{(3)} ([ 3 \chi_{H2,2}^{(3)} ]_{sym}^{(3)}+ \chi_{H3}^{(3)})  ]_{ext2}^{(7)}, \\
M_{6, \vp}^{(7)}&:= -3  \Big[ \frac{Q_2^{(5)}}{ \Phi_0^{(5)} } \Big]_{ext1}^{(7)} [Q_1^{(3)} \chi_{NR1}^{(3)} ]_{ext2}^{(7)} \,
\chi_{H1}^{(7)} \chi_{R1}^{(7)} \chi_{A4}^{(7)}, \\ 
M_{7, \vp}^{(7)}&:= -3  \Big[ \frac{Q_2^{(5)}}{ \Phi_0^{(5)} } \Big]_{ext1}^{(7)} [Q_1^{(3)} \chi_{NR1}^{(3)} ]_{ext2}^{(7)} \,
\chi_{H1}^{(7)} \chi_{R1}^{(7)} (1-\chi_{A4}^{(7)}), \\ 
M_{8, \vp}^{(7)}&:= -3  \Big[ \frac{Q_2^{(5)}}{ \Phi_0^{(5)} } \Big]_{ext1}^{(7)} [Q_1^{(3)} \chi_{NR1}^{(3)} ]_{ext2}^{(7)} \,
\chi_{H1}^{(7)} (1-\chi_{R1}^{(7)}) \chi_{R5}^{(7)}, \\ 
M_{9, \vp}^{(7)}&:= -3  \Big[ \frac{Q_2^{(5)}}{ \Phi_0^{(5)} } \Big]_{ext1}^{(7)} [(Q_2^{(3)}+ Q_3^{(3)}) \chi_{NR1}^{(3)} ]_{ext2}^{(7)} \,
\chi_{H1}^{(7)} (1-\chi_{A1}^{(7)}), \\ 
M_{10, \vp}^{(7)}&:= -3  \Big[ \frac{Q_2^{(5)}}{ \Phi_0^{(5)} } \Big]_{ext1}^{(7)} [Q^{(3)} \chi_{NR1}^{(3)} ]_{ext2}^{(7)} \,
\chi_{NR(1,1)}^{(7)} (1- \chi_{H1}^{(7)}), \\ 
M_{11, \vp}^{(7)}&:= -3  \Big[\Big( \frac{Q_2^{(5)}}{ \Phi_{\vp}^{(5)} }- \frac{Q_2^{(5)}}{\Phi_{0}^{(5)}} \Big) \chi_{>L}^{(5)} \Big]_{ext1}^{(7)} 
[Q^{(3)} \chi_{NR1}^{(3)} ]_{ext2}^{(7)} \, \chi_{NR(1,1)}^{(7)}, \\
M_{12, \vp}^{(7)}&:= -3  \Big[\Big( - \frac{Q_2^{(5)}}{\Phi_{0}^{(5)}} \Big) \chi_{\le L}^{(5)} \Big]_{ext1}^{(7)} 
[Q^{(3)} \chi_{NR1}^{(3)} ]_{ext2}^{(7)} \, \chi_{NR(1,1)}^{(7)}, \\
M_{13, \vp}^{(7)}&:= -12  \Big[ \big[5 \frac{Q_2^{(5)}}{ \Phi_{\vp}^{(5)} } \chi_{>L}^{(5)} \chi_{H1}^{(5)} \big]_{sym}^{(5)} \Big]_{ext1}^{(7)} 
[Q^{(3)} \chi_{NR1}^{(3)} ]_{ext2}^{(7)} \, \chi_{NR(1,2)}^{(7)}, \\
M_{14, \vp}^{(7)}&:= -3  \Big[ \frac{Q_2^{(5)}}{ \Phi_{\vp}^{(5)} } \chi_{>L}^{(5)} \Big]_{ext1}^{(7)} [Q^{(3)} \chi_{NR1}^{(3)} ]_{ext2}^{(7)} \, \chi_{NR(2,1)}^{(7)}, \\
M_{15, \vp}^{(7)}&:= -12  \Big[ \big[5 \frac{Q_2^{(5)}}{ \Phi_{\vp}^{(5)} } \chi_{>L}^{(5)} \chi_{H1}^{(5)} \big]_{sym}^{(5)} \Big]_{ext1}^{(7)}
 [Q^{(3)} \chi_{NR1}^{(3)} ]_{ext2}^{(7)} \, \chi_{NR(2,2)}^{(7)}. \\
\end{align*}

(iv) The multipliers $\{M_{i, \vp}^{(9)}\}_{i=1}^{3}$ and $M_{1, \vp}^{(11)}$ are defined as below:
\begin{align*}
M_{1, \vp}^{(9)} & := \big[ 5 \sum_{i=1}^{8} \ti{L}_{i, \vp}^{(5)} \chi_{>L}^{(5)} ]_{ext1}^{(9)} 
[-Q_1^{(5)}]_{ext2}^{(9)}, \\
M_{2, \vp}^{(9)} & := [ 7 (\ti{L}_{1, \vp}^{(7)}+ \ti{L}_{2, \vp}^{(7)}) \chi_{>L}^{(7)} ]_{ext1}^{(9)} 
[Q^{(3)} [3 \chi_{R3}^{(3)} ]_{sym}^{(3)} ]_{ext2}^{(9)}, \\
M_{3, \vp}^{(9)} & := [ 7 (\ti{L}_{1, \vp}^{(7)}+\ti{L}_{2, \vp}^{(7)}) \chi_{>L}^{(7)} ]_{ext1}^{(9)} [-Q^{(3)} \chi_{NR1}^{(3)} ]_{ext2}^{(9)}, \\
M_{1, \vp}^{(11)}  & := [ 7 (\ti{L}_{1, \vp}^{(7)} + \ti{L}_{2, \vp}^{(7)}) \chi_{>L}^{(7)} ]_{ext1}^{(11)} [-Q_1^{(5)}]_{ext2}^{(11)}.
\end{align*}
\end{prop}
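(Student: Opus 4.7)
The plan is to apply the normal form reduction (differentiation by parts in time) iteratively three times to \eqref{5mKdV3}, to eliminate all derivative losses in the non-resonant parts of the nonlinearity, while tracking exactly which resonant pieces remain and which cancel by symmetry. First I would take the Fourier transform of \eqref{5mKdV3} in $x$ and substitute $\hat v(t,k)=e^{-t\phi_\varphi(k)}\hat u(t,k)$ to eliminate the linear part, obtaining an evolution equation of the schematic form
\begin{equation*}
\partial_t\hat v(t,k)=\sum_{k=k_{1,2,3}}e^{-t\Phi_\varphi^{(3)}}Q^{(3)}\prod_{l=1}^3\hat v(t,k_l)+\sum_{k=k_{1,\dots,5}}e^{-t\Phi_\varphi^{(5)}}Q_1^{(5)}\prod_{l=1}^5\hat v(t,k_l),
\end{equation*}
where the cubic symbol $Q^{(3)}=Q_1^{(3)}+Q_2^{(3)}+Q_3^{(3)}$ comes from $J_2+J_3$ and the quintic symbol $Q_1^{(5)}$ comes from $J_1+J_4$. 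The precise splitting of $Q_1^{(5)}$ into the $\delta$-part with weight $(1-[5\chi_{R1}^{(5)}]_{sym}^{(5)})$ and the $\gamma^2$-part with weight $[\chi_{R1}^{(5)}(1-\chi_{R2}^{(5)})]_{sym}^{(5)}$ encodes both the removal of the mean-valued piece $\int u^4\,dx$ inside $J_1$ and the addition of $J_4$ so that the $\tfrac{4}{5}\gamma^2$ coefficient matches the resonant term $M_{9,\varphi}^{(5)}$ appearing below.

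For the first reduction I would decompose the cubic sum as a resonant piece on $\{k_1=-k_2=k_3\}=\{\chi_{R3}^{(3)}=1\}$, where $\Phi_\varphi^{(3)}=0$ and no reduction is possible (producing $\Lambda_\varphi^{(3)}(\ti M_{1,\varphi}^{(3)},\hat v)$), plus a non-resonant piece $\chi_{NR1}^{(3)}$, which is further partitioned by $\chi_{H1}^{(3)},\chi_{H2,1}^{(3)},\chi_{H2,2}^{(3)},\chi_{H3}^{(3)}$ so that Lemma \ref{Le1} gives a lower bound on $|\Phi_\varphi^{(3)}|$ appropriate to each region. On each such piece the identity $e^{-t\Phi_\varphi^{(3)}}=\partial_t[(-\Phi_\varphi^{(3)})^{-1}e^{-t\Phi_\varphi^{(3)}}]$, used above the cut-off $\chi_{>L}^{(3)}$ (chosen so that $k_{\max}>8|\gamma|E_1(\varphi)$), produces the boundary terms $\Lambda_\varphi^{(3)}(\ti L_{i,\varphi}^{(3)}\chi_{>L}^{(3)},\hat v)$ inside $\partial_t$ and the low-frequency error $\Lambda_\varphi^{(3)}(\ti L_{i,\varphi}^{(3)}\Phi_\varphi^{(3)}\chi_{\le L}^{(3)},\hat v)$. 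By the Leibniz rule this transfers the extra $\partial_t$ onto the $\hat v$-factors; substituting the evolution equation for $\partial_t\hat v$ then generates quintic source terms (from the cubic-cubic interaction), alongside the independent contribution from $Q_1^{(5)}$.

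For the second reduction I would collect these quintic contributions and, using Lemma \ref{Le6}, expand each symmetrized product $[[\ti L^{(3)}]_{ext1}^{(5)}[Q^{(3)}]_{ext2}^{(5)}]_{sym}^{(5)}$ according to whether each of the two constituent triples is high-high-high ($\chi_{H1}^{(3)}$) or high-high-low ($\chi_{H2,1}^{(3)}$), giving rise to the non-resonant regions $\chi_{NR(1,1)}^{(5)},\chi_{NR(1,2)}^{(5)},\chi_{NR(2,1)}^{(5)},\chi_{NR(2,2)}^{(5)}$ and their lower-frequency complements. The resonant quintic symbols, on which Lemmas \ref{Le2} and \ref{Le4} fail to give the required phase lower bound, are absorbed into $\ti M_{1,\varphi}^{(5)},\dots,\ti M_{23,\varphi}^{(5)}$; the crucial structural identity here is that the one-derivative-loss part of $q_2^{(5)}$ on $\{\chi_{R1}^{(5)}=1,\chi_{R2}^{(5)}=0\}$, carried by $M_{9,\varphi}^{(5)}$, combines with the $J_4$-generated multiplier $M_{1,\varphi}^{(5)}=-\tfrac{4}{5}\gamma^2 q_1^{(5)}\chi_{H1}^{(5)}\chi_{R1}^{(5)}(1-\chi_{R2}^{(5)})$ so that the sum $q_2^{(5)}-\tfrac{4}{5}\gamma^2 q_1^{(5)}$ has no derivative loss on that region (cf.~Proposition \ref{prop_res1}). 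The non-resonant quintic contributions are then reduced by the same differentiation by parts, using the phase lower bounds from Lemmas \ref{Le2} and \ref{Le4} to define the $\ti L_{i,\varphi}^{(5)}\chi_{>L}^{(5)}$ boundary terms and producing septic source terms upon substituting $\partial_t\hat v$.

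The third reduction proceeds identically on the septic sources: Lemma \ref{Le3} yields the phase lower bound on $\Phi_\varphi^{(7)}$ and Lemma \ref{Le6} symmetrizes the products of quintic and cubic multipliers into the $\chi_{NR(i,j)}^{(7)}$ regions, producing the $\ti L_{i,\varphi}^{(7)}$ boundary terms and the residual resonant septic symbols $\ti M_{i,\varphi}^{(7)}$; the key fact here (cf.~Proposition \ref{prop_res2}) is that the only one-derivative-loss septic resonance $M_{6,\varphi}^{(7)}$ is cancelled upon full symmetrization over $(k_1,\dots,k_7)$. Substituting the evolution equation once more produces the degree-$9$ and degree-$11$ contributions collected in $\ti M_{i,\varphi}^{(9)}$ and $\ti M_{1,\varphi}^{(11)}$, which by construction carry no derivative losses. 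The main obstacle throughout is the systematic bookkeeping: the indicator multipliers $\chi_{R1}^{(2N+1)},\chi_{R2}^{(5)},\chi_{R3}^{(3)},\chi_{R4}^{(5)},\chi_{R5}^{(2N+1)},\chi_{A1}^{(2N+1)},\chi_{A2}^{(5)},\chi_{A3}^{(5)},\chi_{A4}^{(7)}$ are all designed so that exactly one hypothesis of Lemmas \ref{Le1}--\ref{Le4} applies on each corresponding complement, and one must verify at every stage that the partition of the sum matches this trichotomy precisely, and that every apparent resonance with a derivative loss is either cancelled by Lemma \ref{Le6} (as in $M_{6,\varphi}^{(7)}$) or paired against the auxiliary $M_{1,\varphi}^{(5)}$ term from $J_4$ (as in $M_{9,\varphi}^{(5)}$).
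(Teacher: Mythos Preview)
Your plan is essentially the same as the paper's: start from the interaction-picture equation for $\hat v$ (the paper packages this as Proposition~\ref{prop_req1}), then apply the differentiation-by-parts step (packaged as Proposition~\ref{prop_NF11}) iteratively to the $\tilde L^{(3)}_{i,\varphi}$, $\tilde L^{(5)}_{i,\varphi}$, $\tilde L^{(7)}_{i,\varphi}$ boundary terms, with Lemma~\ref{Le6} handling the symmetrization bookkeeping at each stage (the paper isolates the two hardest symbol identities as Lemmas~\ref{L31} and~\ref{L32}).

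One clarification worth making: Proposition~\ref{prop_NF2} is a purely algebraic statement---it asserts that \eqref{NF21} holds with the \emph{specified} multipliers, nothing more. The cancellation facts you invoke (Propositions~\ref{prop_res1}, \ref{prop_res2}, \ref{prop_res3}) are not part of its proof; they are used only afterwards, in Sections~4--7, to show that the resulting $\tilde M^{(N)}_{j,\varphi}$ are bounded and hence that $G_{\varphi,L}$ satisfies good estimates. In the paper's logic the role of those propositions is to motivate the particular splittings (why $J_4$ is added, why $\chi_{A4}^{(7)}$ is inserted, etc.), but the proof of \eqref{NF21} itself is just careful symbol-chasing that those splittings exhaust the nonlinearity. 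Also a minor slip: $\chi_{H1}^{(3)}$ is the one-dominant-frequency region (high-low-low), not high-high-high; the latter is $\chi_{H3}^{(3)}$.
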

\begin{rem}
Precisely speaking, $L_{1, \vp}^{(3)}  := -Q^{(3)} \chi_{NR1}^{(3)} \, 3 \chi_{H1}^{(3)}/\Phi_{\vp}^{(3)}$ means
\begin{align*}
L_{1, \vp}^{(3)} :=
\begin{cases}
-Q^{(3)} \chi_{NR1}^{(3)} \, 3 \chi_{H1}^{(3)}/\Phi_{\vp}^{(3)} \ \ &\text{ when } \Phi_{\vp}^{(3)}(k_1,k_2, k_3)\neq 0\\
0\ \ &\text{ when } \Phi_{\vp}^{(3)}(k_1,k_2, k_3)= 0.
\end{cases}
\end{align*}
We adapt the same rule in the definition of each multiplier when its denominator is equal to $0$.
\end{rem}

Before we prove Proposition \ref{prop_NF2}, we prepare some propositions and lemmas.
\begin{prop} \label{prop_req1}
Let $s \ge 3/2$, $\vp \in L^2(\T)$, $T>0$ and $u \in C([-T, T]:H^s(\T))$ be a solution of \eqref{5mKdV3}. 
Put $v:= U_\vp(-t)u$. Then, $v \in C([-T, T]: H^s(\T))\cap C^1([-T, T]: H^{s-3}(\T))$ and 
$\ha{v}(t,k)$ satisfies 
\begin{align} \label{eq20} 
\p_t \ha{v} (t,k) =&  \La_{\vp}^{(3)}( -Q^{(3)} \chi_{NR1}^{(3)}, \ha{v} (t) )(t,k) 
+ \La_{\vp}^{(3)}( Q^{(3)} [3 \chi_{R3}^{(3)}]_{sym}^{(3)}, \ha{v} (t) )(t,k) \notag \\
& + \La_{\vp}^{(5)} (-Q_1^{(5)}, \ha{v} (t)) (t,k)
\end{align}
for each $k \in \Z$, where $U_{\vp}(t)= \mathcal{F}^{-1} \exp( t \phi_{\vp} (k)) \mathcal{F}_x$. Moreover, we have 
\begin{align}
\|\p_t v\|_{L_T^\infty H^{s-3}}\lec \|v\|_{L_T^\infty H^{s}}^3+\|v\|_{L_T^\infty H^{s}}^5. \label{eq_es}
\end{align}
\end{prop}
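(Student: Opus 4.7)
The plan is to conjugate \eqref{5mKdV3} by the linear propagator $U_\vp$. Since $U_\vp(t)=\mathcal{F}^{-1}e^{t\phi_\vp(k)}\mathcal{F}_x$ is the group generated by $-\p_x^5-2\gamma E_1(\vp)\p_x^3$, applying $U_\vp(-t)$ to both sides of \eqref{5mKdV3} cancels the linear terms and leaves $\p_t v=U_\vp(-t)\sum_{i=1}^{4}J_i(u)$. Passing to the Fourier side I substitute $\ha u(t,k_l)=e^{t\phi_\vp(k_l)}\ha v(t,k_l)$, so that each $N$-linear nonlinearity produces a contribution to $\p_t\ha v(t,k)$ of the form $\Lambda_\vp^{(N)}(m^{(N)},\ha v)(t,k)$ for a multiplier $m^{(N)}$ read off from $J_i$; the phase factors collapse to $e^{-t\Phi_\vp^{(N)}}$ thanks to the identity $\phi_\vp(k_{1,\dots,N})-\sum_{l=1}^N\phi_\vp(k_l)=\Phi_\vp^{(N)}$ on $k=k_{1,\dots,N}$.

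For the cubic part $J_2+J_3$, the symmetrized symbols of $-\al(\p_x u)^3$, $-\be\p_x(u(\p_x u)^2)$, $-\ga\p_x(u\p_x^2(u^2))$ are respectively $-Q_3^{(3)}$, $-Q_2^{(3)}$, $-Q_1^{(3)}$, summing to $-Q^{(3)}$; the three integral-type counterterms each carry a factor of $\chi_{\{k_{i,j}=0\}}$ in one argument, so after symmetrization they are supported on $1-\chi_{NR1}^{(3)}$. Checking the coefficients hyperplane by hyperplane, I would verify that on each single-pair locus $\{k_{i,j}=0\}$ (with the other two pair sums nonzero) the counterterms exactly cancel the corresponding piece of $-Q^{(3)}$, while on the double-resonance set $k_1=-k_2=k_3$ (up to permutation) the counterterms are double-counted in the symmetrization and leave the residue $+Q^{(3)}[3\chi_{R3}^{(3)}]_{sym}^{(3)}$. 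For the quintic part, $-30\de u^4\p_x u$ symmetrizes to $-6\de q_1^{(5)}$ and $30\de(\int_\T u^4 \, dx)\p_x u$ to $6\de q_1^{(5)}[5\chi_{R1}^{(5)}]_{sym}^{(5)}$; using the equality $\chi_{R1}^{(5)}\chi_{R2}^{(5)}=\chi_{\{k_{1,2}=k_{3,4}=0\}}$ (valid because $k_{1,2}=-k_{3,4}$ on $\chi_{R1}^{(5)}$), $J_4$ contributes $-\tfrac{4}{5}\gamma^2 q_1^{(5)}[\chi_{R1}^{(5)}(1-\chi_{R2}^{(5)})]_{sym}^{(5)}$, and the total matches $-Q_1^{(5)}$ exactly.

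For the regularity statement and \eqref{eq_es}, every multiplier appearing on the right of \eqref{eq20} is a polynomial in $k_1,\dots,k_N$ of total degree at most three, so $|m^{(N)}|\lesssim\langle k_{\max}\rangle^3$, and Lemma \ref{lem_nl2} yields $\|\Lambda_\vp^{(N)}(m^{(N)},\ha v)\|_{l^2_{s-3}}\lesssim\|v\|_{H^s}^N$ with $N=3$ or $5$. Since $U_\vp(-t)$ is an isometry on $H^{s-3}$, the estimate \eqref{eq_es} is immediate; applying the same multilinear bound to time differences together with $u\in C([-T,T]:H^s)$ gives continuity of $\p_t v$ in $H^{s-3}$, hence $v\in C^1([-T,T]:H^{s-3})$. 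The main obstacle I expect is the algebraic bookkeeping in the second paragraph: one has to cleanly distinguish single- and double-pair resonant strata of $1-\chi_{NR1}^{(3)}$ and verify that the fine-tuned coefficients $3\al+\be$, $-2\ga$, $2\ga$ of the integral counterterms reproduce exactly $-Q^{(3)}$ on the single-pair stratum and twice $Q^{(3)}$ on the double-pair stratum, so that after symmetrization the advertised residual $+Q^{(3)}[3\chi_{R3}^{(3)}]_{sym}^{(3)}$ emerges with the correct factor $3$.
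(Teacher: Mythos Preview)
Your proposal is correct and follows the same overall line as the paper: conjugate \eqref{5mKdV3} by $U_\vp$, read off the multipliers of $J_1,\dots,J_4$ on the Fourier side, and use Lemma~\ref{lem_nl2} together with the isometry of $U_\vp$ to get the $H^{s-3}$ bound \eqref{eq_es} and the $C^1$ regularity.

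The only tactical difference is in the cubic bookkeeping. You propose to verify the cancellation ``hyperplane by hyperplane'' on the single- and double-pair resonant strata. The paper avoids this by computing in one stroke that the three integral counterterms in $J_2+J_3$ have combined symmetrized symbol exactly $Q^{(3)}[3\chi_{R1}^{(3)}]_{sym}^{(3)}$, and then invokes the inclusion--exclusion identity
\[
1=\chi_{NR1}^{(3)}+[3\chi_{R1}^{(3)}]_{sym}^{(3)}-[3\chi_{R3}^{(3)}]_{sym}^{(3)}+\chi_{R6}^{(3)},\qquad Q^{(3)}\chi_{R6}^{(3)}=0,
\]
where $\chi_{R6}^{(3)}$ is the indicator of $k_1=k_2=k_3=0$. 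This immediately yields $-Q^{(3)}(1-[3\chi_{R1}^{(3)}]_{sym}^{(3)})=-Q^{(3)}\chi_{NR1}^{(3)}+Q^{(3)}[3\chi_{R3}^{(3)}]_{sym}^{(3)}$ without having to match the coefficients $3\al+\be$, $-2\ga$, $2\ga$ stratum by stratum; the ``double-counting'' you anticipate is precisely the overcount $[3\chi_{R1}^{(3)}]_{sym}^{(3)}=2$ on $\supp\chi_{R3}^{(3)}$, which the identity corrects automatically. Your route works, but the paper's packaging is shorter and sidesteps the obstacle you flagged.
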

\begin{proof}
Since $u\in C([-T, T]:H^s(\T))$ satisfies \eqref{5mKdV3} and $\ha{v}(t,k)=e^{-t\phi_{\vp} (k)} \ha{u}(t,k)$, it follows that
$v\in C([-T,T]: H^{s}(\T))$ and
\begin{equation}\label{EE2}
\p_t \ha{v} (t,k) = \sum_{j=1,2,3,4} e^{-t \phi_{\vp} (k) } \mathcal{F}_x [J_j(u)](t, k)
\end{equation}
By Lemma~\ref{lem_nl2}, it follows that $J_1(u), J_2(u), J_3(u), J_4(u) \in C([-T, T]: H^{s-3}(\T))$. 
Thus, we obtain $v\in C^1([-T,T]: H^{s-3}(\T))$ and \eqref{eq_es}. 
A direct computation yields that  
\EQQ{
& e^{-t \phi_{\vp} (k) } \mathcal{F}_x \big[ -\al (\p_x u)^3 - \be \p_x (u (\p_x u)^2)- \ga \p_x (u \p_x^2 (u^2))  \big] (t,k) \\
&\hspace{0.5cm}  =\sum_{k=k_{1,2,3}} e^{- t \Phi_{\vp}^{(3)}} 
\big( -\al q_3^{(3)} - \be q_2^{(3)}  -\ga q_1^{(3)} \big) \prod_{j=1}^3 \ha{v}(t,k_j) \\
& \hspace{0.5cm} =\sum_{k=k_{1,2,3 }} e^{- t \Phi_{\vp}^{(3)}}
(-Q^{(3)}) \prod_{j=1}^3 \ha{v}(t,k_j) 
}
and 
\EQQ{
& e^{-t \phi_{\vp} (k) } \mathcal{F}_x \Big[ (3 \al + \be) \int_{\T} (\p_x u)^2 \, dx \, \p_x u 
+ 2 \ga \int_{\T} u^2 \, dx \, \p_x^3 u - 2 \ga \int_{\T} (\p_x u)^2 \, dx \, \p_x u \Big](t,k) \\
&\hspace{0.5cm} = \sum_{k=k_{1,2,3}} e^{- t \Phi_{\vp}^{(3)}} Q^{(3)}  [3 \chi_{R1}^{(3)}]_{sym}^{(3)} 
\prod_{j=1}^3 \ha{v}(t,k_j).
} 
Note that $1=\chi_{NR1}^{(3)} + [3 \chi_{R1}^{(3)}]_{sym}^{(3)} - [3 \chi_{R3}^{(3)}]_{sym}^{(3)} + \chi_{R6}^{(3)}$ and 
$Q^{(3)} \chi_{R6}^{(3)}=0$ where 
\begin{align*}
\chi_{R6}^{(3)} = 
\begin{cases}
\dis 1, \hspace{0.2cm} \text{when} \hspace{0.2cm} k_1=k_2=k_3=0 \\
0, \hspace{0.2cm} \text{otherwise}
\end{cases}.
\end{align*}
Thus, we have 
\begin{align} \label{EE4}
& e^{-t \phi_{\vp} (k) } \mathcal{F}_x [ J_2 (u)+J_3(u) ] (t,k) 
= \sum_{k=k_{1,2,3}} e^{- t \Phi_{\vp}^{(3)}} 
(-Q^{(3)}) (1- [3 \chi_{R1}^{(3)}]_{sym}^{(3)}) \prod_{j=1}^3 \ha{v} (t, k_j) \notag \\
& \hspace{0.5cm} = \sum_{k=k_{1,2, 3}} e^{- t \Phi_{\vp}^{(3)}} 
(-Q^{(3)}) (\chi_{NR1}^{(3)} - [3 \chi_{R3}^{(3)}]_{sym}^{(3)} ) \prod_{j=1}^3 \ha{v}(t,k_j) \notag \\
& \hspace{0.5cm}=
\La_{\vp}^{(3)} (- Q^{(3)} \chi_{NR1}^{(3)}, \hat{v}(t) ) (t,k)+ \La_{\vp}^{(3)} ( Q^{(3)} [3 \chi_{R3}^{(3)} ]_{sym}^{(3)}, \ha{v}(t)) (t, k).
\end{align}
Since
\EQQ{
& e^{-t \phi_{\vp} (k) } \mathcal{F}_x \big[ -30 \de\,  u^4 \, \p_x u \big] (t,k)
= \sum_{k=k_{1,2,3,4,5}} e^{- t \Phi_{\vp}^{(5)}} (- 6 \de q_1^{(5)}) \prod_{j=1}^5 \ha{v}(t,k_j) \\
& e^{-t \phi_{\vp} (k) } \mathcal{F}_x \Big[  \int_{\T} u^4 \, dx \,  \p_x u  \Big](t,k)
= \sum_{k=k_{1,2,3,4,5}}  e^{- t \Phi_{\vp}^{(5)}}  q_1^{(5)}  [\chi_{R1}^{(5)} ]_{sym}^{(5)} \prod_{j=1}^5 \ha{v}(t,k_j),
}
and
\EQQ{
e^{-t \phi_{\vp} (k) } \mathcal{F}_x \Big[ \Big( \int_{\T} u^2 \, dx \Big)^2  \p_x u  \Big](t,k)
= \sum_{k=k_{1,2,3,4,5}}  e^{- t \Phi_{\vp}^{(5)}}  q_1^{(5)}  [\chi_{R1}^{(5)} \chi_{R2}^{(5)} ]_{sym}^{(5)} \prod_{j=1}^5 \ha{v}(t,k_j),
}
we have
\begin{align} \label{EE5}
 e^{-t \phi_{\vp} (k) } \mathcal{F}_x [ J_1 (u ) ] (t,k)
 &= \sum_{k=k_{1,2,3,4,5} } e^{- t \Phi_{\vp}^{(5)}} (-6 \de) q_1^{(5)} (1-[5 \chi_{R1}^{(5)}]_{sym}^{(5)}) 
 \prod_{j=1}^5 \ha{v}(t,k_j)  \notag \\
&= \La_{\vp}^{(5)} \big( -6 \de q_1^{(5)} (1- [5 \chi_{R1}^{(5)}]_{sym}^{(5)})   , \ha{v} (t) \big) (t,k). 
\end{align}
and 
\begin{align} \label{EE6}
e^{-t \phi_{\vp} (k) } \mathcal{F}_x [J_4(u)] (t,k) & = 
\sum_{k=k_{1,2,3,4,5} } e^{- t \Phi_{\vp}^{(5)}} \Big( -\frac{4}{5} \ga^2 \Big) q_1^{(5)}  [\chi_{R1}^{(5)} (1-\chi_{R2}^{(5)}) ]_{sym}^{(5)} 
 \prod_{j=1}^3 \ha{v}(t ,k_j) \notag \\
& = \La_{\vp}^{(5)} \Big( -\frac{4}{5} \ga^2 q_1^{(5)} [\chi_{R1}^{(5)} (1- \chi_{R2}^{(5)}) ]_{sym}^{(5)}, \ha{v} (t)  \Big) (t,k) .
\end{align}
Therefore, collecting (\ref{EE2})--(\ref{EE6}), we obtain (\ref{eq20}). 
\end{proof}

\begin{prop} \label{prop_NF11}
Let $N \in \{ 3,5,7 \}$, $s \ge 3/2$, $\vp \in L^2(\T)$, $T>0$ and an $N$-multiplier $m^{(N)}$ satisfy
\begin{align*}
|m^{(N)} (k_1, \dots , k_{N} ) | \lesssim 1, 
\hspace{0.5cm} | (m^{(N)} \Phi_{\vp}^{(N)}) (k_1, \dots, k_N) | \lesssim \langle k_{\max} \rangle^3 .
\end{align*} 
If $ u \in C([-T, T]:H^s(\T))$ is a solution of \eqref{5mKdV3}, then 
$\ha{v}(t,k)=e^{-t \phi_{\vp} (k)} \ha{u}(t,k) $ satisfies the following equation for each $k \in \Z$:
\begin{align}
\label{NF11}
& \p_t \Lambda_{\vp}^{(N)} (\ti{m}^{(N)}, \ha{v}(t))(t,k) = 
\Lambda_{\vp}^{(N)} (-\ti{m}^{(N)} \Phi_{\vp}^{(N)}, \ha{v} (t) )(t,k) \notag \\
& \hspace{0.3cm} 
+ \Lambda_{\vp}^{(N+2)} \big( \big[ [N \ti{m}^{(N)} ]_{ext1}^{(N+2)} [ -Q^{(3)} \chi_{NR1}^{(3)} ]_{ext2}^{(N+2)}  \big]_{sym}^{(N+2)}, 
\ha{v} (t) \big)(t,k) \nonumber \\
& \hspace{0.3cm} 
+ \Lambda_{\vp}^{(N+2)} \big( \big[ [N \ti{m}^{(N)} ]_{ext1}^{(N+2)} [ Q^{(3)} [3 \chi_{R3}^{(3)}]_{sym}^{(3)} ]_{ext2}^{(N+2)}  \big]_{sym}^{(N+2)}, \ha{v} (t) \big)(t,k) \nonumber \\
& \hspace{0.3cm} 
+ \Lambda_{\vp}^{(N+4)} \big( \big[ [N \ti{m}^{(N)} ]_{ext1}^{(N+4)} [ -Q_1^{(5)} ]_{ext2}^{(N+4)}  \big]_{sym}^{(N+4)}, \ha{v} (t) \big)(t,k).
\end{align} 
\end{prop}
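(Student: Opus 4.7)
The plan is to compute $\partial_t \Lambda_{\vp}^{(N)}(\tilde{m}^{(N)}, \ha{v}(t))$ directly from the definition, then reduce each resulting term to the claimed form using the ODE for $\ha{v}$ from Proposition~\ref{prop_req1} together with a phase factorization identity. Starting from
\begin{equation*}
\Lambda_{\vp}^{(N)}(\tilde{m}^{(N)}, \ha{v}(t))(t,k) = \sum_{k=k_{1,\dots,N}} e^{-t\Phi_{\vp}^{(N)}} \tilde{m}^{(N)}(k_1,\dots,k_N) \prod_{l=1}^N \ha{v}(t,k_l),
\end{equation*}
I would apply the product rule termwise in $t$. The growth assumptions $|m^{(N)}|\lesssim 1$ and $|m^{(N)}\Phi_{\vp}^{(N)}|\lesssim \LR{k_{\max}}^3$, combined with $v\in C^1([-T,T];H^{s-3}(\T))$ from Proposition~\ref{prop_req1} (cf.~\eqref{eq_es}) and the multilinear Sobolev-type bounds in Lemmas~\ref{lem_nl2}--\ref{Le7}, guarantee absolute convergence and legitimize differentiation under the sum. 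The derivative hitting $e^{-t\Phi_{\vp}^{(N)}}$ immediately produces the first term $\Lambda_{\vp}^{(N)}(-\tilde{m}^{(N)}\Phi_{\vp}^{(N)},\ha{v}(t))(t,k)$.

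Next, for each $l=1,\dots,N$ I would substitute \eqref{eq20} for $\partial_t \ha{v}(t,k_l)$, which decomposes into the non-resonant cubic $\Lambda_{\vp}^{(3)}(-Q^{(3)}\chi_{NR1}^{(3)},\ha v)(t,k_l)$, the resonant cubic $\Lambda_{\vp}^{(3)}(Q^{(3)}[3\chi_{R3}^{(3)}]_{sym}^{(3)},\ha v)(t,k_l)$, and the quintic $\Lambda_{\vp}^{(5)}(-Q_1^{(5)},\ha v)(t,k_l)$. Inserting each expression into the $l$-th slot produces a double sum with two exponential factors, and the key identity is the phase factorization
\begin{equation*}
\Phi_{\vp}^{(N)}(k_1,\dots,k_{l-1},\kappa_{1,\dots,r},k_{l+1},\dots,k_N) + \Phi_{\vp}^{(r)}(\kappa_1,\dots,\kappa_r) = \Phi_{\vp}^{(N+r-1)}(k_1,\dots,\kappa_1,\dots,\kappa_r,\dots,k_N),
\end{equation*}
valid for $r\in\{3,5\}$ under $k_l=\kappa_{1,\dots,r}$, which follows immediately from $\phi_{\vp}(k_l)=\phi_{\vp}(\kappa_{1,\dots,r})$ and the definition of $\Phi_{\vp}^{(M)}$. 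This identity collapses the two exponentials into $e^{-t\Phi_{\vp}^{(N+r-1)}}$, upgrading the expression to an $(N+r-1)$-linear functional.

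Finally, I would relabel the summation variables so that the cubic or quintic indices occupy the last slots and sum over $l=1,\dots,N$. Because $\tilde{m}^{(N)}$ is symmetric, each of the $N$ contributions coincides after relabeling with $[N\tilde{m}^{(N)}]_{ext1}^{(N+r-1)}$ tensored (via the $ext2$ operator) with the cubic or quintic multiplier. Applying $[\,\cdot\,]_{sym}^{(N+r-1)}$ — which changes nothing at the level of the functional since $\prod_l \ha{v}(t,k_l)$ is already symmetric — yields exactly the three symmetrized expressions on the right-hand side of \eqref{NF11}. The main technical obstacle is the last step: carefully matching the combinatorial factor $N$ produced by summing over the substitution slot $l$ with the $[N\tilde m^{(N)}]_{ext1}^{(N+r-1)}$ normalization, while simultaneously checking that the interchange of $\partial_t$ with the infinite sum is justified by the assumed bounds and the multilinear estimates; once those are verified, \eqref{NF11} follows by mere collection of terms.
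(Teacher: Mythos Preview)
Your proposal is correct and follows essentially the same route as the paper: differentiate under the sum (justified via the bounds on $m^{(N)}$, $m^{(N)}\Phi_\vp^{(N)}$, Lemma~\ref{lem_nl2}, and \eqref{eq_es}), use symmetry of $\tilde m^{(N)}$ to place $\partial_t\ha v$ in a single slot with coefficient $N$, substitute \eqref{eq20}, and collapse the phases via the additivity identity before symmetrizing. The paper invokes symmetry first to reduce to a single slot rather than summing over $l$, but this is a cosmetic difference.
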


Note that any $L_{j, \vp}^{(N)}$ defined in Proposition~\ref{prop_NF2} satisfies 
\begin{align*}
|L_{j, \vp}^{(N)} \chi_{>L}^{(N)} | \lesssim 1,
\hspace*{2em} 
|L_{j, \vp}^{(N)} \Phi_{\vp}^{(N)} \chi_{>L}^{(N)} | \lesssim \langle k_{\max} \rangle^3
\end{align*}
for $L \gg \max\{ 1, |\ga| E_1(\vp)  \}$. Thus, we can apply Proposition~\ref{prop_NF11} 
with $\ti{m}^{(N)}=\ti{L}_{j, \vp}^{(N)} \chi_{>L}^{(N)}$. 
For details, see Remark~\ref{rem_pwb11}. 

\begin{proof}[Proof of Proposition~\ref{prop_NF11}] 
At least formally, we have
\begin{align}\label{NF112}
\p_t \La_{\vp}^{(N)} (\ti{m}^{(N)}, \ha{v} (t))(t, k)
&=\sum_{k=k_{1, \dots, N}} \frac{\p}{\p t} \bigg\{ e^{-t \Phi_{\vp}^{(N)}} \ti{m}^{(N)} \prod_{j=1}^N \ha{v} (t, k_j) \bigg\}\\
&= \mathcal{N}_1(t,k) +\mathcal{N}_2(t, k) \notag
\end{align}
where
\begin{align*}
\mathcal{N}_1(t,k)&=  \sum_{k=k_{1,\cdots, N}}  e^{-t \Phi_{\vp}^{(N)}} (-\ti{m}^{(N)} \Phi_{\vp}^{(N)} ) \prod_{j=1}^N \ha{v}(t , k_j) \\
& =\Lambda_{\vp}^{(N)} (-\ti{m}^{(N)} \Phi_{\vp}^{(N)}, \ha{v})(t,k) \\
\mathcal{N}_2(t, k)&= \sum_{k=k_{1,\cdots, N}}  e^{-t \Phi_{\vp}^{(N)}} \ti{m}^{(N)} \frac{\p}{\p t} \, \prod_{j=1}^N \ha{v}(t , k_j).
\end{align*}
By $| \ti{m}^{(N)} \Phi_{\vp}^{(N)} | \le [ |m^{(N)} \Phi_{\vp}^{(N)} | ]_{sym}^{(N)}\lesssim  \langle k_{\max} \rangle^3 $ and Lemma~\ref{lem_nl2}, 
\begin{align*} 
\Big\| \sum_{k=k_{1,\cdots, N}}  |e^{-t \Phi_{\vp}^{(N)}}| |(-\ti{m}^{(N)} \Phi_{\vp}^{(N)} )| \prod_{j=1}^N |\ha{v}(t , k_j)| \Big\|_{L_T^{\infty} l_{s-3}^2} \lesssim \| v \|_{L_T^{\infty} H^s}^N
\end{align*}
and by $|\ti{m}^{(N)}| \lesssim 1 $,
Lemma~\ref{lem_nl2} and \eqref{eq_es} in Proposition \ref{prop_req1},
\begin{align*}
&\Big\| \sum_{k=k_{1,\dots, N}}  |e^{-t \Phi_{\vp}^{(N)}}| |\ti{m}^{(N)} | \sum_{i=1}^N |\p_t \ha{v}(t , k_i)|\, \prod_{j\in \{1,\ldots,N\}\setminus \{i\}} |\ha{v}(t , k_j)|  \Big\|_{L_T^{\infty} l_{s-3}^2  }\\
& \lesssim \Big\| \sum_{k=k_{1,\cdots, N}}  
\langle k_{\max} \rangle^3  \sum_{i=1}^N \langle k_i \rangle^{-3}|\p_t \ha{v}(t , k_i)|\, \prod_{j\in \{1,\ldots,N\}\setminus \{i\}} |\ha{v}(t , k_j)|  \Big\|_{L_T^{\infty} l_{s-3}^2  }\\
& \lesssim \| \p_t v \|_{L_T^{\infty} H^{s-3}} \| v \|_{L_T^{\infty} H^s }^{N-1} 
\lesssim \| v \|_{L_T^{\infty} H^s}^{N+2} + \| v \|_{L_T^{\infty} H^s}^{N+4}. 
\end{align*}
Thus, for each $k\in \Z$, the convergence  of $\mathcal{N}_1(t,k)$ and $\mathcal{N}_2(t,k)$ is absolute and uniform in $t \in [-T, T]$. Therefore, changing the sum and the time differentiation in \eqref{NF112} can be verified strictly. 
Moreover,
\begin{align*} 
\mathcal{N}_2 (t, k) &= \sum_{k=k_{1,\cdots, N}}  e^{-t \Phi_{\vp}^{(N)}}  N \ti{m}^{(N)} \, \p_t \ha{v} (t, k_N)\prod_{j=1}^{N-1} \ha{v}(t , k_j)  \nonumber \\
&= \La_{\vp}^{(N)} \big(N \ti{m}^{(N)}, \ha{v}(t), \dots, \ha{v}(t), \p_t \ha{v}(t) \big) (t,k)
\end{align*}
Substituting \eqref{eq20} for it, we get
\begin{align*}
\mathcal{N}_2(t,k)  
 = & \La_{\vp}^{(N)} \big(N \ti{m}^{(N)}, \ha{v}(t), \dots, \ha{v}(t), \La_{\vp}^{(3)} (-Q^{(3)} \chi_{NR1}^{(3)}, \ha{v} (t)  )   \big) (t,k) \nonumber \\
 & \hspace{0.3cm}
+ \La_{\vp}^{(N)} \big(N \ti{m}^{(N)}, \ha{v}(t), \dots, \ha{v}(t), \La_{\vp}^{(3)} (Q^{(3)} [ 3 \chi_{R3}^{(3)} ]_{sym}^{(3)}, \ha{v} (t)  ) \big) (t,k) \notag \\ 
 & \hspace{0.3cm}
 +   \La_{\vp}^{(N)} \big(N \ti{m}^{(N)}, \ha{v}(t), \dots, \ha{v}(t), \La_{\vp}^{(5)}  (-Q_1^{(5)}, \ha{v} (t) )  \big) (t,k) \nonumber \\
 = &  \La_{\vp}^{(N+2)} \big( [ N \ti{m}^{(N)}]_{ext1}^{(N+2)} [ -Q^{(3)} \chi_{NR1}^{(3)} ]_{ext2}^{(N+2)}, \ha{v}(t) \big) (t,k) \nonumber \\
 & \hspace{0.3cm} 
 +  \La_{\vp}^{(N+2)} \big( [ N \ti{m}^{(N)}]_{ext1}^{(N+2)} [ Q^{(3)} [3 \chi_{R3}^{(3)} ]_{sym}^{(3)} ]_{ext2}^{(N+2)}, \ha{v}(t) \big) (t,k) 
 \notag \\
 & \hspace{0.3cm} +  \La_{\vp}^{(N+4)} \big( [ N \ti{m}^{(N)}]_{ext1}^{(N+4)} [ -Q_1^{(5)}]_{ext2}^{(N+4)}, \ha{v}(t) \big) (t,k) \nonumber \\
 = &  \La_{\vp}^{(N+2)} 
 \big( \big[  [ N \ti{m}^{(N)}]_{ext1}^{(N+2)} [ -Q^{(3)} \chi_{NR1}^{(3)} ]_{ext2}^{(N+2)} \big]_{sym}^{(N+2)}, \ha{v}(t) \big) (t,k) \nonumber \\
& \hspace{0.3cm} 
+   \La_{\vp}^{(N+2)} 
 \big( \big[  [ N \ti{m}^{(N)}]_{ext1}^{(N+2)} [ Q^{(3)} [3 \chi_{R3}^{(3)}]_{sym}^{(3)} ]_{ext2}^{(N+2)} \big]_{sym}^{(N+2)}, \ha{v}(t) \big) (t,k) 
 \notag \\
 & \hspace{0.3cm} 
 + \La_{\vp}^{(N+4)} \big( \big[ [ N \ti{m}^{(N)}]_{ext1}^{(N+4)} [ -Q_1^{(5)}]_{ext2}^{(N+4)} \big]_{sym}^{(N+4)} , \ha{v}(t) \big) (t,k).
\end{align*}
Therefore, we obtain \eqref{NF11}.
\end{proof}

\begin{lem}\label{lem_sym}
Let a $5$-multiplier $m^{(5)}$ 
satisfy 
$$m^{(5)} (k_1, k_2, k_3, k_4, k_5)=m^{(5)} (k_3, k_4, k_1, k_2, k_5). $$
Then, it follows that
\begin{equation} \label{sym11}
9 \Big[ \Big[ \frac{Q_1^{(3)}}{\Phi_{0}^{(3)} } \chi_{NR1}^{(3)} \Big]_{ext1}^{(5)}
[ Q_1^{(3)} \chi_{NR1}^{(3)} ]_{ext2}^{(5)} \, \chi_{H1}^{(5)} m^{(5)} \Big]_{sym}^{(5)}
=\big[q_2^{(5)} \chi_{H1}^{(5)} (1-\chi_{R2}^{(5)}) m^{(5)} ]_{sym}^{(5)}. 
\end{equation}
\end{lem}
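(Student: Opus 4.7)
The plan is to unpack both sides of \eqref{sym11} to a common symmetrized expression, exploiting the support condition coming from $\chi_{H1}^{(5)}$ and the involution implicit in the definition of $q_2^{(5)}$.

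First I would observe that on $\supp \chi_{H1}^{(5)}$, the inequality $|k_5| > 8^{3} \max_{1 \le l \le 4} |k_l|$ forces the sub-sums $k_{3,4,5}$, $k_{4,5}$, $k_{3,5}$, $k_{1,3,4,5}$, $k_{2,3,4,5}$ to be automatically nonzero by the triangle inequality. Therefore on this support the constraint $\chi_{NR1}^{(3)}(k_1,k_2,k_{3,4,5}) \, \chi_{NR1}^{(3)}(k_3,k_4,k_5) = 1$ reduces to the single requirement that $k_{1,2} \ne 0$ and $k_{3,4} \ne 0$, which is precisely $1-\chi_{R2}^{(5)}$.

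Introduce the auxiliary $5$-multiplier
\[
P(k_1,k_2,k_3,k_4,k_5) := \frac{Q_1^{(3)}}{\Phi_{0}^{(3)}}(k_1,k_2,k_{3,4,5}) \, Q_1^{(3)}(k_3,k_4,k_5).
\]
By the reduction above, the multiplier inside the symmetrization on the left-hand side of \eqref{sym11} equals $P\,\chi_{H1}^{(5)}(1-\chi_{R2}^{(5)})\, m^{(5)}$, while the defining formula
\[
q_2^{(5)} = \frac{9}{2}\bigl\{ P(k_1,k_2,k_3,k_4,k_5) + P(k_3,k_4,k_1,k_2,k_5) \bigr\}
\]
rewrites the right-hand side multiplier as $\tfrac{9}{2}(P + P \circ \tau)\,\chi_{H1}^{(5)}(1-\chi_{R2}^{(5)})\, m^{(5)}$, where $\tau \in S_5$ is the involution $(1\,3)(2\,4)$ fixing $5$.

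The main step is then a change-of-variable inside the symmetrization $\frac{1}{5!}\sum_{\sigma \in S_5}$: substituting $\sigma \mapsto \sigma \circ \tau$ in the second summand turns $P\circ\tau$ into a copy of $P$. The weights are preserved under $\tau$ because $\chi_{H1}^{(5)}$ is invariant under any permutation fixing the fifth coordinate, $\chi_{R2}^{(5)}$ is invariant under $\tau$ (it depends only on the multiset $\{k_{1,2}, k_{3,4}\}$), and $m^{(5)}$ is $\tau$-invariant by hypothesis. Hence the two summands contribute equally after $S_5$-symmetrization, so the factor $9/2$ on the right doubles to $9$, matching the left-hand side and yielding \eqref{sym11}. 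The only point requiring care is the triangle-inequality bookkeeping in the first step, namely that every auxiliary sub-sum containing $k_5$ is nonzero on $\supp \chi_{H1}^{(5)}$; once this is established, the rest is a routine symmetrization argument.
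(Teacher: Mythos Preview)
Your proposal is correct and follows essentially the same approach as the paper's proof. Both arguments use the observation that on $\supp \chi_{H1}^{(5)}$ the constraint $\chi_{NR1}^{(5)}$ collapses to $1-\chi_{R2}^{(5)}$, then exploit the $\tau$-invariance of $M:=\chi_{H1}^{(5)}(1-\chi_{R2}^{(5)})m^{(5)}$ to identify $9[PM]_{sym}^{(5)}$ with $\tfrac{9}{2}[(P+P\circ\tau)M]_{sym}^{(5)}=[q_2^{(5)}M]_{sym}^{(5)}$; the paper states this last equality directly while you phrase it via the substitution $\sigma\mapsto\sigma\tau$, but the content is identical.
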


\begin{proof}
By Remark~\ref{rem_sym}, 
\begin{equation*}
9  \big[ \frac{Q_1^{(3)}}{\Phi_{0}^{(3)} } \chi_{NR1}^{(3)} \big]_{ext1}^{(5)}
[ Q_1^{(3)} \chi_{NR1}^{(3)} ]_{ext2}^{(5)} \, \chi_{H1}^{(5)} m^{(5)} 
=9  \big[ \frac{Q_1^{(3)}}{\Phi_{0}^{(3)} } \big]_{ext1}^{(5)}
[ Q_1^{(3)} ]_{ext2}^{(5)} \, \chi_{H1}^{(5)} \chi_{NR1}^{(5)} m^{(5)}.
\end{equation*}
We put $M:=\chi_{H1}^{(5)} \chi_{NR1}^{(5)}  m^{(5)}$. 
For $(k_1, k_2, k_3, k_4, k_5) \in \supp \chi_{H1}^{(5)}$, it  follows that $\chi_{NR1}^{(5)}= 1 -\chi_{R2}^{(5)}$, 
which leads that $M=\chi_{H1}^{(5)} (1 -\chi_{R2}^{(5)}) m^{(5)} $ and 
\begin{equation*}
M(k_1, k_2, k_3, k_4, k_5)=M(k_3, k_4, k_1, k_2, k_5). 
\end{equation*}
Therefore, the left hand side of \eqref{sym11} is equal to  
\begin{align*}
& 9 \Big[ \Big[ \frac{Q_1^{(3)}}{ \Phi_0^{(3)} }  \Big]_{ext1}^{(3)} [Q_1^{(3)}]_{ext2}^{(3)} \, M  \Big]_{sym}^{(5)} \\
& = \frac{9}{2} \Big[\Big\{ \frac{Q_1^{(3)}}{ \Phi_0^{(3)} } (k_1, k_2, k_{3,4,5}) Q_1^{(3)}(k_3, k_4, k_5 ) 
+ \frac{Q_1^{(3)}}{ \Phi_0^{(3)} } (k_3, k_4, k_{1,2,5}) Q_1^{(3)} (k_1, k_2, k_5) \Big\} \, M  \Big]_{sym}^{(5)} \\
&=[ q_2^{(5)} M ]_{sym}^{(5)}=[ q_2^{(5)} \chi_{H1}^{(5)} (1-\chi_{R2}^{(5)}) m^{(5)} ]_{sym}^{(5)}. 
\end{align*}
\end{proof}

\begin{lem}\label{L31}
\begin{align} 
&\partial_t \Lambda_{\vp}^{(3)} (  \ti{L}_{1, \vp}^{(3)} \chi_{>L}^{(3)}, \ha{v} (t) ) (t,k)
=\Lambda_{\vp}^{(3)} (  -\ti{L}_{1, \vp}^{(3)} \Phi_{\vp}^{(3)}  \chi_{>L}^{(3)}, \ha{v} (t) ) (t,k) \notag \\
&+ \Lambda_{\vp}^{(5)} \big( \sum_{i=2}^{8} \ti{L}_{i ,\vp}^{(5)} \Phi_{\vp}^{(5)}  \chi_{ > L}^{(5)} + 
\sum_{i=2}^{8} \ti{L}_{i, \vp}^{(5)} \Phi_{\vp}^{(5)} \chi_{ \le L}^{(5)} +
\sum_{i=8}^{23} \ti{M}_{i, \vp}^{(5)}, \ha{v} (t) \big) (t,k) \notag \\ 
&+ \La_{\vp}^{(5)} \big( \big[ [ 3 \ti{L}_{1, \vp}^{(3)} \chi_{>L}^{(3)} ]_{ext1}^{(5)} 
[Q^{(3)} [3 \chi_{R3}^{(3)}]_{sym}^{(3)} ]_{ext2}^{(5)} \big]_{sym}^{(5)}, \ha{v} (t) \big) (t, k) \notag \\
&  + \Lambda_{\vp}^{(7)} 
\big( \big[ [ 3 \ti{L}_{1, \vp}^{(3)}  \chi_{>L}^{(3)} ]_{est1}^{(7)} [-Q_1^{(5)}]_{ext2}^{(7)} \big]_{sym}^{(7)}, \ha{v} (t) \big) (t,k). 
\label{eq36}
\end{align}
\end{lem}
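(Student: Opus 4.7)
The plan is to apply Proposition~\ref{prop_NF11} with $N=3$ and $m^{(3)} = L_{1,\vp}^{(3)}\chi_{>L}^{(3)}$. First I would verify the hypothesis: on $\supp(\chi_{NR1}^{(3)}\chi_{H1}^{(3)}\chi_{>L}^{(3)})$ we have $k_{\max} > L \gg \max\{1,|\ga|E_1(\vp)\}$, so Lemma~\ref{Le1} gives $|\Phi_{\vp}^{(3)}| \gtrsim \min\{|k_{1,2}|,|k_{1,3}|,|k_{2,3}|\}\,k_{\max}^4$, whence $|L_{1,\vp}^{(3)}\chi_{>L}^{(3)}| \lesssim 1$ and $|L_{1,\vp}^{(3)}\Phi_{\vp}^{(3)}\chi_{>L}^{(3)}| \lesssim \langle k_{\max}\rangle^3$. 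Proposition~\ref{prop_NF11} then produces one cubic, two quintic, and one septic output. The cubic piece $-\ti{L}_{1,\vp}^{(3)}\Phi_{\vp}^{(3)}\chi_{>L}^{(3)}$ matches the first term of \eqref{eq36} verbatim; the septic piece matches the last term verbatim; and the quintic piece with multiplier $\bigl[[3\ti{L}_{1,\vp}^{(3)}\chi_{>L}^{(3)}]_{ext1}^{(5)}[Q^{(3)}[3\chi_{R3}^{(3)}]_{sym}^{(3)}]_{ext2}^{(5)}\bigr]_{sym}^{(5)}$ matches the third $\La^{(5)}$ contribution verbatim.

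All remaining work reduces to identifying the other quintic piece
\[
M := \bigl[[3\ti{L}_{1,\vp}^{(3)}\chi_{>L}^{(3)}]_{ext1}^{(5)}\,[-Q^{(3)}\chi_{NR1}^{(3)}]_{ext2}^{(5)}\bigr]_{sym}^{(5)}
\]
with $\sum_{i=2}^{8}\ti{L}_{i,\vp}^{(5)}\Phi_{\vp}^{(5)} + \sum_{i=8}^{23}\ti{M}_{i,\vp}^{(5)}$. I would decompose the second $3$-factor using the partition of unity $1 = [3\chi_{H1}^{(3)}]_{sym}^{(3)} + [3\chi_{H2,1}^{(3)}]_{sym}^{(3)} + [3\chi_{H2,2}^{(3)}]_{sym}^{(3)} + \chi_{H3}^{(3)}$. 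The contributions from $[3\chi_{H2,2}^{(3)}]_{sym}^{(3)}$ and $\chi_{H3}^{(3)}$ precisely form $\ti{M}_{8,\vp}^{(5)}$ by definition. For the other two contributions I apply equation \eqref{le211} of Lemma~\ref{Le6}, rewriting the symmetrized product as four terms restricted to $\chi_{NR(1,1)}^{(5)}, \chi_{NR(1,2)}^{(5)}, \chi_{NR(2,1)}^{(5)}, \chi_{NR(2,2)}^{(5)}$ with combinatorial coefficients $3,6,3,6$.

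Within each $\chi_{NR(i,j)}^{(5)}$ region I expand $Q^{(3)}\cdot Q^{(3)} = \sum_{a,b\in\{1,2,3\}}Q_a^{(3)}Q_b^{(3)}$ and stratify further by the size masks $\chi_{H1}^{(5)}, \chi_{A1}^{(5)}, \chi_{A2}^{(5)}, \chi_{A3}^{(5)}$ and the resonance masks $\chi_{R1}^{(5)}, \chi_{R4}^{(5)}, \chi_{R5}^{(5)}$. Lemma~\ref{lem_sym} collapses the $Q_1^{(3)}\cdot Q_1^{(3)}$ contribution on $\chi_{NR(1,1)}^{(5)}\cap \chi_{H1}^{(5)}$ into the symmetric multiplier $q_2^{(5)}$, producing $\ti{L}_{2,\vp}^{(5)}\Phi_{\vp}^{(5)}$ on the non-resonant subregion $(1-\chi_{R1}^{(5)})(1-\chi_{R4}^{(5)})(1-\chi_{R5}^{(5)})$ along with $\ti{M}_{9,\vp}^{(5)}, \ti{M}_{10,\vp}^{(5)}$ on the resonant pieces. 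The mixed combinations $Q_a^{(3)}Q_b^{(3)}$ populate $\ti{L}_{3,\vp}^{(5)},\ldots,\ti{L}_{6,\vp}^{(5)}$ and $\ti{M}_{11,\vp}^{(5)},\ldots,\ti{M}_{17,\vp}^{(5)}$ according to the stratification in the definitions. On $\chi_{NR(1,1)}^{(5)}\cap(1-\chi_{H1}^{(5)})$ I split by $\chi_{A1}^{(5)}$ (giving $\ti{L}_{7,\vp}^{(5)}\Phi_{\vp}^{(5)}$) versus $(1-\chi_{A1}^{(5)})$ (giving $\ti{M}_{18,\vp}^{(5)}$); the $\chi_{NR(2,1)}^{(5)}$ region split by $\chi_{A3}^{(5)}$ yields $\ti{L}_{8,\vp}^{(5)}\Phi_{\vp}^{(5)}$ and $\ti{M}_{22,\vp}^{(5)}$; and the $\chi_{NR(1,2)}^{(5)}, \chi_{NR(2,2)}^{(5)}$ regions give $\ti{M}_{21,\vp}^{(5)}, \ti{M}_{23,\vp}^{(5)}$. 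Finally, since $\ti{L}_{2,\vp}^{(5)},\ldots,\ti{L}_{7,\vp}^{(5)}$ carry $\Phi_0^{(3)}$ in the denominator while $\ti{L}_{1,\vp}^{(3)}$ supplies $\Phi_{\vp}^{(3)}$, I would write $1/\Phi_{\vp}^{(3)} = 1/\Phi_0^{(3)} + (1/\Phi_{\vp}^{(3)}-1/\Phi_0^{(3)})$ and split by $\chi_{>L}^{(3)} + \chi_{\le L}^{(3)} = 1$; the correction terms are exactly $\ti{M}_{19,\vp}^{(5)}$ and $\ti{M}_{20,\vp}^{(5)}$.

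The main obstacle will be the purely combinatorial verification that the numerical constants arising from the symmetrizations (the factors $3,6,9,18$ from Lemma~\ref{Le6}) together with the signs coming from the products $Q_a^{(3)}\cdot(-Q_b^{(3)})$ exactly reproduce the prefactors built into the definitions of $\ti{L}_{i,\vp}^{(5)}$ and $\ti{M}_{i,\vp}^{(5)}$, and that the partition-of-unity decomposition is exhaustive without double-counting. This bookkeeping is mechanical but intricate; once carried out, each resulting term is identified by inspection with a unique multiplier on the right-hand side of \eqref{eq36}.
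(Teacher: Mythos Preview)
Your proposal is correct and follows essentially the same route as the paper's proof: apply Proposition~\ref{prop_NF11}, reduce to the multiplier identity \eqref{eq38}, split by the partition $[3\chi_{H1}^{(3)}]_{sym}^{(3)}+[3\chi_{H2,1}^{(3)}]_{sym}^{(3)}+[3\chi_{H2,2}^{(3)}]_{sym}^{(3)}+\chi_{H3}^{(3)}=1$, invoke Lemma~\ref{Le6} and Lemma~\ref{lem_sym}, and track the resulting pieces through the masks exactly as you describe. One small slip: the mask $(1-\chi_{R5}^{(5)})$ plays no role in producing $\ti{L}_{2,\vp}^{(5)}$---the relevant decomposition there is $\chi_{H1}^{(5)}=\chi_{H1}^{(5)}\chi_{R1}^{(5)}+\chi_{H1}^{(5)}(1-\chi_{R1}^{(5)})\chi_{R4}^{(5)}+\chi_{H1}^{(5)}(1-\chi_{R1}^{(5)})(1-\chi_{R4}^{(5)})$ together with $\chi_{NR2}^{(5)}=(1-\chi_{R1}^{(5)})(1-\chi_{R2}^{(5)})$.
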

\begin{proof}
By Proposition \ref{prop_NF11} with $N=3$ and $\ti{m}^{(N)}=\ti{L}_{1, \vp}^{(3)} \chi_{>L}^{(3)}$, we have
\begin{align*} 
&\partial_t \Lambda_{\vp}^{(3)} (  \ti{L}_{1, \vp}^{(3)} \chi_{>L}^{(3)}, \ha{v} (t) ) (t,k)
=\Lambda_{\vp}^{(3)} (  -\ti{L}_{1, \vp}^{(3)} \Phi_{\vp}^{(3)}  \chi_{>L}^{(3)}, \ha{v} (t) ) (t,k)\\
&+ \Lambda_{\vp}^{(5)} \big( \big[ [3 \ti{L}_{1, \vp}^{(3)} \chi_{>L}^{(3)}  ]_{ext1}^{(5)} [ -Q^{(3)} \chi_{NR1}^{(3)} ]_{ext2}^{(5)} \big]_{sym}^{(5)}, \ha{v} (t) \big) (t,k) \nonumber \\ 
&+ \Lambda_{\vp}^{(5)} \big( \big[ [3 \ti{L}_{1, \vp}^{(3)} \chi_{>L}^{(3)}  ]_{ext1}^{(5)} [ Q^{(3)} [3 \chi_{R3}^{(3)}]_{sym}^{(3)} ]_{ext2}^{(5)} \big]_{sym}^{(5)}, \ha{v} (t) \big) (t,k) \nonumber \\ 
& + \Lambda_{\vp}^{(7)} 
\big( \big[ [ 3 \ti{L}_{1, \vp}^{(3)}  \chi_{>L}^{(3)} ]_{est1}^{(7)} [-Q_1^{(5)}]_{ext2}^{(7)} \big]_{sym}^{(7)}, \ha{v} (t) \big) (t,k).
\end{align*}
Thus, we only need to show
\begin{align} \label{eq38}
\big[ [3 \ti{L}_{1, \vp}^{(3)} \chi_{>L}^{(3)}  ]_{ext1}^{(5)} [ -Q_1^{(3)} \chi_{NR1}^{(3)} ]_{ext2}^{(5)} \big]_{sym}^{(5)} 
= \sum_{i=2}^{8} \ti{L}_{i, \vp}^{(5)} \Phi_{\vp}^{(5)}+  \sum_{i=8}^{23} \ti{M}_{i, \vp}^{(5)}. 
\end{align}
By $[3 \chi_{H1}^{(3)} ]_{sym}^{(3)} + [3 \chi_{H2,1}^{(3)} ]_{sym}^{(3)}+ [3 \chi_{H2,2}^{(3)}]_{sym}^{(3)}+ \chi_{H3}^{(3)} =1$, 
the left hand side of \eqref{eq38} is equal to 
\begin{equation*}
\ti{M}_{8, \vp}^{(5)}+ \big[ [3 \ti{L}_{1, \vp}^{(3)} \chi_{>L}^{(3)}  ]_{ext1}^{(5)} 
[ -Q^{(3)} \chi_{NR1}^{(3)} ([3 \chi_{H1}^{(3)}]_{sym}^{(3)} + [3  \chi_{H2,1}^{(3)}]_{sym}^{(3)} ) ]_{ext2}^{(5)} \big]_{sym}^{(5)}. 
\end{equation*}
By \eqref{le211} with $m_1^{(3)} =Q^{(3)} \chi_{NR1}^{(3)} \chi_{>L}^{(3)} / \Phi_{\vp}^{(3)}$ and $m_2^{(3)} = Q^{(3)} \chi_{NR1}^{(3)}$, we have
\begin{align}
& \big[  [3 \ti{L}_{1,\vp}^{(3)} \chi_{>L}^{(3)} ]_{ext1}^{(3)} [-Q^{(3)} \chi_{NR1}^{(3)} ([3 \chi_{H1}^{(3)}]_{sym}^{(3)} +[3 \chi_{H2,1}^{(3)} ]_{sym}^{(3)}) ]_{ext2}^{(5)} \big]_{sym}^{(5)}\notag\\
& = 3 \Big[ \Big[ \frac{Q^{(3)} }{ \Phi_{\vp}^{(3)}} \chi_{NR1}^{(3)} \chi_{>L}^{(3)} [ 3 \chi_{H1}^{(3)} ]_{sym}^{(3)}  \Big]_{ext1}^{(5)} 
[Q^{(3)} \chi_{NR1}^{(3)}  ([3 \chi_{H1}^{(3)}]_{sym}^{(3)}+ [ 3\chi_{H2,1}^{(3)} ]_{sym}^{(3)} )   ]_{ext2}^{(5)} \Big]_{sym}^{(5)}  \nonumber \\
& =9 \Big[ \Big[ \frac{Q^{(3)} }{\Phi_{\vp}^{(3)} } \chi_{NR1}^{(3)} \chi_{>L}^{(3)}  \Big]_{ext1}^{(5)} [Q^{(3)} \chi_{NR1}^{(3)}  ]_{ext2}^{(5)} \, \chi_{NR(1,1)}^{(5)}  \Big]_{sym}^{(5)} \notag \\
&+9 \Big[ \Big[ \frac{Q^{(3)} }{\Phi_{\vp}^{(3)} } \chi_{NR1}^{(3)} \chi_{>L}^{(3)}  \Big]_{ext1}^{(5)} [Q^{(3)} \chi_{NR1}^{(3)}  ]_{ext2}^{(5)} \, \chi_{NR(2,1)}^{(5)}  \Big]_{sym}^{(5)} \label{eqn312}\\
& + \ti{M}_{21, \vp}^{(5)}+ \ti{M}_{23, \vp}^{(5)}. \notag 
\end{align}
Since
$\chi_{NR(2,1)}^{(5)}=\chi_{NR(2,1)}^{(5)} \chi_{A3}^{(5)}+ \chi_{NR(2,1)}^{(5)} (1-\chi_{A3}^{(5)})$, \eqref{eqn312} is equal to
\[
\ti{L}_{8, \vp}^{(5)} \Phi_{\vp}^{(5)}+  \ti{M}_{22, \vp}^{(5)}. 
\]
Therefore, we only need to show
\begin{align} \label{eq39}
9 \Big[ \Big[ \frac{Q^{(3)} }{\Phi_{\vp}^{(3)} } \chi_{NR1}^{(3)} \chi_{>L}^{(3)}  \Big]_{ext1}^{(5)} [Q^{(3)} \chi_{NR1}^{(3)}  ]_{ext2}^{(5)} \, \chi_{NR(1,1)}^{(5)}  \Big]_{sym}^{(5)} 
=\sum_{i=2}^{7} \ti{L}_{i, \vp}^{(5)} \Phi_{\vp}^{(5)}+\sum_{i=9}^{20} \ti{M}_{i, \vp}^{(5)}.
\end{align}
By the definition,
\begin{align} \label{eq33}
 &9 \Big[ \Big[ \frac{Q^{(3)} }{\Phi_{\vp}^{(3)} } \chi_{NR1}^{(3)} \chi_{>L}^{(3)}  \Big]_{ext1}^{(5)} [Q^{(3)} \chi_{NR1}^{(3)}  ]_{ext2}^{(5)} \, \chi_{NR(1,1)}^{(5)}  \Big]_{sym}^{(5)} \notag \\
& = \sum_{i=19}^{20} \ti{M}_{i, \vp}^{(5)} 
+ 9 \Big[ \Big[ \frac{Q^{(3)} }{\Phi_{0}^{(3)} } \chi_{NR1}^{(3)}   \Big]_{ext1}^{(5)} [Q^{(3)} \chi_{NR1}^{(3)}  ]_{ext2}^{(5)} \, 
\chi_{NR(1,1)}^{(5)}  \Big]_{sym}^{(5)}.
\end{align}
Since $\supp \chi_{H1}^{(5)} \subset \supp \chi_{NR(1,1)}^{(5)}$, 
\begin{align*}
\chi_{NR(1,1)}^{(5)}& = \chi_{NR(1,1)}^{(5)}(1- \chi_{H1}^{(5)})+ \chi_{H1}^{(5)} \\
& = \chi_{NR(1,1)}^{(5)} (1- \chi_{H1}^{(5)}) \chi_{A1}^{(5)} + \chi_{NR(1,1)}^{(5)} (1-\chi_{H1}^{(5)}) (1- \chi_{A1}^{(5)})+ \chi_{H1}^{(5)},
\end{align*}
which leads that 
\begin{align} \label{eqn321}
& 9 \Big[ \Big[ \frac{Q^{(3)} }{\Phi_{0}^{(3)} } \chi_{NR1}^{(3)}   \Big]_{ext1}^{(5)} [Q^{(3)} \chi_{NR1}^{(3)}  ]_{ext2}^{(5)} \, 
\chi_{NR(1,1)}^{(5)}  \Big]_{sym}^{(5)} \notag \\
& = \ti{L}_{7, \vp}^{(5)} \Phi_{\vp}^{(5)} + \ti{M}_{18, \vp}^{(5)} 
+  9 \Big[ \Big[ \frac{Q^{(3)} }{\Phi_{0}^{(3)} } \chi_{NR1}^{(3)}   \Big]_{ext1}^{(5)} [Q^{(3)} \chi_{NR1}^{(3)}  ]_{ext2}^{(5)} \, 
\chi_{H1}^{(5)} \Big]_{sym}^{(5)}.
\end{align}
By $Q^{(3)}=Q_1^{(3)} +Q_2^{(3)}+ Q_3^{(3)}$, 
\begin{align} \label{eqn320}
& 9 \Big[ \Big[ \frac{Q^{(3)} }{\Phi_{0}^{(3)} } \chi_{NR1}^{(3)}   \Big]_{ext1}^{(5)} [Q^{(3)} \chi_{NR1}^{(3)}  ]_{ext2}^{(5)} \, 
\chi_{H1}^{(5)} \Big]_{sym}^{(5)} \notag \\
& = \sum_{i=3}^6 \ti{L}_{i, \vp}^{(5)} \Phi_{\vp}^{(5)}+ \sum_{i=11}^{17} \ti{M}_{i, \vp}^{(5)}
+ 9 \Big[ \Big[ \frac{Q_1^{(3)} }{\Phi_{0}^{(3)} } \chi_{NR1}^{(3)}   \Big]_{ext1}^{(5)} [Q_1^{(3)} \chi_{NR1}^{(3)}  ]_{ext2}^{(5)} \, 
\chi_{H1}^{(5)} \Big]_{sym}^{(5)}.
\end{align}
Since $\chi_{H1}^{(5)}=  \chi_{H1}^{(5)} \chi_{R1}^{(5)}+ \chi_{H1}^{(5)} (1- \chi_{R1}^{(5)}) \chi_{R4}^{(5)}
+ \chi_{H1}^{(5)} (1- \chi_{R1}^{(5)} )(1- \chi_{R4}^{(5)})$, 
by $\chi_{NR2}^{(5)}= (1-\chi_{R1}^{(5)}) (1- \chi_{R2}^{(5)})$ and 
Lemma~\ref{lem_sym} with $m^{(5)}= \chi_{R1}^{(5)}, (1- \chi_{R1}^{(5)}) \chi_{R4}^{(5)}$ or $(1-\chi_{R1}^{(5)}) (1-\chi_{R4}^{(5)})$, 
it follows that 
\begin{equation} \label{eqn322}
 9 \Big[ \Big[ \frac{Q_1^{(3)} }{\Phi_{0}^{(3)} } \chi_{NR1}^{(3)}   \Big]_{ext1}^{(5)} [Q_1^{(3)} \chi_{NR1}^{(3)}  ]_{ext2}^{(5)} \, 
\chi_{H1}^{(5)}  \Big]_{sym}^{(5)}
= \ti{L}_{2, \vp}^{(5)} \Phi_{\vp}^{(5)}+ \sum_{i=9}^{10} \ti{M}_{i, \vp}^{(5)}.
\end{equation}
Collecting \eqref{eq33}--\eqref{eqn322}, we obtain \eqref{eq39}. 
\end{proof}

\begin{lem}\label{L32}
\begin{align} \label{eq46}
&\partial_t \Lambda_{\vp}^{(5)} (\ti{L}_{2, \vp}^{(5)} \chi_{>L}^{(5)}, \ha{v} (t) ) (t,k)
=\Lambda_{\vp}^{(5)} (-\ti{L}_{2, \vp}^{(5)} \Phi_{\vp}^{(5)} \chi_{>L}^{(5)} , \ha{v} (t) ) (t,k) \nonumber \\
&+  \Lambda_{\vp}^{(7)} \Big( \sum_{i=1}^2 \ti{L}_{i, \vp}^{(7)} \Phi_{\vp}^{(7)} \chi_{>L}^{(7)
}+ \sum_{i=1}^2 \ti{L}_{i, \vp}^{(7)} \Phi_{\vp}^{(7)} \chi_{\le L}^{(7)} 
+ \sum_{i=5}^{15} \ti{M}_{i, \vp}^{(7)}, \ha{v} (t) \Big) (t,k) \nonumber \\
& + \La_{\vp}^{(7)} \big( \big[ [  5 \ti{L}_{2, \vp}^{(5)} \chi_{>L}^{(5)} ]_{ext1}^{(7)} [ Q^{(3)} [ 3 \chi_{R3}^{(3)} ]_{sym}^{(3)} ]_{ext2}^{(7)}  \big]_{sym}^{(7)}, \ha{v}(t) \big) (t, k) \notag \\
& +\Lambda_{\vp}^{(9)} \big( \big[ [ 5 \ti{L}_{2, \vp}^{(5)} \chi_{>L}^{(5)}]_{ext1}^{(9)} [-Q_1^{(5)}]_{ext2}^{(9)} \big]_{sym}^{(9)}, \ha{v} (t) \big) (t,k).
\end{align}
\end{lem}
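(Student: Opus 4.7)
The plan is to mirror the proof of Lemma \ref{L31} one level up. First, I would apply Proposition \ref{prop_NF11} with $N=5$ and $\ti{m}^{(5)} = \ti{L}_{2,\vp}^{(5)} \chi_{>L}^{(5)}$. The required bounds $|L_{2,\vp}^{(5)} \chi_{>L}^{(5)}| \lesssim 1$ and $|L_{2,\vp}^{(5)} \Phi_{\vp}^{(5)} \chi_{>L}^{(5)}| \lesssim \langle k_{\max}\rangle^3$ hold for $L \gg \max\{1, |\ga| E_1(\vp)\}$ by the lower bounds on $|\Phi_{\vp}^{(5)}|$ from Lemma \ref{Le2}, available on the support $\chi_{H1}^{(5)}\chi_{NR2}^{(5)}(1-\chi_{R4}^{(5)})$ appearing inside $L_{2,\vp}^{(5)}$. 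The proposition then immediately produces: (a) the term $\Lambda_{\vp}^{(5)}(-\ti{L}_{2,\vp}^{(5)}\Phi_{\vp}^{(5)}\chi_{>L}^{(5)}, \ha{v})$, which is the first summand in the claim; (b) the seven-linear term containing $[Q^{(3)}[3\chi_{R3}^{(3)}]_{sym}^{(3)}]_{ext2}^{(7)}$, which is the third summand; (c) the nine-linear term with $[-Q_1^{(5)}]_{ext2}^{(9)}$, which is the fourth summand; and (d) a residual seven-linear term with symbol $\big[[5\ti{L}_{2,\vp}^{(5)} \chi_{>L}^{(5)}]_{ext1}^{(7)}[-Q^{(3)}\chi_{NR1}^{(3)}]_{ext2}^{(7)}\big]_{sym}^{(7)}$. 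The entire burden of the lemma reduces to showing that this symbol equals $\sum_{i=1}^{2}\ti{L}_{i,\vp}^{(7)}\Phi_{\vp}^{(7)} + \sum_{i=5}^{15}\ti{M}_{i,\vp}^{(7)}$; splitting $1 = \chi_{>L}^{(7)} + \chi_{\le L}^{(7)}$ in the $\ti{L}_{i,\vp}^{(7)}\Phi_{\vp}^{(7)}$ pieces then produces the two $\Phi_{\vp}^{(7)}$ groups in the statement.

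For this decomposition, I would first partition the second factor using $1 = [3\chi_{H1}^{(3)}]_{sym}^{(3)} + [3\chi_{H2,1}^{(3)}]_{sym}^{(3)} + [3\chi_{H2,2}^{(3)}]_{sym}^{(3)} + \chi_{H3}^{(3)}$, exactly as in the proof of Lemma \ref{L31}. The contribution from $[3\chi_{H2,2}^{(3)}]_{sym}^{(3)} + \chi_{H3}^{(3)}$ is by definition $\ti{M}_{5,\vp}^{(7)}$. For the other two pieces, Lemma \ref{Le6}, formula \eqref{le212}, applies with $m_1^{(5)}$ and $m_3^{(5)}$ chosen so that $[m_1^{(5)}[5m_3^{(5)}\chi_{H1}^{(5)}]_{sym}^{(5)}]_{ext1}^{(7)}$ recovers the extension of $5\ti{L}_{2,\vp}^{(5)}\chi_{>L}^{(5)}$, and with $m_2^{(3)} = Q^{(3)}\chi_{NR1}^{(3)}$. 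This reorganizes the symbol of (d) into four pieces supported respectively on $\chi_{NR(1,1)}^{(7)}, \chi_{NR(1,2)}^{(7)}, \chi_{NR(2,1)}^{(7)}, \chi_{NR(2,2)}^{(7)}$; the last three match the definitions of $\ti{M}_{13,\vp}^{(7)}, \ti{M}_{14,\vp}^{(7)}, \ti{M}_{15,\vp}^{(7)}$.

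The $\chi_{NR(1,1)}^{(7)}$ piece needs further work. I would write $1/\Phi_{\vp}^{(5)} = 1/\Phi_0^{(5)} + (1/\Phi_{\vp}^{(5)} - 1/\Phi_0^{(5)})$ and $\chi_{>L}^{(5)} = 1 - \chi_{\le L}^{(5)}$, peeling off $\ti{M}_{11,\vp}^{(7)}$ and $\ti{M}_{12,\vp}^{(7)}$, then use the seven-variable analogue of Lemma \ref{lem_sym} (the product $Q_1^{(3)}\cdot Q_1^{(3)}$ built into $L_{2,\vp}^{(5)}$ already symmetrizes into $Q_2^{(5)}/\Phi_0^{(5)}$) to identify the main piece as $[Q_2^{(5)}/\Phi_0^{(5)}]_{ext1}^{(7)}[Q^{(3)}\chi_{NR1}^{(3)}]_{ext2}^{(7)} \chi_{NR(1,1)}^{(7)}$. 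Splitting this by $\chi_{H1}^{(7)}$ versus $(1-\chi_{H1}^{(7)})$ yields $\ti{M}_{10,\vp}^{(7)}$; inside $\chi_{H1}^{(7)}$, successive splittings by $\chi_{R1}^{(7)}$ (further by $\chi_{A4}^{(7)}$), by $(1-\chi_{R1}^{(7)})\chi_{R5}^{(7)}$ versus $(1-\chi_{R1}^{(7)})(1-\chi_{R5}^{(7)})$, by $Q^{(3)} = Q_1^{(3)} + (Q_2^{(3)}+Q_3^{(3)})$, and by $\chi_{A1}^{(7)}$ versus $(1-\chi_{A1}^{(7)})$ distribute the mass onto $\ti{M}_{6,\vp}^{(7)},\ti{M}_{7,\vp}^{(7)},\ti{M}_{8,\vp}^{(7)},\ti{M}_{9,\vp}^{(7)}$ and, after multiplying and dividing by $\Phi_{\vp}^{(7)}$, onto $\ti{L}_{1,\vp}^{(7)}\Phi_{\vp}^{(7)}$ and $\ti{L}_{2,\vp}^{(7)}\Phi_{\vp}^{(7)}$. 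The main obstacle is bookkeeping, in particular verifying the seven-variable analogue of Lemma \ref{lem_sym} on $\chi_{NR(1,1)}^{(7)}$ and checking that every indicator partition used is really a partition on the relevant subregion; but the algebraic structure is a faithful repetition, one level up, of what was carried out for Lemma \ref{L31}.
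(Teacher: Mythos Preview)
Your outline is correct and follows the paper's proof essentially step for step: apply Proposition~\ref{prop_NF11} with $N=5$, reduce to the symbol identity \eqref{eq49}, split off $\ti M_{5,\vp}^{(7)}$ via the four-way partition of $\chi_{NR1}^{(3)}$, apply \eqref{le212} to obtain $\ti M_{13,\vp}^{(7)},\ti M_{14,\vp}^{(7)},\ti M_{15,\vp}^{(7)}$ and the $\chi_{NR(1,1)}^{(7)}$ piece, then successively peel off $\ti M_{11,\vp}^{(7)},\ti M_{12,\vp}^{(7)},\ti M_{10,\vp}^{(7)}$ and finally split $Q^{(3)}=Q_1^{(3)}+(Q_2^{(3)}+Q_3^{(3)})$ and the indicators $\chi_{R1}^{(7)},\chi_{A4}^{(7)},\chi_{R5}^{(7)},\chi_{A1}^{(7)}$ to extract $\ti M_{6,\vp}^{(7)},\ldots,\ti M_{9,\vp}^{(7)}$ and $\ti L_{1,\vp}^{(7)}\Phi_\vp^{(7)},\ti L_{2,\vp}^{(7)}\Phi_\vp^{(7)}$.

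One clarification: no ``seven-variable analogue of Lemma~\ref{lem_sym}'' is needed. The paper observes directly from the definitions that $\ti L_{2,\vp}^{(5)}\Phi_\vp^{(5)}=[q_2^{(5)}\chi_{NR2}^{(5)}(1-\chi_{R4}^{(5)})\chi_{H1}^{(5)}]_{sym}^{(5)}=[Q_2^{(5)}\chi_{H1}^{(5)}]_{sym}^{(5)}$, hence $\ti L_{2,\vp}^{(5)}\chi_{>L}^{(5)}=[Q_2^{(5)}\chi_{H1}^{(5)}]_{sym}^{(5)}\,\chi_{>L}^{(5)}/\Phi_\vp^{(5)}$, and then applies \eqref{le212} with $m_1^{(5)}=\chi_{>L}^{(5)}/\Phi_\vp^{(5)}$, $m_2^{(3)}=Q^{(3)}\chi_{NR1}^{(3)}$, $m_3^{(5)}=Q_2^{(5)}$; the $\chi_{NR(1,1)}^{(7)}$ output of \eqref{le212} is then already $(-3)\big[[Q_2^{(5)}\chi_{>L}^{(5)}/\Phi_\vp^{(5)}]_{ext1}^{(7)}[Q^{(3)}\chi_{NR1}^{(3)}]_{ext2}^{(7)}\chi_{NR(1,1)}^{(7)}\big]_{sym}^{(7)}$, with no further symmetrization step required.
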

\begin{proof}
By Proposition~\ref{prop_NF11} with $N=5$ and $\ti{m}^{(N)}=\ti{L}_{2, \vp}^{(5)} \chi_{>L}^{(5)}$, we have
\begin{align*}
&\partial_t \Lambda_{\vp}^{(5)} (\ti{L}_{2, \vp}^{(5)} \chi_{>L}^{(5)}, \ha{v} (t) ) (t,k)
=\Lambda_{\vp}^{(5)} (-\ti{L}_{2, \vp}^{(5)} \Phi_{\vp}^{(5)} \chi_{>L}^{(5)} , \ha{v} (t) ) (t,k)\\
&+  \Lambda_{\vp}^{(7)} \big( \big[ [5 \ti{L}_{2, \vp}^{(5)} \chi_{>L}^{(5)} ]_{ext1}^{(7)} [-Q^{(3)} \chi_{NR1}^{(3)} ]_{ext2}^{(7)} 
 \big]_{sym}^{(7)}, \ha{v}(t) \big) (t,k)\\
 & +\Lambda_{\vp}^{(7)} (\big[ [ 5 \ti{L}_{2, \vp}^{(5)} \chi_{>L}^{(5)}]_{ext1}^{(7)} [Q^{(3)} [3 \chi_{R3}^{(3)} ]_{sym}^{(3)} ]_{ext2}^{(7)} \big]_{sym}^{(7)}, \ha{v} (t) ) (t,k) \\
& +\Lambda_{\vp}^{(9)} (\big[ [ 5 \ti{L}_{2, \vp}^{(5)} \chi_{>L}^{(5)}]_{ext1}^{(9)} [-Q_1^{(5)}]_{ext2}^{(9)} \big]_{sym}^{(9)}, \ha{v} (t) ) (t,k).
\end{align*}
Thus, we only need to show
\begin{equation}\label{eq49}
 \big[ [5 \ti{L}_{2, \vp}^{(5)} \chi_{>L}^{(5)} ]_{ext1}^{(7)} [-Q^{(3)} \chi_{NR1}^{(3)} ]_{ext2}^{(7)}  \big]_{sym}^{(7)}
= \sum_{i=1}^2 \ti{L}_{i, \vp}^{(7)} \Phi_{\vp}^{(7)}  + \sum_{i=5}^{15} \ti{M}_{i, \vp}^{(7)}.
\end{equation}
By $[3 \chi_{H1}^{(3)}]_{sym}^{(3)}+ [ 3 \chi_{H2,1}^{(3)} ]_{sym}^{(3)} + [3 \chi_{H2,2}^{(3)}]_{sym}^{(3)} + \chi_{H3}^{(3)}=1 $, 
the left hand side of \eqref{eq49} is equal to 
\begin{equation*}
\ti{M}_{5, \vp}^{(7)}+ \big[ [5 \ti{L}_{2, \vp}^{(5)} \chi_{>L}^{(5)} ]_{ext1}^{(7)} 
[-Q^{(3)} \chi_{NR1}^{(3)} ( [3\chi_{H1}^{(3)} ]_{sym}^{(3)}+ [3 \chi_{H2,1}^{(3)} ]_{sym}^{(3)} ) ]_{ext2}^{(7)}  \big]_{sym}^{(7)}.
\end{equation*}
Note that 
\begin{equation*}
\ti{L}_{2, \vp}^{(5)} \Phi_{\vp}^{(5)} = [ q_2^{(5)} \chi_{NR2}^{(5)} (1- \chi_{R4}^{(5)}) \chi_{H1}^{(5)}  ]_{sym}^{(5)}
=[ Q_2^{(5)} \chi_{H1}^{(5)} ]_{sym}^{(5)}  
\end{equation*}
and $\ti{L}_{2, \vp}^{(5)} \chi_{>L}^{(5)}= [Q_2^{(5)} \chi_{H1}^{(5)} ]_{sym}^{(5)} \, \chi_{>L}^{(5)} / \Phi_{\vp}^{(5)} $. 
By \eqref{le212} with $m_1^{(5)}=\chi_{>L}^{(5)} /  \Phi_{\vp}^{(5)} $, $m_2^{(3)}= Q^{(3)} \chi_{NR1}^{(3)}$ and 
$m_3^{(5)} =Q_2^{(5)}$, it follows that 
\begin{align*}
& \big[ [5 \ti{L}_{2, \vp}^{(5)} \chi_{>L}^{(5)} ]_{ext1}^{(7)} 
[-Q^{(3)} \chi_{NR1}^{(3)} ( [3\chi_{H1}^{(3)} ]_{sym}^{(3)}+ [3 \chi_{H2,1}^{(3)} ]_{sym}^{(3)} ) ]_{ext2}^{(7)}  \big]_{sym}^{(7)} \notag \\
&= \big[ \big[ [ 5 Q_2^{(5)} \chi_{H1}^{(5)}]_{sym}^{(5)} \chi_{>L}^{(5)}/ \Phi_{\vp}^{(5)} \big]_{ext1}^{(7)} 
[-Q^{(3)} \chi_{NR1}^{(3)} ( [3\chi_{H1}^{(3)} ]_{sym}^{(3)}+ [3 \chi_{H2,1}^{(3)} ]_{sym}^{(3)} ) ]_{ext2}^{(7)}  \big]_{sym}^{(7)} \notag \\
&= (-3) \Big[ \Big[ \frac{Q_2^{(5)}}{ \Phi_{\vp}^{(5)} } \chi_{>L}^{(5)} \Big]_{ext1}^{(7)} [ Q^{(3)} \chi_{NR1}^{(3)} ]_{ext2}^{(7)} 
\, \chi_{NR(1,1)}^{(7)} \Big]_{sym}^{(7)} 
+ \sum_{i=13}^{15} \ti{M}_{i, \vp}^{(5)}.
\end{align*}
Therefore, we only need to show 
\begin{equation} \label{eq42}
(-3) \Big[ \Big[ \frac{Q_2^{(5)}}{ \Phi_{\vp}^{(5)} } \chi_{>L}^{(5)} \Big]_{ext1}^{(7)} [ Q^{(3)} \chi_{NR1}^{(3)} ]_{ext2}^{(7)} 
\, \chi_{NR(1,1)}^{(7)} \Big]_{sym}^{(7)} 
= \sum_{i=1}^2 \ti{L}_{i, \vp}^{(7)} \Phi_{\vp}^{(7)}+ \sum_{i=6}^{12} \ti{M}_{i, \vp}^{(7)}. 
\end{equation}
By the definition, the left hand side of (\ref{eq42}) is equal to 
\begin{align} \label{eqn422}
\sum_{i=11}^{12} \ti{M}_{i, \vp}^{(7)}
+(-3) \Big[ \Big[ \frac{Q_2^{(5)}}{ \Phi_{0}^{(5)} } \Big]_{ext1}^{(7)} [ Q^{(3)} \chi_{NR1}^{(3)} ]_{ext2}^{(7)} 
\, \chi_{NR(1,1)}^{(7)} \Big]_{sym}^{(7)}. 
\end{align}
Since $\supp \, \chi_{H1}^{(7)} \subset \supp \, \chi_{NR(1,1)}^{(7)}$, 
it follows that $\chi_{NR(1,1)}^{(7)}=\chi_{NR(1,1)}^{(7)} (1-\chi_{H1}^{(7)})+\chi_{H1}^{(7)}$, which leads that 
\begin{align} \label{eqn423}
& (-3) \Big[ \Big[ \frac{Q_2^{(5)}}{ \Phi_{0}^{(5)} } \Big]_{ext1}^{(7)} [ Q^{(3)} \chi_{NR1}^{(3)} ]_{ext2}^{(7)} 
\, \chi_{NR(1,1)}^{(7)} \Big]_{sym}^{(7)} \notag \\
& = \ti{M}_{10, \vp}^{(7)}+
(-3) \Big[ \Big[ \frac{Q_2^{(5)}}{ \Phi_{0}^{(5)} } \Big]_{ext1}^{(7)} [ Q^{(3)} \chi_{NR1}^{(3)} ]_{ext2}^{(7)} 
\, \chi_{H1}^{(7)} \Big]_{sym}^{(7)}.
\end{align}
By $Q^{(3)}=Q_1^{(3)} + Q_2^{(3)}+ Q_3^{(3)}$, 
\begin{align}
& (-3) \Big[ \Big[ \frac{Q_2^{(5)}}{ \Phi_{0}^{(5)} } \Big]_{ext1}^{(7)} [ Q^{(3)} \chi_{NR1}^{(3)} ]_{ext2}^{(7)} 
\, \chi_{H1}^{(7)} \Big]_{sym}^{(7)} \notag \\
& = (-3) \Big[ \Big[ \frac{Q_2^{(5)}}{ \Phi_{0}^{(5)} } \Big]_{ext1}^{(7)} [ Q_1^{(3)} \chi_{NR1}^{(3)} ]_{ext2}^{(7)} 
\, \chi_{H1}^{(7)} \Big]_{sym}^{(7)} \label{eqn42a} \\
& +(-3) \Big[ \Big[ \frac{Q_2^{(5)}}{ \Phi_{0}^{(5)} } \Big]_{ext1}^{(7)} [ (Q_2^{(3)} +Q_3^{(3)}) \chi_{NR1}^{(3)} ]_{ext2}^{(7)} 
\, \chi_{H1}^{(7)} \Big]_{sym}^{(7)}. \label{eqn42b}
\end{align}
Since
\begin{align*}
\chi_{H1}^{(7)}  = & \chi_{H1}^{(7)} \chi_{R1}^{(7)} \chi_{A4}^{(7)}+ \chi_{H1}^{(7)}  \chi_{R1}^{(7)} (1-\chi_{A4}^{(7)}) \\
& + \chi_{H1}^{(7)} (1-\chi_{R1}^{(7)}) \chi_{R5}^{(7)}+ \chi_{H1}^{(5)} (1-\chi_{R1}^{(7)}) (1-\chi_{R5}^{(7)}),
\end{align*}
\eqref{eqn42a} is equal to 
\begin{equation} \label{eqn424}
\ti{L}_{1, \vp}^{(7)} \Phi_{\vp}^{(7)}+ \sum_{i=6}^8 \ti{M}_{i, \vp}^{(7)}. 
\end{equation}
Since $\chi_{H1}^{(7)}= \chi_{H1}^{(7)} \chi_{A1}^{(7)}+ \chi_{H1}^{(7)} (1-\chi_{A1}^{(7)})$, 
\eqref{eqn42b} is equal to 
\begin{equation} \label{eqn425}
\tilde{L}_{2, \vp}^{(7)} \Phi_{\vp}^{(7)}+ \ti{M}_{9, \vp}^{(7)}.
\end{equation}
Collecting (\ref{eqn422})--(\ref{eqn425}), we obtain (\ref{eq42}). 
\end{proof}

Now, we prove Proposition \ref{prop_NF2}.
\begin{proof}[Proof of Proposition \ref{prop_NF2}]
By direct computation, it follows that
\begin{align*}
& -\frac{4}{5} \ga^2 q_1^{(5)} [ \chi_{R1}^{(5)} (1- \chi_{R2}^{(5)}) ]_{sym}^{(5)}= \ti{M}_{1, \vp}^{(5)}+ \ti{M}_{2, \vp}^{(5)}, \\
& -6 \de q_1^{(5)} (1- [5 \chi_{R1}^{(5)}]_{sym}^{(5)})
= \ti{L}_{1, \vp}^{(5)} \Phi_{\vp}^{(5)} \chi_{>L}^{(5)}+ \ti{L}_{1, \vp}^{(5)} \Phi_{\vp}^{(5)} \chi_{\le L}^{(5)} +\sum_{i=3}^5 \ti{M}_{i, \vp}^{(5)}
\end{align*}
By $[3 \chi_{H1}^{(3)} ]_{sym}^{(3)} + [3  \chi_{H2,1}^{(3)}]_{sym}^{(3)}+ [3 \chi_{H2,2}^{(3)}]_{sym}^{(3)} +\chi_{H3}^{(3)} = 1$, we have
\begin{align*}
-Q^{(3)} \chi_{NR1}^{(3)}
= \sum_{i=1}^4 \ti{L}_{i, \vp}^{(3)} \Phi_{\vp}^{(3)} \chi_{>L}^{(3)}+ \sum_{i=1}^4 \ti{L}_{i, \vp}^{(3)} \Phi_{\vp}^{(3)} \chi_{\le L}^{(3)}. 
\end{align*}
Thus, by Proposition \ref{prop_req1}, we have
\begin{equation}\label{eq370}
\begin{split}
\p_t \ha{v} (t,k) = & \La_{\vp}^{(3)} \Big(\sum_{i=1}^4 \ti{L}_{i, \vp}^{(3)} \Phi_{\vp}^{(3)} \chi_{> L}^{(3)}
+ \sum_{i=1}^4 \ti{L}_{i, \vp}^{(3)} \Phi_{\vp}^{(3)} \chi_{\le L}^{(3)}+ \ti{M}_{1, \vp}^{(3)} , \ha{v}(t) \Big)(t,k)\\
& + \La_{\vp}^{(5)} \Big( \ti{L}_{1, \vp}^{(5)} \Phi_{\vp}^{(5)} \chi_{> L}^{(5)} 
+ \ti{L}_{1,\vp}^{(5)} \Phi_{\vp}^{(5)} \chi_{\le L}^{(5)}+ \sum_{i=1}^5 \ti{M}_{i, \vp}^{(5)}, \ha{v} (t) \Big)(t,k). 
\end{split}
\end{equation}
By Proposition~\ref{prop_NF11} with $N=3$ and $\ti{m}^{(N)}=\sum_{i=2}^4 \ti{L}_{i, \vp}^{(3)} \chi_{>L}^{(3)}$, we have
\begin{equation} \label{eq37}
\begin{split}
&\partial_t \Lambda_{\vp}^{(3)} \Big(\sum_{i=2}^4 \ti{L}_{i, \vp}^{(3)} \chi_{>L}^{(3)}, \ha{v} (t) \Big) (t,k) \\
& =\Lambda_{\vp}^{(3)} \Big(- \sum_{i=2}^4 \ti{L}_{i, \vp}^{(3)} \Phi_{\vp}^{(3)} \chi_{>L}^{(3)} , \ha{v}(t) \Big) (t,k)
+ \Lambda_{\vp}^{(5)} \big( \ti{M}_{7, \vp}^{(5)}, \ha{v}(t) \big) (t,k) \\
& + \Lambda_{\vp}^{(5)} \Big(\big[ [ 3 \sum_{i=2}^4 \ti{L}_{i, \vp}^{(3)} \chi_{>L}^{(3)}]_{ext1}^{(5)} 
[Q^{(3)} [3 \chi_{R3}^{(3)}]_{sym}^{(3)}  ]_{ext2}^{(5)} \big]_{sym}^{(5)}, \ha{v} (t) \Big) (t,k) \\
& + \Lambda_{\vp}^{(7)} \Big(\big[ [ 3 \sum_{i=2}^4 \ti{L}_{i, \vp}^{(3)} \chi_{>L}^{(3)}]_{ext1}^{(7)} 
[-Q_1^{(5)}]_{ext2}^{(7)} \big]_{sym}^{(7)}, \ha{v} (t) \Big) (t,k).
\end{split}
\end{equation}
By Proposition~\ref{prop_NF11} with $N=5$ and 
$\ti{m}^{(N)}= (\ti{L}_{1, \vp}^{(5)}+ \sum_{i=3}^8 \ti{L}_{i, \vp}^{(5)}) \chi_{>L}^{(5)}$, we have
\begin{equation} \label{eq31}
\begin{split}
&\partial_t \Lambda_{\vp}^{(5)}  \big( \big(\ti{L}_{1, \vp}^{(5)}+ \sum_{i=3}^8 \ti{L}_{i, \vp}^{(5)} \big) \chi_{>L}^{(5)}, \ha{v}(t) \big) (t,k) \\
& =\Lambda_{\vp}^{(5)}  \big(- \big( \ti{L}_{1, \vp}^{(5)}+ \sum_{i=3}^8 \ti{L}_{i, \vp}^{(5)} \big) \chi_{>L}^{(5)}, \ha{v}(t) \big) (t,k)
+  \Lambda_{\vp}^{(7)} (\ti{M}_{3, \vp}^{(7)}+ \ti{M}_{4, \vp}^{(5)} , \ha{v}(t) ) (t,k) \\
& + \Lambda_{\vp}^{(7)} 
\big( \big[ [ 5 \big( \ti{L}_{1, \vp}^{(5)}+ \sum_{i=3}^8 \ti{L}_{i, \vp}^{(5)} \big) \chi_{>L}^{(5)} ]_{ext1}^{(7)} 
[Q^{(3)} [3 \chi_{R3}^{(3)}]_{sym}^{(3)} ]_{ext2}^{(7)} \big]_{sym}^{(7)}, \ha{v} (t) \big)  (t,k) \\
& + \Lambda_{\vp}^{(9)} 
\big( \big[ [ 5\big( \ti{L}_{1, \vp}^{(5)}+ \sum_{i=3}^8 \ti{L}_{i, \vp}^{(5)} \big) \chi_{>L}^{(5)} ]_{ext1}^{(9)} 
[-Q_1^{(5)}]_{ext2}^{(9)} \big]_{sym}^{(9)}, \ha{v} (t) \big)  (t,k).
\end{split}
\end{equation}
By Proposition~\ref{prop_NF11} with $N=7$ and $\ti{m}^{(N)}=(\ti{L}_{1, \vp}^{(7)}+\ti{L}_{2, \vp}^{(5)} ) \chi_{>L}^{(7)}$, we have
\begin{equation} \label{eq51}
\begin{split}
&\partial_t \Lambda_{\vp}^{(7)} ( (\ti{L}_{1, \vp}^{(7)}+\ti{L}_{2, \vp}^{(7)}) \chi_{>L}^{(7)}, \ha{v}(t) ) (t,k)
=\Lambda_{\vp}^{(7)} ( -(\ti{L}_{1, \vp}^{(7)}+ \ti{L}_{2, \vp}^{(7)} ) \Phi_{\vp}^{(7)} \chi_{>L}^{(7)} , \ha{v}(t) ) (t,k) \\
&+ \Lambda_{\vp}^{(9)} (\ti{M}_{2, \vp}^{(9)}+ \ti{M}_{3, \vp}^{(9)}, \ha{v}(t) ) (t,k)+
 \Lambda_{\vp}^{(11)} (\ti{M}_{1, \vp}^{(11)}, \ha{v}(t) ) (t,k).
\end{split}
\end{equation}
By \eqref{eq370}--\eqref{eq51}, Lemmas \ref{L31} and \ref{L32}, we conclude \eqref{NF21}.
\end{proof}

\section{cancellation properties}

In Lemma \ref{lem_pwb2} and Lemmas \ref{lem_mle}--\ref{lem_nl10}, we show that all multipliers $\{ M_{j, \vp}^{(N)} \}$ except 
for $M_{1,\vp}^{(5)}$, $M_{9,\vp}^{(5)}$, $M_{11,\vp}^{(5)}$, $M_{13, \vp}^{(5)}$ and $M_{6,\vp}^{(7)}$ have no derivative loss.
As we explain below, there are some difficulties to estimate 
$M_{1,\vp}^{(5)}$, $M_{9, \vp}^{(5)}$, $M_{11,\vp}^{(5)}$, $M_{13, \vp}^{(5)}$ and $M_{6,\vp}^{(7)}$
and the normal form reduction does not work to overcome the difficulties since these are resonant parts.
Therefore, we use a kinds of cancellation properties. \\
(i) $M_{1,\vp}^{(5)}$ has one derivative loss and $M_{9,\vp}^{(5)}$ has one derivative loss.
That is, when $(k_1,k_2,k_3, k_4, k_5)\in \supp M_{1, \vp}^{(5)}= \supp M_{9, \vp}^{(5)} $ and $|k_5|^{1/2}  \gg \max_{1 \le j \le 4} \{ |k_j| \}$, 
it follows that $8^3 \max_{1 \le j \le 4}\{ |k_j| \} <|k_5|$ and
\begin{equation*}
|M_{1,\vp}^{(5)}| \sim |k_5|, \hspace{0.5cm} |M_{9, \vp}^{(5)}| \sim |k_5|. 
\end{equation*}
In Proposition \ref{prop_res1}, we compute the sum of these two multipliers and show it has no derivative loss. \\
(ii) $M_{6, \vp}^{(7)}$ has one derivative loss.
That is, when $(k_1, k_2, k_3, k_4, k_5, k_6, k_7) \in \supp M_{6, \vp}^{(7)}$ and $|k_7|^{1/2} \gg \max_{1 \le j \le 6} \{ |k_j| \} $, 
it follows that $8^{5} \max_{1 \le j \le 6} \{ |k_j| \} < |k_7|$ and 
\begin{equation*}
|M_{6,\vp}^{(7)}| \sim  \Big| \frac{1}{k_{1,2} k_{3,4}} + \frac{1}{k_{1,3} k_{2,4}} + \frac{1}{ k_{1,4} k_{2,3} }   \Big| \, |k_7|. 
\end{equation*}
For instance, by taking $k_1=1$, $k_2=1$, $k_3=1$, $k_4=1$, $k_5=-2$, $k_6=-2$ and $|k_7|$ sufficiently large, it follows that 
$(k_1, k_2, k_3, k_4, k_5, k_6, k_7) \in \supp M_{6, \vp}^{(7)}$ and
\begin{equation*}
|M_{6, \vp}^{(7)}| \sim |k_7|. 
\end{equation*}
In Proposition \ref{prop_res2}, we compute the symmetrization of it and show it has no derivative loss. \\
(iii) $M_{11,\vp}^{(5)}$ has one derivative loss and $M_{13, \vp}^{(5)}$ has one derivative loss. 
That is, when $(k_1, k_2, k_3, k_4, k_5) \in \supp M_{11, \vp}^{(5)}= \supp M_{13, \vp}^{(5)}$ and 
$ |k_5|^{1/2} \gg \max_{1 \le j \le 4} \{ |k_j| \}$, it follows that $8^3 \max_{1 \le j \le 4} \{ |k_j| \} < |k_5|$ and 
\begin{equation*} 
|M_{11, \vp}^{(5)}| \sim |k_5|, \hspace{0.5cm} |M_{13, \vp}^{(5)}| \sim |k_5|. 
\end{equation*}
In Proposition \ref{prop_res3}, we compute the sum of these two terms and show it has no derivative loss.

\if0
(i) For $(k_1, k_2, k_3, k_4, k_5) \in \chi_{H1}^{(5)}$, it follows that 
\begin{align} \label{memo1}
\Big| \sum_{l=1}^7 p_{l} (k_1, k_2, k_3, k_4) k_5^{7-l}  \Big|
\lesssim & |k_{1,2,3,4}| |k_5|^6 + |k_{1,2} k_{3,4}| |k_5|^5 \notag \\
&+ \max\{ |k_{1} k_2 k_{1,2}|, |k_3 k_4 k_{3,4}|  \} |k_5|^4.
\end{align}
When $(k_1, k_2, k_3, k_4, k_5) \in \supp M_{9, \vp}^{(5)}$ and $|k_5|^{1/2} \gg \max_{j=1,2,3,4} \{ |k_j| \} $, it follows that 
$k_{1,2,3,4}=0$ and 
\begin{equation*}
\max\{  |k_1k_2 k_{1,2}|, |k_3 k_4 k_{3,4}| \} |k_5|^4 \ll \max\{ |k_{1,2}|, |k_{3,4}| \} |k_5|^5 \le |k_{1,2} k_{3,4} | |k_5|^5.
\end{equation*}
Thus, by \eqref{memo1} and the definition of $M_{9, \vp}^{(5)}$, we have 
$\big| \sum_{l=1}^7  p_{l} (k_1, k_2, k_3, k_4) k_5^{7-l} \big| \sim |k_{1,2} k_{3,4}| |k_5|^5 $ and $|M_{9, \vp}^{(5)}| \sim |k_5|$. \\
(ii) By the definition, $M_{6, \vp}^{(7)}$ is equal to 
\begin{align*}
& \{ q_2^{(5)} (k_1, k_2, k_3, k_4, k_{5,6,7})+ q_2^{(5)} (k_1, k_3,  k_2, k_4, k_{5,6,7})+  q_2^{(5)} (k_1, k_4,  k_2, k_3, k_{5,6,7}) \} \\
&  \hspace{0.5cm} \times \frac{Q_1^{(3)} (k_5, k_6, k_7)  }{ \Phi_0^{(5)} (k_1,  k_2, k_3, k_4, k_{5,6,7}) } 
[\chi_{NR2}^{(5)}]_{ext1}^{(7)} [\chi_{NR1}^{(3)}]_{ext2}^{(7)} \chi_{H1}^{(7)} \chi_{A4}^{(7)} \chi_{R1}^{(7)}. 
\end{align*}   
Note that $ (k_1, k_2,k_3, k_4, k_5, k_6, k_7) 
\in [\chi_{NR2}^{(5)}]_{ext1}^{(7)} [\chi_{NR1}^{(3)}]_{ext2}^{(7)} \chi_{H1}^{(7)} \chi_{A4}^{(7)} \chi_{R1}^{(7)}$ means 
\begin{equation*}
k_{1,2,3,4,5,6}=0, \hspace{0.3cm} k_{1,2} k_{3,4} k_{5,6} k_{1,2,3,4} k_{1,2,5,6} k_{3,4,5,6} \neq 0, \hspace{0.3cm} |k_7|^{3/5} > 8^{5} \max_{1 \le j \le 6} \{ |k_j| \}.
\end{equation*}
When $(k_1, k_2, k_3, k_4, k_5, k_6, k_7) \in \supp M_{6, \vp}^{(7)}$ and $ |k_7|^{1/2} \gg \max_{1 \le j \le 6} \{   |k_j| \}  $, it follows that 
\begin{align*}
& |\Phi_0^{(5)} (k_1, k_2, k_3, k_4, k_{5,6,7}) | \sim |k_{1,2,3,4}| |k_{5,6,7}|^4 \sim |k_{1,2,3,4}| |k_7|^4, \\
& \big|q_2^{(5)} (k_1, k_2, k_3, k_4, k_{5,6,7})+q_2^{(5)} (k_1, k_3, k_2, k_4, k_{5,6,7})+ q_{2}^{(5)} (k_1, k_4, k_2, k_3, k_{5,6,7})  \big| \\
&  \sim |k_{1,2,3,4}| |k_7|^2 \Big|  \frac{1}{k_{1,2} k_{3,4}} + \frac{1}{k_{1,3} k_{2,4} }+ \frac{1}{ k_{1,4} k_{2,3} }  \Big|,
\end{align*}
which leads 
\begin{equation*}
|M_{6, \vp}^{(7)}| \sim \Big|  \frac{1}{k_{1,2} k_{3,4}} + \frac{1}{k_{1,3} k_{2,4} }+ \frac{1}{ k_{1,4} k_{2,3} }  \Big| \, |k_7|. 
\end{equation*}
(iii) A simple calculation yields that 
\begin{equation} \label{memo2}
|M_{11, \vp}^{(5)}| \sim \frac{ | k_3 k_4+ k_{3,4} k_5 | }{ |k_{3,4}| } \chi_{H1}^{(5)} \chi_{R1}^{(5)} \chi_{NR1}^{(5)}, 
\hspace{0.3cm}
|M_{13, \vp}^{(5)}| \sim \frac{ | k_1 k_2+ k_{1,2} k_{3,4,5} | }{ |k_{1,2}| } \chi_{H1}^{(5)} \chi_{R1}^{(5)} \chi_{NR1}^{(5)}.
\end{equation}
When $(k_1, k_2, k_3, k_4, k_5) \in \supp M_{11, \vp}^{(5)}= \supp M_{13, \vp}^{(5)}$ and $|k_5|^{1/2} \gg \max_{j=1,2,3,4} \{ |k_j| \}$, 
it follows that 
\begin{equation*}
|k_3 k_4| \ll |k_5| \le |k_{3,4}| |k_5|, \hspace{0.3cm} |k_1k_2| \ll |k_5| \sim |k_{3,4,5}| \le |k_{1,2} | |k_{3,4,5}|. 
\end{equation*}
Thus, by \eqref{memo2}, we have $|M_{11, \vp}^{(5)}| \sim |k_5|$ and $|M_{13, \vp}^{(5)}| \sim |k_{3,4,5}| \sim |k_5|$. 
\fi

\begin{lem} \label{lem_key}
It follows that 
\begin{align} \label{sym21}
q_2^{(5)}=  
-\frac{2}{5} i \ga^2  \frac{1}{ k_{1,2} k_{3,4} k_{1,2,3,5} k_{1,2,4,5} k_{1,3,4,5} k_{2,3,4,5} } \sum_{l=1}^7 p_l(k_1, k_2, k_3, k_4) k_5^{7-l}
\end{align}
where each $p_l(k_1, k_2, k_3, k_4)$ is a polynomial of degree $l$ for $l=1, \dots, 7$ and 
\begin{align} \label{def_q2}
& p_1(k_1, k_2, k_3, k_4):=k_{1,2,3,4}, \hspace{0.5cm} p_2(k_1, k_2, k_3, k_4):= 4 k_{1,2,3,4}^2- 2k_{1,2} k_{3,4}, \notag \\
& p_{3} (k_1, k_2, k_3, k_4):=7k_{1,2,3,4}^3-8k_{1,2} k_{3,4} k_{1,2,3,4}, \notag \\
& p_{4} (k_1, k_2, k_3, k_4):= 7k_{1,2,3,4}^4- 12k_{1,2} k_{3,4} k_{1,2,3,4}^2 -2k_{1,2}^2 k_{3,4}^2 \notag \\
& \hspace{0.5cm}  -2 (k_1 k_2 k_{1,2} +k_3k_4 k_{3,4}) k_{1,2,3,4}+2( k_1 k_2 k_{1,2}^2 +  k_3 k_4 k_{3,4}^2),  \notag \\
& p_5(k_1k_2, k_3, k_4):= 4k_{1,2,3,4}^5-7k_{1,2} k_{3,4} k_{1,2,3,4}^3 -7 k_{1,2}^2 k_{3,4}^2 k_{1,2,3,4} \notag \\
& \hspace{0.5cm} -3 (k_1k_2k_{1,2}+k_3 k_4 k_{3,4}) k_{1,2,3,4}^2 +  (k_1k_2 k_{1,2}^2 + k_3 k_4 k_{3,4}^2) k_{1,2,3,4} \notag \\
& \hspace{0.5cm} +3 (k_1 k_2 k_{1,2}^3+k_3 k_4 k_{3,4}^3) -(k_1^2 k_2^2 k_{1,2} +k_3^2 k_4^2 k_{3,4} ), \notag \\
&p_6(k_1, k_2,  k_3, k_4):= k_{1,2,3,4}^6- 8k_{1,2}^2 k_{3,4}^2 k_{1,2,3,4}^2- (k_1k_2 k_{1,2}+ k_3 k_4 k_{3,4}) k_{1,2,3,4}^3 \notag \\
&\hspace{0.5cm}  -2 (k_1 k_2 k_{1,2}^2 +k_3 k_4 k_{3,4}^2) k_{1,2,3,4}^2 +4 (k_1 k_2 k_{1,2}^3 + k_3 k_4 k_{3,4}^3) k_{1,2,3,4}
\notag \\
& \hspace{0.5cm}-(k_1^2k_2^2 k_{1,2}+ k_3^2 k_4^2 k_{3,4}) k_{1,2,3,4}+ (k_1 k_2 k_{1,2}^4 + k_3 k_4 k_{3,4}^4) \notag \\
& \hspace{0.5cm} - (k_1^2 k_2^2 k_{1,2}^2 + k_3^2 k_4^2 k_{3,4}^2),  \notag \\
& p_7(k_1, k_2, k_3, k_4):= 
k_{1,2} k_{3,4} k_{1,2,3,4}^5- 3 k_{1,2}^2 k_{3,4}^2 k_{1,2,3,4}^3 \notag \\
& \hspace{0.5cm} -(k_1k_2 k_{1,2}^2 +k_3 k_4 k_{3,4}^2) k_{1,2,3,4}^3 + (k_1 k_2 k_{1,2}^3 + k_3 k_4 k_{3,4}^3) k_{1,2,3,4}^2 \notag \\
&  \hspace{0.5cm}+ (k_1 k_2 k_{1,2}^4 + k_3 k_4 k_{3,4}^4  ) k_{1,2,3,4} -(k_1^2 k_2^2 k_{1,2}^2 +k_3^2 k_4^2 k_{3,4}^2) k_{1,2,3,4}. 
\end{align}
\end{lem}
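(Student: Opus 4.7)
The identity (\ref{sym21}) is a pure polynomial identity in $k_1,\ldots,k_5$, and I would verify it by direct computation following the definitions. My plan is as follows.

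First, I will simplify the building block $Q_1^{(3)}/\Phi_0^{(3)}$. From the definitions $Q_1^{(3)}= \gamma q_1^{(3)}= -\frac{i\gamma}{3} k_{1,2,3}(k_{1,2}^2+k_{2,3}^2+k_{1,3}^2)$ and the factorization \eqref{2eq2} for $\Phi_0^{(3)}$, the common quadratic factor $(k_{1,2}^2+k_{2,3}^2+k_{1,3}^2)$ cancels, yielding the clean expression
$$\frac{Q_1^{(3)}}{\Phi_0^{(3)}}(k_1,k_2,k_3)=\frac{2\gamma\, k_{1,2,3}}{15\, k_{1,2}k_{1,3}k_{2,3}}.$$
Substituting the triples $(k_1,k_2,k_{3,4,5})$ and $(k_3,k_4,k_{1,2,5})$ into this formula and multiplying by the corresponding $Q_1^{(3)}(k_3,k_4,k_5)$ and $Q_1^{(3)}(k_1,k_2,k_5)$, I obtain two rational expressions each carrying a factor $k_{1,2,3,4,5}$ in the numerator. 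Placing them over the common denominator $k_{1,2}k_{3,4}k_{1,2,3,5}k_{1,2,4,5}k_{1,3,4,5}k_{2,3,4,5}$, the coefficient in front matches the $-\frac{2i\gamma^2}{5}$ of (\ref{sym21}), and what remains to prove is that the numerator equals $2\sum_{l=1}^7 p_l(k_1,k_2,k_3,k_4)k_5^{7-l}$.

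Second, I will expand this numerator
$$A=k_{3,4}k_{1,2,3,5}k_{1,2,4,5}k_{1,2,3,4,5}k_{3,4,5}\bigl(k_{3,4}^2+k_{4,5}^2+k_{3,5}^2\bigr)+k_{1,2}k_{2,3,4,5}k_{1,3,4,5}k_{1,2,3,4,5}k_{1,2,5}\bigl(k_{1,2}^2+k_{2,5}^2+k_{1,5}^2\bigr)$$
as a polynomial in $k_5$. For each linear factor of the form $k_{i,j,5}$ or $k_{i,j,k,5}$ I would write $k_{i,j,5}=k_{i,j}+k_5$, etc., and then multiply out by the binomial theorem, using the shorthand $u:=k_{1,2}$, $v:=k_{3,4}$, $w:=k_{1,2,3,4}=u+v$ (and auxiliary $\alpha:=k_1 k_2$, $\beta:=k_3k_4$, which enter through $k_{1,5}k_{2,5}=k_5^2+uk_5+\alpha$ and $k_{3,5}k_{4,5}=k_5^2+vk_5+\beta$). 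Collecting terms by powers of $k_5$ then yields polynomials in $(u,v,\alpha,\beta)$ that respect the manifest $(k_1,k_2)\leftrightarrow(k_3,k_4)$ symmetry of the stated $p_l$.

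Third, I would match the coefficients of $k_5^{7-l}$ to the claimed polynomials $p_l$ one by one. The low-power cases $l=1,2,3$ are immediate (only the highest-order binomial contributions survive, giving $w$, $4w^2-2uv$, and $7w^3-8uvw$ respectively). The middle cases $l=4,5,6,7$ produce the bulk of the terms $\alpha u$, $\beta v$, $\alpha u^2$, $\beta v^2$, etc.\ in the stated formulas; here significant cancellations occur between the two summands and these are precisely where the $(k_1,k_2)\leftrightarrow(k_3,k_4)$ symmetry must be invoked to recognize the mixed terms in the $p_l$. The principal obstacle is purely bookkeeping: the two summands individually have numerators of total degree $8$ (once the binomials are expanded, each has on the order of $40$--$50$ monomials), but by organizing the expansion in the symmetric variables $(u,v,\alpha,\beta)$ and exploiting the $k_5\mapsto k_5$, $(k_1,k_2)\leftrightarrow(k_3,k_4)$ symmetry, the verification reduces to checking seven identities in the four variables $(u,v,\alpha,\beta)$, which can be read off directly from the definitions (\ref{def_q2}).
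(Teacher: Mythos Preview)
Your approach is correct and essentially identical to the paper's: both start from the simplification $Q_1^{(3)}/\Phi_0^{(3)}=\tfrac{2\gamma}{15}\,k_{1,2,3}/(k_{1,2}k_{1,3}k_{2,3})$, put the two summands over the common denominator $k_{1,2}k_{3,4}k_{1,2,3,5}k_{1,2,4,5}k_{1,3,4,5}k_{2,3,4,5}$, and expand the numerator as a polynomial in $k_5$. The only organizational difference is that the paper first expands one half (defining intermediate coefficients $r_l$) and then symmetrizes using the power-sum identities $k_{1,2}^n+k_{3,4}^n=$ polynomial in $k_{1,2,3,4}$ and $k_{1,2}k_{3,4}$, whereas you propose to work directly with the symmetrized expression in the variables $(u,v,\alpha,\beta)$; both lead to the same bookkeeping. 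One minor slip: the prefactor you obtain is $-\tfrac{i\gamma^2}{5}$, not $-\tfrac{2i\gamma^2}{5}$, and your numerator $A$ equals $2\sum_l p_l k_5^{7-l}$ (the factor of $2$ comes from $k_{3,4}^2+k_{4,5}^2+k_{3,5}^2=2(k_5^2+k_{3,4}k_5+k_{3,4}^2-k_3k_4)$), so the two factors of $2$ combine correctly---just be careful not to double-count.
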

\begin{proof}
Since 
\begin{equation*}
 \frac{Q_1^{(3)}}{\Phi_{0}^{(3)}}= \frac{2}{15} \gamma \, \frac{k_{1,2,3}}{k_{1,2} k_{2,3} k_{1,3}}, \hspace{0.5cm}
 Q_1^{(3)}=-\frac{2}{3} i \ga k_{1,2,3} (k_3^2 +k_{1,2}k_3+k_{1,2}^2-k_1 k_2),
\end{equation*}
it follows that 
\begin{align}
q_2^{(5)} 
= -\frac{2}{5} i \ga^2  \frac{ L(k_1, k_2, k_3, k_4, k_5)+ L(k_3, k_4, k_1, k_2, k_5)  }{ k_{1,2} k_{3,4} k_{1,2,3,5} k_{1,2,4,5} k_{1,3,4, 5} k_{2,3,4,5}  } \label{sym12}
\end{align}
where
\begin{equation*}
L(k_1, k_2, k_3, k_4, k_5)= k_{3,4} k_{1,2,3,5} k_{1,2,4,5} k_{1,2,3,4,5} k_{3,4,5} (k_5^2 +k_{3,4} k_5 + k_{3,4}^2 -k_3 k_4).  
\end{equation*}
Since
\begin{align*}
&k_{1,2,3,5} k_{1,2,4,5} k_{1,2,3,4,5} k_{3,4,5}
  =k_5^4+3k_{1,2,3,4}k_5^3+(3k_{1,2,3,4}^2+k_{3,4}k_{1,2,3,4}+(k_3k_4-k_{3,4}^2))k_5^2\\
  &+(k_{1,2,3,4}^3+2k_{3,4}k_{1,2,3,4}^2+(k_3k_4-2k_{3,4}^2)k_{1,2,3,4}+k_3k_4k_{3,4})k_5\\
  &+(k_{3,4}k_{1,2,3,4}^3-k_{3,4}^2k_{1,2,3,4}^2+k_3k_4k_{3,4}k_{1,2,3,4}),
\end{align*}
a direct computation yields that 
\begin{equation*}
L(k_1,k_2, k_3, k_4, k_5)= \sum_{l=1}^7 r_l (k_1, k_2, k_3, k_4) k_5^{7-l}
\end{equation*}
where each $r_l (k_1, k_2, k_3, k_4) $ is a polynomial of degree $l$ for $l=1,2 \dots, 7$ and  
\begin{align*} 
& r_1(k_1, k_2, k_3, k_4)= k_{3,4}, \hspace{0.5cm} r_2(k_1, k_2, k_3, k_4)= 3 k_{3,4} k_{1,2,3,4} +k_{3,4}^2,  \notag \\
& r_3(k_1, k_2, k_3, k_4)= 3k_{3,4} k_{1,2,3,4}^2 + 4 k_{3,4}^2 k_{1,2,3,4}, \notag \\
& r_4(k_1, k_2, k_3, k_4)= k_{3,4} k_{1,2,3,4}^3 +5k_{3,4}^2 k_{1,2,3,4}^2 +2 k_{3,4}^3 k_{1,2,3,4} -k_{3,4}^4 \notag  \\
& \hspace{0.5cm} -2 k_3  k_4 k_{3,4} k_{1,2,3,4} + 2 k_3 k_4 k_{3,4}^2, \notag \\
& r_5(k_1, k_2, k_3, k_4)= 2 k_{3,4}^2 k_{1,2,3,4}^3+ 4 k_{3,4}^3 k_{1,2,3,4}^2 -k_{3,4}^4 k_{1,2,3,4}-k_{3,4}^5 \notag \\
&  \hspace{0.5cm} -3 k_3 k_4 k_{3,4} k_{1,2,3,4}^2+ k_3 k_4 k_{3,4}^2 k_{1,2,3,4}  - k_3^2 k_4^2 k_{3,4} +3 k_3 k_4 k_{3,4}^3 , \notag    \\
& r_6(k_1, k_2, k_3, k_4)= 2 k_{3,4}^3 k_{1,2,3,4}^3 + k_{3,4}^4 k_{1,2,3,4}^2 -2 k_{3,4}^5 k_{1,2,3,4}-k_3 k_4 k_{3,4} k_{1,2,3,4}^3 \notag  \\
& \hspace{0.5cm} -2 k_3 k_4 k_{3,4}^2  k_{1,2,3,4}^2 +4k_3 k_4  k_{3,4}^3 k_{1,2,3,4} -k_3^2 k_4^2 k_{3,4} k_{1,2,3,4}
+k_3 k_4 k_{3,4}^4 - k_3^2 k_4^2 k_{3,4}^2 ,  \notag \\  
& r_7(k_1, k_2, k_3,k_4)= k_{3,4}^4  k_{1,2,3,4}^3-k_{3,4}^5 k_{1,2,3,4}^2 -k_3 k_4 k_{3,4}^2 k_{1,2,3,4}^3 +k_3 k_4 k_{3,4}^3 k_{1,2,3,4}^2 \notag \\
& \hspace{0.5cm} +k_3 k_4 k_{3,4}^4 k_{1,2,3,4} -k_3^2 k_4^2 k_{3,4}^2 k_{1,2,3,4}.
\end{align*}
By
\begin{align*}
& k_{1,2}^2 + k_{3,4}^2 =k_{1,2,3,4}^2 - 2 k_{1,2} k_{3,4}, \hspace{0.5cm} k_{1,2}^3 + k_{3,4}^3 = k_{1,2,3,4}^3 - 3 k_{1,2} k_{3,4} k_{1,2,3,4}, \\
& k_{1,2}^4 + k_{3,4}^4 = k_{1,2,3,4}^4-4 k_{1,2} k_{3,4} k_{1,2,3,4}^2 + 2k_{1,2}^2 k_{3,4}^2, \\
& k_{1,2}^5 + k_{3,4}^5 = k_{1,2,3,4}^5 -5 k_{1,2} k_{3,4} k_{1,2,3,4}^3 + 5 k_{1,2}^2 k_{3,4}^2 k_{1,2,3,4},
\end{align*}
it follows that 
\begin{align} \label{sym13}
& L(k_1, k_2, k_3, k_4, k_5)+ L(k_3, k_4, k_1, k_2, k_5)
= \sum_{l=1}^7 p_l(k_1, k_2, k_3, k_4) k_5^{7-l}.
\end{align}
Substituting \eqref{sym13} into \eqref{sym12}, we obtain \eqref{sym21}. 
\end{proof}

\begin{prop} \label{prop_res1}
It follows that 
\begin{equation} \label{re11}
| M_{1,\vp}^{(5)} + M_{9, \vp}^{(5)}  | 
\lesssim \Big( \frac{\max \{|k_1k_2|, |k_3 k_4| \}  }{ |k_{1,2}|  } +\max_{1 \le j \le 4} \{ |k_j| \} \Big) \, 
 \chi_{H1}^{(5)} \chi_{R1}^{(5)} (1- \chi_{R2}^{(5)}). 
\end{equation}
\end{prop}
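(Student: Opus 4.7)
The plan is to reduce the bound to an explicit polynomial cancellation computation. On the support of $\chi_{H1}^{(5)} \chi_{R1}^{(5)} (1-\chi_{R2}^{(5)})$ we have $k_{1,2,3,4}=0$, hence $k_{3,4}=-k_{1,2}\neq 0$ and $k_{1,2,3,4,5}=k_5$, while $|k_j|\ll |k_5|$ for $j=1,2,3,4$. In particular $q_1^{(5)}=ik_5$, so
\[
M_{1,\vp}^{(5)} \;=\; -\frac{4}{5}\,i\ga^2\,k_5
\]
on the support. For $M_{9,\vp}^{(5)}=q_2^{(5)}\chi_{H1}^{(5)}\chi_{R1}^{(5)}(1-\chi_{R2}^{(5)})$ I will apply Lemma~\ref{lem_key}. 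The denominator of $q_2^{(5)}$ becomes $-k_{1,2}^{2}(k_5-k_1)(k_5-k_2)(k_5-k_3)(k_5-k_4)$, and expanding using $k_1+k_2+k_3+k_4=0$ gives a polynomial in $k_5$ of the form $k_5^4+e_2k_5^2-e_3k_5+e_4$ where $e_2=k_1k_2+k_3k_4-k_{1,2}^2$, $e_3=-k_{1,2}(k_1k_2-k_3k_4)$, $e_4=k_1k_2k_3k_4$.

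The next step is to substitute $k_{1,2,3,4}=0$ and $k_{3,4}=-k_{1,2}$ into the explicit polynomials $p_l$ from \eqref{def_q2}. A direct check shows $p_1=p_3=p_7=0$, $p_2=2k_{1,2}^2$, $p_4=-2k_{1,2}^4+2k_{1,2}^2(k_1k_2+k_3k_4)$, while $p_5$ and $p_6$ admit the factored forms
\[
p_5 = k_{1,2}(k_1k_2-k_3k_4)\bigl[3k_{1,2}^2-(k_1k_2+k_3k_4)\bigr],\qquad
p_6 = k_{1,2}^{2}(k_1k_2+k_3k_4)k_{1,2}^{2}-k_{1,2}^{2}(k_1^2k_2^2+k_3^2k_4^2).
\]
Writing $M_{9,\vp}^{(5)}+M_{1,\vp}^{(5)}=\tfrac{2}{5}i\ga^{2}(N-2k_5k_{1,2}^{2}D)/(k_{1,2}^{2}D)$, where $N=\sum_{l=1}^{7}p_l k_5^{7-l}$ and $D=\prod_{j=1}^{4}(k_5-k_j)$, the crucial cancellation is that the $k_5^{5}$- and $k_5^{3}$-coefficients of $N$ equal those of $2k_{1,2}^{2}k_5 D$: indeed $p_2=2k_{1,2}^{2}$ and the identity $p_4=2k_{1,2}^{2}e_2$ is equivalent to the definition of $e_2$.

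Thus $N-2k_{1,2}^{2}k_5 D$ is a polynomial of degree at most $2$ in $k_5$, and a short computation gives
\[
(p_5+2k_{1,2}^{2}e_3)=k_{1,2}(k_1k_2-k_3k_4)\bigl[k_{1,2}^{2}-k_1k_2-k_3k_4\bigr],\qquad
(p_6-2k_{1,2}^{2}e_4)=k_{1,2}^{2}(k_1k_2+k_3k_4)\bigl[k_{1,2}^{2}-k_1k_2-k_3k_4\bigr],
\]
while the constant term $p_7$ vanishes. Dividing by $k_{1,2}^{2}|D|\sim k_{1,2}^{2}|k_5|^{4}$ and using $|k_j|\ll |k_5|$ on $\supp\chi_{H1}^{(5)}$ to convert the $|k_5|^{2}$ and $|k_5|$ factors into small constants, the quadratic-in-$k_5$ contribution is bounded by $|k_1k_2-k_3k_4|/|k_{1,2}|\lesssim\max\{|k_1k_2|,|k_3k_4|\}/|k_{1,2}|$ and the linear-in-$k_5$ contribution by $\max_j|k_j|$. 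This yields \eqref{re11}.

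The only delicate step is verifying the two algebraic identities $p_2=2k_{1,2}^{2}$ and $p_4-2k_{1,2}^{2}e_2=0$ that cause the leading $k_5^{5}$ and $k_5^{3}$ terms to drop out; once these are in hand, the remaining estimate is a bookkeeping exercise using $|k_5|\gg |k_j|$.
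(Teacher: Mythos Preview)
Your proof is correct and follows essentially the same route as the paper's: apply Lemma~\ref{lem_key}, substitute $k_{1,2,3,4}=0$ (so $k_{3,4}=-k_{1,2}$), identify the cancellation of the leading $k_5^5$ coefficient between $M_{9,\vp}^{(5)}$ and $M_{1,\vp}^{(5)}$, and bound the remainder using $|k_j|\ll|k_5|$ on $\supp\chi_{H1}^{(5)}$. The paper organizes the algebra slightly differently (splitting off the $p_2 k_5^{5}$ piece and combining it with $M_{1,\vp}^{(5)}$ before treating the $f_l$ terms, rather than forming the single numerator $N-2k_{1,2}^{2}k_5 D$), and your additional observation that the $k_5^{3}$ coefficient also vanishes is correct but not actually needed for the stated bound.
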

\begin{proof}
Put $M:= \chi_{H1}^{(5)} \chi_{R1}^{(5)} (1- \chi_{R2}^{(5)}) $. 
Since $(k_1, k_2, k_3, k_4, k_5) \in \supp M$ implies that $k_{1,2,3,4}=0$ and 
$k_{2,3,4,5} k_{1,3,4,5} k_{1,2,4,5} k_{1,2,3,5}= \prod_{i=1}^4 (k_5-k_i)$, 
it follows that $M_{1, \vp}^{(5)}= - \frac{4}{5} i \ga^2 k_5 \, M  $ and 
by Lemma~\ref{lem_key}
\begin{align*}
M_{9, \vp}^{(5)}= & - \frac{2}{5} i \ga^2 \frac{M}{ k_{1,2} k_{3,4} \prod_{i=1}^4 (k_5- k_i) } 
\sum_{l=1}^7 p_l (k_1, k_2, k_3, k_4) k_5^{7-l} \notag \\
=& \frac{4}{5} i \ga^2 \frac{k_5^5}{ \prod_{i=1}^4 (k_5-k_i) } M \\
& -\frac{2}{5} i \ga^2 \frac{M}{k_{1,2} k_{3,4} \prod_{i=1}^4 (k_5-k_i)} \sum_{l=4}^6 f_{l}(k_1, k_2, k_3, k_4) k_5^{7-l}
\end{align*}
where
\begin{align*}
& f_4(k_1,  k_2, k_3, k_4)=-2k_{1,2}^2 k_{3,4}^2+2 (k_1 k_2 k_{1,2}^2+  k_3 k_4 k_{3,4}^2) , \\
& f_5(k_1,  k_2, k_3, k_4)=3 (k_1 k_2 k_{1,2}^3 + k_3 k_4 k_{3,4}^3) -(k_1^2 k_2^2 k_{1,2} + k_3^2 k_4^2 k_{3,4}) , \\
& f_6(k_1,  k_2, k_3, k_4)=(k_1 k_2 k_{1,2}^4 +  k_3 k_4 k_{3,4}^4) -(k_1^2 k_2^2 k_{1,2}^2+ k_3^2 k_4^2 k_{3,4}^2). 
\end{align*}
We notice that 
\begin{equation*}
\frac{4}{5} i \ga^2 \frac{k_5^5}{ \prod_{i=1}^4 (k_5-k_i)  } M - \frac{4}{5} i \ga^2 k_5 M
= \frac{4}{5} i \ga^2 \frac{M}{ \prod_{i=1}^4 (k_5-k_i)  } \sum_{l=1}^4 g_l(k_1, k_2, k_3,  k_4) k_5^{5-l}
\end{equation*}
where 
\begin{align*}
& g_1(k_1, k_2, k_3, k_4)  = k_{1,2,3,4}, \hspace{0.5cm} g_2(k_1, k_2, k_3,k_4)  = -(k_1 k_2+k_3k_4 +k_{1,2}k_{3,4}), \\
& g_3(k_1, k_2, k_3, k_4) = k_1k_2 k_{3,4} + k_3 k_4 k_{1,2}, \hspace{0.5cm} g_4(k_1, k_2, k_3, k_4) = -k_1k_2k_3k_4.  
\end{align*}
Thus, it follows that 
\begin{align*} 
M_{1, \vp}^{(5)}+ M_{9, \vp}^{(5)}= &
\frac{4}{5} i \ga^2 \frac{M}{ \prod_{i=1}^4 (k_5 -k_i) } \sum_{l=2}^4 g_l(k_1,  k_2, k_3, k_4) k_5^{5-l} \notag \\
& - \frac{2}{5} i \ga^2 \frac{M}{k_{1,2} k_{3,4} \prod_{i=1}^4 (k_5-k_i)} \sum_{l=4}^6 f_l(k_1, k_2, k_3, k_4) k_5^{7-l}. 
\end{align*}
By the definition of $\{ f_l \}_{l=4}^6$ and $\{ g_l \}_{l=2}^4$, we have 
\begin{align*} 
& \sum_{l=3}^6 |f_l(k_1, K_2, k_3, k_4) k_5^{7-l} | \, M 
\lesssim \big( \max\{ |k_1 k_2 k_{1,2}|, |k_3 k_4 k_{3,4}| \} + |k_{1,2}^2 k_{3,4}|\big) |k_5|^4 M, \notag  \\
& \sum_{l=2}^4 |g_l(k_1, k_2, k_3, k_4) k_5^{5-l} | \, M \lesssim \max \{ |k_1 k_2|, |k_3 k_4|, |k_{1,2}|^2 \} |k_5|^3 M.
\end{align*}
Therefore, we obtain \eqref{re11}. 
\end{proof}

\begin{prop} \label{prop_res2}
It follows that $|\ti{M}_{6, \vp}^{(7)}| \lesssim 1$. 
\end{prop}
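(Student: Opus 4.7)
The plan is to use Lemma~\ref{lem_key} to put $M_{6,\varphi}^{(7)}$ in closed form on $\supp M_{6,\varphi}^{(7)}$, identify the unique piece that carries a one-derivative loss in $|k_7|$, and cancel it after $S_7$-symmetrization by means of a combinatorial residue identity. On the support one has $k_{1,2,3,4,5,6}=0$, $\max_{j\le 6}|k_j|\lesssim|k_7|^{3/5}$ (by $\chi_{A4}^{(7)}$), and $k_{5,6,7}=k_7-k_{1,2,3,4}\sim k_7$. Inserting the formula of Lemma~\ref{lem_key} into $[Q_2^{(5)}/\Phi_0^{(5)}]_{ext1}^{(7)}$, using $\Phi_0^{(5)}(k_1,\ldots,k_4,k_{5,6,7})=-5ik_{1,2,3,4}k_{5,6,7}^4+R_0$ from Remark~\ref{rem_fac1}, and combining with the leading part $-\tfrac{2i\gamma}{3}k_{5,6,7}k_7^2$ of $Q_1^{(3)}(k_5,k_6,k_7)$, the factor $k_{1,2,3,4}$ cancels between the $p_1$-summand of $q_2^{(5)}$ and the leading part of $\Phi_0^{(5)}$, yielding
\begin{align*}
M_{6,\varphi}^{(7)}=\frac{4i\gamma^{3}}{75}\,k_{7}\,S(k_1,k_2,k_3,k_4)+(\text{lower order in }k_7),\quad S:=\frac{1}{k_{1,2}k_{3,4}}+\frac{1}{k_{1,3}k_{2,4}}+\frac{1}{k_{1,4}k_{2,3}},
\end{align*}
with $S$ arising from the three-term permutation sum that defines $Q_2^{(5)}$.

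Since $M_{6,\varphi}^{(7)}$ is symmetric in $(k_1,k_2,k_3,k_4)$ and in $(k_5,k_6)$, while $\chi_{H1}^{(7)}\chi_{R1}^{(7)}\chi_{A4}^{(7)}$ singles out $k_7$, the full symmetrization reduces, with combinatorial factor $4!\,2!/7!=1/105$, to an average over ordered partitions of $\{1,\ldots,7\}$ into a $4$-set $A$, a $2$-set $B$, and a singleton $\{i\}$. At a point where a single index $i$ is extremal (otherwise all $105$ terms vanish by $\chi_{A4}^{(7)}$), exactly $\binom{6}{2}=15$ partitions contribute, one for each $4$-subset $A\subset\{1,\ldots,7\}\setminus\{i\}$, and the dangerous linear-in-$k_i$ part of $\tilde{M}_{6,\varphi}^{(7)}$ becomes a constant multiple of $k_i\sum_{A}S_A$, which equals $k_i\sum_{\{P,P'\}}(k_Pk_{P'})^{-1}$ taken over all unordered pairs of disjoint $2$-subsets of $\{1,\ldots,7\}\setminus\{i\}$ (six indices that sum to zero by $\chi_{R1}^{(7)}$).

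The crux is the combinatorial identity
\begin{align*}
\sum_{\substack{P,P'\subset\{1,\ldots,6\}\\|P|=|P'|=2,\,P\cap P'=\emptyset}}\frac{1}{k_Pk_{P'}}=0\quad\text{whenever}\quad k_1+\cdots+k_6=0,
\end{align*}
which I will prove by a residue argument: viewed on the hyperplane $\{\sum k_j=0\}$, the left-hand side is a rational function homogeneous of degree $-2$ whose only possible poles lie on each $\{k_P=0\}$; the residue there equals $\sum_{P'\subset\{1,\ldots,6\}\setminus P,\,|P'|=2}1/k_{P'}$, and on this locus the complementary four indices themselves sum to zero, so the six reciprocals of pair sums split into three sign-opposite cancelling pairs $k_{P'}+k_{(\{1,\ldots,6\}\setminus P)\setminus P'}=0$. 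All residues thus vanish, the function extends holomorphically, and a homogeneous rational function of negative degree with no poles must be identically zero. This annihilates the $k_i$-term of $\tilde{M}_{6,\varphi}^{(7)}$; the main obstacle is then to verify that every subleading contribution (from $p_2,\ldots,p_7$ in Lemma~\ref{lem_key}, from $R_0$ in Remark~\ref{rem_fac1}, and from the subleading parts of $Q_1^{(3)}$) is $O(1)$ after symmetrization, each such correction carrying an extra factor of order $k_{1,2,3,4}/k_{5,6,7}\lesssim|k_7|^{-2/5}$ or inheriting the $S$-structure that is killed by another application of the same residue identity.
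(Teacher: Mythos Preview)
Your leading-order cancellation is correct, and your $45$-term residue identity is genuine. In fact it follows at once from the elementary $3$-term identity $\tfrac{1}{ab}+\tfrac{1}{bc}+\tfrac{1}{ca}=\tfrac{a+b+c}{abc}=0$ applied to each of the $15$ partitions of $\{1,\dots,6\}$ into three pairs (with $a=k_{P_1}$, $b=k_{P_2}$, $c=k_{P_3}$, $a+b+c=k_{1,\dots,6}=0$); each unordered disjoint pair $\{P,P'\}$ appears in exactly one such partition, so the $45$ terms group into $15$ vanishing triples. This is precisely the mechanism the paper exploits, but the paper does it without passing to the full $S_7$-symmetrization.

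The gap in your argument is the treatment of the subleading terms. Your claim that each correction ``carries an extra factor of order $k_{1,2,3,4}/k_{5,6,7}\lesssim|k_7|^{-2/5}$'' would only reduce the leading $O(|k_7|)$ term to $O(|k_7|^{3/5})$, not to $O(1)$; and many of the corrections (from $e_2(k_1,\dots,k_4)$ in the expansion of $\prod_j(k_7-k_j)$, from $k_5^2+k_6^2$ in $Q_1^{(3)}$, from the $k_1k_2k_{1,2}^n$ pieces of $p_4,\dots,p_7$) are \emph{not} of the specific form $k_{1,2,3,4}\times(\text{rest})$ that would make them $O(1)$ via $k_{1,2,3,4}/(k_{1,2}k_{3,4})=1/k_{1,2}+1/k_{3,4}$. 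Verifying that all subleading contributions are indeed $O(1)$ requires a careful common-denominator computation of the full numerator, which is exactly the content of the paper's Lemma~\ref{L41}.

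The paper's route is different and avoids this difficulty. It observes that the characteristic function $m^{(7)}$ is invariant under cyclic permutation of the three \emph{pairs} $(k_1,k_2)$, $(k_3,k_4)$, $(k_5,k_6)$, so that already before the full $S_7$-symmetrization one may sum the three cyclic images over a common denominator. Lemma~\ref{L41} computes the resulting numerator $q_1^{(7)}$ as $-50\,k_{5,6}k_{1,2,3,4}k_{1,2,5,6}k_{3,4,5,6}\,k_7^{19}+R_1$ with $|R_1|\lesssim|k_{1,2}k_{3,4}k_{5,6}|(|k_{1,2}|^2+|k_{3,4}|^2+|k_{5,6}|^2)|k_7|^{18}$. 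The three leading terms sum to $50\,k_{1,2}k_{3,4}k_{5,6}(k_{1,2}+k_{3,4}+k_{5,6})=0$, and the ratio of $R_1$ to the denominator $\sim|k_{1,2}^{2}k_{3,4}^{2}k_{5,6}^{2}|\,|k_7|^{18}$ is bounded because $k_{1,2}+k_{3,4}+k_{5,6}=0$ forces $|k_{i,j}|\le 2\max$ of the other two pair-sums. Thus the paper controls \emph{all} orders in one stroke, whereas your expansion in $|k_7|^{-1}$ leaves the subleading analysis open.
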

For the proof of Proposition~\ref{prop_res2}, we prepare the following lemma. 
\begin{lem} \label{L41}
Put
\begin{align} \label{def41}
& q_1^{(7)}(k_1,k_2, k_3, k_4, k_5, k_6, k_7)=
k_{5,6} k_{1,2,3,4,5,7} k_{1,2,3,4,6,7} (k_{5,6}^2 +k_{6,7}^2 + k_{5,7}^2) \notag \\
& \times \sum_{l=1}^7 p_{l}(k_1, k_2, k_3, k_4) k_{5,6,7}^{8-l} \Phi_0^{(5)} (k_3, k_4, k_5, k_6, k_{1,2,7}) \Phi_0^{(5)} (k_1, k_2, k_5,  k_6, k_{3,4,7})
\end{align}
where $\{ p_l \}_{l=1}^{7}$ is as in \eqref{def_q2} and 
\begin{align*}
\chi_{R7}^{(7)}=
\begin{cases}
1, \hspace{0.3cm} \text{when} \hspace{0.3cm} k_{1,2,3,4,5,6}=0, ~~k_{1,2,3,4} k_{3,4,5,6} k_{1,2,5,6}  \neq 0 \\
\hspace{2cm} |k_7|^{3/5} > 8^5 \max_{j=1,2,3,4,5,6} \{  |k_j|  \} \\
0, \hspace{0.3cm} \text{otherwise}
\end{cases}.
\end{align*}
Then, it follows that
\begin{align} \label{L410}
& q_1^{(7)} = -50 k_{5,6} k_{1,2,3,4} k_{1,2,5,6} k_{3,4,5,6} k_7^{19} 
+ R_1(k_1,k_2., k_3, k_4, k_5, k_6, k_7)
\end{align}
where $R_1$ is a polynomial of degree $23$ and satisfies
\begin{equation*} 
|R_1(k_1,k_2, k_3, k_4, k_5, k_6, k_7) \chi_{R7}^{(7)}| 
\lesssim |k_{1,2} k_{3,4} k_{5,6} | ( |k_{1,2}|^2+|k_{3,4}|^2 + |k_{5,6}|^2 ) |k_7|^{18} \chi_{R7}^{(7)}. 
\end{equation*}
\end{lem}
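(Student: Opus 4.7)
The plan is to view $q_1^{(7)}$ as a polynomial in $k_7$ of $k_7$-degree exactly $19$, compute the $k_7^{19}$-coefficient explicitly, and control everything below. On $\supp \chi_{R7}^{(7)}$ the relation $k_{1,2,3,4,5,6}=0$ yields
\[
k_{1,2,3,4}=-k_{5,6}, \qquad k_{3,4,5,6}=-k_{1,2}, \qquad k_{1,2,5,6}=-k_{3,4},
\]
and the nonvanishing conditions force $|k_{1,2}|, |k_{3,4}|, |k_{5,6}|\ge 1$. The scaling $|k_7|^{3/5}>8^5\max_{j\le 6}|k_j|$ makes $k_{1,2,7}, k_{3,4,7}, k_{5,6,7}\sim k_7$ and verifies the hypothesis of Remark~\ref{rem_fac1} applied with the large argument taken to be $k_{1,2,7}$ or $k_{3,4,7}$.

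A direct expansion of the prefactor, using $(k_7-k_5)(k_7-k_6)=k_7^2-k_{5,6}k_7+k_5k_6$ and $k_{5,6}^2+k_{5,7}^2+k_{6,7}^2 = 2k_7^2+2k_{5,6}k_7+(k_5^2+k_6^2+k_{5,6}^2)$, shows that the $k_7^3$ and $k_7^2$ coefficients cancel identically, so
\[
k_{5,6}k_{1,2,3,4,5,7}k_{1,2,3,4,6,7}(k_{5,6}^2+k_{6,7}^2+k_{5,7}^2)=2k_{5,6}k_7^4+E_P,
\]
with $|E_P|\lesssim |k_{5,6}|^2(|k_5|+|k_6|)^2 |k_7|+|k_{5,6}|(|k_5|+|k_6|)^4$. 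Applying Remark~\ref{rem_fac1} twice (the remark's ``$k_{1,2,3,4}$'' becoming $k_{3,4,5,6}=-k_{1,2}$ or $k_{1,2,5,6}=-k_{3,4}$),
\[
\Phi_0^{(5)}(k_3,k_4,k_5,k_6,k_{1,2,7})=5ik_{1,2}k_{1,2,7}^4+R_0^{(1)},\quad
\Phi_0^{(5)}(k_1,k_2,k_5,k_6,k_{3,4,7})=5ik_{3,4}k_{3,4,7}^4+R_0^{(2)},
\]
with $|R_0^{(1)}|\lesssim |k_{1,2}|^2|k_7|^3$ and $|R_0^{(2)}|\lesssim |k_{3,4}|^2|k_7|^3$. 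The top $k_7$-contribution to $\sum_{l=1}^7 p_l k_{5,6,7}^{8-l}$ comes only from $l=1$ as $p_1 k_7^7=k_{1,2,3,4}k_7^7=-k_{5,6}k_7^7$. Multiplying the leading $k_7$-parts of all four factors produces the $k_7^{19}$-coefficient
\[
2k_{5,6}\cdot(-k_{5,6})\cdot(5ik_{1,2})(5ik_{3,4})=50\,k_{5,6}^2 k_{1,2} k_{3,4}=-50\,k_{5,6}k_{1,2,3,4}k_{1,2,5,6}k_{3,4,5,6},
\]
which is exactly the leading term in \eqref{L410}. Everything else is $R_1$, a polynomial of total degree $23$ and $k_7$-degree at most $18$.

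To bound $R_1$ I observe that every remaining summand in the expansion inherits (i) a factor of $k_{5,6}$ from the prefactor; (ii) a factor estimated by $|k_{1,2}||k_{3,4}|$ from the product of the two $\Phi_0^{(5)}$'s (treating the four cases leading$\times$leading, leading$\times R_0^{(2)}$, $R_0^{(1)}\times$leading, and $R_0^{(1)}\times R_0^{(2)}$ uniformly and using $|k_{1,2}|,|k_{3,4}|\ge 1$ to extract the missing factor in the two mixed cases); and (iii) at least one additional factor of $|k_{1,2}|$, $|k_{3,4}|$, $|k_{5,6}|$, or a ``small'' $|k_j|$ for every unit drop of $k_7$-degree below $19$, coming from Taylor-expanding $k_{1,2,7}^4=(k_7+k_{1,2})^4$, $k_{3,4,7}^4=(k_7+k_{3,4})^4$, or $k_{5,6,7}^{8-l}=(k_7+k_{5,6})^{8-l}$, from $E_P$, or from an $R_0^{(i)}$. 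The scaling $|k_j|\lesssim |k_7|^{3/5}$ together with $|k_{1,2}|,|k_{3,4}|,|k_{5,6}|\ge 1$ lets one convert any bare $|k_j|^a$ into the required polynomial expression, yielding $|R_1|\lesssim |k_{1,2}||k_{3,4}||k_{5,6}|(|k_{1,2}|^2+|k_{3,4}|^2+|k_{5,6}|^2)|k_7|^{18}$. The chief difficulty lies in this final bookkeeping: the $\Phi_0^{(3)}(k_{3,4},k_5,k_6)$ and $\Phi_0^{(2)}(k_3,k_4)$ fragments hidden inside $R_0^{(1)}$ (and symmetrically $R_0^{(2)}$) do not polynomially factor through $k_{1,2}$, so the $|k_{1,2}|^2$ in the $R_0^{(1)}$ bound must be generated by combining $|k_{1,2}|\ge 1$ with the scaling; one has to verify case by case that the collected estimate takes the claimed form.
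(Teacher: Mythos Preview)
Your approach is essentially the same as the paper's: decompose each of the four factors of $q_1^{(7)}$ into its top $k_7$-power plus a controlled remainder (using Remark~\ref{rem_fac1} for the two $\Phi_0^{(5)}$ factors), multiply the four leading pieces to obtain $-50k_{5,6}k_{1,2,3,4}k_{1,2,5,6}k_{3,4,5,6}k_7^{19}$, and bound the cross terms. The paper records the intermediate remainders as $R_2,R_3,R_5,R_6,R_7$ with bounds $|R_2|\lesssim |k_{5,6}|^2|k_7|^3$, $|R_3|\lesssim(|k_{1,2}|^2+|k_{3,4}|^2+|k_{5,6}|^2)|k_7|^6$, $|R_7|\lesssim |k_{1,2}||k_{3,4}|(|k_{1,2}|+|k_{3,4}|)|k_7|^7$, and then stops; your observation that both the $k_7^3$ and $k_7^2$ coefficients of the prefactor vanish is sharper than the paper's $|R_2|\lesssim|k_{5,6}|^2|k_7|^3$ but not needed, and your honest caveat about the $\Phi_0^{(3)},\Phi_0^{(2)}$ fragments inside $R_0^{(i)}$ is exactly the point already absorbed into Remark~\ref{rem_fac1}. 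The final multiplication of the three ``leading $+$ remainder'' blocks is routine once the individual bounds are in hand.
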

\begin{proof} 
For $(k_1,k_2, k_3, k_4, k_5, k_6, k_7) \in \supp \chi_{R7}^{(7)}$, it follows that 
\begin{equation} \label{L412}
|k_{1,2,3,4}|=|k_{5,6}| < 2 \cdot 8^{-5} |k_7|
\end{equation}
and $|k_5^2 k_6^2| \le \max_{1\le j \le 6} \{  |k_j|^4 \} < 8^{-20} |k_7|^3 $. Thus, we have 
\begin{align} \label{L401}
& k_{5,6} k_{1,2,3,4,5,7} k_{1,2,3,4,6,7} (k_{5,6}^2+ k_{6,7}^2 + k_{5,7}^2 ) \notag \\ 
& \hspace{0.5cm}= 2k_{5,6} k_7^4 + R_2(k_1, k_2, k_3, k_4, k_5, k_6, k_7)
\end{align}
where $R_2$ is a polynomial of degree $5$ and satisfies 
\begin{equation} \label{L402}
|R_2(k_1, k_2, k_3, k_4, k_5, k_6, k_7) \chi_{R7}^{(7)} | \lesssim |k_{5,6}|^2 |k_7|^3 \chi_{R7}^{(7)}. 
\end{equation}
We notice that $(k_1, k_2, k_3, k_4, k_5, k_6, k_7) \in \supp \chi_{R7}^{(7)}$ leads that \eqref{L412} and
\begin{equation*}
\max \{ |k_1^2 k_2^2|, |k_3^2 k_4^2| \} \le \max_{1 \le j \le 6} \{  |k_j|^4 \} < 8^{-20} |k_7|^3.
\end{equation*}
Thus, by \eqref{def_q2}, we have
\begin{align} \label{L403}
\sum_{l=1}^7 p_l(k_1, k_2, k_3,  k_4) k_{5,6,7}^{8-l} = k_{1,2,3,4} k_{7}^7 + R_3 (k_1, k_2, k_3, k_4, k_5,  k_6, k_7)
\end{align}
where $R_3$ is a polynomial of degree $8$ and satisfies 
\begin{equation} \label{L404}
| R_3 (k_1,  k_2, k_3, k_4,  k_5, k_6, k_7) \chi_{R7}^{(7)}| \lesssim (|k_{1,2}|^2 + |k_{3,4}|^2 +  |k_{5,6}|^2) |k_7|^6 \chi_{R7}^{(7)}. 
\end{equation}
Since $(k_1, k_2, k_3, k_4, k_5,k_6, k_7) \in \supp \chi_{R7}^{(7)}$ leads that 
\begin{equation*}
|k_{1,2,7}|^{3/5} > 8^4 \max_{j=3,4,5,6} \{ |k_j| \}, \hspace{0.5cm} k_{3,4,5.6} \neq 0, 
\end{equation*}
we use Remark~\ref{rem_fac1} to have 
\begin{align*}
\Phi_0^{(5)}(k_3, k_4, k_5, k_6, k_{1,2,7})= -5i k_{3,4,5,6} k_{1,2,7}^4 +R_4(k_1, k_2, k_3, k_4, k_5, k_6, k_7)
\end{align*}
where $R_4$ is a polynomial of degree $5$ and satisfies 
\begin{equation*}
| R_4 (k_1,  k_2, k_3, k_4,  k_5, k_6, k_7) \chi_{R7}^{(7)}| \lesssim |k_{3,4,5,6}|^2 |k_{1,2,7}|^3  \chi_{R7}^{(7)}. 
\end{equation*}
Since $| k_{3,4,5,6} | =|k_{1,2}| < 2 \cdot 8^{-5} |k_7| $ holds for $(k_1, k_2,  k_3, k_4, k_5, k_6, k_7) \in \supp \chi_{R7}^{(7)}$, 
it follows that 
\begin{equation} \label{L413}
\Phi_0^{(5)} (k_3, k_4, k_5, k_6, k_{1,2,7}) =
-5i k_{3,4,5,6} k_7^4 +R_5(k_1, k_2, k_3, k_4, k_5, k_6, k_7)
\end{equation}
and 
\begin{equation*}
| R_5 (k_1,  k_2, k_3, k_4,  k_5, k_6, k_7) \chi_{R7}^{(7)}| \lesssim |k_{1,2}|^2 |k_{7}|^3  \chi_{R7}^{(7)}. 
\end{equation*}
Similarly, we have 
\begin{equation} \label{L414}
\Phi_{0}^{(5)} (k_1, k_2, k_5, k_6, k_{3,4,7})= -5i k_{1,2,5,6} k_7^4 + R_6(k_1, k_2, k_3, k_4, k_5, k_6, k_7)
\end{equation}
and 
\begin{equation*}
| R_6 (k_1,  k_2, k_3, k_4,  k_5, k_6, k_7) \chi_{R7}^{(7)}| \lesssim |k_{3,4}|^2 |k_{7}|^3  \chi_{R7}^{(7)}. 
\end{equation*}
By \eqref{L413} and \eqref{L414}, it follows that 
\begin{align} \label{L405}
& \Phi_0^{(5)} (k_1, k_2, k_5, k_6, k_{3,4,7}) \Phi_0^{(5)}(k_3, k_4, k_5, k_6, k_{1,2,7})  \notag \\
& \hspace{0.5cm} = -25 k_{3,4,5,6} k_{1,2,5,6}  k_7^8 +R_7(k_1, k_2, k_3, k_4, k_5, k_6, k_7)
\end{align}
where $R_7$ is a polynomial of degree $10$ and satisfies 
\begin{equation} \label{L406}
| R_7 (k_1,  k_2, k_3, k_4,  k_5, k_6, k_7) \chi_{R7}^{(7)}| \lesssim |k_{1,2}| |k_{3,4}| (|k_{1,2}|+|k_{3,4}| )   |k_{7}|^7  \chi_{R7}^{(7)}. 
\end{equation}
Collecting \eqref{L401}--\eqref{L404} and \eqref{L405}--\eqref{L406}, we obtain \eqref{L410}. 
\end{proof}
\begin{proof}[Proof of Proposition~\ref{prop_res2}]
We notice that $(k_1,  k_2, k_3, k_4, k_5, k_6, k_7) \in \supp \chi_{H1}^{(7)} \chi_{A4}^{(7)}$ implies 
\begin{align*}
& (1-\chi_{R4}^{(5)}) (k_1, k_2, k_3, k_4, k_{5,6,7} )= (1-\chi_{R4}^{(5)}) (k_1, k_3, k_2, k_4, k_{5,6,7}) \\
&  =(1- \chi_{R4}^{(5)}) (k_1, k_4, k_2, k_3, k_{5,6,7})=1.
\end{align*}
Thus, by the  definition of $Q_2^{(5)}$, it follows that 
\begin{align*}
M_{6, \vp}^{(7)}=& K^{(7)}(k_1,k_2,k_3,k_4,k_5,k_6,k_7)+K^{(7)}(k_1,k_3,k_2,k_4,k_5,k_6,k_7)\\
&+K^{(7)}(k_1,k_4,k_2,k_3,k_5,k_6,k_7)
\end{align*}
where
\begin{equation} \label{L407}
K^{(7)}:=- \left(\frac{q_2^{(5)}}{ \Phi_0^{(5)} } \chi_{NR2}^{(5)}\right) (k_1,  k_2, k_3, k_4,  k_{5,6,7}) \left(Q_1^{(3)} \chi_{NR1}^{(3)}\right) (k_5, k_6, k_7) 
\, \chi_{H1}^{(7)} \chi_{A4}^{(7)} \chi_{R1}^{(7)}.
\end{equation}
Since $\ti{M}_{6, \vp}^{(7)}= 3\ti{K}^{(7)}$, we only need to show $| \ti{K}^{(7)} | \lesssim 1 $. 

Put 
\begin{equation*}
m^{(7)} :=\chi_{NR2}^{(5)}(k_1, k_2, k_3, k_4, k_{5,6,7}) \chi_{NR1}^{(3)} (k_5, k_6, k_7) \chi_{H1}^{(7)} \chi_{A4}^{(7)} \chi_{R1}^{(7)}. 
\end{equation*}
For $(k_1, k_2, k_3, k_4, k_5, k_6, k_7) \in \supp \chi_{H1}^{(7)} \chi_{A4}^{(7)} \chi_{R1}^{(7)}$, it follows that 
\begin{align*}
& \chi_{NR2}^{(5)} (k_1, k_2, k_3, k_4, k_{5,6,7}) \chi_{NR1}^{(3)} (k_5, k_6, k_7) \\
&  =
\begin{cases}
1 \hspace{0.3cm} \text{when} \hspace{0.3cm} k_{1,2} k_{3,4} k_{5,6} k_{1,2,3,4} k_{1,2,5,6} k_{3,4,5,6} \neq 0 \\
0 \hspace{0.3cm} \text{otherwise}
\end{cases},
\end{align*}
which leads that $m^{(7)}$ satisfies 
\begin{align*}
&m^{(7)} (k_1, k_2, k_3, k_4, k_5, k_6, k_7)= m^{(7)}(k_3, k_4, k_5, k_6, k_1, k_2, k_7) \notag \\
& =m^{(7)} (k_1, k_2, k_5, k_6, k_3, k_4, k_7). 
\end{align*}
Thus, it follows that 
\begin{align*}
\ti{K}^{(7)}=& 
-\frac{1}{3} \Big[   \Big\{  \frac{q_2^{(5)}}{\Phi_0^{(5)}} (k_1, k_2, k_3, k_4, k_{5,6,7}) Q_1^{(3)} (k_5, k_6, k_7)  \\
&\hspace{0.8cm}  + \frac{q_2^{(5)}}{\Phi_0^{(5)}} (k_3, k_4,k_5, k_6, k_{1,2,7}) Q_1^{(3)} (k_1, k_2, k_7) \\
& \hspace{0.8cm} +\frac{q_2^{(5)}}{\Phi_0^{(5)}} (k_1, k_2, k_5, k_6, k_{3,4,7}) Q_1^{(3)} (k_3, k_4, k_7)  \Big\} m^{(7)} \Big]_{sym}^{(7)}.
\end{align*}
By the  definition of $Q_1^{(3)}$ and Lemma~\ref{lem_key}, we have 
\begin{align} \label{res31}
& \ti{K}^{(7)}
=  \frac{2}{45} \ga^3 \Big[  \frac{m^{(7)}}{ k_{1,2} k_{3,4} k_{5,6} k_{1,2,3,4,5,7} k_{1,2,3,4,6,7} k_{1,2,3,5,6,7} k_{1,2,4,5,6,7} k_{1,3,4,5,6,7} k_{2,3,4,5,6,7}  }  \notag \\ 
& \times \frac{ q_1^{(7)}(k_1, k_2, k_3,  k_4, k_5, k_6, k_7)+ q_1^{(7)}(k_3, k_4, k_5, k_6, k_1, k_2, k_7)+ q_1^{(7)}(k_1, k_2, k_5, k_6, k_3, k_4, k_7) }
{ \Phi_0^{(5)} (k_1, k_2, k_3,k_4, k_{5,6,7}) \Phi_0^{(5)} (k_3, k_4, k_5, k_6, k_{1,2,7})\Phi_0^{(5)} (k_1, k_2, k_5, k_6, k_{3,4,7} ) }  
\Big]_{sym}^{(7)}
\end{align} 
where $q_1^{(7)}$ is as in \eqref{def41}. 
By $\supp m^{(7)} \subset \supp \chi_{R7}^{(7)} $, we use Lemma~\ref{L41} to have
\begin{align} \label{res50}
q_1^{(7)}& = (-50 k_{5,6} k_{1,2,3,4} k_{1,2,5,6} k_{3,4,5,6} ) k_7^{19} +R_1(k_1,k_2, k_3, k_4, k_5, k_6, k_7) \notag \\
& =: r_4(k_1, k_2, k_3, k_4, k_5, k_6) k_7^{19} +R_1(k_1, k_2, k_3, k_4, k_5, k_6, k_7) 
\end{align} 
where $R_1$ is a polynomial of degree $23$ and satisfies 
\begin{equation} \label{res51}
|R_1(k_1, k_2, k_3, k_4, k_5, k_6, k_7) \, m^{(7)} | \lesssim  |k_{1,2} k_{3,4} k_{5,6}| (|k_{1,2}|^2 + |k_{3,4}|^2 +|k_{5,6}|^2) |k_7|^{18} m^{(7)}. 
\end{equation} 
For $ (k_1, k_2, k_3, k_4, k_5, k_6, k_7) \in \supp \chi_{R1}^{(7)}$, it follows that 
\begin{equation*}
r_4(k_1, k_2, k_3, k_4, k_5, k_6)= 50 k_{1,2} k_{3,4} k_{5,6}^2
\end{equation*}
which leads  
\begin{align} \label{res52}
& r_4(k_1, k_2, k_3, k_4, k_5, k_6)+ r_4(k_3, k_4, k_5, k_6, k_1, k_2) + r_4(k_1, k_2, k_5, k_6, k_3, k_4) \notag \\
& = 50 k_{1,2} k_{3,4} k_{5,6} k_{1,2,3,4,5,6}=0.   
\end{align}
By \eqref{res50} and \eqref{res52}, we get  
\begin{align} \label{res35}
& \big|\{ q_1^{(7)} (k_1, k_2, k_3, k_4, k_5, k_6, k_7)+ q_1^{(7)}(k_3, k_4, k_5, k_6, k_1, k_2, k_7 )
+ q_1^{(7)} (k_1, k_2, k_5, k_6, k_3, k_4, k_7) \} m^{(7)} \big| \notag \\
& =\big| \{R_1(k_1, k_2, k_3, k_4, k_5, k_6, k_7)+R_1 (k_3, k_4, k_5, k_6, k_1, k_2,k_7)
 + R_1 (k_1, k_2, k_5, k_6, k_3, k_4, k_7) \} m^{(7)} \big| \notag \\ 
& \lesssim |k_{1,2} k_{3,4} k_{5,6}| (|k_{1,2}|^2 + |k_{3,4}|^2 +|k_{5,6}|^2) |k_7|^{18} m^{(7)}. 
\end{align}  
Here we used \eqref{res51} and symmetry in the last inequality. 
By Lemma~\ref{Le2}, it follows that
\begin{align}
& \big| k_{1,2} k_{3,4} k_{5,6} k_{1,2,3,4,5,7} k_{1,2,3,4,6,7} k_{1,2,3,5,6,7} k_{1,2,4,5,6,7} k_{1,3,4,5,6,7} k_{2,3,4,5,6,7} \notag \\
& \hspace{0.3cm} \times
\Phi_0^{(5)} (k_1, k_2, k_3, k_4, k_{5,6,7}) \Phi_0^{(5)} (k_3, k_4, k_5, k_6, k_{1,2,7}) \Phi_0^{(5)} (k_1, k_2, k_5, k_6, k_{3,4,7}) \big| m^{(7)} 
\notag \\
& \gtrsim |k_{1,2}^2 k_{3,4}^2 k_{5,6}^2| |k_7|^{18} \, m^{(7)}. \label{res38}
\end{align}
By \eqref{res31}, \eqref{res35} and \eqref{res38}, we obtain 
\begin{align*}
| \ti{K}^{(7)} | \lesssim \Big[  \frac{ |k_{1,2}|^2 +|k_{3,4}|^2 + |k_{5,6}|^2  }{ |k_{1,2} k_{3,4} k_{5,6}|  } \, m^{(7)} \Big]_{sym}^{(7)}
\lesssim 1.
\end{align*}
\end{proof}
\begin{prop} \label{prop_res3} 
It follows that 
\begin{equation} \label{res41}
| M_{11, \vp}^{(5)}+M_{13, \vp}^{(5)}| \lesssim \frac{ \max_{1 \le j \le 4} \{ |k_j|^2 \} }{ |k_{1,2}| } 
\, \chi_{H1}^{(5)} \chi_{R1}^{(5)} (1- \chi_{R2}^{(5)}). 
\end{equation}
\end{prop}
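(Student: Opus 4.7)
The plan is to evaluate $M_{11,\vp}^{(5)}$ and $M_{13,\vp}^{(5)}$ in closed form on their support, combine them over a common denominator, and exhibit the algebraic cancellation of the leading power of $k_5$ in the resulting numerator.

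Write $a:=k_{1,2}$, $b:=k_1k_2$ and $d:=k_3k_4$. On $\supp\chi_{R1}^{(5)}$ we have $k_{3,4}=-a$, and the two $\chi_{NR1}^{(3)}$ factors in the definitions of $M_{11,\vp}^{(5)}$ and $M_{13,\vp}^{(5)}$ force $a\ne 0$ and $k_5\notin\{k_1,k_2,-k_3,-k_4\}$. Consequently $k_{1,2,3,4,5}=k_5$, $k_{2,3,4,5}=k_5-k_1$, $k_{1,3,4,5}=k_5-k_2$ and $k_{3,4,5}=k_5-a$. Setting
\[
S_1:=a^2+(k_5-k_1)^2+(k_5-k_2)^2,\quad S_2:=b+ak_5-a^2,\quad S_3:=a^2+(k_5+k_3)^2+(k_5+k_4)^2,
\]
a direct substitution using \eqref{2eq2} and the formulas for $Q_1^{(3)}=\ga q_1^{(3)}$, $Q_2^{(3)}=\be q_2^{(3)}$ yields, on the support,
\[
\frac{M_{11,\vp}^{(5)}}{9}=-\frac{2i\be\ga\,k_5(k_5-a)(d-ak_5)}{45\,a(k_5-k_1)(k_5-k_2)},\qquad\frac{M_{13,\vp}^{(5)}}{9}=-\frac{2i\be\ga\,k_5(k_5-a)\,S_2S_3}{45\,a(k_5-k_1)(k_5-k_2)\,S_1},
\]
the factor $S_1$ from $\Phi_0^{(3)}(k_1,k_2,k_{3,4,5})$ cancelling the same factor in $Q_1^{(3)}(k_1,k_2,k_{3,4,5})$ in $M_{11,\vp}^{(5)}$ but surviving in $M_{13,\vp}^{(5)}$. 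Combining yields
\[
M_{11,\vp}^{(5)}+M_{13,\vp}^{(5)}=-\frac{2i\be\ga}{5}\cdot\frac{k_5(k_5-a)\,\bigl[(d-ak_5)S_1+S_2S_3\bigr]}{a(k_5-k_1)(k_5-k_2)\,S_1}.
\]

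The main step is the polynomial identity
\[
(d-ak_5)S_1+S_2S_3=2(b+d-a^2)\,k_5^2+2a(a^2-2d)\,k_5+2(a^2-b)(2d-a^2),
\]
which I would prove by first writing $S_1=2(a^2-b)+2(k_5^2-ak_5)$ and $S_3=2(a^2-d)+2(k_5^2-ak_5)$ (both follow from $k_3+k_4=-a$, $k_1^2+k_2^2=a^2-2b$, $k_3^2+k_4^2=a^2-2d$) and then matching coefficients; the crucial observation is that the cubic contributions $-2ak_5^3$ from $(d-ak_5)S_1$ and $+2ak_5^3$ from $S_2S_3$ cancel exactly. This cancellation is the heart of the proposition and the main obstacle; the rest consists of routine size estimates.

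Finally, set $K:=\max_{1\le j\le 4}|k_j|$. On $\supp\chi_{H1}^{(5)}$ we have $|k_5|>8^3K$, so $|k_5-a|\sim|k_5-k_1|\sim|k_5-k_2|\sim|k_5|$ and $S_1\gec|k_5|^2$. Since $|a|\lec K$ and $|b|,|d|\lec K^2$, the displayed identity gives $|(d-ak_5)S_1+S_2S_3|\lec K^2\,k_5^2$, while the prefactor satisfies $|k_5(k_5-a)/[a(k_5-k_1)(k_5-k_2)S_1]|\lec 1/(|a|k_5^2)$. Multiplying yields $|M_{11,\vp}^{(5)}+M_{13,\vp}^{(5)}|\lec K^2/|a|$ on the support. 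Because the $\chi_{NR1}^{(3)}$ factors in the definitions of $M_{11,\vp}^{(5)},M_{13,\vp}^{(5)}$ force $k_{1,2}k_{3,4}\ne 0$, the combined support already lies in $\supp\chi_{H1}^{(5)}\chi_{R1}^{(5)}(1-\chi_{R2}^{(5)})$, and \eqref{res41} follows.
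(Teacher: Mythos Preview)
Your proof is correct and follows essentially the same strategy as the paper's: combine $M_{11,\vp}^{(5)}$ and $M_{13,\vp}^{(5)}$ over the common denominator $\Phi_0^{(3)}(k_1,k_2,k_{3,4,5})$ (your $a(k_5-k_1)(k_5-k_2)S_1$ is this quantity up to a constant), observe that the leading power of $k_5$ in the numerator is proportional to $k_{1,2,3,4}$ and hence vanishes on $\chi_{R1}^{(5)}$, and bound the remaining terms. The only real difference is that the paper leaves the lower-order coefficients $h_l$ ($l\ge 2$) implicit and bounds them by their polynomial degree, whereas you specialize to $k_{1,2,3,4}=0$ from the start and compute the resulting quadratic in $k_5$ explicitly; this gives a slightly cleaner constant but no new idea.
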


\begin{proof}
By Remark \ref{rem_sym}, we have 
\EQQ{
& M_{11, \vp}^{(5)}= 9 \frac{Q_1^{(3)}}{ \Phi_0^{(3)} } (k_1, k_2, k_{3,4,5}) Q_2^{(3)} (k_3, k_4, k_5) \, \chi_{H1}^{(5)} \chi_{R1}^{(5)} \chi_{NR1}^{(5)}, \\
& M_{13, \vp}^{(5)}= 9 \frac{Q_2^{(3)}}{ \Phi_0^{(3)} } (k_1, k_2, k_{3,4,5}) Q_1^{(3)} (k_3, k_4, k_5) \, \chi_{H1}^{(5)} \chi_{R1}^{(5)} \chi_{NR1}^{(5)}. 
}
Put $M=\chi_{H1}^{(5)} \chi_{R1}^{(5)} \chi_{NR1}^{(5)}$. 
For $(k_1, k_2, k_3, k_4, k_5) \in \supp \chi_{H1}^{(5)}$, it follows that $\chi_{NR1}^{(5)}=1- \chi_{R2}^{(5)}$, which leads that 
\begin{equation} \label{res410}
M=\chi_{H1}^{(5)} \chi_{R1}^{(5)} (1-\chi_{R2}^{(5)}). 
\end{equation}
By direct computation, we have 
\begin{align*}
& M_{11, \vp}^{(5)}+ M_{13, \vp}^{(5)} \\
& = \frac{M}{ \Phi_{0}^{(3)} (k_1, k_2, k_{3,4,5}) }
 \Big\{ 9 Q_1^{(3)} (k_1, k_2, k_{3,4,5}) Q_2^{(3)} (k_3, k_4, k_5) \\
& \hspace{3cm} + 9 Q_2^{(3)} (k_1, k_2, k_{3,4,5}) Q_1^{(3)} (k_3, k_4, k_5)  \Big\} \\
&=(- \be \ga) \frac{M}{ \Phi_0^{(3)} (k_1, k_2, k_{3,4,5})  } 
\Big\{ k_{1,2,3,4,5} (k_{1,2}^2+ k_{1,3,4,5}^2 +k_{2,3,4,5}^2) k_{3,4,5} (k_3k_4+k_{3,4} k_5) \\
& \hspace{3cm} +k_{1,2,3,4,5} (k_1 k_2 +k_{1,2} k_{3,4,5}) k_{3,4,5} (k_{3,4}^2 +k_{3,5}^2 +k_{4,5}^2) \Big\} \\
& = (- \be  \ga) \frac{M}{ \Phi_0^{(3)} (k_1,  k_2, k_{3,4,5}) } \sum_{l=1}^6 h_l (k_1, k_2, k_3, k_4) k_5^{6-l}
\end{align*}
where each $h_l(k_1, k_2, k_3, k_4)$ is  a polynomial of degree $l$ for $l=1,2,\dots, 6$ and 
\begin{align*}
h_1(k_1, k_2, k_3, k_4)=2 k_{1,2,3,4}.  
\end{align*}
Since $h_1(k_1, k_2, k_3, k_4)=0$ holds for $(k_1, k_2, k_3, k_4, k_5) \in \supp \chi_{R1}^{(5)}$, we have 
\begin{align} \label{res411}
M_{11, \vp}^{(5)}+ M_{13, \vp}^{(5)}= 
(- \be \ga) \frac{M}{\Phi_0^{(3)} (k_1, k_2, k_{3,4,5}) } \sum_{l=2}^6 h_l(k_1,  k_2, k_3, k_4) k_5^{6-l}.
\end{align}
For $(k_1, k_2, k_3,  k_4,  k_5) \in \supp M$, it follows that 
\begin{equation} \label{res412}
\sum_{l=2}^6 |h_l(k_1, k_2, k_3, k_4)| |k_5|^{6-l} \lesssim \max_{1 \le j \le 4} \{ |k_j|^2 \} \, |k_5|^4
\end{equation}
and 
\begin{equation} \label{res413}
|\Phi_{0}^{(3)} (k_1, k_2, k_{3,4,5}) | \gtrsim |k_{1,2}| |k_5|^4.
\end{equation}
Collecting \eqref{res410}--\eqref{res413}, we obtain \eqref{res41}. 
\end{proof}


\section{multilinear estimates}

In this section, we present several multilinear estimates in order to prove main estimates 
which are stated in Section 7. 

\begin{lem} \label{lem_nes01}
Let $s \ge 3/2$ and $L>0$. Then, for any $m=1,2,3$, it follows that 
\begin{align}
& \Big\| \sum_{k=k_{1,2,3}} \big[ \langle k_{2,3} \rangle^{-1} \langle k_{\max} \rangle^{-1} \chi_{H2,1}^{(3)} \big]_{sym}^{(3)} \,
 \prod_{l=1}^3 |\ha{v}_l (k_l)|   \Big\|_{l_s^2}
 \lesssim  \| v _m\|_{H^{s-3}} \prod_{l \in \{ 1,2,3  \} \setminus \{ m \} } \|  v_l \|_{H^s} , \label{cnl1} \\
& \Big\| \sum_{k=k_{1,2,3}} \big[ \langle k_{2,3} \rangle^{-1} \langle k_{\max} \rangle^{-1} \chi_{H2,2}^{(3)} \big]_{sym}^{(3)} \, 
\prod_{l=1}^3 |\ha{v}_l (k_l)|   \Big\|_{l_s^2}
 \lesssim  \| v _m\|_{H^{s-3}} \prod_{l \in \{ 1,2,3  \} \setminus \{ m \} } \|  v_l \|_{H^s}, \label{cnl2} \\
 & \Big\| \sum_{k=k_{1,2,3}} \Lambda_1^{-1} \chi_{H3}^{(3)}  \, \prod_{l=1}^3 |\ha{v}_l (k_l)|   \Big\|_{l_s^2}
 \lesssim  \| v _m\|_{H^{s-3}} \prod_{l \in \{ 1,2,3  \} \setminus \{ m \} } \|  v_l \|_{H^s}, \label{cnl3} 
\end{align}
where $\La_1= \min \{ \langle k_{1,2} \rangle \langle k_{1,3} \rangle, \langle k_{1,2} \rangle \langle k_{2,3} \rangle, 
\langle k_{1,3} \rangle \langle k_{2,3} \rangle  \}$. Moreover, we have 
\begin{equation}
\Big\| \sum_{k=k_{1,2,3}} \La_1^{-1} \chi_{H3}^{(3)} \chi_{>L}^{(3)} \, \prod_{l=1}^3 |\ha{v}_l (k_l)|   \Big\|_{l_s^2}
 \lesssim L^{-3} \prod_{l=1}^3  \| v_l \|_{H^s}.  \label{cnl4} 
\end{equation}
\end{lem}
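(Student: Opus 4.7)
The four estimates concern multipliers supported in three distinct frequency regions: $\chi_{H2,1}^{(3)}$ and $\chi_{H2,2}^{(3)}$ each have one small frequency and two comparable large ones (distinguished by whether $|k_{2,3}|$ is large or small), while $\chi_{H3}^{(3)}$ has all three frequencies comparable. The unified strategy is to Littlewood-Paley decompose each input, apply a sharp bilinear $L^2$ convolution estimate to the high-frequency pair, use the low-regularity Sobolev bound $\|P_Kv\|_{L^2}\lesssim K^{\max(0,3-s)}\|v\|_{H^{s-3}}$ for the $H^{s-3}$ factor, and sum the dyadic contributions via Cauchy-Schwarz.

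For \eqref{cnl1} and \eqref{cnl2}, by symmetrization I may assume $m=1$, so that $k_1$ plays the ``small'' role. Decompose $|k_1|\sim K$, $|k_{2,3}|\sim M$, $|k_2|\sim|k_3|\sim N$; on $\chi_{H2,1}^{(3)}$ the cut-off forces $K\ll M\lesssim N$ with $\langle k\rangle\sim\langle k_{2,3}\rangle$, while on $\chi_{H2,2}^{(3)}$ it gives $M\le K\ll N$ with $\langle k\rangle\sim\langle k_1\rangle$. The key bilinear estimate
\[
\bigl\|(|\ha{P_Nv_2}|\ast|\ha{P_Nv_3}|)\bigr\|_{l^2_m,\,|m|\sim M}\lesssim M^{1/2}\|P_Nv_2\|_{L^2}\|P_Nv_3\|_{L^2}
\]
follows by pointwise Cauchy-Schwarz together with $\#\{|m|\sim M\}\sim M$. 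Combining it with $\|\ha{P_Kv_1}\|_{l^1}\lesssim K^{1/2}\|P_Kv_1\|_{L^2}$ (Cauchy-Schwarz) and the Sobolev-type bound above, each dyadic piece is bounded explicitly; successive summation over $K$, $M$, and finally $N$ closes the argument. At the endpoint $s=3/2$ the final $N$-sum is sharp: for \eqref{cnl1} one obtains $\sum_NN^2\|P_Nv_2\|\|P_Nv_3\|\lesssim\|v_2\|_{H^1}\|v_3\|_{H^1}\le\|v_2\|_{H^s}\|v_3\|_{H^s}$, and for \eqref{cnl2} one obtains $\sum_NN^3\|P_Nv_2\|\|P_Nv_3\|\lesssim\|v_2\|_{H^s}\|v_3\|_{H^{3-s}}\lesssim\|v_2\|_{H^s}\|v_3\|_{H^s}$, the last bound being valid precisely when $s\ge 3/2$.

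For \eqref{cnl3} on $\chi_{H3}^{(3)}$ all $|k_j|\sim N$, so the only available smoothing comes from $\La_1^{-1}$; sub-decomposing by $|k_{1,2}|\sim A$, $|k_{1,3}|\sim B$, $|k_{2,3}|\sim C$ in the WLOG ordering $A\le B\le C\le N$ gives $\La_1^{-1}\lesssim(AB)^{-1}$. Apply the bilinear estimate to the pair whose frequency sum has scale $A$, use $\|\ha{P_Nv_l}\|_{l^1}\lesssim N^{1/2}\|P_Nv_l\|_{L^2}$ for the third factor, and use the low-regularity bound on the $v_m$ factor; the dyadic contributions sum over $A,B,C,N$ by Cauchy-Schwarz in the same spirit as above. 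Finally \eqref{cnl4} follows from the same argument restricted to $N\gtrsim L$ (forced by $\chi_{>L}^{(3)}$ on $\chi_{H3}^{(3)}$), but using $\|P_Nv_l\|_{L^2}\le N^{-s}\|v_l\|_{H^s}$ for all three factors: summing $\sum_{N\gtrsim L}N^{-3}\sim L^{-3}$ yields the extra decay. The main obstacle is closing the dyadic summation at the endpoint $s=3/2$: the sharp bilinear gain $M^{1/2}$, rather than the weaker $N^{1/2}$ available from plain Young's inequality, is indispensable, and the exponents in the final Cauchy-Schwarz must be chosen with care, especially in \eqref{cnl2} where the condition $3-s\le s$ is tight.
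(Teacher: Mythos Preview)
Your dyadic/bilinear approach is genuinely different from the paper's, which instead reduces each estimate to an all-$H^s$ version (via $\langle k_m\rangle^3\le\langle k_{\max}\rangle^3$), bounds the resulting weight pointwise by a product $\langle k_{j_1}\rangle^{s-1/2+1/2i}\prod_{l=2}^i\langle k_{j_l}\rangle^{1+1/2i}$, and then invokes Lemma~\ref{Le7} (H\"older in $L^{2i}$ plus Sobolev embedding); for \eqref{cnl3} the paper uses $\Lambda_1^{-1}\lesssim\langle k_{1,2}\rangle^{-2}$ together with $\|\langle\cdot\rangle^{-2}(f_1*f_2)\|_{l^1}\lesssim\|f_1\|_{l^2}\|f_2\|_{l^2}$. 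Your bilinear route is more quantitative and would also succeed, but the paper's argument is considerably shorter precisely because the reduction to all-$H^s$ makes the ``which factor is in $H^{s-3}$'' question disappear.

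That reduction is exactly what your sketch is missing, and without it there is a real gap. The sentence ``by symmetrization I may assume $m=1$, so that $k_1$ plays the `small' role'' conflates two independent symmetries. Symmetry of the \emph{symmetrized} multiplier lets you fix $m=1$, but after unfolding $[\,\cdot\,]_{sym}^{(3)}$ into a sum over permutations and changing variables, the low-regularity input $v_1$ lands at \emph{each} of the three positions of the \emph{unsymmetrized} multiplier, not only at the small one. Thus for \eqref{cnl1}--\eqref{cnl2} you must also treat the case where $v_1\in H^{s-3}$ sits at one of the two large frequencies $\sim N$; your dyadic scheme handles this too (Cauchy--Schwarz in $N$ with exponents $(s-3,3/2)$ closes exactly at $s=3/2$), but it is a separate computation and is in fact the case where the threshold $s\ge 3/2$ is sharp for \eqref{cnl1}---your Case-1-only bound $\sum_N N^2\|P_Nv_2\|\|P_Nv_3\|\lesssim\|v_2\|_{H^1}\|v_3\|_{H^1}$ only needs $s\ge 1$. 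The cleanest repair is to adopt the paper's preliminary reduction: replace $|\hat v_m(k_m)|$ by $\langle k_m\rangle^3\cdot\langle k_m\rangle^{-3}|\hat v_m(k_m)|$, absorb $\langle k_m\rangle^3\le\langle k_{\max}\rangle^3$ into the (symmetric) multiplier, and then run your dyadic argument with all three inputs in $H^s$.
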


\begin{proof}
First, we prove \eqref{cnl1}. It suffices to show 
\begin{equation} \label{cnl11}
\Big\|  \sum_{k=k_{1,2,3}} \langle k_{2,3} \rangle^{-1} \langle  k_{\max} \rangle^2 \chi_{H2,1}^{(3)} \prod_{l=1}^3 |\ha{v}_l (k_l)|  \Big\|_{l^2} \lesssim \prod_{l=1}^3 \|  v_l \|_{H^s}
\end{equation}
for any $\{  v_l \}_{l=1}^3 \subset H^s (\T)$. 
Since $(k_1, k_2, k_3) \in \supp \chi_{H2,1}^{(3)}$ implies $|k_{1,2,3}| \sim |k_{2,3}|$ and $|k_2| \sim |k_3| \sim k_{\max}$, 
we have 
\begin{equation*}
\langle k_{1,2,3} \rangle^s \langle  k_{2,3} \rangle^{-1} \langle k_{\max}  \rangle^{2} \chi_{H2,1}^{(3)}
\lesssim \langle k_{1,2,3} \rangle^{s-1} \langle k_3 \rangle^2 \chi_{H2,1}^{(3)} 
\lesssim \langle k_2 \rangle^{s-1/4} \langle k_3 \rangle^{5/4}. 
\end{equation*}
Thus, by Lemma~\ref{Le7} with $i=2$, we obtain \eqref{cnl11}. 

Next, we show \eqref{cnl2}. It suffices to show 
\begin{equation} \label{cnl21}
\Big\| \sum_{k=k_{1,2,3}} \langle k_{\max} \rangle^2 \chi_{H2,2}^{(3)} \, \prod_{l=1}^3 |\ha{v}_l(k_l)|  \Big\|_{l^2} 
\lesssim \prod_{l=1}^3 \| v_l  \|_{H^s}
\end{equation}
for any $\{ v_l \}_{l=1}^3 \subset H^s(\T)$. 
For $(k_1, k_2, k_3) \in \chi_{H2,2}^{(3)}$, it follows that $|k_{1,2,3}| \lesssim |k_1|$ and $|k_2| \sim |k_3| \sim k_{\max}$, 
which leads that 
\begin{equation*}
\langle k_{1,2,3} \rangle^{s} \langle k_{\max}  \rangle^{2} \chi_{H2,2}^{(3)} 
\lesssim \langle k_1 \rangle^{s} \langle k_3  \rangle^2 \chi_{H2,2}^{(3)} 
\lesssim \langle k_1 \rangle^{s-1/3} \langle k_2 \rangle^{7/6} \langle k_3  \rangle^{7/6}.
\end{equation*}
Thus, by Lemma~\ref{Le7} with $i=3$, we obtain \eqref{cnl21}. 

Next, we prove \eqref{cnl3}. It suffices to show 
\begin{equation} \label{cnl31}
\Big\| \sum_{k=k_{1,2,3}} \langle k_{1,2,3} \rangle^{s} \La_1^{-1} \langle k_{\max} \rangle^3 \chi_{H3}^{(3)} \, 
\prod_{l=1}^3 |\ha{v}_l (k_l)| \Big\|_{l^2} 
\lesssim \prod_{l=1}^3 \| v_l  \|_{H^s}
\end{equation}
for any $\{ v_l \}_{l=1}^3 \subset H^s(\T)$. 
By symmetry, we may assume that $|k_{1,2}| \le |k_{1,3}| \le |k_{2,3}|$ holds. 
Then, by $\La_1^{-1} \lesssim \langle k_{1,2} \rangle^{-2}$, we have 
\begin{equation*} 
\langle k_{1,2,3} \rangle^{s} \La_1^{-1} \langle k_{\max} \rangle^2 \chi_{H3}^{(3)} 
\lesssim \langle k_{1,2} \rangle^{-2} \prod_{l=1}^3 \langle k_l \rangle^s \langle k_{\max} \rangle^{-2s+3}
\lesssim \langle k_{1,2} \rangle^{-2} \prod_{l=1}^3 \langle k_l  \rangle^s. 
\end{equation*}
Thus, by the Young inequality and the H\"{o}lder inequality,  the left  hand side of  \eqref{cnl31} is bounded by 
\begin{align*} 
& \|\{ \langle \cdot \rangle^{-2} \big(  \langle \cdot \rangle^{s} |\ha{v}_1|* \langle \cdot \rangle^{s} |\ha{v}_2| \big) \} * 
\langle \cdot \rangle^s |\ha{v}_3| \|_{l^2}
\lesssim \| \langle \cdot \rangle^s |\ha{v}_3|  \|_{l^2} 
\| \langle \cdot  \rangle^s |\ha{v}_1| * \langle \cdot \rangle^s |\ha{v}_2|  \|_{l^{\infty}} \\
& \hspace{0.3cm} \lesssim \prod_{l=1}^3 \| v_l \|_{H^s}. 
\end{align*} 
Finally, we show \eqref{cnl4}. Since  
\begin{equation*}
\langle k_{1,2,3} \rangle^{s} \La^{-1} \chi_{H3}^{(3)} \chi_{>L}^{(3)} 
\lesssim L^{-3} \langle k_{1,2,3} \rangle^{s} \La^{-1} \langle k_{\max} \rangle^3 \chi_{H3}^{(3)}, 
\end{equation*}
by \eqref{cnl31}, we get \eqref{cnl4}. 
\end{proof}

\begin{lem} \label{lem_nl5}
Let $s \ge 3/2$. Then, for any $\{  v_l \}_{l=1}^3 \subset H^s(\T)$, it follows that 
\begin{equation} \label{cnl5}
\Big\| \sum_{k=k_{1,2,3}} \langle k_{\min} \rangle \langle k_{\max} \rangle^2 
\big([3 \chi_{H2,2}^{(3)}]_{sym}^{(3)} + \chi_{H3}^{(3)} \big)  \prod_{l=1}^3 |\ha{v}_l (k_l)| \Big\|_{l_{s-1}^2} \lesssim \prod_{l=1}^3 \| v_l \|_{H^s}
\end{equation}
where $k_{\min}:=\min \{ |k_1|, |k_2|, |k_3| \}$. 
\end{lem}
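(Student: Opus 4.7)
The plan is to reduce the $l^{2}_{s-1}$ estimate to a pointwise bound on the multiplier
$$
m(k_1,k_2,k_3):=\langle k_{\min}\rangle\langle k_{\max}\rangle^{2}\bigl([3\chi_{H2,2}^{(3)}]_{sym}^{(3)}+\chi_{H3}^{(3)}\bigr),
$$
and then to apply Lemma~\ref{Le7} with $i=3$ to the rescaled multiplier $\tilde m:=\langle k_{1,2,3}\rangle^{-1}m$, which converts the target norm from $l^{2}_{s-1}$ into $l^{2}_{s}$. Concretely, after exploiting the symmetry of $m$ to assume $|k_1|\le|k_2|\le|k_3|$, the goal becomes the pointwise inequality
$$
\langle k_{1,2,3}\rangle^{s-1}\,m(k_1,k_2,k_3)\;\lesssim\;\langle k_1\rangle^{s-1/3}\langle k_2\rangle^{7/6}\langle k_3\rangle^{7/6}.
$$

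I would verify this on the two subregions separately. On $\supp\chi_{H3}^{(3)}$ all three frequencies are comparable to $k_{\max}$, so both sides are of order $\langle k_{\max}\rangle^{s+2}$; the exponents match arithmetically via $(s-1/3)+7/6+7/6=s+2$. On $\supp[3\chi_{H2,2}^{(3)}]_{sym}^{(3)}$, only $k_1$ can play the ``low'' role forced by the defining inequalities (since $|k_1|\le|k_2|\le|k_3|$ rules out the other two choices), and then the condition $|k_{2,3}|\le 16|k_1|$ yields $\langle k_{1,2,3}\rangle\lesssim\langle k_1\rangle$ together with $|k_2|\sim|k_3|\sim k_{\max}$. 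Hence the left side is $\lesssim\langle k_1\rangle^{s}\langle k_3\rangle^{2}$ while the right side is $\sim\langle k_1\rangle^{s-1/3}\langle k_3\rangle^{7/3}$, and the inequality reduces to $(|k_1|/|k_3|)^{1/3}\lesssim 1$, which is immediate.

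With the pointwise bound in hand, the remaining chain is the Young--Plancherel--H\"{o}lder--Sobolev scheme from the proof of Lemma~\ref{Le7}: Young's inequality reorganizes the sum as a triple convolution, Plancherel turns its $l^{2}$ norm into an $L^{2}$ product, H\"{o}lder $L^{6}\!\cdot\!L^{6}\!\cdot\!L^{6}\hookrightarrow L^{2}$ splits it, and the sharp one-dimensional embedding $H^{1/3}(\T)\hookrightarrow L^{6}(\T)$ yields $\|v_1\|_{H^{s}}\|v_2\|_{H^{3/2}}\|v_3\|_{H^{3/2}}\lesssim\prod_{l=1}^{3}\|v_l\|_{H^{s}}$. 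The sorted-ordering assumption is lifted by decomposing the sum over the six sectors $\{|k_{\sigma(1)}|\le|k_{\sigma(2)}|\le|k_{\sigma(3)}|\}$ and relabelling in each, which is permissible because $m$ is symmetric and the final bound is symmetric in the $v_l$. The one place to watch is the endpoint $s=3/2$: here $s-1/3=7/6$ forces the ``high-weight'' factor onto the smallest frequency, and this is exactly saturated by the sharp embedding $H^{3/2-1/6}\hookrightarrow L^{6}$, so no $\varepsilon$-loss appears. A naive $l^{1}$-based Young estimate using $\|\langle\cdot\rangle\hat v\|_{l^{1}}\lesssim\|v\|_{H^{3/2+}}$ would require $s>3/2$ strictly, which is precisely why the $L^{6}$-based distribution of exponents coming from Lemma~\ref{Le7} is essential at the endpoint.
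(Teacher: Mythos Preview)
Your proof is correct and follows essentially the same route as the paper: reduce to the pointwise bound $\langle k_{1,2,3}\rangle^{s-1}\langle k_{\min}\rangle\langle k_{\max}\rangle^{2}\lesssim\langle k_1\rangle^{s-1/3}\langle k_2\rangle^{7/6}\langle k_3\rangle^{7/6}$ on the relevant support, then invoke Lemma~\ref{Le7} with $i=3$. The only cosmetic difference is that the paper drops the symmetrization at the outset (passing from $[3\chi_{H2,2}^{(3)}]_{sym}^{(3)}$ to $\chi_{H2,2}^{(3)}$, whose support already forces $|k_1|=k_{\min}$), whereas you keep the symmetrized multiplier and impose the ordering $|k_1|\le|k_2|\le|k_3|$ afterwards; these are equivalent maneuvers. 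One small slip in your closing commentary: the Sobolev embedding used is $H^{1/2-1/6}=H^{1/3}\hookrightarrow L^{6}$, not $H^{3/2-1/6}$.
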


\begin{proof}
It suffices to show 
\begin{equation} \label{cnl51}
\Big\| \sum_{k=k_{1,2,3}} \langle k_{1,2,3} \rangle^{s-1}  \langle k_{\min} \rangle \langle k_{\max} \rangle^2 
(\chi_{H2,2}^{(3)} + \chi_{H3}^{(3)})  \prod_{l=1}^3 |\ha{v}_l (k_l)| \Big\|_{l^2} \lesssim \prod_{l=1}^3 \| v_l \|_{H^s}.
\end{equation}
For $(k_1, k_2, k_3) \in \supp  \chi_{H2,2}^{(3)} \cup \supp \chi_{H3}^{(3)}$, 
it follows that $|k_{1,2,3}| \lesssim |k_1| \sim  k_{\min} $ and $|k_2| \sim |k_3| \sim k_{\max}$, which leads that 
\begin{align*}
\langle k_{1,2,3} \rangle^{s-1} \langle k_{\min} \rangle \langle k_{\max} \rangle^2 (\chi_{H2,2}^{(3)}+ \chi_{H3}^{(3)})
\lesssim \langle k_1 \rangle^{s-1/3} \langle k_2 \rangle^{7/6} \langle k_3 \rangle^{7/6}.
\end{align*}
Thus, by Lemma~\ref{Le7} with $i=3$, we obtain \eqref{cnl51}. 
\end{proof}

\begin{lem} \label{lem_stres}
Let $s \ge 3/2$. Then, for any $\{ v_l \}_{l=1}^3 \subset H^s(\T)$, we have 
\begin{equation} \label{stres}
\big\| \sum_{k=k_{1,2,3}} \big| Q^{(3)} [3 \chi_{R3}^{(3)}]_{sym}^{(3)} \big| \, \prod_{l=1}^3 |\ha{v}_l(k_l)|  \big\|_{l_s^2} \lesssim \prod_{l=1}^3 \| v_l  \|_{H^s}.  
\end{equation}
\end{lem}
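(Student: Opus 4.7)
The plan is to exploit the rigid structure of the support of $\chi_{R3}^{(3)}$, which collapses the apparent trilinear convolution into essentially a one-parameter family, and then absorb the three derivatives contained in $Q^{(3)}$ by distributing the Sobolev regularity evenly across the three factors. The threshold $s\ge 3/2$ enters precisely at this last step.

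First I would evaluate the symbols on the diagonal $k_1=-k_2=k_3=:k$. A direct substitution yields $q_1^{(3)}=-\tfrac{4i}{3}k^3$, $q_2^{(3)}=\tfrac{i}{3}k^3$, and $q_3^{(3)}=ik^3$, so $|Q^{(3)}\chi_{R3}^{(3)}|\lesssim |k|^3$. Since each $q_j^{(3)}$ is symmetric in $(k_1,k_2,k_3)$, this pointwise bound persists after symmetrization. A direct combinatorial check shows that $\supp[3\chi_{R3}^{(3)}]_{sym}^{(3)}$ consists precisely of those $(k_1,k_2,k_3)$ which are permutations of $(k,k,-k)$ for some $k\in\Z$, and that the multiplier equals $1$ there; in particular $k_{1,2,3}=k$ on this set, so $|Q^{(3)}[3\chi_{R3}^{(3)}]_{sym}^{(3)}|\lesssim\langle k_{1,2,3}\rangle^3$.

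Second, for each output frequency $k$, the constraints $k_{1,2,3}=k$ together with membership in the support fix the three input frequencies up to which slot carries $-k$. Hence the inner sum reduces to at most three explicit terms of the form $|\ha{v}_i(-k)\,\ha{v}_j(k)\,\ha{v}_l(k)|$ with $\{i,j,l\}=\{1,2,3\}$. Expanding the square via $(a+b+c)^2\le 3(a^2+b^2+c^2)$, the square of the left-hand side of \eqref{stres} is dominated by a finite sum of expressions
\begin{equation*}
\sum_{k\in\Z}\langle k\rangle^{2s+6}|\ha{v}_i(-k)|^2|\ha{v}_j(k)|^2|\ha{v}_l(k)|^2.
\end{equation*}

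Finally, since $s\ge 3/2$ gives $2s+6\le 6s$, I would split $\langle k\rangle^{6s}=\langle k\rangle^{2s}\cdot\langle k\rangle^{2s}\cdot\langle k\rangle^{2s}$, place two of the weighted factors in $\ell^\infty$ using the trivial inequality $\sup_k\langle k\rangle^s|\ha{v}(k)|\le\|v\|_{H^s}$, and retain the third in $\ell^2$. Summing the three symmetric arrangements and taking square roots yields \eqref{stres}. No substantive obstacle is expected: the rigidity of $\chi_{R3}^{(3)}$ has eliminated the usual convolution, and the only nontrivial ingredient is the threshold inequality $s+3\le 3s$, which is exactly the hypothesis $s\ge 3/2$.
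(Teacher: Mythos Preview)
Your proof is correct and follows essentially the same approach as the paper: both exploit that on $\supp[3\chi_{R3}^{(3)}]_{sym}^{(3)}$ the convolution collapses to a single-frequency diagonal with $|Q^{(3)}|\lesssim\langle k\rangle^3$, then distribute the weight $\langle k\rangle^{s+3}$ across the three factors using $s\ge 3/2$. The only cosmetic difference is in the final step: the paper applies H\"older with three $l^6$ norms and then uses the embedding $l^2\hookrightarrow l^6$, whereas you use an $l^\infty\times l^\infty\times l^2$ split; both reduce to the same inequality $s/3+1\le s$.
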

\begin{proof}
By the H\"{o}lder inequality and the continuous embedding $l^2 \hookrightarrow l^6$, the left hand  side of \eqref{stres} is  bounded by 
\begin{equation*}
\big\| \langle \cdot \rangle^{s+3} \prod_{l=1}^3 |\ha{v}_l|  \big\|_{l^2} \le 
\prod_{l=1}^3 \| \langle \cdot  \rangle^{s/3+1}|\ha{v}_l| \|_{l^6} \lesssim \prod_{l=1}^3 \|  v_l \|_{H^{s/3+1}} \le \prod_{l=1}^3 \| v_l \|_{H^s}.  
\end{equation*}
Here we used $s \ge 3/2$ in the last inequality. 
\end{proof}


\section{pointwise upper bounds}

In this section, we present pointwise upper bounds of some multipliers $L_{j, \vp}^{(N)}$ and $M_{j, \vp}^{(N)}$ defined in Proposition~\ref{prop_NF2}. 
We now put 
\begin{align*}
& J_1=\{(5,1), (5,7), (5,8), (7,1), (7,2)\}, \hspace{0.5cm} J_2=\{ (5,3), (5,4), (5,5), (5,6)  \}, \\
& J_3=\{ (3,2), (3,3), (3,4)   \}, \hspace{0.5cm} J_4=\{ (3,1), (5,2)  \}.
\end{align*}
\begin{lem} \label{lem_pwb1}
Let $f \in L^2(\T)$ 
and $L_{j, f}^{(N)}$ with $(N, j) \in J_1 \cup J_2 \cup J_3 \cup J_4$ be as in Proposition~\ref{prop_NF2}. 
Then, the following hold for $L \gg \max \{1, | \ga| E_1(f) \}$: \\
(I) When $(N, j) \in J_4$, we have
\begin{align} 
|L_{j,f}^{(N)} \chi_{>L}^{(N)} | \lesssim \langle k_{\max}  \rangle^{-1} \chi_{>L}^{(N)}. \label{pwb3} 
\end{align}
(II) When $(N,j) \in J_1$, we have
\begin{align} \label{pwb1}
|L_{j,f}^{(N)} \chi_{>L}^{(N)} | \lesssim \langle k_{1, \dots ,N}  \rangle^{-1} \langle k_{\max} \rangle^{-2} \chi_{>L}^{(N)}. 
\end{align}
(III) When $(N, j) \in J_2$, we have 
\begin{equation} \label{pwb0}
|L_{j, \vp}^{(N)} \chi_{>L}^{(N)} | \lesssim 
\max \big\{ \frac{ \langle k_1 k_2 \rangle }{ \langle k_{1,2}  \rangle},  \frac{ \langle k_{3} k_4 \rangle}{ \langle k_{3,4} \rangle } \big\} 
\langle k_5 \rangle^{-3} \, \chi_{H1}^{(5)} \chi_{>L}^{(5)}. 
\end{equation}
(IV) It follows that 
\begin{align} 
& |L_{2, f}^{(3)} \chi_{>L}^{(3)} | \lesssim \langle k_{2,3} \rangle^{-1} \langle k_{\max} \rangle^{-1} \chi_{H2,1}^{(3)} \chi_{>L}^{(3)}, 
\label{pwb01}  \\
& |L_{3, f}^{(3)} \chi_{>L}^{(3)} | \lesssim \langle k_{2,3} \rangle^{-1} \langle k_{\max} \rangle^{-1} \chi_{H2,2}^{(3)} \chi_{>L}^{(3)}, 
\label{pwb02}  \\
& |L_{4, f}^{(3)} \chi_{>L}^{(3)} | \lesssim \La_1^{-1} \chi_{H3}^{(3)}  \chi_{>L}^{(3)} \label{pwb03} 
\end{align}
where $\La_1$ is as in Lemma~\ref{lem_nes01}. 
\end{lem}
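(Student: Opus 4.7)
The plan is to treat each of the twelve multipliers separately, following a uniform three-step recipe: (1) bound the numerator pointwise using the explicit formulas for $Q^{(3)}$, $q_1^{(5)}$, $Q_1^{(5)}$, $q_2^{(5)}$ (via Lemma \ref{lem_key}), and $Q_2^{(5)}$, along with the identity $Q_1^{(3)}/\Phi_0^{(3)} = \frac{2\gamma}{15}\cdot k_{1,2,3}/(k_{1,2}k_{2,3}k_{1,3})$ and its analogue for $Q_2^{(3)}/\Phi_0^{(3)}$; (2) bound $|\Phi_{\vp}^{(N)}|$ from below using Lemmas \ref{Le1}--\ref{Le4}, whose quantitative hypotheses are guaranteed by the threshold condition $L \gg \max\{1,|\gamma|E_1(\vp)\}$ (which gives the largeness required of $k_{\max}$) together with the high-frequency cutoffs $\chi_{H\cdot}^{(N)}$ built into each $L_{j,\vp}^{(N)}$; (3) divide to obtain the claimed pointwise bound.

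For parts (I) (the case $(N,j)=(3,1)$) and (IV), the denominator is $\Phi_\vp^{(3)}$, directly controlled by Lemma \ref{Le1}. On $\chi_{H1}^{(3)}$ the condition \eqref{rel1} fails and \eqref{ff4} gives $|\Phi_\vp^{(3)}|\gtrsim |k_{1,2}|\langle k_{\max}\rangle^4\geq \langle k_{\max}\rangle^4$; combined with $|Q^{(3)}|\lesssim \langle k_{\max}\rangle^3$ this yields \eqref{pwb3} for $L_{1,\vp}^{(3)}$. On $\chi_{H2,1}^{(3)}$ and $\chi_{H2,2}^{(3)}$ the same \eqref{ff4} produces $|\Phi_\vp^{(3)}|\gtrsim \langle k_{2,3}\rangle\langle k_{\max}\rangle^4$ (since $k_{2,3}$ is the short difference), giving \eqref{pwb01}--\eqref{pwb02}. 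On $\chi_{H3}^{(3)}$, \eqref{rel1} holds, so \eqref{ff2} applies and produces $|\Phi_\vp^{(3)}|\gtrsim \Lambda_1\langle k_{\max}\rangle^3$, hence \eqref{pwb03}. For $L_{2,\vp}^{(5)}$ in part (I), the key preliminary is the size estimate $|q_2^{(5)}|\lesssim \langle k_5\rangle^3/(\langle k_{1,2}\rangle\langle k_{3,4}\rangle)$ on $\chi_{H1}^{(5)}$, obtained from Lemma \ref{lem_key} by degree counting in $k_5$ using $|k_j|\le |k_5|/8^3$ for $j\le 4$; in each of the three regimes activating $1-\chi_{R4}^{(5)}$, the relevant part of Lemma \ref{Le2} produces $|\Phi_\vp^{(5)}|\gtrsim \langle k_5\rangle^4$, yielding $|L_{2,\vp}^{(5)}|\lesssim \langle k_5\rangle^{-1}$.

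For part (II), the numerators all carry a factor of order $\langle k_{1,\ldots,N}\rangle$ (supplied by $q_1^{(5)}$, $Q_1^{(5)}$, or the factor $k_{1,2,3}$ inside $Q_i^{(3)}/\Phi_0^{(3)}$), and the denominator $\Phi_\vp^{(N)}$ is controlled case-by-case: for $L_{1,\vp}^{(5)}$ one uses Lemma \ref{Le2}(i) when $\max>16\sec$ and Lemma \ref{Le2}(ii) when $|k_5|^{4/5}>8^3\max$ and $k_{1,2,3,4}\neq 0$ (the two alternatives inside $\chi_{H1}^{(5)}(1-\chi_{R1}^{(5)})(1-\chi_{R5}^{(5)})$); for $L_{7,\vp}^{(5)}$ on $\chi_{NR(1,1)}^{(5)}(1-\chi_{H1}^{(5)})\chi_{A1}^{(5)}$, one uses the splitting $\Phi_\vp^{(5)}=\Phi_\vp^{(3)}(k_{1,2,3},k_4,k_5)+\Phi_\vp^{(3)}(k_1,k_2,k_3)$ and bounds the first piece from below with Lemma \ref{Le1} (the second being of strictly smaller order by $\chi_{A1}^{(5)}$); for $L_{8,\vp}^{(5)}$ the support $\chi_{NR(2,1)}^{(5)}\chi_{A3}^{(5)}$ is exactly the hypothesis \eqref{L12} of Lemma \ref{Le4}; and for $L_{1,\vp}^{(7)}, L_{2,\vp}^{(7)}$ one verifies \eqref{rel21} or \eqref{rel22} of Lemma \ref{Le3} from the combination of $\chi_{H1}^{(7)}$ with $\chi_{A1}^{(7)}$ or $(1-\chi_{R1}^{(7)})(1-\chi_{R5}^{(7)})$.

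For part (III), the four multipliers have the shape $[Q_i^{(3)}/\Phi_0^{(3)}]_{ext1}^{(5)}[Q_j^{(3)}]_{ext2}^{(5)}\cdot(\text{cutoffs})/\Phi_\vp^{(5)}$. Evaluated at $(k_1,k_2,k_{3,4,5})$ on $\chi_{H1}^{(5)}$, the ratio $Q_1^{(3)}/\Phi_0^{(3)}$ behaves like $\langle k_{1,2,3,4,5}\rangle/(\langle k_{1,2}\rangle\langle k_5\rangle^2)$ (since $|k_l+k_{3,4,5}|\sim\langle k_5\rangle$ for $l=1,2$), while $Q_2^{(3)}/\Phi_0^{(3)}$ carries an extra $\langle k_1k_2+k_{1,2}k_{3,4,5}\rangle$ factor that one rewrites as $k_1k_2+k_{1,2}k_{3,4,5}$ and splits, contributing $\langle k_1k_2\rangle/\langle k_{1,2}\rangle$. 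The other factor $Q_j^{(3)}(k_3,k_4,k_5)$ obeys $\lesssim \langle k_5\rangle(\langle k_3k_4\rangle+\langle k_{3,4}\rangle\langle k_5\rangle)$, and the cutoff $\chi_{A1}^{(5)}$ or $\chi_{A2}^{(5)}$ enforces either Lemma \ref{Le2}(i) (giving $|\Phi_\vp^{(5)}|\gtrsim \langle k_4\rangle\langle k_5\rangle^4$) or Lemma \ref{Le2}(iii) (giving $|\Phi_\vp^{(5)}|\gtrsim \max\{\langle k_{1,2}\rangle,\langle k_{3,4}\rangle\}\langle k_5\rangle^4$). Assembling all factors and splitting by which of $|k_{1,2}|$ and $|k_{3,4}|$ is dominant produces the required maximum. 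The main technical obstacle will be precisely this asymmetric bookkeeping in part (III): one must keep the factor $\langle k_{1,2,3,4,5}\rangle$ from the numerator combined with the $\max\{\langle k_{1,2}\rangle,\langle k_{3,4}\rangle\}$ from the denominator lower bound in order to trade losses in $k_i k_j$ against gains in $k_{i,j}$ and end with $\max\{\langle k_1k_2\rangle/\langle k_{1,2}\rangle,\langle k_3k_4\rangle/\langle k_{3,4}\rangle\}\langle k_5\rangle^{-3}$, rather than a weaker separate bound in each half.
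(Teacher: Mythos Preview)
Your proposal is correct and follows essentially the same route as the paper: bound the numerator via the explicit formulas for $Q^{(3)}$, $q_2^{(5)}$, $Q_2^{(5)}$ (through Lemma~\ref{lem_key}), bound $|\Phi_\vp^{(N)}|$ from below via Lemmas~\ref{Le1}--\ref{Le4}, and divide. One small slip to correct in your write-up: in Part~(III) the cutoffs $\chi_{A1}^{(5)}$ and $\chi_{A2}^{(5)}$ belong only to $L_{5,\vp}^{(5)}$ and $L_{6,\vp}^{(5)}$; the multipliers $L_{3,\vp}^{(5)}$ and $L_{4,\vp}^{(5)}$ instead carry $(1-\chi_{R1}^{(5)})(1-\chi_{R4}^{(5)})$, and there you get the uniform lower bound $|\Phi_\vp^{(5)}|\gtrsim |k_{1,2,3,4}||k_5|^4$ by splitting $(1-\chi_{R4}^{(5)})$ into the three alternatives and invoking the three parts of Lemma~\ref{Le2} (exactly as you did for $L_{2,\vp}^{(5)}$).
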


\begin{rem} \label{rem_pwb11}
By Lemma~\ref{lem_pwb1} and the definition of the multipliers, we can easily check that 
\begin{equation*} 
|L_{j, f}^{(N)} \chi_{>L}^{(N)} | \lesssim 1, \hspace{0.5cm} |L_{j, f}^{(N)} \Phi_f^{(N)} \chi_{>L}^{(N)}| \lesssim \langle k_{\max} \rangle^3 
\end{equation*}
for $(N,j) \in J_1 \cup J_2 \cup J_3 \cup J_4$. 
Thus, we can apply Proposition~\ref{prop_NF11} with $\ti{m}^{(N)}=\ti{L}_{j, f}^{(N)} \chi_{>L}^{(N)}$. 
\end{rem}

\begin{proof}
(I) First, we prove (\ref{pwb3}) for $(N, j) \in J_4$. \\ 
(Ia) Estimate of $L_{1, f}^{(3)} \chi_{>L}^{(3)}$: By Lemma~\ref{Le1}, we have
$|L_{1, f}^{(3)} \chi_{>L}^{(3)} | \lesssim \langle k_{\max} \rangle^{-1}  \chi_{>L}^{(3)}$. \\
(Ib) Estimate of $L_{2,f}^{(5)} \chi_{>L}^{(5)}$: By \eqref{def_q2}, it follows that
\begin{align} \label{es_q2}
& \sum_{l=1}^7 |p_l(k_1, k_2, k_3, k_4) k_5^{7-l} | \, \chi_{H1}^{(5)} \notag \\
& \hspace{0.3cm} 
\lesssim \big( |k_{1,2,3,4}| |k_5|^6 + |k_{1,2} k_{3,4} | |k_5|^5 + \max \{ |k_1 k_2 k_{1,2}|, |k_3 k_4 k_{3,4}| \} |k_5|^4 \big) \chi_{H1}^{(5)}.
\end{align}
Thus, by Lemma~\ref{lem_key}, we have 
\begin{align*}
|L_{2, f}^{(5)} \Phi_f^{(5)}|& =|q_2^{(5)} \chi_{H1}^{(5)} \chi_{NR2}^{(5)} (1-\chi_{R4}^{(5)}) | \notag \\
& \lesssim \Big( \frac{|k_{1,2,3,4}| }{ |k_{1,2}| |k_{3,4}| } |k_5|^2 +|k_5|+ \max \Big\{ \frac{|k_1 k_2|}{|k_{3,4}|}, \frac{ |k_3 k_4| }{|k_{1,2}| }  \Big\} \Big) \chi_{H1}^{(5)} \chi_{NR2}^{(5)} (1-\chi_{R4}^{(5)}).
\end{align*}
By Lemma~\ref{Le2}, it follows that 
\begin{equation*}
|\Phi_f^{(5)} \chi_{H1}^{(5)} \chi_{NR2}^{(5)} (1- \chi_{R4}^{(5)}) \chi_{>L}^{(5)} | \gtrsim |k_{1,2,3,4}| |k_5|^4 \chi_{H1}^{(5)} \chi_{NR2}^{(5)} (1- \chi_{R4}^{(5)}) \chi_{>L}^{(5)}. 
\end{equation*}
Hence, we have $ |L_{2, f}^{(5)} \chi_{>L}^{(5)}| \lesssim \langle k_{\max}  \rangle^{-2} \chi_{>L}^{(5)}$. \\
\noindent
(II) Next, we prove (\ref{pwb1}) for $(N,j) \in J_1$. \\
(IIa) Estimate of $L_{1,f}^{(5)} \chi_{>L}^{(5)}$: For $(k_1, k_2, k_3, k_4, k_5) \in \supp \chi_{H1}^{(5)} (1-\chi_{R1}^{(5)}) (1-\chi_{R5}^{(5)}) $, 
either \eqref{rel3} or \eqref{rel30} holds. Thus, by Lemma~\ref{Le2}, it follows that 
\begin{equation*}
|\Phi_{f}^{(5)} \chi_{H1}^{(5)} (1-\chi_{R1}^{(5)}) (1- \chi_{R5}^{(5)}) \chi_{>L}^{(5)} | \gtrsim  
|k_{1,2,3,4} | |k_5|^4  \chi_{H1}^{(5)} (1-\chi_{R1}^{(5)}) (1-\chi_{R5}^{(5)}) \chi_{>L}^{(5)}, 
\end{equation*}
which leads $|L_{1,f}^{(5)} \chi_{>L}^{(5)}| \lesssim \langle k_{\max} \rangle^{-3} \chi_{>L}^{(5)}$. \\
(IIb) Estimate of $L_{7, f}^{(5)} \chi_{>L}^{(5)}$: 
For $(k_1, k_2, k_3, k_4, k_5) \in \supp \chi_{NR(1,1)}^{(5)} (1-\chi_{H1}^{(5)}) \chi_{A1}^{(5)}  $, it follows that 
$ 96 \, \text{sec}_{1 \le j \le 4} \{ |k_j| \} < 6 \max_{1 \le j \le 4} \{ |k_j| \} < |k_5|=k_{\max} \lesssim \max_{1 \le j \le 4} \{ |k_j| \} $. 
Thus, by Lemma~\ref{Le2} (i),  it follows that 
\begin{align*}
| \Phi_{f}^{(5)} \chi_{NR1}^{(5)} \chi_{NR(1,1)}^{(5)} (1- \chi_{H1}^{(5)}) \chi_{A1}^{(5)}  \chi_{>L}^{(5)} | 
& \gtrsim \max_{1\le j \le 4} \{ |k_j| \} |k_5|^4 \chi_{NR1}^{(5)} \chi_{NR(1,1)}^{(5)} (1-\chi_{H1}^{(5)}) \chi_{A1}^{(5)} \chi_{>L}^{(5)} \\
& \gtrsim |k_5|^5 \chi_{NR1}^{(5)} \chi_{NR(1,1)}^{(5)} (1-\chi_{H1}^{(5)}) \chi_{A1}^{(5)} \chi_{>L}^{(5)}, 
\end{align*}
which implies that $|L_{7, f}^{(5)} \chi_{>L}^{(5)}| \lesssim \langle k_{\max} \rangle^{-3} \chi_{>L}^{(5)} $ holds. \\
(IIc) Estimate of $L_{8, f}^{(5)} \chi_{>L}^{(5)}$: By Lemma~\ref{Le4}, we have  
\begin{equation*}
|\Phi_f^{(5)} \chi_{NR1}^{(5)} \chi_{NR(2,1)}^{(5)} \chi_{A3}^{(5)} \chi_{>L}^{(5)} | 
\gtrsim |k_{1,2,3,4,5}| |k_5|^4 \chi_{NR1}^{(5)} \chi_{NR(2,1)}^{(5)} \chi_{A3}^{(5)} \chi_{>L}^{(5)}, 
\end{equation*}
which leads that $|L_{8, f}^{(5)} \chi_{>L}^{(5)}| \lesssim \langle k_{1,2,3,4,5} \rangle^{-1} \langle k_{\max} \rangle^{-2} \chi_{>L}^{(5)}$ holds. \\
(IId) Estimate of $L_{1, f}^{(7)} \chi_{>L}^{(7)}$: By \eqref{pwes22} in Lemma~\ref{lem_pwb2}, we have 
\begin{equation*}
| L_{1, f}^{(7)} \Phi_f^{(7)}  | \lesssim \langle k_7 \rangle \, \chi_{H1}^{(7)} (1-\chi_{R1}^{(7)})(1- \chi_{R5}^{(7)}).
\end{equation*}
By Lemma~\ref{Le3}, it follows that 
\begin{equation*}
|\Phi_f^{(7)} \chi_{H1}^{(7)} (1-\chi_{R1}^{(7)}) (1-\chi_{R5}^{(7)}) \chi_{>L}^{(7)} | 
\gtrsim |k_{1,2,3,4,5,6}| |k_7|^4 \chi_{H1}^{(7)} (1-\chi_{R1}^{(7)}) (1-\chi_{R5}^{(7)}) \chi_{>L}^{(7)}.
\end{equation*}
Thus, we have $|L_{1, f}^{(7)} \chi_{>L}^{(7)} | \lesssim \langle k_{\max} \rangle^{-3} \chi_{>L}^{(7)} $. \\
(IIe) Estimate of $L_{2, f}^{(7)} \chi_{>L}^{(7)}$: By \eqref{pwes22} in Lemma~\ref{lem_pwb2}, we have 
\begin{equation*}
|L_{2, f}^{(7)} \Phi_{f}^{(7)} | \lesssim \max_{1 \le  j\le 6} \{ |k_j| \} \, \chi_{H1}^{(7)} \chi_{A1}^{(7)}. 
\end{equation*}
Since $(k_1, k_2,  k_3, k_4, k_5, k_6, k_7) \in \supp \chi_{H1}^{(7)} \chi_{A1}^{(7)}$ leads that 
\begin{equation*}
|k_7|> 8^5 \max_{1 \le j \le 6} \{ |k_j| \}> 16 \cdot 8^5 \sec_{1 \le j \le 6} \{ |k_j| \}, 
\end{equation*}
by Lemma~\ref{Le3}, it follows that 
\begin{equation*}
|\Phi_{f}^{(7)} \chi_{H1}^{(7)} \chi_{A1}^{(7)} \chi_{>L}^{(7)} | \gtrsim |k_{1,2,3,4,5,6}| |k_5|^4 \chi_{H1}^{(7)} \chi_{A1}^{(7)} \chi_{>L}^{(7)}
\sim \max_{1 \le j \le 6} \{ |k_j| \} |k_5|^4 \chi_{H1}^{(7)} \chi_{A1}^{(7)} \chi_{>L}^{(7)}, 
\end{equation*}
Thus, we get $|L_{2, f}^{(7)} \chi_{>L}^{(7)} |  \lesssim \langle k_{\max}  \rangle^{-4} \chi_{>L}^{(7)}$. 

Therefore, we obtain \eqref{pwb1} for $(N, j) \in J_1$. \\
\noindent
(III) Next, we prove \eqref{pwb0} for $(N,j) \in J_2$. \\
(IIIa) Estimate of $L_{3, f}^{(5)} \chi_{>L}^{(5)}$ and $L_{4, f}^{(5)} \chi_{>L}^{(5)}$: By Lemma~\ref{Le1}, it follows that 
\begin{align*}
& |L_{3, f}^{(5)} \chi_{>L}^{(5)} | \lesssim \Big( \frac{ |k_3 k_4| }{\langle k_{1,2} \rangle} + \frac{| k_{3,4} |}{ \langle k_{1,2} \rangle } |k_5| \Big) \, \chi_{H1}^{(5)} (1- \chi_{R1}^{(5)}) (1-\chi_{R4}^{(5)}), \\
& |L_{4, f}^{(5)} \chi_{>L}^{(5)} | \lesssim \Big( \frac{ |k_1 k_2| }{ \langle k_{1,2} \rangle} + |k_5|   \Big) \,
\chi_{H1}^{(5)} (1-\chi_{R1}^{(5)}) (1- \chi_{R4}^{(5)}). 
\end{align*}
By Lemma~\ref{Le2}, we have
\begin{equation*}
|\Phi_f^{(5)} \chi_{H1}^{(5)} (1-\chi_{R1}^{(5)})  (1 - \chi_{R4}^{(5)})  \chi_{>L}^{(5)} | 
\gtrsim |k_{1,2,3,4}| |k_5|^4 \chi_{H1}^{(5)} (1-\chi_{R1}^{(5)}) (1-\chi_{R4}^{(5)}) \chi_{>L}^{(5)}. 
\end{equation*}
Thus, by $ \langle k_{3,4} \rangle \lesssim \langle k_{1,2,3,4} \rangle \langle k_{1,2} \rangle$, we get
\begin{align*}
|L_{3,f}^{(5)} \chi_{>L}^{(5)}| \lesssim \frac{\langle k_3 k_4 \rangle}{ \langle k_{3,4} \rangle } \langle k_5 \rangle^{-3} \chi_{H1}^{(5)} \chi_{>L}^{(5)}, 
\hspace{0.5cm}
|L_{4,f}^{(5)} \chi_{>L}^{(5)}| \lesssim \frac{\langle k_1 k_2 \rangle}{ \langle k_{1,2} \rangle } \langle k_5 \rangle^{-3} \chi_{H1}^{(5)} 
\chi_{>L}^{(5)}. 
\end{align*}
(IIIb) Estimate of $L_{5, f}^{(5)} \chi_{>L}^{(5)}$: 
For $(k_1, k_2, k_3, k_4, k_5) \in \chi_{H1}^{(5)} \chi_{A1}^{(5)}$, it follows that 
\begin{equation*}
|k_5|> 8^3 \max_{1 \le j \le 4} \{ |k_j| \} > 16 \cdot 8^{3} \text{sec}_{1\le j  \le 4} \{ |k_j| \}.
\end{equation*}
Thus, by Lemma~\ref{Le2} (i), we have 
\begin{equation*}
|\Phi_f^{(5)} \chi_{H1}^{(5)} \chi_{A1}^{(5)} \chi_{>L}^{(5)}| 
\gtrsim \max_{1 \le j \le 4} \{ |k_j| \} |k_5|^4 \chi_{H1}^{(5)} \chi_{A1}^{(5)} \chi_{>L}^{(5)},
\end{equation*}
which leads that $|L_{5, f}^{(5)} \chi_{>L}^{(5)}| \lesssim \langle k_5 \rangle^{-3} \chi_{H1}^{(5)} \chi_{>L}^{(5)}$ holds. \\
(IIIc) Estimate of $L_{6, f}^{(5)} \chi_{>L}^{(5)}$: 
By Lemma~\ref{Le2} (iii), it follows that 
\begin{equation*}
|\Phi_f^{(5)} \chi_{H1}^{(5)} \chi_{A2}^{(5)} \chi_{>L}^{(5)} | \gtrsim |k_{3,4}| |k_5|^4 \chi_{H1}^{(5)} \chi_{A2}^{(5)} \chi_{>L}^{(5)}, 
\end{equation*}
which leads that 
$|L_{6, f}^{(5)} \chi_{>L}^{(5)} | \lesssim \langle k_3 k_4 \rangle \langle k_{3,4} \rangle^{-1} \langle k_5 \rangle^{-4} \chi_{H1}^{(5)} \chi_{>L}^{(5)}$ holds. 

Therefore, we obtain \eqref{pwb0} for $(N,j) \in J_2$. \\
\noindent
(IV) By Lemma~\ref{Le1}, we immediately obtain \eqref{pwb01}--\eqref{pwb03}.
\end{proof}

Next, we give pointwise upper bounds of some multipliers $M_{j, \vp}^{(N)}$ defined in Proposition~\ref{prop_NF2}. 

\begin{lem} \label{lem_pwb2}
Let $f \in L^2(\T)$ and $M_{j, f}^{(N)}$ be as in Proposition~\ref{prop_NF2}. 
Then, the following hold for $L \gg \max\{ 1, |\ga| E_1(f) \}$: \\
(I) When $(N,j) \in \{(5,4), (5,12), (5,14), (5,15), (5,16), (5,17) \}$, it follows that  
\begin{equation} \label{pwb11}
\langle k_{1,\dots ,5} \rangle^s  |M_{j,f}^{(N)}| \lesssim \big(  \max \Big\{ \frac{ |k_{1,2}| }{ \langle k_{1,2} \rangle}, \, \frac{|k_3 k_4|}{ \langle k_{3,4} \rangle  } \big\}
+ \big\langle \max_{1\le j \le 4} \{ |k_{j}| \}  \big\rangle^{5/8}  \big\langle \text{{\upshape sec}}_{1 \le j \le 4} \{ |k_j| \} \big\rangle^{5/8}  \big)  \langle k_5 \rangle^s
\end{equation} 
and 
\begin{align} \label{pwb12}
\langle k_{1, \dots ,5}  \rangle^s  |M_{10,f}^{(5)} | \lesssim \min\{ \langle k_{1,2} \rangle^{-1}, \langle k_{3,4}  \rangle^{-1} \}
 \big\langle \max_{1 \le j \le 4} \{ |k_j|  \} \big\rangle^{5/4} \big\langle \text{{\upshape sec}}_{1 \le j \le 4} \{ |k_j| \}  \big\rangle^{5/4} 
\, \langle k_5  \rangle^s.
\end{align}
(II) When $(N, j) \in  \{ (7,7), (7, 8), (7,9) \}$, it follows that 
\begin{equation} \label{pwb13}
\langle k_{1, \dots, 7}  \rangle^s |M_{j, f}^{(N)}| \lesssim \big\langle \max_{1 \le j \le 6} \{ |k_j| \}  \big\rangle^{5/6} 
\big\langle \text{{\upshape sec}}_{1 \le j \le 6} \{ |k_j| \}  \big\rangle^{5/6} \langle k_7 \rangle^s. 
\end{equation}
(III) It follows that 
\begin{align}
& \langle k_{1, \dots, 5}  \rangle^s |M_{18, f}^{(5)}| \lesssim \big\langle \max_{1 \le j \le 4} \{ |k_j| \}  \big\rangle^{7/6} 
\big\langle \text{{\upshape sec}}_{1 \le j \le 4} \{ |k_j| \}  \big\rangle^{7/6} \langle k_5 \rangle^{s-1/3}, \label{pwb41} \\
& \langle k_{1, \dots, 5}  \rangle^s |M_{22, f}^{(5)}| \lesssim \big\langle \max\{ |k_1|, |k_2| \}  \big\rangle^{s-1/3} 
\langle k_4 \rangle^{7/6} \langle k_5 \rangle^{7/6}, \label{pwb42} \\ 
& \langle k_{1, \dots, 7}  \rangle^s |M_{10, f}^{(7)}| \lesssim \big\langle \max_{1 \le j \le 6} \{ |k_j| \}  \big\rangle^{5/4}  
\langle k_7 \rangle^{s-1/4}, \label{pwb43} \\
& \langle k_{1, \dots, 7}  \rangle^s |M_{14, f}^{(7)}| \lesssim \langle k_6  \rangle^{5/4} \langle k_7  \rangle^{s-1/4}. \label{pwb44}
\end{align}
(IV) When $(N,j) \in \{ (5,19), (5,20), (7,11), (7, 12) \}$, it follows that 
\begin{equation} \label{pwb14}
| M_{j, f}^{(N)} | \lesssim L^2. 
\end{equation}
\end{lem}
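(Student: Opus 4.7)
My plan is to treat each of the four parts separately, using three tools uniformly: the denominator lower bounds of Lemmas~\ref{Le1}--\ref{Le4}, the explicit representation of $q_2^{(5)}$ from Lemma~\ref{lem_key} together with the numerator bound \eqref{es_q2}, and the pointwise upper bounds on $Q^{(3)}$ and $Q^{(3)}\chi_{NR1}^{(3)}/\Phi_0^{(3)}$ from the proof of Lemma~\ref{lem_pwb1}. The structural reduction running through every case is that each multiplier listed either carries a $\chi_{H1}^{(N)}$ factor---in which case the hypothesis $|k_N|\gg \max_{j<N}|k_j|$ gives $\langle k_{1,\dots,N}\rangle \sim \langle k_N\rangle$ and the Japanese bracket ratio cancels---or lives in a region where $|k_N|$ is comparable to $\max_{j<N}|k_j|$, again making $\langle k_{1,\dots,N}\rangle/\langle k_N\rangle$ bounded. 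In both situations the inequalities reduce to pointwise estimates on $|M_{j,f}^{(N)}|$.

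\noindent For Parts~(I) and~(II) I would dispatch the multipliers one at a time. The factor $\chi_{R5}^{(N)}$ enforces $|k_N|\lesssim (\max)^{5/4}$ and $\max\sim \mathrm{sec}$, which yields $|k_N|\lesssim (\max)^{5/8}(\mathrm{sec})^{5/8}$ (respectively the exponent $5/6$ in the septic case), and this handles $M_{4,\vp}^{(5)}$ and $M_{8,\vp}^{(7)}$; the relation $|k_{1,2}|\sim |k_{3,4}|$ on $\chi_{R4}^{(5)}$, combined with \eqref{es_q2} and the factor $k_{1,2}k_{3,4}$ in the denominator coming from Lemma~\ref{lem_key}, gives $M_{10,\vp}^{(5)}$, $M_{12,\vp}^{(5)}$ and $M_{14,\vp}^{(5)}$; the restrictions $(1-\chi_{A1}^{(N)})$, $(1-\chi_{A2}^{(5)})$, and $\chi_{R1}^{(7)}(1-\chi_{A4}^{(7)})$ force the second largest frequency to be comparable to the largest one among $k_1,\dots,k_{N-1}$, which combined with $|Q^{(3)}\chi_{NR1}/\Phi_0^{(3)}|\lesssim 1/(\langle k_{1,2}\rangle \langle k_{1,2,3}\rangle)$ on $\chi_{H1}^{(3)}$ disposes of $M_{15,\vp}^{(5)}$, $M_{16,\vp}^{(5)}$, $M_{17,\vp}^{(5)}$, $M_{7,\vp}^{(7)}$, $M_{9,\vp}^{(7)}$. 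I would handle Part~(III) along the same lines, this time on the non-$\chi_{H1}$ regions, after inserting Lemma~\ref{Le2} and Lemma~\ref{lem_key} to control $Q_2^{(5)}/\Phi_0^{(5)}$.

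\noindent For Part~(IV) the key mechanism is the $L$-threshold. I would handle the multipliers with $\chi_{>L}$ applied to the difference $1/\Phi_\vp-1/\Phi_0$ (that is $M_{19,\vp}^{(5)}$ and $M_{11,\vp}^{(7)}$) via \eqref{ff3} and \eqref{2eq6}, which produce a gain of order $|\gamma|E_1(\vp)/\langle k_{\max}\rangle^2$ and, after multiplication by the numerator and by the inner $Q^{(3)}$ and use of $k_{\max}\ge L$, yield a bound $\lesssim |\gamma|E_1(\vp)\lesssim L\le L^2$. The multipliers with $\chi_{\le L}$ (that is $M_{20,\vp}^{(5)}$ and $M_{12,\vp}^{(7)}$) have outer frequencies bounded by $L$; on $\chi_{NR(1,1)}$ this in turn forces the inner high frequency $|k_5|$ (respectively $|k_7|$) to be $\lesssim L$, so that the product of outer kernel $\lesssim 1/(|k_{1,2}|\langle k_5\rangle)$ and inner $|Q^{(3)}|\lesssim |k_5|^3$ is $\lesssim L^2$ (using $|k_{1,2}|\ge 1$ from $\chi_{NR1}^{(3)}$). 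The main obstacle will be the bookkeeping of $\langle k_{1,2}\rangle$ versus $\langle k_{3,4}\rangle$ in Part~(I), especially for $M_{12,\vp}^{(5)}$ and $M_{14,\vp}^{(5)}$, where the correct ratio must be extracted directly from Lemma~\ref{lem_key} by separating the cases $|k_{1,2}|\ge 1$ and $k_{1,2}=0$ (the latter killed by $\chi_{NR1}^{(3)}$), in order not to lose a derivative against the max-sec factor.
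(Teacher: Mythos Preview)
Your outline matches the paper's approach almost exactly: case-by-case analysis using the support restrictions $\chi_{R4}^{(5)}, \chi_{R5}^{(N)}, (1-\chi_{A1}^{(N)}), (1-\chi_{A4}^{(7)})$ to force either $|k_N|\lesssim n_1^{5/4}$ (or $n_1^{5/3}$) with $n_1\sim n_2$, or to make a derivative absorbable by $n_1\sim n_2$; the difference bounds \eqref{ff3}, \eqref{2eq6} for Part~(IV); and the observation that $[\chi_{\le L}^{(N-2)}]_{ext1}^{(N)}\chi_{NR(1,1)}^{(N)}$ forces $k_{\max}\lesssim L$.

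One misidentification to flag: you group $M_{12,\vp}^{(5)}$ and $M_{14,\vp}^{(5)}$ with $M_{10,\vp}^{(5)}$ as applications of Lemma~\ref{lem_key}, but only $M_{10,\vp}^{(5)}$ carries the symbol $q_2^{(5)}$. The multipliers $M_{12,\vp}^{(5)}$ and $M_{14,\vp}^{(5)}$ are built from $[Q_j^{(3)}/\Phi_0^{(3)}]_{ext1}^{(5)}[Q_l^{(3)}]_{ext2}^{(5)}$ with $\{j,l\}=\{1,2\}$, so the relevant input is Lemma~\ref{Le1} (the bound you yourself quote, $|Q^{(3)}\chi_{NR1}/\Phi_0^{(3)}|\lesssim \langle k_{1,2}\rangle^{-1}\langle k_{1,2,3}\rangle^{-1}$ on $\chi_{H1}^{(3)}$), not the $q_2^{(5)}$ representation. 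The same remark applies to $M_{18,\vp}^{(5)}$ and $M_{22,\vp}^{(5)}$ in Part~(III), which again involve only $Q^{(3)}/\Phi^{(3)}$; Lemma~\ref{lem_key} enters Part~(III) only for $M_{10,\vp}^{(7)}$ and $M_{14,\vp}^{(7)}$ via the bound $|Q_2^{(5)}/\Phi_\bullet^{(5)}|\chi_{H1}^{(5)}\lesssim \langle k_5\rangle^{-2}$. This does not affect the validity of your plan---the correct tool is simpler than the one you cite---but when you write out details you should invoke Lemma~\ref{Le1} directly for those four multipliers.
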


\begin{proof} 
Put $n_1= \max_{1 \le j \le 2N} \{ |k_j| \} $ and $n_2 = \text{sec}_{1 \le j \le 2N} \{ |k_j| \}  $.  \\
(I) Firstly, we show \eqref{pwb11} and \eqref{pwb12}. \\
(Ia) Estimate of $M_{4, f}^{(5)}$: Since $(k_1, k_2, k_3, k_4, k_5) \in \supp \chi_{H1}^{(5)} \chi_{R5}^{(5)}$ implies that \\
$k_{\max}= |k_5| \lesssim n_1^{5/4}$ and $n_1 \sim n_2$, we have 
$ \langle k_{1,2,3,4,5}  \rangle^s |M_{4, f}^{(5)} | \lesssim n_1^{5/8} n_2^{5/8} \langle k_5 \rangle^s $. \\
(Ib) Estimate of $M_{12,f}^{(5)}$ and $M_{14,f}^{(4)}$: Since $(k_1, k_2, k_3, k_4, k_5) \in \supp \chi_{H1}^{(5)} \chi_{R4}^{(5)}$ leads that  
$|k_{1,2}| \sim |k_{3,4}| $, $|k_{3,4,5}| \sim |k_5| =k_{\max} \lesssim n_1^{5/4}$ and $n_1 \sim n_2$, by Lemma~\ref{Le1}, it follows that 
\begin{align*}
& \langle k_{1,2,3,4,5}  \rangle^s |M_{12,f}^{(5)}| 
 \lesssim \Big( \frac{|k_3 k_4|}{  \langle k_{1,2}  \rangle }+ \frac{|k_{3,4}|}{ \langle k_{1,2} \rangle } |k_5| \Big) 
 \langle k_5 \rangle^s \chi_{H1}^{(5)} \chi_{R4}^{(5)} 
\lesssim \Big( \frac{ |k_3 k_4| }{ \langle k_{3,4} \rangle } +n_1^{5/8} n_2^{5/8}  \Big) \langle k_5 \rangle^s ,  \\
& \langle k_{1,2,3,4,5}  \rangle^s  |M_{14, f}^{(5)} | \lesssim \Big( \frac{ |k_1 k_2|  }{ \langle k_{1,2} \rangle } + |k_5|  \Big) 
\langle k_5 \rangle^s \chi_{H1}^{(5)} \chi_{R4}^{(5)}
\lesssim \Big( \frac{ |k_1 k_2| }{ \langle k_{1,2} \rangle } +n_1^{5/8} n_2^{5/8}  \Big) \langle k_5 \rangle^s,  
\end{align*}
which implies that \eqref{pwb11} holds for $(N.j) \in \{ (5,12), (5,14) \}$. \\
(Ic) Estimate of $M_{15, f}^{(5)}$: Since $(k_1, k_2, k_3, k_4, k_5) \in \supp \chi_{H1}^{(5)} (1- \chi_{A1}^{(5)}) $ implies that 
$|k_{3,4,5}| \sim |k_5| = k_{\max}$ and $n_1 \sim n_2$, by Lemma~\ref{Le1}, we have 
\begin{equation*}
\langle k_{1,2,3,4,5} \rangle^s |M_{15, f}^{(5)}| 
\lesssim \Big( \frac{ |k_1 k_2| }{ \langle k_{1,2} \rangle} + n_1 \Big) \langle k_5  \rangle^s \chi_{H1}^{(5)} (1- \chi_{A1}^{(5)}) 
\lesssim \Big( \frac{ |k_1 k_2| }{ \langle k_{1,2} \rangle } + n_1^{1/2} n_2^{1/2} \Big) \langle k_5 \rangle^s. 
\end{equation*}
We easily check that \eqref{pwb11} holds for $(N, j) \in \{ (5,16), (5,17) \}$. 
Therefore, we obtain \eqref{pwb11} for $(N,j) \in \{ (5,4), (5,12), (5, 14), (5,15), (5,16), (5,17) \}$.

Next, we prove \eqref{pwb12}. By Lemma~\ref{lem_key},  it follows that 
\begin{equation*}
M_{10, f}^{(5)}= - \frac{2}{5} i \ga^2 
\frac{ \chi_{NR2}^{(5)} \chi_{H1}^{(5)} \chi_{R4}^{(5)}  }{ k_{1,2} k_{3,4} k_{1,2,3,5} k_{1,2,4,5} k_{1,3,4,5} k_{2,3,4,5} } \,
\sum_{l=1}^{7} p_l(k_1, k_2, k_3, k_4) k_5^{7-l}. 
\end{equation*}
Thus,  by \eqref{es_q2}, we have 
\begin{equation} \label{pwes11}
|M_{10, f}^{(5)}| \lesssim \Big( \frac{ |k_{1,2,3,4}| }{ |k_{1,2}| |k_{3,4}| } |k_5|^2 + |k_5|+ 
\max \Big\{ \frac{|k_1 k_2|}{ |k_{1,2}| }, \frac{ |k_3 k_4| }{ |k_{3,4}| } \Big\}   \Big) \, \chi_{NR2}^{(5)} \chi_{H1}^{(5)} \chi_{R4}^{(5)}.
\end{equation}
Since $ (k_1, k_2, k_3, k_4, k_5) \in \supp \chi_{H1}^{(5)} \chi_{R4}^{(5)} $ implies that 
$|k_{1,2}| \sim |k_{3,4}|$, $k_{\max} =|k_5| \lesssim n_1^{5/4}$ and $n_1 \sim n_2$, we have 
\begin{align}
& \frac{ |k_{1,2,3,4}|}{ |k_{1,2}| |k_{3,4}|  } |k_5|^2 \, \chi_{NR2}^{(5)} \chi_{H1}^{(5)} \chi_{R4}^{(5)} 
\lesssim \min\{ \langle k_{1,2}  \rangle^{-1}, \langle k_{3,4} \rangle^{-1} \} n_1^{5/4} n_2^{5/4}  \chi_{H1}^{(5)} \chi_{R4}^{(5)}, \label{pwes12} \\ 
& |k_5| \chi_{NR2}^{(5)} \, \chi_{H1}^{(5)} \chi_{R4}^{(5)} \lesssim n_1^{5/8} n_2^{5/8} \, \chi_{H1}^{(5)} \chi_{R4}^{(5)}. \label{pwes13}
\end{align}
Collecting \eqref{pwes11}--\eqref{pwes13}, we obtain \eqref{pwb12}. 

\vspace{0.5em}

\noindent
(II) Secondly, we prove \eqref{pwb13}. By Lemma~\ref{Le2}, the definition of $Q_2^{(5)}$ and Lemma~\ref{lem_key}, we have 
\begin{equation} \label{pwes21}
\Big| \frac{ Q_2^{(5)} }{ \Phi_f^{(5)} } \chi_{>L}^{(5)} \chi_{H1}^{(5)} \Big| \lesssim \langle k_5 \rangle^{-2} \chi_{H1}^{(5)}, 
\hspace{0.5cm}  \Big| \frac{ Q_2^{(5)} }{ \Phi_0^{(5)} } \chi_{H1}^{(5)} \Big| \lesssim \langle k_5 \rangle^{-2} \chi_{H1}^{(5)}. 
\end{equation}
Let $\chi^{(7)}$be a characteristic function satisfying $\supp \chi^{(7)}  \subset \supp [\chi_{H1}^{(5)}]_{ext1}^{(7)} $. 
By \eqref{pwes21}, $\chi^{(7)} =[ \chi_{H1}^{(5)} ]_{ext1}^{(7)} \chi^{(7)} $ and Remark~\ref{rem_sym}, we have 
\begin{align}
& \Big| \Big[ \frac{Q_2^{(5)}}{ \Phi_f^{(5)} } \chi_{>L}^{(5)} \Big]_{ext1}^{(7)} \chi^{(7)} \Big| 
= \Big[  \Big| \frac{Q_2^{(5)}}{ \Phi_f^{(5)} } \chi_{>L}^{(5)} \chi_{H1}^{(5)}  \Big| \Big]_{ext1}^{(7)} \chi^{(7)} 
\lesssim [ \langle k_5 \rangle^{-2} \chi_{H1}^{(5)} ]_{ext1}^{(7)} \chi^{(7)}
\lesssim \langle k_{5,6,7} \rangle^{-2} \chi^{(7)}, \label{pwes221} \\
& \Big| \Big[ \frac{Q_2^{(5)}}{ \Phi_0^{(5)} } \Big]_{ext1}^{(7)} \chi^{(7)} \Big| 
\lesssim \langle k_{5,6,7} \rangle^{-2} \chi^{(7)}. \label{pwes22}
\end{align}
We notice that 
\begin{equation*}
\supp \chi_{H1}^{(7)} \subset \supp \chi_{NR(1,1)}^{(7)} \subset \supp [\chi_{H1}^{(5)}]_{ext1}^{(7)}, 
\hspace{0.5cm} \supp \chi_{NR(2,1)}^{(7)} \subset \supp [\chi_{H1}^{(5)}]_{ext1}^{(7)}. 
\end{equation*} 
(IIa) Estimate of $M_{7, f}^{(7)}$: For $(k_1, \dots, k_7) \in \supp \chi_{H1}^{(7)} \chi_{R1}^{(7)} (1- \chi_{A4}^{(7)}) $, 
it follows that $|k_{1, \dots, 7}| \sim  |k_{5,6,7}| \sim |k_7|=k_{\max} \lesssim n_1^{5/3}$ and $n_1 \sim n_2$. 
Thus, by \eqref{pwes22}, we have 
\begin{equation*}
\langle k_{1, \dots, 7}  \rangle^{s} |M_{7, f}^{(7)}| \lesssim \langle k_7  \rangle^{s+1} \, \chi_{H1}^{(7)} \chi_{R1}^{(7)} (1- \chi_{A4}^{(7)}) \lesssim n_1^{5/6} n_2^{5/6} \langle k_7  \rangle^s.  
\end{equation*}
(IIb) Estimate of $M_{8, f}^{(7)}$ and $M_{9, f}^{(7)}$: 
Since $(k_1 , \dots , k_7 ) \in \supp \chi_{H1}^{(7)} \chi_{R5}^{(7)}$ implies that 
$| k_{1, \dots, 7} | \sim |k_{5,6,7}| \sim |k_7| =k_{\max} \lesssim n_1^{5/4} $ and $n_1 \sim n_2$, by \eqref{pwes22}, it follows that 
\begin{equation*}
\langle k_{1, \dots, 7}  \rangle^s |M_{8, f}^{(7)}| \lesssim \langle k_7 \rangle^{s+1} \, \chi_{H1}^{(7)} \chi_{R5}^{(7)} \lesssim n_1^{5/8} n_2^{5/8} \langle k_7 \rangle^s. 
\end{equation*}
By \eqref{pwes22}, we have
\begin{equation*}
\langle k_{1, \dots, 7} \rangle^s |M_{9, f}^{(7)}| \lesssim n_1 \langle k_7  \rangle^s \chi_{H1}^{(7)} (1-\chi_{A1}^{(7)})
 \lesssim n_1^{1/2} n_2^{1/2} \langle k_7 \rangle^s.  
\end{equation*}
Therefore, we obtain \eqref{pwb13}. 

\vspace{0.5em}

\noindent
(III) Thirdly, we prove \eqref{pwb41}--\eqref{pwb44}. \\
(IIIa) Estimate of $M_{18,f}^{(5)}$: Since $(k_1, k_2, k_3, k_4, k_5) \in \supp \chi_{NR(1,1)}^{(5)} (1-\chi_{H1}^{(5)}) (1-  \chi_{A1}^{(5)})$ leads 
$|k_{1, 2,3,4,5}| \sim |k_5|=k_{\max} \lesssim n_1$ and $n_1 \sim n_2$, we have 
\begin{equation*}
\langle k_{1,2,3,4,5} \rangle^s |M_{18, f}^{(5)}| \lesssim \langle k_5 \rangle^{s+2} \chi_{NR(1,1)}^{(5)} (1-\chi_{H1}^{(5)}) (1-\chi_{A1}^{(5)}) 
\lesssim n_1^{7/6} n_{2}^{7/6} \langle k_5  \rangle^{s-1/3}.
\end{equation*}
(IIIb) Estimate of $M_{22, f}^{(5)}$: By symmetry, we may assume that $|k_1| \le |k_2|$ holds. 
Then, for $(k_1, k_2, k_3, k_4, k_5) \in \supp \chi_{NR(2,1)}^{(5)} (1- \chi_{A3}^{(5)})$, it follows that 
$|k_{1,2,3,4,5}| \sim |k_{3,4,5}| \lesssim |k_2|$ and $|k_4| \sim |k_5| \sim k_{\max}$. Thus, we have 
\begin{equation*}
\langle k_{1,2,3,4,5} \rangle^s |M_{22, f}^{(5)}| \lesssim \langle k_2 \rangle^{s} \langle k_5 \rangle^2 \chi_{NR(2,1)}^{(5)} (1- \chi_{A3}^{(5)}) 
\lesssim \langle k_2 \rangle^{s-1/3} \langle k_4 \rangle^{7/6} \langle k_5 \rangle^{7/6}. 
\end{equation*}
(IIIc) Estimate of $M_{10, f}^{(7)}$: Since $(k_1, \dots, k_7) \in \supp \chi_{NR(1,1)}^{(7)} (1-\chi_{H1}^{(7)}) $ implies 
$|k_{1,\dots, 7}| \sim |k_{5,6,7}| \sim |k_7| =k_{\max} \lesssim n_1$, by \eqref{pwes22}, we have 
\begin{equation*}
\langle k_{1, \dots, 7}  \rangle^s |M_{10, f}^{(7)}| \lesssim \langle k_7 \rangle^{s+1} \chi_{NR(1,1)}^{(7)} (1-\chi_{H1}^{(7)}) 
\lesssim n_1^{5/4} \langle k_7 \rangle^{s-1/4}. 
\end{equation*}
(IIId) Estimate of $M_{14, f}^{(7)}$: Since $(k_1, \dots, k_7) \in \supp \chi_{NR(2,1)}^{(7)} $ implies \\
$|k_{1,\dots, 7}| \sim |k_{5,6,7}| \lesssim |k_6| \sim |k_7| \sim k_{\max}$, by \eqref{pwes221}, we have 
\begin{equation*}
\langle k_{1, \dots, 7}  \rangle^s |M_{14, f}^{(7)}| \lesssim \langle k_{5,6,7} \rangle^{s-1} \langle k_7 \rangle^{2} \chi_{NR(2,1)}^{(7)} 
\lesssim \langle k_6 \rangle^{5/4} \langle k_7 \rangle^{s-1/4}. 
\end{equation*}
\noindent
(IV) Finally, we prove \eqref{pwb14}. \\
(IVa) Estimate of $M_{20,f}^{(5)}$ and $M_{12, f}^{(7)}$: Let $(N, j) \in \{ (5,20), (7,12) \}$. 
$(k_1, \dots, k_N) \in \supp \chi_{NR(1,1)}^{(N)} [\chi_{ \le L}^{(N-2)}]_{ext1}^{(N)}$ leads that 
$k_{\max} \le 4L/3$. Thus, we have $|M_{j, f}^{(N)}| \lesssim L^2$. \\
(IVb) Estimate of $M_{19, f}^{(5)}$: 
By Lemma~\ref{Le1}, $\chi_{NR(1,1)}^{(5)}= [\chi_{H1}^{(3)} ]_{ext1}^{(5)} \chi_{NR(1,1)}^{(5)}$ and Remark ~\ref{rem_sym}, 
we have 
\begin{align*}
& \Big| \Big[ \Big( \frac{Q^{(3)} }{ \Phi_f^{(3)} }- \frac{Q^{(3)} }{ \Phi_0^{(3)} } \Big) \, \chi_{NR1}^{(3)} \chi_{>L}^{(3)}  \Big]_{ext1}^{(5)}  \chi_{NR(1,1)}^{(5)} \Big| \\
& \hspace{0.5cm} = \Big[ \Big| \Big( \frac{Q^{(3)} }{ \Phi_f^{(3)} }- \frac{Q^{(3)} }{ \Phi_0^{(3)} } \Big) \, \chi_{NR1}^{(3)} \chi_{>L}^{(3)} \chi_{H1}^{(3)} \Big| \Big]_{ext1}^{(5)} \chi_{NR(1,1)}^{(5)} \\
& \hspace{0.5cm} \lesssim \big[ |\ga| E_1(f) \, \langle k_3 \rangle^{-3} \chi_{H1}^{(3)} \big]_{ext1}^{(5)} \chi_{NR(1,1)}^{(5)} 
= |\ga| E_1(f) \, \langle k_{3,4,5} \rangle^{-3} \chi_{NR(1,1)}^{(5)}. 
\end{align*}
Thus, by $|\ga| E_1(f) \lesssim L$, we get $|M_{19,f}^{(5)}| \lesssim L$. \\
(IVc) Estimate of $M_{11,f}^{(7)}$: By Lemma~\ref{Le2}, the definition of $Q_2^{(5)}$ and Lemma~\ref{lem_key}, it follows that 
\begin{equation*}
\Big|\Big( \frac{Q_2^{(5)}}{ \Phi_f^{(5)} } - \frac{Q_2^{(5)}}{\Phi_0^{(5)}}  \Big) \chi_{>L}^{(5)} \chi_{H1}^{(5)} \Big|
\lesssim |\ga| E_1(f) \, \langle k_5 \rangle^{-3} \chi_{H1}^{(5)}. 
\end{equation*}
Thus, by  $\chi_{NR(1,1)}^{(7)}= [\chi_{H1}^{(5)} ]_{ext1}^{(7)} \chi_{NR(1,1)}^{(7)} $ and Remark~\ref{rem_sym}, we have 
\begin{equation*}
\Big| \Big[ \Big( \frac{Q_2^{(5)}}{ \Phi_f^{(5)} } - \frac{Q_2^{(5)}}{\Phi_0^{(5)}}  \Big) \chi_{>L}^{(5)} \Big]_{ext1}^{(7)} \chi_{NR(1,1)}^{(7)} \Big|
\lesssim |\ga| E_1(f) \, \langle k_{5,6,7} \rangle^{-3} \chi_{NR(1,1)}^{(7)}. 
\end{equation*}
Hence, by $|\ga| E_1(f) \lesssim L $, we get $|M_{11,f}^{(7)}| \lesssim L$. 

Therefore, we obtain \eqref{pwb14}.
\end{proof}

In the following remark, we state a property of multipliers $L_{j, \vp}^{(N)}$ and $M_{j, \vp}^{(N)}$ which is needed to 
prove main estimates stated as in Section 7.
\begin{rem} \label{rem_pwb3}
For $f, g \in L^2(\T)$ and $L \gg \max \{ |\ga| E_1(f), |\ga| E_1(g)  \}$, 
by Lemmas~\ref{Le1}, \ref{Le2}, \ref{Le3} and \ref{Le4} and the definition of $\{  L_{j, f}^{(N)} \}$ and $\{ M_{j, f}^{(N)} \}$ as in Proposition~\ref{prop_NF2}, 
each $L_{j, f}^{(N)}$ satisfies 
\begin{equation} \label{pwb21}
|(L_{j, f}^{(N)}-L_{j, g}^{(N)} ) \chi_{>L}^{(N)} | \lesssim |\ga| |E_1(f) -E_1(g)| (|L_{j, f}^{(N)} \chi_{>L}^{(N)} |+ |L_{j, g}^{(N)} \chi_{>L}^{(N)} | ). 
\end{equation}
and each $M_{j, f}^{(N)}$ satisfies 
\begin{align} \label{pwb22}
|M_{j, f}^{(N)} - M_{j, g}^{(N)}| \lesssim |\ga| |E_1(f)- E_1(g)| |M_{j, f}^{(N)}|. 
\end{align}
In particular, for $(N, j) \in \{ (5,1), (5,9), (5,11), (5,13), (7, 6) \}$, we have $M_{j, f}^{(N)}- M_{j, g}^{(N)}=0$.  
\end{rem}

\section{main estimates}

The main estimates of the proof of Theorem~\ref{thm_LWP} are as below.
 
\begin{prop} \label{prop_main1}
Let $s \ge 3/2$, $f, g \in L^2(\T)$ and $F_{f, L}$ and $G_{f, L}$ be as in Proposition~\ref{prop_NF2}. 
Then, for $v, w \in C([-T, T]: H^s(\T))$ and $L \gg \max\{ 1, |\ga| E_1(f), | \ga| E_1(g) \}$, we have 
\begin{align}
& \big\| F_{f, L} (\ha{v}) - F_{f, L} (\ha{w}) \big\|_{L_T^{\infty} l_s^2} 
\lesssim  L^{-1} (1+ \| v \|_{L_T^{\infty} H^s}+ \| w \|_{L_T^{\infty} H^s} )^6 \| v- w \|_{L_T^{\infty} H^s}, \label{mes11} \\
& \big\| G_{f, L} (\ha{v}) - G_{f, L} (\ha{w}) \|_{L_T^{\infty} l_s^2} 
\lesssim L^3 (1+ \| v \|_{L_T^{\infty} H^s}+ \| w \|_{L_T^{\infty} H^s} )^{10} \| v- w \|_{L_T^{\infty} H^s}. \label{mes12} 
\end{align}
Moreover, it follows that 
\begin{align}
& \big\| F_{f, L} (\ha{v}) - F_{g, L} (\ha{v}) \big\|_{L_T^{\infty} l_s^2} \le C_*,  \label{mes13} \\
& \big\| G_{f, L} (\ha{v})- G_{g, L} (\ha{v}) \big\|_{L_T^{\infty} l_s^2} \le C_* \label{mes14}
\end{align}
where $C_*=C_*(v ,s,|E_1(f)-E_1(g)|,|\ga|,T,L) \ge 0$, $C_*\to 0$ when $|E_1(f)-E_1(g)|\to 0$ and 
$C_*=0$ when $E_1(f)=E_1(g)$. 
\end{prop}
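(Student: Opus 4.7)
The plan is to split the four inequalities into two groups: the Lipschitz-in-$v$ bounds \eqref{mes11}--\eqref{mes12}, and the continuity-in-$f$ bounds \eqref{mes13}--\eqref{mes14}. In both groups the fundamental tool is the multilinearity of $\Lambda_\varphi^{(N)}$: every difference reduces to a sum of $N$ multilinear expressions in which one slot carries $\hat v - \hat w$ (or, for \eqref{mes13}--\eqref{mes14}, in which the kernel or phase varies). After that reduction, each summand is estimated by combining the pointwise multiplier bounds of Lemmas~\ref{lem_pwb1} and \ref{lem_pwb2} with the $H^s$-multilinear estimates of Lemmas~\ref{lem_nl2}, \ref{lem_nl3}, \ref{Le7}, \ref{lem_nes01}, \ref{lem_nl5}, and \ref{lem_stres}. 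For the five multipliers that individually carry a derivative loss—$M_{1,\varphi}^{(5)}$, $M_{9,\varphi}^{(5)}$, $M_{11,\varphi}^{(5)}$, $M_{13,\varphi}^{(5)}$, $M_{6,\varphi}^{(7)}$—I will never estimate them in isolation, but invoke the cancellation identities of Propositions~\ref{prop_res1}, \ref{prop_res2}, \ref{prop_res3} to replace the relevant sums by terms with no derivative loss.

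For \eqref{mes11} I would run through each of the three $N$-linear pieces of $F_{f,L}$. The telescoping identity yields, for each multiplier $\tilde L_{i,\varphi}^{(N)}\chi_{>L}^{(N)}$, a sum of $N$ expressions in which one slot is $\hat v - \hat w$ and the others are either $\hat v$ or $\hat w$. Because $\chi_{>L}^{(N)}$ forces $\langle k_{\max}\rangle \gtrsim L$, every pointwise bound from Lemma~\ref{lem_pwb1} (cases (I)--(IV)) automatically contributes an extra factor $\langle k_{\max}\rangle^{-1}\lesssim L^{-1}$ relative to the plain statement of the lemma. Concretely, for $(N,j)\in J_4$ the residual kernel is exactly of the form handled by Lemma~\ref{lem_nl2}; for $(N,j)\in J_1$, by Lemma~\ref{lem_nl3}; for $(N,j)\in J_2$ and the $\chi_{H1}^{(5)}$ factor one applies Lemma~\ref{lem_nl3} after distributing the $\langle k_ik_{i+1}\rangle/\langle k_{i,i+1}\rangle$ weight; and for the $L_{2,\varphi}^{(3)}, L_{3,\varphi}^{(3)}, L_{4,\varphi}^{(3)}$ cases one uses \eqref{cnl1}--\eqref{cnl4}. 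Adding everything and using $\|v\|_{H^s},\|w\|_{H^s}$ to estimate the remaining factors gives \eqref{mes11}.

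Estimate \eqref{mes12} is the longer bookkeeping exercise. The contributions of $\tilde L_{i,\varphi}^{(N)}\Phi_\varphi^{(N)}\chi_{\le L}^{(N)}$ are bounded by observing that $|L_{i,\varphi}^{(N)}\Phi_\varphi^{(N)}|\lesssim\langle k_{\max}\rangle^3$ so that on $\chi_{\le L}^{(N)}$ they are bounded by $L^3$; then apply Lemma~\ref{lem_nl2} to the remaining factors. For the $\tilde M_{i,\varphi}^{(N)}$ terms I would split into three sub-groups. First, the genuinely non-resonant $M$-symbols (e.g.\ $M_{i,\varphi}^{(5)}$ with $i\in\{4,10,12,14,15,16,17,18,22\}$ and $M_{i,\varphi}^{(7)}$ with $i\in\{7,8,9,10,14\}$) are handled by the pointwise bounds \eqref{pwb11}--\eqref{pwb44} of Lemma~\ref{lem_pwb2} together with Lemma~\ref{Le7}. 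Second, the resonant sums $M_{1,\varphi}^{(5)}+M_{9,\varphi}^{(5)}$, $\tilde M_{6,\varphi}^{(7)}$, and $M_{11,\varphi}^{(5)}+M_{13,\varphi}^{(5)}$ are recombined using Propositions~\ref{prop_res1}--\ref{prop_res3}, after which no derivative loss remains and Lemma~\ref{lem_nl2} (or Lemma~\ref{lem_stres} for $M_{1,\varphi}^{(3)}$-type $\chi_{R3}^{(3)}$ contributions) closes the bound. Third, the terms $M_{19,\varphi}^{(5)}, M_{20,\varphi}^{(5)}, M_{11,\varphi}^{(7)}, M_{12,\varphi}^{(7)}$ carry the $L^2$ bound \eqref{pwb14}, and the $M_{2,\varphi}^{(5)}, M_{3,\varphi}^{(5)}, M_{5,\varphi}^{(5)}$ etc.\ carry only $|k_5|$ derivatives, handled by Lemma~\ref{Le7} with $i=5$. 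Tracking the $L$-powers in each group and combining gives the $L^3$ factor. The hardest step here is verifying that Propositions~\ref{prop_res1}--\ref{prop_res3} are actually sharp enough to feed into Lemmas~\ref{lem_nl2}--\ref{lem_stres} without loss, in particular checking that the surviving $\max\{|k_1k_2|/\langle k_{1,2}\rangle, |k_3k_4|/\langle k_{3,4}\rangle\}$-type weights do not obstruct the application of Lemma~\ref{Le7}.

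For \eqref{mes13}--\eqref{mes14} the strategy is to apply Lemma~\ref{lem_go} to each multilinear piece, using the multilinear bounds already established in \eqref{mes11}--\eqref{mes12} as the required hypothesis of the form $\|\sum |m^{(N)}|\prod|\hat v_l|\|_{L^\infty_T l^2_s}\le C_0\prod\|v_l\|_{L^\infty_TH^{s_l}}$. This handles the change in the exponential factor $e^{-t\Phi_f^{(N)}}\mapsto e^{-t\Phi_g^{(N)}}$. In addition one must account for the dependence of the kernels themselves on $f$ through $E_1(f)$; here Remark~\ref{rem_pwb3} gives $|L_{j,f}^{(N)}\chi_{>L}^{(N)}-L_{j,g}^{(N)}\chi_{>L}^{(N)}|$ and $|M_{j,f}^{(N)}-M_{j,g}^{(N)}|$ dominated by $|\gamma||E_1(f)-E_1(g)|$ times the original pointwise bound, so the contributions from kernel differences go to zero linearly in $|E_1(f)-E_1(g)|$; for the five multipliers independent of $f$ this difference is identically zero. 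Combining the two sources of dependence yields a constant $C_*$ of the claimed form, vanishing when $E_1(f)=E_1(g)$. The main obstacle throughout the whole argument is simply the combinatorial volume of cases: one has roughly forty multipliers to classify, and the delicate point in each is matching the right pointwise bound from Section~6 with the right multilinear inequality so that no factor of $\langle k_{\max}\rangle$ is left uncontrolled.
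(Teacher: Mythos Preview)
Your proposal is correct and follows the same overall route as the paper: telescoping in $v$, pointwise multiplier bounds from Section~6, multilinear Sobolev estimates from Section~5, the cancellation Propositions~\ref{prop_res1}--\ref{prop_res3} for the five bad resonant symbols, and Lemma~\ref{lem_go} together with Remark~\ref{rem_pwb3} for the dependence on $f$. The paper simply packages these into three intermediate lemmas---Lemma~\ref{lem_nes0} for all $\tilde L_{j,f}^{(N)}\chi_{>L}^{(N)}$, Lemma~\ref{lem_mle} for the $\tilde M$-terms admitting a direct pointwise bound (your ``first'' through ``third'' sub-groups, plus the pairs $M_1^{(5)}+M_9^{(5)}$, $M_{11}^{(5)}+M_{13}^{(5)}$ and $\tilde M_6^{(7)}$), and Lemma~\ref{lem_nl10} for the remaining composition-type $\tilde M$-terms---and then the proof of Proposition~\ref{prop_main1} is a few lines quoting these lemmas plus the trivial bound $|L_{j,f}^{(N)}\Phi_f^{(N)}\chi_{\le L}^{(N)}|\lesssim L^3$.

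The one structural point you under-describe is the handling of the multipliers in the paper's sets $K_4$--$K_7$, i.e.\ those $M_{j,f}^{(N)}$ explicitly defined as $[N'\,\tilde L_{i,f}^{(N')}\chi_{>L}^{(N')}]_{ext1}^{(N)}[\,\cdot\,]_{ext2}^{(N)}$ (for instance $M_6^{(5)}, M_7^{(5)}, M_8^{(5)}$, $M_1^{(7)}$--$M_5^{(7)}$, and all $M^{(9)}$, $M^{(11)}$). The paper does \emph{not} bound these by a pointwise estimate on the full $N$-multiplier followed by Lemma~\ref{Le7}; instead it rewrites $\Lambda_f^{(N)}(\tilde M,\hat v)=N'\,\Lambda_f^{(N')}\big(\tilde L_{i,f}^{(N')}\chi_{>L}^{(N')},\hat v,\ldots,\hat v,\mathcal N(\hat v)\big)$, where $\mathcal N$ is one of $\Lambda_f^{(3)}(-Q^{(3)}\chi_{NR1}^{(3)},\cdot)$, $\Lambda_f^{(3)}(-Q^{(3)}\chi_{NR1}^{(3)}([3\chi_{H2,2}^{(3)}]_{sym}+\chi_{H3}^{(3)}),\cdot)$, $\Lambda_f^{(3)}(Q^{(3)}[3\chi_{R3}^{(3)}]_{sym},\cdot)$, or $\Lambda_f^{(5)}(-Q_1^{(5)},\cdot)$, and then feeds the $H^{s-3}$, $H^{s-1}$, or $H^s$ bound on $\mathcal N$ (from Lemmas~\ref{lem_nl2}, \ref{lem_nl5}, \ref{lem_stres}) into the slot-$m$ versions of \eqref{nl41}, \eqref{nl42}, \eqref{cnl1}--\eqref{cnl3} already established for $\tilde L_{i,f}^{(N')}$. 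This is exactly why those lemmas were stated with an arbitrary slot $m$ allowed in $H^{s-3}$. Your sketch has all the ingredients but does not articulate this composition step; once you add it, the $9$- and $11$-linear terms fall out automatically and no new pointwise analysis is needed.
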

As a corollary of Proposition~\ref{prop_main1}, we obtain the following estimates. 
\begin{cor} \label{cor_mainest}
Let $s\ge 3/2$. Then, there exists a constant $C \ge 1$ such that the following estimates hold 
for $T>0$, $\vp_1 , \vp_2 \in H^s(\T)$, $L \gg \max\{ 1, | \ga| E_1(\vp_1), |\ga| E_1(\vp_2) \}$ 
and any solution $u_1\in C([-T,T]:H^s(\T))$ (resp. $u_2\in C([-T,T]:H^s(\T))$) to 
\eqref{5mKdV3} with initial data $\vp_1$ (resp. $\vp_2 $):
\EQS{
\| u_1 \|_{L_T^{\infty} H^s}   &\leq  \| \vp_1 \|_{H^s} + C L^{-1} (1+\|  u_1 \|_{L_T^{\infty} H^s} )^6 \| u_1 \|_{L_T^{\infty} H^s} \notag\\
& + C T L^3 (1+\| u_1 \|_{L_T^{\infty} H^s })^{10}  \| u_1 \|_{L_T^{\infty} H^s },  \label{mes31} \\
\| u_1- u_2 \|_{L_T^{\infty} H^s }  &\le \| \vp_1-\vp_2 \|_{H^s} + (1+T) C_{*} \notag\\
& + CL^{-1} (1+\| u_1 \|_{L_T^{\infty} H^s } + \| u_2 \|_{L_T^{\infty} H^s } )^{6} (\| u_1 - u_2\|_{L_T^{\infty} H^s }+C_{*}) \notag \\
& + CT L^{3} (1+\| u_1 \|_{L_T^{\infty} H^s } + \| u_2 \|_{L_T^{\infty} H^s } )^{10}( \| u_1 - u_2\|_{L_T^{\infty} H^s }+C_{*}) \label{mes32}
}
where a constant $C_{*}$ is as in Proposition~\ref{prop_main1}. 
\end{cor}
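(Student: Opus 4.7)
The plan is to integrate the normal form identity of Proposition~\ref{prop_NF2} and then use Proposition~\ref{prop_main1} as a black box. Setting $v_j := U_{\vp_j}(-t) u_j$, the functions $\ha{v}_j(t,k) = e^{-t\phi_{\vp_j}(k)} \ha{u}_j(t,k)$ satisfy $|\ha{v}_j|=|\ha{u}_j|$ since $\phi_{\vp_j}(k)$ is purely imaginary; in particular $\|v_j\|_{L_T^\infty H^s}=\|u_j\|_{L_T^\infty H^s}$. Integrating \eqref{NF21} from $0$ to $t$ gives, for every $k\in\Z$,
\EQQ{
\ha{v}_j(t,k)+F_{\vp_j,L}(\ha{v}_j)(t,k)=\ha{\vp}_j(k)+F_{\vp_j,L}(\ha{\vp}_j)(0,k)+\int_0^t G_{\vp_j,L}(\ha{v}_j)(t',k)\,dt'.
}

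To derive \eqref{mes31}, I would take the $L_T^\infty l_s^2$ norm, move $F_{\vp_j,L}(\ha{v}_j)$ to the right, and bound the two $F$--contributions and the $G$--integral using Proposition~\ref{prop_main1}. Every summand in $F_{\vp,L}$ and $G_{\vp,L}$ is $N$--linear with $N\ge 3$, so both vanish at $0$; choosing $w=0$ in \eqref{mes11} and \eqref{mes12} therefore gives
\EQQ{
\|F_{\vp_j,L}(\ha{v}_j)\|_{L_T^\infty l_s^2}\lesssim L^{-1}(1+\|v_j\|_{L_T^\infty H^s})^6\|v_j\|_{L_T^\infty H^s},
}
together with the analogous $L^3(\cdots)^{10}$ estimate for $G_{\vp_j,L}(\ha{v}_j)$. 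Dominating $\|F_{\vp_j,L}(\ha{\vp}_j)(0)\|_{l_s^2}$ by this same $L_T^\infty l_s^2$ quantity and translating back via $\|v_j\|_{H^s}=\|u_j\|_{H^s}$ then yields \eqref{mes31}.

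For \eqref{mes32}, I would subtract the two integral identities and split each cross term through
\EQQ{
F_{\vp_1,L}(\ha{v}_1)-F_{\vp_2,L}(\ha{v}_2)=\bigl(F_{\vp_1,L}(\ha{v}_1)-F_{\vp_1,L}(\ha{v}_2)\bigr)+\bigl(F_{\vp_1,L}(\ha{v}_2)-F_{\vp_2,L}(\ha{v}_2)\bigr),
}
and likewise for $G$ and for the data terms $F_{\vp_j,L}(\ha{\vp}_j)(0)$. Applying \eqref{mes11}, \eqref{mes13} to the $F$--pieces and \eqref{mes12}, \eqref{mes14} to the $G$--pieces produces a bound of exactly the shape of \eqref{mes32} but with $\|v_1-v_2\|_{L_T^\infty H^s}$ in place of $\|u_1-u_2\|_{L_T^\infty H^s}$.

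The main obstacle is transferring this $v$--estimate back to $u_1-u_2$, since $v_1$ and $v_2$ are defined by \emph{different} linear semigroups. Writing
\EQQ{
\ha{u}_1(t,k)-\ha{u}_2(t,k)=e^{t\phi_{\vp_1}(k)}(\ha{v}_1-\ha{v}_2)(t,k)+\bigl(e^{t\phi_{\vp_1}(k)}-e^{t\phi_{\vp_2}(k)}\bigr)\ha{v}_2(t,k),
}
the plan is to control the second term by the truncation argument of Lemma~\ref{lem_go}: split at $|k|=K$, use $|e^{t\phi_{\vp_1}(k)}-e^{t\phi_{\vp_2}(k)}|\le 2T|\ga|\,|E_1(\vp_1)-E_1(\vp_2)|\,K^3$ on $|k|\le K$, and on $|k|>K$ combine the trivial bound $2$ with the uniform continuity of $\ha{v}_2\in C([-T,T];l_s^2)$ to make the high--frequency tail arbitrarily small. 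This gives $\|u_1-u_2\|_{L_T^\infty H^s}\le \|v_1-v_2\|_{L_T^\infty H^s}+C_*$, and, by the same argument with roles exchanged, the reverse inequality. Inserting these into the $v$--estimate above yields \eqref{mes32}.
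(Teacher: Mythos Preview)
Your proposal is correct and follows essentially the same route as the paper: integrate the normal form identity \eqref{NF21}, bound the $F$ and $G$ contributions via Proposition~\ref{prop_main1}, and transfer between $v_j$ and $u_j$ using $\|u_j\|_{H^s}=\|v_j\|_{H^s}$ together with a truncation argument for the difference. The only cosmetic discrepancy is the order of the telescoping split (you insert $F_{\vp_1,L}(\ha v_2)$, the paper inserts $F_{\vp_2,L}(\ha v_1)$), and your justification of $\|u_1-u_2\|_{L_T^\infty H^s}\le \|v_1-v_2\|_{L_T^\infty H^s}+C_*$ is more explicit than the paper's one-line appeal to uniform $l_s^2$-continuity and dominated convergence; neither affects the argument.
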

\begin{proof}[Proof of Corollary \ref{cor_mainest}]
By proposition~\ref{prop_NF2}, 
$\ha{v}_j(t,k)=e^{-t \phi_{\vp_j}(k)} \ha{u}_j (t,k)$ satisfies (\ref{NF21}) with initial data $\ha{\vp}_j$ 
for each $k \in \Z$ and any $t \in [-T, T]$. Thus, it follows that 
\EQQ{
& \Big[ \ha{v}_1(t',k )+ F_{\vp_1,L} (\ha{v}_1) (t',k)  \Big]_0^t= \int_0^t G_{\vp_1, L} (\ha{v}_1) (t',k) \, dt', \\
& \Big[(\ha{v}_1-\ha{v}_2)(t',k)+ \big(F_{\vp_1,L}(\ha{v}_1 ) - F_{\vp_2,L}(\ha{v}_2  ) \big) (t',k)  \Big]_0^t \\
& \hspace{2.5cm} =\int_0^t \big( G_{\vp_1,L}(\ha{v}_1)-G_{\vp_2,L}(\ha{v}_2)  \big) (t',k)\, dt'
}
for each $k \in \Z$ and any $t \in [-T, T]$, which leads that 
\EQS{
& \| v_1 \|_{L^{\infty}_T H^s} \leq \| \vp_1 \|_{H^s} + 2\| F_{\vp_1, L} (\ha{v}_1) \|_{L_T^{\infty} l_s^2}
+ T \| G_{\vp_1,L}(\ha{v}_1) \|_{L_T^{\infty} l_s^2 },  \label{6es1}\\
& \|v_1-v_2\|_{L^\infty_T H^s} \leq \|\vp_1-\vp_2\|_{H^s} \notag \\ 
& \hspace{0.3cm} +2 \big( \|F_{\vp_1,L}(\ha{v}_1)-F_{\vp_2,L}( \ha{v}_1)\|_{L_T^{\infty} l_s^2} 
+ \| F_{\vp_2, L} (\ha{v}_1) - F_{\vp_2, L} (\ha{v}_2) \|_{L_T^{\infty} l_s^2} \big) \notag \\ 
& \hspace{0.3cm} +T \big( \|G_{\vp_1,L} (\ha{v}_1)-G_{\vp_2,L}(\ha{v}_1) \|_{L_T^{\infty} l_s^2} 
+ \|  G_{\vp_2, L} (\ha{v}_1)- G_{\vp_2, L} (\ha{v}_2)  \|_{L_T^{\infty} l_s^2}  \big). \label{6es2}
}
Applying \eqref{mes11}--\eqref{mes12} to \eqref{6es1} and \eqref{mes11}--\eqref{mes14} to \eqref{6es2},  
we find a constant $C \ge 1$ 
such that 
\EQS{
& \| v_1 \|_{L^\infty_T H^s}  \le  \| \vp_1 \|_{H^s}
+ C L^{-1} (1 +\| v_1\|_{L_T^{\infty} H^s } )^6 \| v_1\|_{L_T^{\infty} H^s } \notag \\
& \hspace{2cm} +C T L^3(1+\| v_1\|_{L_T^{\infty }H^s } )^{10}\| v_1 \|_{L_T^{\infty} H^s}, \label{6es3} \\
& \| v_1- v_2 \|_{L_T^{\infty} H^s}   \le \| \vp_1- \vp_2 \|_{H^s} +(1+T) C_{*} \notag \\
& \hspace{0.5cm} 
+ CL^{-1} (1+ \| v_1\|_{L_T^{\infty} H^s} + \| v_2\|_{L_T^{\infty} H^s})^6 \| v_1 -v_2 \|_{L_T^{\infty} H^s} \notag \\
& \hspace{0.5cm}
+ CT L^3  (1+ \| v_1\|_{L_T^{\infty} H^s} + \| v_2\|_{L_T^{\infty} H^s})^{10} \| v_1 -v_2 \|_{L_T^{\infty} H^s} . \label{6es4}
}
By $\| u_1 \|_{L_T^{\infty} H^s}= \|  v_1 \|_{L_T^{\infty} H^s }$ and \eqref{6es3}, we have \eqref{mes31}. 
We notice that it follows that 
\begin{equation} \label{6es5}
\| u_1-u_2 \|_{L_T^{\infty} H^s} \le \| v_1 -v_2 \|_{L_T^{\infty} H^s}+C_{*}, \hspace{0.5cm} 
\| v_1-v_2 \|_{L_T^{\infty}H^s} \le \| u_1- u_2  \|_{L_T^{\infty}H^s} + C_{*} 
\end{equation}
by uniform $l_s^2$-continuity of $\ha{u}_1$ and $\ha{v}_1$ and the dominated convergence theorem. 
By \eqref{6es4}, \eqref{6es5} and $\| u_j \|_{L_T^{\infty} H^s } = \|  v_j \|_{L_T^{\infty} H^s }$ with $j=1,2$, 
we obtain \eqref{mes32}. 
\end{proof}

Before we prove Proposition~\ref{prop_main1}, we prepare some lemmas. 
The proofs of them are based on the nonlinear estimates and the pointwise upper bounds of multipliers 
stated in Section 5 and 6. 

\begin{lem} \label{lem_nes0} 
Let $s \ge 3/2$, $f, g \in L^2(\T)$ and $L_{j, f}^{(N)}$ with $(N,j) \in J_1 \cup J_2 \cup J_3 \cup J_4$ be as in Proposition~\ref{prop_NF2}. 
Then, for any $\{ v_l \}_{l=1}^N \subset C([-T, T]: H^s(\T))$ and $L \gg \max \{1, |\ga| E_{1} (f) , | \ga| E_1(g) \}$, we have
\begin{align}
& \Big\| \sum_{k=k_{1, \dots, N}} |\ti{L}_{j, f}^{(N)} \chi_{>L}^{(N)} | \, \prod_{l=1}^N |\ha{v}_l(t, k_l)| \Big\|_{L_T^{\infty} l_2^s} 
\lesssim L^{-1} \prod_{l=1}^N \| v_l \|_{L_T^{\infty} H^s}, \label{ne01} \\
& \big\|  \La_f^{(N)} ( \ti{L}_{j, f}^{(N)} \chi_{>L}^{(N)}, \ha{v}_1) - \La_{g}^{(N)} ( \ti{L}_{j, g}^{(N)} \chi_{>L}^{(N)}, \ha{v}_1 )  \big\|_{L_T^{\infty}l_s^2} 
\le C_* \label{ne03}
\end{align}
where $C_*=C_*(v_1,s,|E_1(f)-E_1(g)|,|\ga|,T) \ge 0$, $C_*\to 0$ when $|E_1(f)-E_1(g)|\to 0$ and 
$C_{*} =0$ when $E_1(f)=E_1(g)$. 
\end{lem}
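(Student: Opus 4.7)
The plan is to derive both \eqref{ne01} and \eqref{ne03} from the pointwise upper bounds on $L_{j,f}^{(N)}\chi_{>L}^{(N)}$ supplied by Lemma~\ref{lem_pwb1}, combined with the multilinear estimates of Section~4 and Lemmas~\ref{lem_nes01}--\ref{lem_nl5}, and then to handle the $f\mapsto g$ dependence via Remark~\ref{rem_pwb3} and Lemma~\ref{lem_go}. The factor $L^{-1}$ in \eqref{ne01} will in each case be extracted from the support restriction $\chi_{>L}^{(N)}$ via the pointwise inequality $\chi_{>L}^{(N)}\le L^{-1}\langle k_{\max}\rangle$, at most at the cost of trading one power of $\langle k_{\max}\rangle$ against one derivative in a Sobolev norm (which is harmless because $H^s\hookrightarrow H^{s-a}$ for any $a\ge 0$).

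For \eqref{ne01} I would split into four cases according to which of $J_1,\ldots,J_4$ contains $(N,j)$, reducing at the outset to unsymmetrized multipliers via $|\ti L_{j,f}^{(N)}\chi_{>L}^{(N)}|\le [|L_{j,f}^{(N)}\chi_{>L}^{(N)}|]_{sym}^{(N)}$. When $(N,j)\in J_4$, estimate \eqref{pwb3} yields $|L_{j,f}^{(N)}\chi_{>L}^{(N)}|\lesssim L^{-1}$ on support, and the remaining $N$-linear sum is bounded by iterating the Sobolev algebra $H^s\cdot H^s\hookrightarrow H^s$ (valid since $s\ge 3/2>1/2$). When $(N,j)\in J_1$, \eqref{pwb1} together with $\chi_{>L}^{(N)}\le L^{-1}\langle k_{\max}\rangle$ gives $|L_{j,f}^{(N)}\chi_{>L}^{(N)}|\lesssim L^{-1}\langle k_{1,\dots,N}\rangle^{-1}\langle k_{\max}\rangle^{-1}$, to which I apply a mild variant of \eqref{nl41} in which the symbol $\langle k_{1,\dots,N}\rangle^{-1}\langle k_{\max}\rangle^{-1}$ produces $\|v_m\|_{H^{s-2}}\prod_{l\ne m}\|v_l\|_{H^s}$ (the proof is the same Young--H\"older argument as in Lemma~\ref{lem_nl3} with one fewer derivative spent). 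When $(N,j)\in J_2$, \eqref{pwb0} combined with $\langle k_ik_j\rangle\lesssim\langle k_i\rangle\langle k_j\rangle$ and $\chi_{>L}^{(5)}\le L^{-1}\langle k_5\rangle$ on $\chi_{H1}^{(5)}$ yields
\[
|L_{j,f}^{(5)}\chi_{>L}^{(5)}|\lesssim L^{-1}\bigl(\langle k_{1,2}\rangle^{-1}\langle k_1\rangle\langle k_2\rangle+\langle k_{3,4}\rangle^{-1}\langle k_3\rangle\langle k_4\rangle\bigr)\langle k_5\rangle^{-2}\chi_{H1}^{(5)},
\]
which, after symmetrization, falls under the analogue of \eqref{nl42} with $\langle k_N\rangle^{-3}$ replaced by $\langle k_N\rangle^{-2}$ (again proved by the same reasoning as Lemma~\ref{lem_nl3}). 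When $(N,j)\in J_3$, the bounds \eqref{pwb01}--\eqref{pwb03} exactly match the symbols treated in Lemma~\ref{lem_nes01}: \eqref{cnl1} and \eqref{cnl2} cover $(3,2)$ and $(3,3)$, from which $L^{-1}$ is again extracted via $\chi_{>L}^{(3)}\le L^{-1}\langle k_{\max}\rangle$; and \eqref{cnl4} already supplies a factor $L^{-3}\le L^{-1}$ for $(3,4)$.

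For \eqref{ne03} I split the difference as
\begin{align*}
& \Lambda_f^{(N)}(\ti L_{j,f}^{(N)}\chi_{>L}^{(N)},\ha v_1)-\Lambda_g^{(N)}(\ti L_{j,g}^{(N)}\chi_{>L}^{(N)},\ha v_1) \\
& \qquad = \Lambda_f^{(N)}\bigl((\ti L_{j,f}^{(N)}-\ti L_{j,g}^{(N)})\chi_{>L}^{(N)},\ha v_1\bigr)+\bigl(\Lambda_f^{(N)}-\Lambda_g^{(N)}\bigr)(\ti L_{j,g}^{(N)}\chi_{>L}^{(N)},\ha v_1).
\end{align*}
The first summand is controlled by \eqref{pwb21} of Remark~\ref{rem_pwb3} combined with the already-proved \eqref{ne01}, giving an upper bound of order $|\ga||E_1(f)-E_1(g)|L^{-1}\|v_1\|_{L^\infty_T H^s}^N$. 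The second summand fits exactly the hypothesis of Lemma~\ref{lem_go} (precisely because \eqref{ne01} furnishes the required multilinear bound on the symbol $\ti L_{j,g}^{(N)}\chi_{>L}^{(N)}$), whose conclusion provides a constant $C_*$ with $C_*\to 0$ as $|E_1(f)-E_1(g)|\to 0$. When $E_1(f)=E_1(g)$ both summands vanish identically, so $C_*=0$.

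The main technical obstacle lies in Case $J_2$: the pointwise bound \eqref{pwb0} mixes a high-low quotient with the localization $\chi_{H1}^{(5)}$, and the $L^{-1}$ extraction has to be carried out in such a way that the residual symbol still matches the structural form required by \eqref{nl42}. Once this alignment is secured (with the one-derivative slack absorbed by $H^s\hookrightarrow H^{s-2}$), the other three cases are essentially mechanical combinations of \eqref{pwb3}, \eqref{pwb1}, \eqref{pwb01}--\eqref{pwb03} with Sobolev algebra, \eqref{nl41}, and Lemma~\ref{lem_nes01}.
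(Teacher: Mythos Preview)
Your argument for \eqref{ne03} matches the paper's exactly: the same splitting into a multiplier-difference piece handled by \eqref{pwb21} and a $\Lambda_f-\Lambda_g$ piece handled by Lemma~\ref{lem_go}.

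For \eqref{ne01}, your approach is correct but considerably more elaborate than the paper's. You split into four cases $J_1,\ldots,J_4$ and in each extract $L^{-1}$ via $\chi_{>L}^{(N)}\le L^{-1}\langle k_{\max}\rangle$, then invoke tailored multilinear lemmas (\eqref{nl41}, \eqref{nl42} and variants, \eqref{cnl1}--\eqref{cnl4}). The paper instead observes that Lemma~\ref{lem_pwb1} yields the \emph{uniform} bound $|L_{j,f}^{(N)}\chi_{>L}^{(N)}|\lesssim\langle k_{\max}\rangle^{-1}\chi_{>L}^{(N)}\lesssim L^{-1}$ for \emph{every} $(N,j)\in J_1\cup J_2\cup J_3\cup J_4\setminus\{(3,4)\}$: indeed \eqref{pwb1} is already $\lesssim\langle k_{\max}\rangle^{-2}$, \eqref{pwb0} is $\lesssim\langle k_{\max}\rangle^{-1}$ on $\chi_{H1}^{(5)}$ since there $\langle k_ik_j\rangle/\langle k_{i,j}\rangle\lesssim\langle k_5\rangle^2$, and \eqref{pwb01}--\eqref{pwb02} carry an explicit $\langle k_{\max}\rangle^{-1}$. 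After this, \eqref{ne02} follows immediately from the algebra property of $H^s$ ($s>1/2$), with no need for \eqref{nl41}, \eqref{nl42}, or \eqref{cnl1}--\eqref{cnl3}. Only $(N,j)=(3,4)$ genuinely requires special treatment, via \eqref{cnl4}, which you also do. So what you flag as the ``main technical obstacle'' (the $J_2$ case) is in fact handled by the paper in one line, and the variants of \eqref{nl41}--\eqref{nl42} you propose to prove are not needed.
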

\begin{proof}
First, we prove \eqref{ne01}. It suffices to show 
\begin{equation} \label{ne02}
\Big\| \sum_{k=k_{1, \dots, N}} |L_{j, f}^{(N)} \chi_{>L}^{(N)}| \, \prod_{l=1}^N |\ha{v}_l(t, k_l)| \Big\|_{L_T^{\infty} l_s^2} \lesssim L^{-1} \prod_{l=1}^N \| v_l \|_{L_T^{\infty} H^s}. 
\end{equation} 
By Lemma~\ref{lem_pwb1}, it follows that 
\begin{equation*}  
| L_{j, f}^{(N)} \chi_{>L}^{(N)} | \le \langle k_{\max} \rangle^{-1} \chi_{>L}^{(N)} \lesssim L^{-1}
\end{equation*}
for $(N, j) \in J_1 \cup J_2 \cup J_3 \cup J_4 \setminus \{  (3,4) \}$. 
By \eqref{cnl4} and \eqref{pwb03}, we have 
\begin{equation*}
 \Big\| \sum_{k=k_{1,2,3}} |L_{4, f}^{(3)} \chi_{>L}^{(3)} | \, \prod_{l=1}^3 |\ha{v}_l(t, k_l)| \Big\|_{L_T^{\infty} l_s^2} \lesssim L^{-3} \prod_{l=1}^3 \| v_l \|_{L_T^{\infty} H^s}.  
 \end{equation*}
Thus, we obtain \eqref{ne02}. 
Next, we prove (\ref{ne03}). A direct computation yields that  
\begin{align*}
& [ \La_{f}^{(N)} (\ti{L}_{j, f}^{(N)} \chi_{>L}^{(N)}, \ha{v}_1)- \La_{g}^{(N)} (\ti{L}_{j, g}^{(N)} \chi_{>L}^{(N)}, \ha{v}_1)] (t,k) \\
& = \La_{f}^{(N)} ( ( \ti{L}_{j, f}^{(N)}-\ti{L}_{j,g}^{(N)} ) \chi_{>L}^{(N)}, \ha{v}_1) (t,k)
+ [ \La_f^{(N)} (\ti{L}_{j,g}^{(N)} \chi_{>L}^{(N)}, \ha{v}_1) - \La_{g}^{(N)} (\ti{L}_{j, g}^{(N)} \chi_{>L}^{(N)}, \ha{v}_1)] (t,k) \\
& =: J_{1,1}^{(N)}(\ha{v}_1) (t,k) + J_{1,2}^{(N)} (\ha{v}_1) (t,k).
\end{align*}
By Lemma~\ref{lem_pwb1} and \eqref{pwb21}, it follows that 
$| ( \ti{L}_{j, f}^{(N)} - \ti{L}_{j,g}^{(N)}) \chi_{>L}^{(N)} | \lesssim |\ga| |E_1(f)-E_1(g)|$, 
which leads that 
\begin{align*}
\| J_{1,1}^{(N)} (\ha{v}_1) \|_{L_T^{\infty} l_s^2} \lesssim |\ga| |E_1(f) - E_1(g)| \| v_1 \|_{L_T^\infty H^s}^N \le C_*.
\end{align*}
By \eqref{ne01} and Lemma~\ref{lem_go}, we have $\| J_{1,2}^{(N)} (\ha{v}_1)  \|_{L_T^{\infty} l_s^2} \le C_*$.
Therefore, we obtain (\ref{ne03}). 
\end{proof}

Next, we give multilinear estimates for all multipliers $M_{j, \vp}^{(N)}$ defined in 
Proposition~\ref{prop_NF2}. Now we put 
\begin{align*}
K_1=& \{(5,4), (5,12), (5,14), (5,15), (5, 16), (5, 17), (5,19), (5,20), \\ 
& \hspace{0.3cm} (7,6), (7,7), (7,8), (7,9), (7,11), (7,12) \}, \\
K_2=& \{ (5,2), (5,3), (5,5), (5,18), (5,22), (7,10), (7,14) \}, \\
K_3=& \{ (3,1), (5,10), (5,21), (5,23), (7,13), (7,15)  \}. \\
K_4=& \{ (5,7), (7,3), (7,4), (9,3) \}, \hspace{0.5cm} K_5=\{ (5,8), (7,5) \}, \\
K_6=&\{ (5,6), (7,2), (9,2) \}, \hspace{0.5cm} K_7=\{ (7,1), (9,1), (11,1)   \}.
\end{align*}
\begin{lem} \label{lem_mle}
Let $s \ge 3/2$, $f, g \in L^2(\T)$ and $M_{j, \vp}^{(N)}$ with $(N, j) \in K_1 \cup K_2 \cup K_3$ as be in 
Proposition~\ref{prop_NF2}. Then, for $\{ v_l \}_{l=1}^N  \subset C([-T, T]: H^s(\T))$ and $L \gg \max\{1, |\ga| E_1(f), |\ga| E_1(g) \}$, 
we have  
\begin{align}
& \Big\| \sum_{k=k_{1, \cdots, N}} |\ti{M}_{j, f}^{(N)}| \, \prod_{l=1}^N |\ha{v}_l(t, k_l)| \Big\|_{L_T^{\infty} l_s^2 } 
\lesssim L^2 \prod_{l=1}^N \| v_l \|_{L_T^{\infty} H^s}, \label{mle1} \\
& \big\| \La_f^{(N)} (\ti{M}_{j, f}^{(N)}, \ha{v}_1) - \La_g^{(N)} (\ti{M}_{j, g}^{(N)}, \ha{v}_1) \big\|_{L_T^{\infty} l_s^2} \le C_{*} \label{mle2}
\end{align}
where $C_{*}=C_{*}(v_1, s, |E_1(f)-E_1(g) |, |\ga|, L, T) \ge 0$, $C_{*} \to 0$ when $|E_1(f) -E_1(g)| \to 0$ 
and $C_{*} =0$ when $E_1(f)=E_1(g)$. 
Moreover, it follows that 
\begin{align}
& \Big\| \sum_{k=k_{1, 2,3,4,5}} |\ti{M}_{1, f}^{5)} + \ti{M}_{9, f}^{(5)} | \, \prod_{l=1}^5 |\ha{v}_l(t, k_l)| \Big\|_{L_T^{\infty} l_s^2 } 
\lesssim \prod_{l=1}^5 \|v_l \|_{L_T^{\infty} H^s}, \label{mle3} \\
& \Big\| \sum_{k=k_{1, 2,3,4,5}} |\ti{M}_{11, f}^{(5)} +\ti{M}_{13, f}^{(5)} | \, \prod_{l=1}^5 |\ha{v}_l(t, k_l)| \Big\|_{L_T^{\infty} l_s^2 } 
\lesssim \prod_{l=1}^5 \| v_l \|_{L_T^{\infty} H^s} \label{mle4} 
\end{align}
and
\begin{align}
& \big\| \La_f^{(5)} (\ti{M}_{1,f}^{(5)}+ \ti{M}_{9, f}^{(5)}, \ha{v}_1)- \La_g^{(5)} (\ti{M}_{1,g}^{(5)}+ \ti{M}_{9, g}^{(5)}, \ha{v}_1 ) 
\big\|_{L_T^{\infty} l_s^2} \le C_{*}, \label{mle5} \\
& \big\| \La_f^{(5)} (\ti{M}_{11,f}^{(5)}+ \ti{M}_{13, g}^{(5)} , \ha{v}_1)- \La_g^{(5)} (\ti{M}_{11,g}^{(5)}+ \ti{M}_{13, g}^{(5)}, \ha{v}_1 ) 
\big\|_{L_T^{\infty} l_s^2} \le C_{*} \label{mle6} 
\end{align}
where a constant $C_{*}$ is defined above. 
\end{lem}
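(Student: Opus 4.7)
The plan is to prove the multilinear bounds \eqref{mle1}, \eqref{mle3}, \eqref{mle4} by combining the pointwise upper bounds on each $M_{j,f}^{(N)}$ established in Lemma~\ref{lem_pwb2} and Propositions~\ref{prop_res1}--\ref{prop_res3} with the Sobolev-type multilinear inequalities of Section~5. The difference estimates \eqref{mle2}, \eqref{mle5}, \eqref{mle6} will then be deduced by the standard splitting
\EQQ{
& \La_f^{(N)}(\ti M_{j,f}^{(N)},\ha v_1) - \La_g^{(N)}(\ti M_{j,g}^{(N)},\ha v_1) = \La_f^{(N)}(\ti M_{j,f}^{(N)}-\ti M_{j,g}^{(N)},\ha v_1) \\
& \hspace{2.5cm} + \bigl[\La_f^{(N)}(\ti M_{j,g}^{(N)},\ha v_1)-\La_g^{(N)}(\ti M_{j,g}^{(N)},\ha v_1)\bigr],
}
together with the pointwise control $|\ti M_{j,f}^{(N)}-\ti M_{j,g}^{(N)}|\lec |\ga||E_1(f)-E_1(g)||\ti M_{j,f}^{(N)}|$ from Remark~\ref{rem_pwb3} and Lemma~\ref{lem_go}.

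For \eqref{mle1} we split the analysis according to the class $K_i$. In $K_1$, the multipliers $(5,19),(5,20),(7,11),(7,12)$ satisfy $|\ti M_{j,f}^{(N)}|\lec L^2$ by \eqref{pwb14}, and Lemma~\ref{lem_nl2} gives the bound directly. The case $(7,6)$ is already $|\ti M_{6,f}^{(7)}|\lec 1$ by Proposition~\ref{prop_res2}. The remaining elements $(5,4), (5,12), (5,14)$--$(5,17), (7,7)$--$(7,9)$ are handled by \eqref{pwb11}--\eqref{pwb13}: the dangerous term $\max\{|k_1k_2|,|k_3k_4|\}/\langle k_{1,2}\rangle$ fits hypothesis \eqref{sb11} of Lemma~\ref{Le7} with $i=2$, while the $n_1^{\alpha}n_2^{\alpha}$ terms ($\alpha<1$) fit with $i=2$ or $i=3$. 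The class $K_2$ is treated identically via Lemma~\ref{lem_pwb2}(III) and Lemma~\ref{Le7}. For $K_3$, the case $(3,1)$ is exactly Lemma~\ref{lem_stres}, $(5,10)$ follows from \eqref{pwb12} combined with Lemma~\ref{Le7}, and the extended multipliers $(5,21),(5,23),(7,13),(7,15)$ are first decomposed via Lemma~\ref{Le6} with the relevant symmetric factors, after which each sub-product is bounded by Lemma~\ref{lem_pwb1} and Lemmas~\ref{lem_nl3}, \ref{lem_nes01}; the crucial $L^2$ factor enters through the two copies of $\chi_{>L}^{(3)}$ appearing inside $\ti L_{j,\vp}^{(3)}\chi_{>L}^{(3)}$.

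The estimates \eqref{mle3} and \eqref{mle4} are the structurally non-trivial bounds, and exploit the cancellations in Propositions~\ref{prop_res1}, \ref{prop_res3}. In $\supp \chi_{H1}^{(5)}\chi_{R1}^{(5)}(1-\chi_{R2}^{(5)})$ we have $k_{1,2,3,4}=0$, hence $|k_{1,\ldots,5}|=|k_5|$, so Proposition~\ref{prop_res1} yields
\EQQ{
\langle k_{1,\ldots,5}\rangle^{s}|\ti M_{1,f}^{(5)}+\ti M_{9,f}^{(5)}| \lec \Bigl(\frac{\max\{|k_1k_2|,|k_3k_4|\}}{\langle k_{1,2}\rangle}+\max_{1\le j\le 4}|k_j|\Bigr)\langle k_5\rangle^{s},
}
and the analogous bound for $\ti M_{11}^{(5)}+\ti M_{13}^{(5)}$ carries $\max_j |k_j|^2/\langle k_{1,2}\rangle$ in place of the first term. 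Both right-hand sides satisfy hypothesis \eqref{sb11} of Lemma~\ref{Le7} with $i=2$, giving $\prod\|v_l\|_{L_T^\infty H^s}$ \emph{without} the $L^2$ factor. For the difference estimates, when $(N,j)\in\{(5,1),(5,9),(5,11),(5,13),(7,6)\}$ the multiplier is independent of $f$ (Remark~\ref{rem_pwb3}), so the first term in the splitting vanishes and the residual $\La_f-\La_g$ term is handled by Lemma~\ref{lem_go}; for every other $(N,j)$ the first term is controlled by the just-proved absolute estimate multiplied by the factor $|\ga||E_1(f)-E_1(g)|$ from Remark~\ref{rem_pwb3}, while the second term is again Lemma~\ref{lem_go}. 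The main obstacle is the bookkeeping in $K_3$: after Lemma~\ref{Le6} is invoked on $(5,21),(5,23),(7,13),(7,15)$, one has to check region by region that each piece either falls under \eqref{nl41} or \eqref{nl42} of Lemma~\ref{lem_nl3} or under the cases of Lemma~\ref{lem_nes01}, and that the two $\chi_{>L}$ factors always co-locate on factors with derivative-order $-1$ so that the prefactor $L^{-2}\cdot L^2 = L^0$ times the stated $L^2$ is indeed sharp.
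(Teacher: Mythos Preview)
Your overall architecture is right: pointwise bounds from Lemma~\ref{lem_pwb2} and Propositions~\ref{prop_res1}--\ref{prop_res3} plus Sobolev-type multilinear estimates give \eqref{mle1}, \eqref{mle3}, \eqref{mle4}, and the splitting together with Remark~\ref{rem_pwb3} and Lemma~\ref{lem_go} gives the difference bounds. However, two of your implementation steps are off.

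First, your treatment of $K_3$ via Lemma~\ref{Le6} is misguided. Lemma~\ref{Le6} runs in the \emph{opposite} direction: it takes a symmetrized product $[\cdots[3\chi_{H1}^{(3)}]_{sym}^{(3)}]_{ext1}[\cdots]_{ext2}$ and breaks it into $\chi_{NR(i,j)}$ pieces; it is used in Section~3 to produce the multipliers $M_{21,\vp}^{(5)}$, $M_{23,\vp}^{(5)}$, $M_{13,\vp}^{(7)}$, $M_{15,\vp}^{(7)}$, not to estimate them. These multipliers already carry a single $\chi_{NR(i,j)}^{(N)}$ factor, and the paper estimates them directly: for $M_{21,\vp}^{(5)}$ one uses Lemma~\ref{Le1} on the $[Q^{(3)}/\Phi_\vp^{(3)}]_{ext1}^{(5)}$ block together with the frequency ordering forced by $\chi_{NR(1,2)}^{(5)}$ to get a pointwise bound of the form $\langle k_1\rangle^{s-1}\langle k_{2,3,4,5}\rangle^{-1}\langle k_{3,4,5}\rangle\langle k_5\rangle^2$, then case-splits and feeds into Lemma~\ref{Le7}. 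For $M_{13,\vp}^{(7)}$, $M_{15,\vp}^{(7)}$ one first undoes the inner symmetrization using that $Q_2^{(5)}/\Phi_f^{(5)}$ is symmetric in $(k_1,\dots,k_4)$ and that $\chi_{NR(j,2)}^{(7)}$ pins $|k_1|$ as the largest low frequency, then invokes \eqref{pwes21}. Your proposed route through Lemma~\ref{Le6}, Lemma~\ref{lem_pwb1}, and Lemma~\ref{lem_nes01} does not connect to these multipliers.

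Second, your reading of the $L^2$ factor is inverted. The factor does \emph{not} come from ``two copies of $\chi_{>L}^{(3)}$'': a $\chi_{>L}$ cutoff contributes negative powers of $L$ (cf.\ Lemma~\ref{lem_nes0}), not positive ones. The $L^2$ in \eqref{mle1} is simply the worst case, attained only for $(5,19),(5,20),(7,11),(7,12)$ where \eqref{pwb14} gives $|M_{j,f}^{(N)}|\lesssim L^2$; every other case in $K_1\cup K_2\cup K_3$ is in fact $O(1)$. Relatedly, your claim that the $\langle k_{1,2}\rangle^{-1}$-type terms in \eqref{pwb11}, \eqref{pwb12}, and in Propositions~\ref{prop_res1}, \ref{prop_res3} ``fit hypothesis \eqref{sb11} of Lemma~\ref{Le7}'' is not correct: \eqref{sb11} allows only pure monomials $\langle k_1\rangle^{s-1/2+1/2i}\prod_{l=2}^i\langle k_l\rangle^{1+1/2i}$, with no $\langle k_{i,j}\rangle^{-1}$ weight. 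Those terms are instead handled by a direct Young/H\"older argument (placing $\langle\cdot\rangle^{-1}(\langle\cdot\rangle|\ha v_i|*\langle\cdot\rangle|\ha v_{i+1}|)$ in $l^1$), or equivalently by the mechanism behind \eqref{nl42}; this is what the paper does for $M_{10,f}^{(5)}$ and for \eqref{mle17}--\eqref{mle18}.
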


\begin{proof}
Firstly, we show \eqref{mle1}. 
By Proposition~\ref{prop_res2}, we immediately have \eqref{mle1} for $(N, j)=(7,6)$. 
Thus, we only need to prove 
\begin{equation} \label{mle7}
\Big\| \sum_{k=k_{1, \dots, N}} |M_{j, f}^{(N)}| \, \prod_{l=1}^N |\ha{v}_l (t, k_l) | \Big\|_{L_T^{\infty} l_s^2 } 
\lesssim L^2 \prod_{l=1}^N \| v_l \|_{L_T^{\infty} H^s}
\end{equation}
for $(N, j) \in K_1 \cup K_2 \cup K_3 \setminus \{ (7,6) \}$. \\
(I) By \eqref{pwb11}, \eqref{pwb13} and \eqref{pwb14}, we have \eqref{mle7} for $(N,j) \in K_1 \setminus \{ (7,6) \}$ and $s > 4/3$. \\
(II) By the definition of multipliers and Lemma~\ref{Le7}, we have \eqref{mle7} for $(N,j) \in \{(5,2) , (5,3), (5,5) \}$. 
Combining \eqref{pwb41}--\eqref{pwb44} and Lemma~\ref{Le7}, we get \eqref{mle7} for $(N, j) \in \{ (5,18), (5,22), (7,10), (7,14) \}$. 
Therefore, we obtain \eqref{mle7} for $(N, j) \in K_2$. \\
(III) Next, we prove \eqref{mle7} for $(N, j) \in K_3$. 
Put $n_1= \max_{1 \le j \le N-1} \{ |k_j| \} $ and $n_2=\text{sec}_{1 \le j \le N-1} \{ |k_j| \}$. \\
(IIIa) Estimate of $M_{1,f}^{(3)}$:  By Lemma~\ref{lem_stres}, 
we have \eqref{mle7} with $(N,j)=(3,1)$. \\
(IIIb) Estimate of $M_{10, f}^{(5)}$: By symmetry, we may assume that $|k_1| \le |k_2|$ and $|k_3| \le |k_4|$ hold. 
Then, we need to deal with the following cases. 

First, we suppose that $n_1=|k_4|$ and $n_2=|k_2|$ hold. Then, by \eqref{pwb12}, it follows that 
\begin{equation*}
\langle k_{1,2,3,4,5} \rangle^{s} |M_{10, f}^{(5)}| \lesssim 
\langle k_{1,2} \rangle^{-1/2} \langle k_2 \rangle^{5/4} \langle k_{3,4} \rangle^{-1/2} \langle k_4 \rangle^{5/4} \langle k_5 \rangle^s.
\end{equation*}
Thus, by the Young inequality and the H\"{o}lder inequality, the left hand side of \eqref{mle7} is bounded by 
\begin{align*}
& \| \langle \cdot \rangle^{-1/2} (|\ha{v}_1|* \langle \cdot \rangle^{5/4} |\ha{v}_2| ) \|_{l^1} 
\|  \langle \cdot \rangle^{-1/2} (|\ha{v}_3| * \langle \cdot \rangle^{5/4} |\ha{v}_4|) \|_{l^1} \| \langle \cdot  \rangle^s |\ha{v}_5| \|_{l^2} \\
& \lesssim \| v_1 \|_{H^{1/2+}} \| v_2 \|_{H^{5/4+}} \| v_3 \|_{H^{1/2+}} \| v_4 \|_{H^{5/4+}} \| v_5 \|_{H^s},
\end{align*}
which implies that \eqref{mle7} holds for $s >5/4$. 
In a same manner as above, we have \eqref{mle7} when $n_1 =|k_2|$ and $n_2=|k_4|$. 

Next, we suppose that $n_1=|k_4|$ and $n_2=|k_3|$ hold. 
Then, by \eqref{pwb12}, we have 
\begin{align*}
\langle k_{1,2,3,4,5} \rangle^s |M_{10, f}^{(5)}| \lesssim 
\langle k_{3,4}  \rangle^{-1} \langle k_3 \rangle^{5/4} \langle k_4 \rangle^{5/4} \langle k_5 \rangle^s.
\end{align*}
Thus, by the Young inequality and the H\"{o}lder inequality, the left hand side of \eqref{mle7} is bounded by 
\begin{align*}
& \| \ha{v}_1| \|_{l^1} \| \ha{v}_2 \|_{l^1}  
\|  \langle \cdot \rangle^{-1} ( \langle \cdot \rangle^{5/4} |\ha{v}_3| * \langle \cdot \rangle^{5/4} |\ha{v}_4|) \|_{l^1} 
\| \langle \cdot  \rangle^s |\ha{v}_5| \|_{l^2} \\
& \lesssim \| v_1 \|_{H^{1/2+}} \| v_2 \|_{H^{1/2+}} \| v_3 \|_{H^{5/4+}} \| v_4 \|_{H^{5/4+}} \| v_5 \|_{H^s},
\end{align*}
which implies that \eqref{mle7} holds for $s>5/4$. 
In a similar manner as above, we have \eqref{mle7} when $n_1=|k_2|$ and $n_2=|k_1|$. \\
(IIIc) Estimate of $M_{21, f}^{(5)}$: Since $(k_1, k_2, k_3, k_4, k_5) \in \supp \chi_{NR(1,2)}^{(5)}$ leads that $|k_{3,4,5}| \sim |k_5|$ and  
$6 \max_{j=2,3,4,5}\{ |k_j| \} <|k_1|=k_{\max}$, by Lemma~\ref{Le1}, we have 
\begin{equation} \label{mle12}
\langle k_{1,2,3,4,5} \rangle^s |M_{21,f}^{(5)}| 
\lesssim \langle k_1 \rangle^{s-1} \langle k_{2,3,4,5} \rangle^{-1} \langle k_{3,4,5} \rangle \langle k_5 \rangle^2  \chi_{NR(1,2)}^{(5)}.  
\end{equation}
First, we suppose that $16|k_2| \le  |k_{3,4,5}|$ holds. 
Then, by \eqref{mle12}, we have 
\begin{equation*}
\langle k_{1,2,3,4,5} \rangle^s | M_{21, f}^{(5)} | \lesssim \langle k_1 \rangle^{s-1} \langle k_5 \rangle^2 \chi_{NR(1,2)}^{(7)} 
\lesssim \langle k_1 \rangle^{s-1/4} \langle k_5 \rangle^{5/4}.
\end{equation*}
Thus, by  Lemma~\ref{Le7} with $i=2$, we get \eqref{mle7}. \\ 
Next, we suppose that $ |k_{3,4,5}| \le 16 |k_2| $ holds. 
Then, by \eqref{mle12}, we have 
\begin{equation*}
\langle k_{1,2,3,4,5} \rangle^s |M_{21,f}^{(5)}| \lesssim 
\langle k_{1} \rangle^{s-1} \langle k_2 \rangle \langle k_5 \rangle^2 \chi_{NR(1,2)}^{(5)} 
\lesssim \langle k_1 \rangle^{s-1/3} \langle k_2 \rangle^{7/6} \langle k_5 \rangle^{7/6}.
\end{equation*}
Thus, by Lemma~\ref{Le7} with $i=3$, we get \eqref{mle1}. \\
(IIId) Estimate of $M_{23,f}^{(5)}$: Since $(k_1, k_2, k_3, k_4,  k_5) \in \supp \chi_{NR(2,2)}^{(5)}$ leads that 
$|k_4| \sim |k_5|$ and $8 \max\{ |k_2|, |k_{3,4,5}| \} < |k_1|$, by Lemma~\ref{Le1}, we have 
\begin{equation*} 
\langle k_{1,2,3,4,5} \rangle^s  |M_{23,f}^{(5)}| 
\lesssim \langle k_1 \rangle^{s-1} \langle k_{2,3,4,5} \rangle^{-1} \langle k_{3,4,5} \rangle \langle k_5 \rangle^2 \chi_{NR(2,2)}^{(5)}
\lesssim \langle k_1 \rangle^s \langle k_{2,3,4,5}  \rangle^{-1} \langle k_4 \rangle \langle k_5 \rangle.
\end{equation*}
Thus, by the H\"{o}lder and Young inequalities, we have \eqref{mle7} for $s>1$. \\
(IIIe) Estimate of $M_{13,f}^{(7)}$ and $M_{15, f}^{(7)}$: Since $Q_2^{(5)} \chi_{>L}^{(5)} / \Phi_f^{(5)} $ is symmetric with \\
$(k_1, k_2, k_3, k_4)$ and 
$\chi_{NR(1,2)}^{(7)}= \chi_{H1}^{(5)} (k_{5,6,7}, k_2, k_3, k_4, k_1) \chi_{NR(j, 2)}^{(7)}$ for $j=1,2$, it follows that 
\begin{align*}
& \Big[ \Big[ 5 \, \frac{Q_2^{(5)}}{ \Phi_f^{(5)}} \chi_{>L}^{(5)} \chi_{H1}^{(5)} \Big]_{sym}^{(5)}  \Big]_{ext1}^{(7)} \chi_{NR(j ,2)}^{(7)} \notag \\
& = \Big[ \Big[ 5 \, \frac{Q_2^{(5)}}{ \Phi_f^{(5)}} \chi_{>L}^{(5)} \chi_{H1}^{(5)} \Big]_{sym}^{(5)}  \Big]_{ext1}^{(7)} 
\chi_{H1}^{(5)} (k_{5,6,7}, k_2, k_3, k_4, k_1) \chi_{NR(j,2)}^{(7)} \notag \\
& =
\Big( \frac{Q_2^{(5)}}{ \Phi_f^{(5)}} \chi_{>L}^{(5)} \chi_{H1}^{(5)} \Big) (k_{5,6,7}, k_2, k_3, k_4, k_1) \chi_{NR(j,2)}^{(7)}, 
\end{align*}
which leads that 
\begin{align*}
& M_{13,f}^{(7)}=- 12\Big( \frac{Q_2^{(5)}}{ \Phi_f^{(5)} } \chi_{>L}^{(5)} \chi_{H1}^{(5)} \Big)(k_{5,6,7}, k_2, k_3, k_4, k_1) 
(Q^{(3)} \chi_{NR1}^{(3)}) (k_5, k_6, k_7) \chi_{NR(1,2)}^{(7)}, \\
& M_{15,f}^{(7)}=- 12\Big( \frac{Q_2^{(5)}}{ \Phi_f^{(5)} } \chi_{>L}^{(5)} \chi_{H1}^{(5)} \Big)(k_{5,6,7}, k_2, k_3, k_4, k_1) 
(Q^{(3)} \chi_{NR1}^{(3)}) (k_5, k_6, k_7) \chi_{NR(2,2)}^{(7)}. 
\end{align*}
By Lemma~\ref{Le2}, the definition of $Q_2^{(5)}$ and Lemma~\ref{lem_key}, 
\begin{equation} \label{mle15}
\Big| \Big( \frac{Q_2^{(5)}}{ \Phi_{f}^{(5)} } \chi_{>L}^{(5)} \chi_{H1}^{(5)} \Big) (k_{5,6,7}, k_2, k_3, k_4, k_1) \chi_{NR(j ,2)}^{(7)}  \Big| 
\lesssim \langle k_1 \rangle^{-2} \chi_{NR(j,2)}^{(7)}
\end{equation}
for $j=1,2$. 
Since $(k_1, \dots, k_7) \in \supp \chi_{NR(1,2)}^{(7)}$ implies that $|k_{5,6,7}| \sim |k_7|$ and $8^3 |k_{5,6,7}| <|k_1|= k_{\max} $, 
by \eqref{mle15}, we have 
\begin{equation*}
\langle k_{1,2,3,4,5,6,7}  \rangle^{s} |M_{13, f}^{(7)}| \lesssim \langle k_1  \rangle^{s-2} \langle k_7 \rangle^{3} \chi_{NR(1,2)}^{(7)} 
\lesssim \langle k_1 \rangle^{s-1/4} \langle k_7 \rangle^{5/4}.  
\end{equation*}
Thus, by Lemma~\ref{Le7} with $i=2$, we get \eqref{mle1}. 
Moreover, for $(k_1, \dots, k_7) \in \supp \chi_{NR(2,2)}^{(7)}$ it follows that 
$8^3 |k_{5,6,7}| <|k_1|\sim |k_{1,2,3,4,5,6, 7}|$ and $|k_6| \sim |k_7|$. Thus, by \eqref{mle15}, 
\begin{equation*}
\langle k_{1,2,3,4,5,6,7} \rangle^{s} |M_{15,f}^{(5)}|
 \lesssim \langle k_1  \rangle^{s-2} \langle  k_{5,6,7} \rangle \langle k_7 \rangle^{2} \chi_{NR(2,2)}^{(7)} 
 \lesssim \langle k_1 \rangle^{s-1} \langle k_6 \rangle \langle k_7 \rangle,
\end{equation*}
which implies that \eqref{mle7} holds. 

Therefore, we obtain \eqref{mle7} for $(N,j) \in K_1 \cup K_2 \cup K_3 \setminus \{ (7,6) \}$. 

Secondly, we prove \eqref{mle2}. By a direct computation, it follows that  
\begin{align*}
& [\La_f^{(N)} (\ti{M}_{j, f}^{(N)}, \ha{v}_1)- \La_g^{(N)} (\ti{M}_{j, g}^{(N)}, \ha{v}_1)](t,k) \\
& = \La_f^{(N)} (\ti{M}_{j, f}^{(N)}- \ti{M}_{j,g}^{(N)}, \ha{v}_1 ) (t, k)
+ [ \La_f^{(N)} (\ti{M}_{j, g}^{(N)}, \ha{v}_1)- \La_g^{(N)} (\ti{M}_{j, g}^{(N)}, \ha{v}_1) ](t,k) \\
& =: J_{2,1}^{(N)} (\ha{v}_1)(t, k) + J_{2,2}^{(N)} (\ha{v}_1)(t,k). 
\end{align*}
By \eqref{pwb22}, \eqref{mle7} and $\ti{M}_{6, f}^{(7)}-\ti{M}_{6,g}^{(7)}=0$, we have
\begin{equation*}
\| J_{2,1}^{(N)} (\ha{v}_1) \|_{L_T^{\infty} l_s^2} \lesssim |\ga| |E_1(f) - E_1(g)| \, \| v_1 \|_{L_T^{\infty} H^s}^N \le C_{*}. 
\end{equation*}
By \eqref{mle1} and Lemma~\ref{lem_go}, we have 
$\| J_{2,2}^{(N)} (\ha{v}_1) \|_{L_T^{\infty} l_s^2} \le C_{*}$. 
Thus, we obtain \eqref{mle2}. 

Finally, we prove \eqref{mle3} and \eqref{mle4}. It suffices to show 
\begin{align}
& \Big\| \sum_{k=k_{1,2,3,4,5}} |M_{1, f}^{(5)} + M_{9, f}^{(5)}| \, \prod_{l=1}^5 |\ha{v}_l(t, k_l)| \Big\|_{L_T^{\infty} l_s^2} 
\lesssim \prod_{l=1}^5 \| v_l \|_{L_T^{\infty} H^s},  \label{mle17} \\
& \Big\| \sum_{k=k_{1,2,3,4,5}} |M_{11, f}^{(5)} + M_{13, f}^{(5)}| \, \prod_{l=1}^5 |\ha{v}_l(t, k_l)| \Big\|_{L_T^{\infty} l_s^2} 
\lesssim \prod_{l=1}^5 \| v_l \|_{L_T^{\infty} H^s}. \label{mle18}
\end{align}
We prove \eqref{mle17}. 
Since $(k_1,  k_2, k_3, k_4, k_5) \in \supp \chi_{H1}^{(5)} \chi_{R1}^{(5)} (1-\chi_{R2}^{(5)})$ leads that 
$|k_{1,2}|=|k_{3,4}| \ge 1$, $n_1 \sim n_2$ and $|k_5|=k_{\max}$, by Proposition~\ref{prop_res1}, we have   
\begin{equation*}
\langle k_{1,2,3,4,5} \rangle^{s} |M_{1,f}^{(5)}+ M_{9, f}^{(5)}| \lesssim 
\Big( \max \Big\{ \frac{|k_1k_2|}{ \langle k_{1,2} \rangle }, \frac{ |k_3 k_4| }{ \langle k_{3,4} \rangle } \Big\} 
+n_1^{1/2} n_2^{1/2}   \Big) \langle k_5 \rangle^s. 
\end{equation*}
Thus, by the Young inequality and the H\"{o}lder inequality, we have \eqref{mle17} for $s>1$. 
Next, we show \eqref{mle18}. 
We notice that $(k_1, k_2, k_3, k_4, k_5) \in \supp \chi_{H1}^{(5)} \chi_{R1}^{(5)} (1- \chi_{R2}^{(5)})$ implies that 
$|k_{1,2} | = |k_{3,4}| \ge 1$, $n_1 \sim n_2$ and  $|k_5|=k_{\max}$. 
Thus, by Proposition~\ref{prop_res3}, we have 
\begin{equation*}
\langle k_{1,2,3,4,5}  \rangle^s | M_{11,f}^{(5)}+ M_{13,f}^{(5)} | \lesssim \min\{ \langle k_{1,2}  \rangle^{-1}, \langle k_{3,4} \rangle^{-1} \} 
n_1 n_2 \langle k_5 \rangle^s.
\end{equation*} 
In a similar manner as the case (IIIb) in Lemma~\ref{lem_mle}, we get \eqref{mle18} for $s >1$. 

In a similar way to the proof of \eqref{mle2}, we obtain \eqref{mle5} and \eqref{mle6}. 
\end{proof}

\begin{lem} \label{lem_nl10}
Let $s \ge 3/2$, $f , g \in L^2(\T)$ and $M_{j, f}^{(N)}$ with 
$(N,j) \in K_4 \cup K_5 \cup K_6 \cup K_7$ be as in Proposition~\ref{prop_NF2}. 
Then, for $\{ v_l \}_{l=1}^N \subset C([-T, T]: H^s(\T)) $ and $L \gg \max\{ 1, |\ga| E_1(f), |\ga| E_1(g) \}$, we have
\begin{align}
& \big\| \La_{f}^{(N)} (\ti{M}_{j, f}^{(N)}, \ha{v}_1, \dots, \ha{v}_N) \big\|_{L_T^{\infty} l_s^2} 
\lesssim \prod_{l=1}^N \| v_l \|_{L_T^{\infty} H^s}, \label{nes11} \\
& \big\|  \La_{f}^{(N)} (\ti{M}_{j, f}^{(N)}, \ha{v}_1) - \La_g^{(N)} (\ti{M}_{j, g}^{(N)} , \ha{v}_1) \big\|_{L_T^{\infty} l_s^2} 
\le C_* \label{nes13} 
\end{align}
where $C_*=C_*(v_1, s ,|E_1(f)-E_1(g)|, |\ga|, T) \ge 0$, $C_*\to 0$ when $|E_1(f)-E_1(g)|\to 0$ and 
$C_{*} =0$ when $E_1(f)=E_1(g)$. 
\end{lem}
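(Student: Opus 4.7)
The plan is to exploit the common product structure of every multiplier in $K_4\cup K_5\cup K_6\cup K_7$. Inspection of Proposition~\ref{prop_NF2} shows that each such $M_{j,f}^{(N)}$ factors as
\begin{equation*}
M_{j,f}^{(N)} = [A_f^{(N')}]_{ext1}^{(N)} \cdot [B]_{ext2}^{(N)}
\end{equation*}
with $N'=N-2$ for $K_4\cup K_5\cup K_6$ and $N'=N-4$ for $K_7$, where $A_f^{(N')}$ is a linear combination of the multipliers $\tilde L_{i,f}^{(N')}\chi_{>L}^{(N')}$ (all controlled pointwise by Lemma~\ref{lem_pwb1}) and $B$ is one of $-Q^{(3)}\chi_{NR1}^{(3)}$, $-Q^{(3)}\chi_{NR1}^{(3)}([3\chi_{H2,2}^{(3)}]_{sym}^{(3)}+\chi_{H3}^{(3)})$, $Q^{(3)}[3\chi_{R3}^{(3)}]_{sym}^{(3)}$, or $-Q_1^{(5)}$. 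Crucially, the factor $B$ does not depend on $f$, a feature that will simplify the difference estimate.

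By the identity $\Phi_f^{(N)} = \Phi_f^{(N')}(k_1,\ldots,k_{N'-1},k_{N',\ldots,N}) + \Phi_f^{(N-N'+1)}(k_{N'},\ldots,k_N)$ already used in the proof of Proposition~\ref{prop_NF11}, combined with the permutation symmetry $|\tilde M_{j,f}^{(N)}|\le [|M_{j,f}^{(N)}|]_{sym}^{(N)}$, it suffices to control, for each $\sigma\in S_N$,
\begin{equation*}
\Lambda_f^{(N')}\bigl(A_f^{(N')},\ha v_{\sigma(1)},\ldots,\ha v_{\sigma(N'-1)},\ha w_\sigma(t)\bigr)(t,k),\qquad w_\sigma(t):=\Lambda_f^{(N-N'+1)}(B,\ha v_{\sigma(N')},\ldots,\ha v_{\sigma(N)})(t).
\end{equation*}
For the inner nonlinearity I would invoke Lemma~\ref{lem_nl2} (for $K_4,K_5$, yielding $w_\sigma\in L_T^\infty H^{s-3}$), Lemma~\ref{lem_stres} (for $K_6$, yielding the stronger conclusion $w_\sigma\in L_T^\infty H^s$ because the $\chi_{R3}^{(3)}$ support collapses the convolution), and the pointwise bound $|Q_1^{(5)}|\lesssim \langle k_{1,\ldots,5}\rangle$ combined with Lemma~\ref{lem_nl2} (for $K_7$, yielding $w_\sigma\in L_T^\infty H^{s-3}$). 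In every case the inner bound is $\prod_{l\ge N'}\|v_{\sigma(l)}\|_{L_T^\infty H^s}$.

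The outer $\Lambda_f^{(N')}$ would then be estimated by substituting the pointwise bounds of Lemma~\ref{lem_pwb1} and the multilinear Sobolev inequalities of Section~5. For $K_6$, since $w_\sigma\in H^s$, Lemma~\ref{lem_nes0} applies directly and produces a bound $\lesssim L^{-1}\prod\|v_l\|_{L_T^\infty H^s}$. For $K_4,K_5,K_7$, the pieces of $A_f^{(N')}$ supported on $\chi_{H2,1}^{(3)},\chi_{H2,2}^{(3)},\chi_{H3}^{(3)}$ (only relevant when $N'=3$) are handled by Lemma~\ref{lem_nes01} with $w_\sigma$ playing the distinguished $H^{s-3}$ role, while the remaining pieces—those with the $J_1$-type gain $\langle k_{1,\ldots,N'}\rangle^{-1}\langle k_{\max}\rangle^{-2}$ of Lemma~\ref{lem_pwb1}(II) or with $\chi_{H1}^{(N')}$ support—are treated by Lemma~\ref{lem_nl3}, through a case distinction based on which of the $N'$ arguments of $A_f^{(N')}$ attains the maximum frequency. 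In every subcase the total $l_s^2$ norm closes at $\prod_l\|v_l\|_{L_T^\infty H^s}$, which proves \eqref{nes11}.

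For the difference estimate \eqref{nes13}, I split as
\begin{equation*}
\Lambda_f^{(N)}(\tilde M_{j,f}^{(N)},\ha v_1) - \Lambda_g^{(N)}(\tilde M_{j,g}^{(N)},\ha v_1) = \Lambda_f^{(N)}\bigl(\tilde M_{j,f}^{(N)}-\tilde M_{j,g}^{(N)},\ha v_1\bigr) + \bigl[\Lambda_f^{(N)}-\Lambda_g^{(N)}\bigr]\bigl(\tilde M_{j,g}^{(N)},\ha v_1\bigr).
\end{equation*}
Since $B$ does not depend on $f$, the difference $M_{j,f}^{(N)}-M_{j,g}^{(N)}$ telescopes into contributions of the form $(L_{i,f}^{(N')}-L_{i,g}^{(N')})\chi_{>L}^{(N')}\cdot [B]_{ext2}^{(N)}$, and each is bounded by $|\ga||E_1(f)-E_1(g)|$ times the same pointwise expressions already estimated, via \eqref{pwb21} of Remark~\ref{rem_pwb3}; the same multilinear machinery then yields a $C_*$. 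The second term is handled by applying Lemma~\ref{lem_go} to the bound \eqref{nes11} just established. The main obstacle is the $K_7$ case, where the inner factor is quintic and carries only one gained derivative from $|Q_1^{(5)}|\lesssim \langle k_{1,\ldots,5}\rangle$; the outer factor $A_f^{(N')}$ (with $N'\in\{3,5,7\}$) must therefore provide at least one negative power of $\langle k_{\max}\rangle$ regardless of which of its arguments is the largest, and one must verify through a careful matching of exponents in Lemma~\ref{Le7} (or equivalently through the right choice of $i$ in Lemma~\ref{lem_nl3}) that the residual loss is absorbed for every subcase.
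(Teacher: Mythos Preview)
Your overall strategy---factor each $M_{j,f}^{(N)}$ as an outer $\tilde L$-piece composed with an inner nonlinearity $B$, bound the inner piece in an appropriate Sobolev space, then absorb it via the pointwise bounds on the outer piece, and handle the difference through \eqref{pwb21} and Lemma~\ref{lem_go}---is exactly the paper's approach, and your treatment of $K_4$ and $K_6$ is correct.

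The gap is in $K_5$ (and, as written, in $K_7$). For $(N,j)\in K_5$ the outer multiplier is $\tilde L_{1,f}^{(3)}\chi_{>L}^{(3)}$ or $\tilde L_{2,f}^{(5)}\chi_{>L}^{(5)}$, i.e.\ it lies in $J_4$, and Lemma~\ref{lem_pwb1}(I) gives only $|L_{j,f}^{(N')}\chi_{>L}^{(N')}|\lesssim\langle k_{\max}\rangle^{-1}$. A single negative power cannot absorb an inner factor placed merely in $H^{s-3}$: neither \eqref{nl41} nor \eqref{nl42} of Lemma~\ref{lem_nl3} applies with such a weak outer bound, and these $J_4$ pieces are not supported on $\chi_{H2,1}^{(3)},\chi_{H2,2}^{(3)},\chi_{H3}^{(3)}$, so Lemma~\ref{lem_nes01} is unavailable as well. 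The paper closes this by noticing that the \emph{inner} factor for $K_5$, namely $-Q^{(3)}\chi_{NR1}^{(3)}\bigl([3\chi_{H2,2}^{(3)}]_{sym}^{(3)}+\chi_{H3}^{(3)}\bigr)$, is supported where $|k_{1,2,3}|\lesssim k_{\min}$ and hence obeys the sharper pointwise bound $\langle k_{\min}\rangle\langle k_{\max}\rangle^2$; Lemma~\ref{lem_nl5} then places the inner nonlinearity in $H^{s-1}$, which the single $\langle k_{\max}\rangle^{-1}$ from the outer factor can absorb. Your grouping of $K_5$ with $K_4$ and your appeal to Lemma~\ref{lem_nl2} for the inner bound miss this and the argument does not close. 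The same mismatch appears in your $K_7$ discussion: you write that the quintic inner factor lies in $H^{s-3}$, but $|Q_1^{(5)}|\lesssim\langle k_{1,\ldots,5}\rangle$ loses only one derivative, so (as the paper records in \eqref{nest211}--\eqref{nest212}) it actually lies in $H^{s-1}$; this sharper regularity is again essential because the $K_7$ outer factors include $J_4$ pieces with only $\langle k_{\max}\rangle^{-1}$ decay.
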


\begin{proof} 
Put 
\begin{align*}
& \mathcal{N}_{1,f}^{(3)} (\ha{w}_1, \ha{w}_2, \ha{w}_3) (t,k):= 
\La_f^{(3)} (-Q^{(3)} \chi_{NR1}^{(3)}, \ha{w}_1, \ha{w}_2, \ha{w}_3 )(t, k), \\
& \mathcal{N}_{2,f}^{(3)} (\ha{w}_1, \ha{w}_2, \ha{w}_3) (t,k):= 
\La_f^{(3)} \big( (- Q^{(3)} \chi_{NR1}^{(3)}) ( [ 3\chi_{H2,2}^{(3)} ]_{sym}^{(3)} + \chi_{H3}^{(3)}) , \ha{w}_1, \ha{w}_2 , \ha{w}_3 \big)(t, k), \\
& \mathcal{N}_{3,f}^{(3)} (\ha{w}_1, \ha{w}_2, \ha{w}_3) (t,k):= 
\La_f^{(3)} \big(Q^{(3)} [3 \chi_{R3}^{(3)}]_{sym}^{(3)}   , \ha{w}_1, \ha{w}_2 , \ha{w}_3 \big)(t, k), \\
& \mathcal{N}_{f}^{(5)} (\ha{w}_1, \ha{w}_2, \ha{w}_3, \ha{w}_4, \ha{w}_5) (t,k):= 
\La_f^{(5)} (-Q_1^{(5)}, \ha{w}_1, \ha{w}_2, \ha{w}_3, \ha{w}_4, \ha{w}_5 )(t, k)
\end{align*}
and
\begin{align*}
& \mathcal{N}_{i, f}^{(3)}(\ha{w}_1) (t, k):= \mathcal{N}_{i, f}^{(3)} (\ha{w}_1, \ha{w}_1, \ha{w}_1) (t, k), \\
& \mathcal{N}_f^{(5)} (\ha{w}_1) (t,k):= \mathcal{N}_f^{(5)} (\ha{w}_1, \ha{w}_1, \ha{w}_1, \ha{w}_1, \ha{w}_1)(t,k)
\end{align*}
for $i=1,2,3$. By $|Q^{(3)} \chi_{NR1}^{(3)} | \lesssim \langle k_{\max} \rangle^3$ and Lemma~\ref{lem_nl2}, it follows that 
\begin{equation} \label{nest11}
\big\| \sum_{k=k_{1,2,3}} |Q^{(3)} \chi_{NR1}^{(3)} | \, \prod_{l=1}^3 |\ha{w}_l(t, k_l)| \big\|_{L_T^{\infty} l_{s-3}^2 } 
\lesssim \prod_{l=1}^3 \| w_l \|_{L_T^{\infty} H^s}
\end{equation}
for $\{ w_l \}_{l=1}^3 \subset C([-T, T]: H^s(\T))$, which implies 
\begin{equation} \label{nest12}
\big\| \mathcal{N}_{1,f}^{(3)} (\ha{w}_1, \ha{w}_2, \ha{w}_3)  \big\|_{L_T^{\infty} l_{s-3}^2 } 
\lesssim \prod_{l=1}^3 \| w_l \|_{L_T^{\infty} H^s}. 
\end{equation}
By \eqref{nest11} and Lemma~\ref{lem_go}, we have 
\begin{equation} \label{nest13}
\| \mathcal{N}_{1,f}^{(3)} (\ha{w}_1)- \mathcal{N}_{1,g}^{(3)} (\ha{w}_1)  \|_{L_T^{\infty} l_{s-3}^2} \le C_{*}(w_1, s, |E_1(f) -E_1(g) |, |\ga|, T).
\end{equation} 
By $ |Q^{(3)} \chi_{NR1}^{(3)} | \, ([3 \chi_{H2,2}^{(3)} ]_{sym}^{(3)} + \chi_{H3}^{(3)}) \lesssim  
\langle k_{\min}  \rangle \langle k_{\max} \rangle^2 \, ([3 \chi_{H2,2}^{(3)} ]_{sym}^{(3)} + \chi_{H3}^{(3)})$ and 
Lemma~\ref{lem_nl5}, 
\begin{equation}
\big\| \sum_{k=k_{1,2,3}} |Q^{(3)} \chi_{NR1}^{(3)} | \, ([3 \chi_{H2,2}^{(3)} ]_{sym}^{(3)} + \chi_{H3}^{(3)}) \, 
\prod_{l=1}^3 |\ha{w}_l(t, k_l)| \big\|_{L_T^{\infty} l_{s-1}^2 } 
\lesssim \prod_{l=1}^3 \| w_l \|_{L_T^{\infty} H^s} \label{nest14}
\end{equation}
and by Lemma~\ref{lem_stres}  
\begin{equation}
\big\| \sum_{k=k_{1,2,3}} \big| Q^{(3)} [3 \chi_{R3}^{(3)}]_{sym}^{(3)} \big| \,
\prod_{l=1}^3 |\ha{w}_l(t, k_l)| \big\|_{L_T^{\infty} l_{s}^2 } 
\lesssim \prod_{l=1}^3 \| w_l \|_{L_T^{\infty} H^s} \label{nest15}
\end{equation}
for $\{ w_l \}_{l=1}^{3} \subset C([-T, T]: H^s(\T))$, which leads that  
\begin{align}
& \big\| \mathcal{N}_{2,f}^{(3)} (\ha{w}_1, \ha{w}_2, \ha{w}_3)  \big\|_{L_T^{\infty} l_{s-1}^2} \lesssim \prod_{l=1}^3 \| w_l \|_{L_T^{\infty} H^s}, 
\label{nest16} \\
& \big\| \mathcal{N}_{3,f}^{(3)} (\ha{w}_1, \ha{w}_2, \ha{w}_3)  \big\|_{L_T^{\infty} l_{s}^2} \lesssim \prod_{l=1}^3 \| w_l \|_{L_T^{\infty} H^s}. 
\label{nest17}
\end{align}
By \eqref{nest14}, \eqref{nest15} and Lemma~\ref{lem_go}, we have 
\begin{align}
&\big\| \mathcal{N}_{2,f}^{(3)} (\ha{w}_1)- \mathcal{N}_{2, g}^{(3)} (\ha{w}_1) \big\|_{L_T^{\infty} l_{s-1}^2} 
\le C_{*}(w_1, s, |E_1(f)-E_1(g)|, |\ga|, T) , \label{nest18} \\
&\big\| \mathcal{N}_{3,f}^{(3)} (\ha{w}_1)- \mathcal{N}_{3, g}^{(3)} (\ha{w}_1) \big\|_{L_T^{\infty} l_{s}^2} 
\le C_{*}(w_1, s, |E_1(f)-E_1(g)|, |\ga|, T). \label{nest19}
\end{align}
Furthermore, it follows that 
\begin{align}
\big\| \sum_{k=k_{1, \dots, 5}} |Q_1^{(5)}| \, \prod_{l=1}^5 |\ha{w}_l (t, k_l)|  \big\|_{L_T^{\infty} l_{s-1}^2} 
\lesssim \prod_{l=1}^5 \| w_l \|_{L_T^{\infty} H^s}, \label{nest211} \\ 
\big\| \mathcal{N}_{f}^{(5)}  (\ha{w}_1, \ha{w}_2 , \ha{w}_3, \ha{w}_4, \ha{w}_5)  \big\|_{L_T^{\infty} l_{s-1}^2} 
\lesssim \prod_{l=1}^5 \| w_l \|_{L_T^{\infty} H^s}. \label{nest212} 
\end{align}
for $\{ w_l \}_{l=1}^5 \subset C([-T, T]: H^s(\T))$. Thus, by \eqref{nest211} and Lemma~\ref{lem_go}, we have 
\begin{equation} \label{nest20}
\big\| \mathcal{N}_f^{(5)} (\ha{w}_1)- \mathcal{N}_g^{(5)} (\ha{w}_1)  \big\|_{L_T^{\infty} l_{s-1}^2} \le C_{*}(w_1, s, |E_1(f)-E_1(g)|, |\ga|, T).
\end{equation}
Here we introduce the following notations: 
\begin{align*}
& \mathcal{I}_{i, f}^{(N, j)} (\ha{v}_1) (t,k) 
:= \La_f^{(N)} \big( \ti{L}_{j, f}^{(N)} \chi_{>L}^{(N)}, \ha{v}_1, \dots, \ha{v}_1, \mathcal{N}_{i,f}^{(3)} (\ha{v}_1)  \big) (t,k), \\
& \mathcal{K}_{f}^{(N, j)} (\ha{v}_1) (t,k) := 
\La_f^{(N)} \big( \ti{L}_{j, f}^{(N)} \chi_{>L}^{(N)}, \ha{v}_1, \dots, \ha{v}_{1}, \mathcal{N}_{f}^{(5)} (\ha{v}_1)  \big) (t,k).
\end{align*}
(I) Firstly, we prove \eqref{nes11} and \eqref{nes13} for $(N,j) \in K_4$. By the definition, it follows that 
\begin{align*}
& \La_f^{(7)}(\ti{M}_{3,f}^{(7)}, \ha{v}_1)= \sum_{j=1,7,8} 5 \, \mathcal{I}_{1,f}^{(5, j)} (\ha{v}_1), \hspace{0.3cm} 
\La_f^{(9)} (\ti{M}_{3,f}^{(7)}, \ha{v}_1)= \sum_{j=1.2} 7 \, \mathcal{I}_{1,f}^{(7, j)} (\ha{v}_1), \\
& \La_f^{(7)} (\ti{M}_{4,f}^{(7)}, \ha{v}_1)= \sum_{j=3,4,5,6} 5 \, \mathcal{I}_{1,f}^{(5,j)} (\ha{v}_1), \hspace{0.3cm} 
\La_f^{(5)}(\ti{M}_{7,f}^{(5)}, \ha{v}_1)= \sum_{j=2,3,4} 3 \, \mathcal{I}_{1,f}^{(3,j)} (\ha{v}_1). 
\end{align*}
Thus, it suffices to show that for $i=1$, $(N,j) \in J_1\cup J_2 \cup J_3$ and $\{ v_l  \}_{l=1}^{N+2} \subset C([-T, T]: H^s(\T)) $ 
\begin{align}
& \Big\| \sum_{k=k_{1, \dots, N}} |\ti{L}_{j, f}^{(N)} \chi_{>L}^{(N)} | \prod_{l=1}^{N-1} |\ha{v}_l (t, k_l) | 
| \mathcal{N}_{i,f}^{(3)} (\ha{v}_N, \ha{v}_{N+1}, \ha{v}_{N+2}) (t, k_N) |  \Big\|_{L_T^{\infty} l_s^2 } 
\lesssim \prod_{l=1}^{N+2} \| v_l \|_{L_T^{\infty} H^s}, \label{nest21} \\
& \big\| \mathcal{I}_{i, f}^{(N,j)} (\ha{v}_1)- \mathcal{I}_{i, g}^{(N, j)} (\ha{v}_1) \big\|_{L_T^{\infty} l_s^2} \le C_{*}. \label{nest22}
\end{align} 
For $(N,j) \in J_1$, by \eqref{pwb1}, it follows that
\begin{equation} \label{nest23}
| \ti{L}_{j, f}^{(N)} \chi_{>L}^{(N)} | \leq [ |L_{j, f}^{(N)} \chi_{>L}^{(N)} |  ]_{sym}^{(N)} 
\lesssim \langle k_{1, \dots, N}  \rangle^{-1} \langle k_{\max} \rangle^{-2}.
\end{equation}
By \eqref{nl41}, \eqref{nest12} and \eqref{nest23}, the left hand side of \eqref{nest21} is bounded by 
\begin{equation*}
\prod_{l=1}^{N-1} \| v_l \|_{L_T^{\infty} H^s} \, 
\big\| \mathcal{N}_{1,f}^{(3)} (\ha{v}_N, \ha{v}_{N+1}, \ha{v}_{N+2}) \big\|_{L_T^{\infty} l_{s-3}^2 } 
\lesssim \prod_{l=1}^{N+2} \| v_l \|_{L_T^{\infty} H^s}. 
\end{equation*}
Similarly, by \eqref{pwb0}, \eqref{nl42} and \eqref{nest12}, we get \eqref{nest21} for $(N, j) \in J_2$. 
Collecting \eqref{pwb01}--\eqref{pwb03}, \eqref{cnl1}--\eqref{cnl3} and \eqref{nest12}, we have \eqref{nest21} for $(N, j) \in J_3$. 
Therefore, we  obtain \eqref{nest21} for $(N, j) \in J_1 \cup J_2 \cup J_3$. 
Next, we prove \eqref{nest22}. By a direct computation, it follows that 
\begin{align*}
& \big[ \mathcal{I}_{1, f}^{(N, j)} (\ha{v}_1)- \mathcal{I}_{1, g}^{(N,j)} (\ha{v}_1) \big] (t,k) \\
&= \La_f^{(N)} \big( (\ti{L}_{j,f}^{(N)}- \ti{L}_{j,g}^{(N)}) \chi_{>L}^{(N)}, \ha{v}_1, \dots, \ha{v}_1, \mathcal{N}_{1,f}^{(3)} (\ha{v}_1) \big) 
(t,k) \notag \\
& + \big[ \La_f^{(N)} \big(\ti{L}_{j, g}^{(N)} \chi_{>L}^{(N)}, \ha{v}_1, \dots, \ha{v}_1, \mathcal{N}_{1,f}^{(3)} (\ha{v}_1) \big) 
- \La_{g}^{(N)} \big(\ti{L}_{j, g}^{(N)} \chi_{>L}^{(N)}, \ha{v}_1, \dots, \ha{v}_1, \mathcal{N}_{1,f}^{(3)} (\ha{v}_1) \big) \big] (t,k) \notag \\
&+ \La_g^{(N)} \big( \ti{L}_{j, g}^{(N)} \chi_{>L}^{(N)}, \ha{v}_1, \dots, \ha{v}_1,  
\,\mathcal{N}_{1,f}^{(3)} (\ha{v}_1) - \mathcal{N}_{1,g}^{(3)} (\ha{v}_1) \big) (t,k) \notag \\
&= : J_{3,1}^{(N+2)} (\ha{v}_1) (t, k)+ J_{3,2}^{(N+2)} (\ha{v}_1)(t, k)+ J_{3,3}^{(N+2)} (\ha{v}_1) (t,k).
\end{align*}
By \eqref{pwb21} and a slight modification of \eqref{nest21}, we have 
\begin{equation*}
\| J_{3,1}^{(N+2)} (\ha{v}_1) \|_{L_T^{\infty} l_s^2 } \lesssim |\ga| |E_1(f) -E_1(g)| \, \| v_1 \|_{L_T^{\infty} H^s }^{N+2} \leq C_{*}.  
\end{equation*}
By \eqref{nest21} and Lemma~\ref{lem_go}, we get  $\|  J_{2,2}^{(N+2)} (\ha{v}_1) \|_{L_T^{\infty} l_s^2} \le C_{*} $. 
By \eqref{pwb0}--\eqref{pwb03}, \eqref{nl41}, \eqref{nl42}, \eqref{cnl1}--\eqref{cnl3}, \eqref{nest13} and \eqref{nest23}, we have 
\begin{equation*}
\| J_{3,3}^{(N+2)} (\ha{v}_1) \|_{L_T^{\infty} l_{s}^2 } \lesssim \| v_1  \|_{L_T^{\infty} H^s}^{N-1}
\big\| \mathcal{N}_{1,f}^{(3)} (\ha{v}_1)- \mathcal{N}_{1,g}^{(3)} (\ha{v}_1) \big\|_{L_T^{\infty} l_{s-3}^2} \le C_{*}. 
\end{equation*}
Therefore, we obtain \eqref{nest22}. \\
(II) Secondly, we prove \eqref{nes11} and \eqref{nes13} for $(N,j) \in K_5$. Since 
\begin{equation*}
\La_f^{(5)} (\ti{M}_{8,f}^{(7)}, \ha{v}_1)= 3 \,  \mathcal{I}_{2,f}^{(3,1)} (\ha{v}_1), \hspace{0.3cm} 
\La_f^{(7)} (\ti{M}_{5,f}^{(7)}, \ha{v}_1)= 5 \, \mathcal{I}_{2,f}^{(5,2)} (\ha{v}_1), 
\end{equation*}
it suffices to show that \eqref{nest21} and \eqref{nest22} hold for $i=2$, $(N, j) \in J_4$ and $\{ v_l \}_{l=1}^{N+2} \subset C([-T, T]: H^s(\T))$. 
By \eqref{pwb3} and \eqref{pwb21}, it follows that 
\begin{equation}
| \ti{L}_{j, f}^{(N)} \chi_{>L}^{(N)} |  \lesssim \langle k_{\max}  \rangle^{-1}. \label{nest31}
\end{equation}
By \eqref{nest16} and \eqref{nest31}, the left hand side of \eqref{nest21} is bounded by 
\begin{align*}
& \Big\| \sum_{k=k_{1, \dots, N}} \langle k_{\max} \rangle^{-1} \prod_{l=1}^{N-1} |\ha{v}_l(t, k_l)| 
 |\mathcal{N}_{2,f}^{(3)} (\ha{v}_N, \ha{v}_{N+1}, \ha{v}_{N+2}) (t, k_N) |  \Big\|_{L_T^{\infty} l_s^2} \\ 
& \lesssim \prod_{l=1}^{N-1} \| v_l \|_{L_T^{\infty} H^s} 
\big\| \mathcal{N}_{2,f}^{(3)} (\ha{v}_N, \ha{v}_{N+1}, \ha{v}_{N+2}) \big\|_{L_T^{\infty} l_{s-1}^2} \lesssim \prod_{l=1}^{N+2} \| v_l \|_{L_T^{\infty} H^s}. 
\end{align*}
In a similar manner as the case (I), 
by \eqref{pwb21}, \eqref{nest16}, \eqref{nest18}, \eqref{nest31}, Lemma~\ref{lem_go} and a slight modification of \eqref{nest21},  
we get \eqref{nest22}. \\ 
(III) Thirdly, we prove \eqref{nes11} and \eqref{nes13} for $(N,j) \in K_6$. 
By the definition of $M_{j, f}^{(N)}$ with $(N, j) \in K_6$, it suffices to show that \eqref{nest21} and \eqref{nest22} hold 
for $i=3$, $(N, j) \in J_1 \cup J_2 \cup J_3 \cup J_4$ and $\{ v_l \}_{l=1}^{N+2} \subset C([-T, T]: H^s(\T))$. 
By Lemma~\ref{lem_pwb1}, it follows that $|\ti{L}_{j, f}^{(N)} \chi_{>L}^{(N)} | \lesssim 1$. Thus, by \eqref{nest17}, we get \eqref{nest21}. 
In a similar manner as the case (I), by \eqref{pwb21}, \eqref{nest17}, \eqref{nest19}, Lemma~\ref{lem_go}, Lemma~\ref{lem_pwb1} and 
a slight modification of \eqref{nest21}, we obtain \eqref{nest22}. \\
(IV) Finally, we prove \eqref{nes11} and \eqref{nes13} for $(N, j) \in K_7$. 
By the definition of $M_{j, f}^{(N)}$ with $(N, j) \in K_7$, 
it suffices to show that for $(N, j) \in J_1 \cup J_2 \cup J_3 \cup J_4$ and 
$\{ v_l \}_{l=1}^{N+4} \subset C([-T, T]: H^s(\T))$
\begin{align}
& \Big\| \sum_{k=k_{1, \dots, N}} |\ti{L}_{j, f}^{(N)} \chi_{>L}^{(N)} |  \prod_{l=1}^{N-1} |\ha{v}_l(t, k_l)| 
 |\mathcal{N}_{f}^{(5)} (\ha{v}_N, \ha{v}_{N+1}, \ha{v}_{N+2}, \ha{v}_{N+3}, \ha{v}_{N+4}) (t, k_N) |  \Big\|_{L_T^{\infty} l_s^2}  \notag \\
& \hspace{0.5cm} \lesssim \prod_{l=1}^{N+4} \| v_l \|_{L_T^{\infty} H^s}, \label{nest51} \\
& \big\| \mathcal{K}_f^{(N,j)} (\ha{v}_1)- \mathcal{K}_g^{(N,j)} (\ha{v}_1) \big\|_{L_T^{\infty} l_s^2 } \le C_{*}. \label{nest52}
\end{align}
By Lemma~\ref{lem_pwb1}, it follows that 
\begin{equation} 
|\ti{L}_{j,f}^{(N)} \chi_{>L}^{(N)} | \lesssim \langle k_{\max} \rangle^{-1} \label{nest53} 
\end{equation}
for $(N, j) \in  J_1 \cup J_2 \cup J_3 \cup J_4 \setminus \{ (3,4) \} $. 
By \eqref{cnl3}, \eqref{pwb03}, \eqref{nest212} and \eqref{nest53}, the left hand side of \eqref{nest51} is bounded by 
\begin{equation*}
\prod_{l=1}^{N-1} \| v_l \|_{L_T^{\infty} H^s} 
\big\| \mathcal{N}_f^{(5)} (\ha{v}_N, \ha{v}_{N+1}, \ha{v}_{N+2}, \ha{v}_{N+3}, \ha{v}_{N+4} )  \big\|_{L_T^{\infty} l_{s-1}^2 } 
\lesssim \prod_{l=1}^{N+4} \| v_l \|_{L_T^{\infty} H^s }. 
\end{equation*}
In a similar manner as the case (I), 
by \eqref{cnl3}, \eqref{pwb03}, \eqref{pwb21}, \eqref{nest212}, \eqref{nest20}, \eqref{nest53}, Lemma~\ref{lem_go} 
and a slight modification of \eqref{nest51}, we obtain \eqref{nest52}. 
\end{proof}

Now, we prove Proposition~\ref{prop_main1}. 
\begin{proof}[Proof of Proposition~\ref{prop_main1}] 
By the definition of $F_{f, L}$ and Lemma~\ref{lem_nes0}, we have (\ref{mes11}) and (\ref{mes13}).  
For $(N,j) \in J_1 \cup J_2 \cup J_3 \cup J_4$, we can easily check
\begin{align} \label{mes20}
|L_{j, f}^{(N)} \Phi_{f}^{(N)} \chi_{ \le L}^{(N)} | \lesssim L^3. 
\end{align} 
By (\ref{mes20}), we have 
\begin{align} \label{mes21}
\Big\| \sum_{k=k_{1, \dots, N}} |\ti{L}_{j, f}^{(N)} \Phi_{f}^{(N)} \chi_{\le L}^{(N)}| \, \prod_{l=1}^N |\ha{v}_l(t, k_l)| \Big\|_{L_T^{\infty} l_s^2 } 
\lesssim L^{3} \prod_{l=1}^N \| v_l \|_{L_T^{\infty} H^s}
\end{align}
for any $\{ v_l \}_{l=1}^N \subset C([-T, T]: H^s(\T))$. 
Thus, by the definition of $G_{f, L}$, Lemmas~\ref{lem_mle}--\ref{lem_nl10}, we have (\ref{mes12}). 
Next, we prove (\ref{mes14}). 
For $(N, j) \in J_1 \cup J_2 \cup J_3 \cup J_4$, by the definition and (\ref{mes20}), it follows that 
\begin{align}
|\ti{L}_{j, f}^{(N)} \Phi_{f}^{(N)} \chi_{\le L}^{(N)}- \ti{L}_{j, g}^{(N)} \Phi_g^{(N)} \chi_{\le L}^{(N)}  | 
\lesssim  |\ga| |E_1(f)- E_1(g)| L^3 .  \label{mes23}
\end{align}
By (\ref{mes21}), (\ref{mes23}) and Lemma~\ref{lem_go}, we have 
\begin{align*}
\big\|  \La_f^{(N)} ( \ti{L}_{j, f}^{(N)} \Phi_f^{(N)} \chi_{\le L}^{(N)}, \ha{v} )
- \La_g^{(N)} ( \ti{L}_{j, g}^{(N)} \Phi_g^{(N)} \chi_{\le L}^{(N)}, \ha{v} )  \big\|_{L_T^{\infty} l_s^2} \le C_*.
\end{align*}
Hence, by the definition of $G_{f, L}$ and Lemmas~\ref{lem_mle}--\ref{lem_nl10}, we obtain (\ref{mes14}). 
\end{proof}

\section{Proof of Theorem~\ref{thm_LWP}}

In this section, we give the proof of Theorem~\ref{thm_LWP}. 
By Lemma~\ref{lem_E1} below, it is verified that the following conservation law holds:
\begin{equation} \label{cle1}
E_1(u)(t)=E_1 (\vp)  \hspace{0,5cm} (t \in [-T, T])
\end{equation}
when $u \in C([-T,T]: H^{3/2}(\T))$ is a solution to \eqref{5mKdV1} or \eqref{5mKdV2} with \eqref{initial} and $\al = \be$. 
Thus, the unconditional local well-posedness for \eqref{5mKdV1} with \eqref{initial} is equivalent to 
that for \eqref{5mKdV2} with \eqref{initial} under the assumption that $s \ge 3/2$ and $\alpha= \beta$ or $\ga=0$. 
\begin{lem} \label{lem_E1}
Let $s \ge 3/2$, $\alpha= \be$, $\vp \in H^s(\T)$ 
and $u \in C([-T,T]: H^{s} (\mathbb{T}))$ be a solution to \eqref{5mKdV1} or \eqref{5mKdV2} with \eqref{initial}. 
Then it follows $\| u (t) \|_{L^2} = \| \varphi \|_{L^2}$ for any $t \in [-T,T]$. 
\end{lem}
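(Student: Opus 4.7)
\emph{Strategy and reduction.} The plan is to establish $\tfrac{d}{dt}\|u(t)\|_{L^2}^2 = 0$ by Fourier-truncating $u$, taking $N\to\infty$, and justifying each formal integration-by-parts identity at the borderline regularity $s=3/2$. First I would reduce to equation \eqref{5mKdV1}: a direct comparison shows that \eqref{5mKdV2} differs from \eqref{5mKdV1} only by terms of the form $C(t)\partial_x^j u$ with $j\in\{1,3\}$ and $C(t)$ an $x$-independent functional of $u(t)$ (namely $2\gamma E_1(\varphi)\partial_x^3 u$ and $K(u)\partial_x u$ on the left, balanced by the analogous constant-in-$x$ pieces inside $J_1, J_3, J_4$). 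Every such term pairs with $u$ to give $\tfrac{C(t)}{2}\int\partial_x(u^2)=0$ (for $j=1$) or $-\tfrac{C(t)}{2}\int\partial_x[(\partial_x u)^2]=0$ (for $j=3$) on smooth functions, and the same identity passes to $u\in H^{3/2}$ by density. Hence the $L^2$-balance for \eqref{5mKdV2} coincides with that for \eqref{5mKdV1}, and it is enough to work with \eqref{5mKdV1}.

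\emph{Fourier truncation.} Writing $\mathcal N(u):=\alpha(\partial_x u)^3+\beta\partial_x(u(\partial_x u)^2)+\gamma\partial_x(u\partial_x^2(u^2))+6\delta\partial_x(u^5)$, Lemma~\ref{lem_nl2} and the algebra property $H^{3/2}(\T)\cdot H^{3/2}(\T)\subset H^{3/2}(\T)$ give $\mathcal N(u)\in C([-T,T]:H^{s-3}(\T))$. Setting $u_N:=P_{\leq N}u$, we obtain $u_N\in C^1([-T,T]:C^\infty(\T))$, and idempotency and self-adjointness of $P_{\leq N}$ produce
\[\|u_N(t)\|_{L^2}^2-\|P_{\leq N}\varphi\|_{L^2}^2 = -2\int_0^t\langle u_N,\partial_x^5 u\rangle\,ds - 2\int_0^t\langle u_N,\mathcal N(u)\rangle_{H^{3/2}\times H^{-3/2}}\,ds.\]
The integrands are uniformly dominated in $N$: the first vanishes identically (see below), and the second is bounded by $\|u\|_{H^{3/2}}\|\mathcal N(u)\|_{H^{-3/2}}\lesssim \|u\|_{H^{3/2}}^4+\|u\|_{H^{3/2}}^6$. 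Dominated convergence together with $u_N\to u$ in $L^2$ then reduces the task to proving $\langle u,\mathcal N(u)\rangle = 0$ for each $t$ when $\alpha=\beta$.

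\emph{Term-by-term cancellations.} We verify the four contributions by density via $v_M:=P_{\leq M}u$ and continuity of the resulting multilinear forms on $H^{3/2}$. The linear contribution is automatic: $\langle u_N,\partial_x^5 u\rangle = \langle u_N,\partial_x^5 u_N\rangle = 0$ because $P_{\leq N}$ commutes with $\partial_x^5$ and $u_N$ is smooth. The pure quintic is an exact divergence: $u^6\in H^{3/2}\hookrightarrow C(\T)$ by the algebra property, so $\int\partial_x(u^6)=0$ and $\langle u,\partial_x(u^5)\rangle = -\tfrac16\int\partial_x(u^6) = 0$. For the $\gamma$ term, set $w=u^2\in H^{3/2}$; then $\partial_x w\in H^{1/2}$, $\partial_x^2 w\in H^{-1/2}$, and
\[\langle u,\partial_x(u\partial_x^2(u^2))\rangle = -\tfrac12\langle\partial_x w,\partial_x^2 w\rangle_{H^{1/2}\times H^{-1/2}},\]
which vanishes because $\langle\partial_x w_M,\partial_x^2 w_M\rangle = \tfrac12\int\partial_x[(\partial_x w_M)^2]=0$ for every smoothing $w_M$ and the duality pairing is bilinearly continuous. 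Finally, the cubic cancellation uses $\alpha=\beta$: integration by parts on smooth $v$ gives $\int v\partial_x(v(\partial_x v)^2) = -\int v(\partial_x v)^3$, and both trilinear forms extend continuously from smooth $v$ to $v\in H^{3/2}$ via $H^{3/2}\hookrightarrow L^\infty$ and $H^{1/2}\hookrightarrow L^3$, yielding $\alpha\langle u,(\partial_x u)^3\rangle+\beta\langle u,\partial_x(u(\partial_x u)^2)\rangle = (\alpha-\beta)\int u(\partial_x u)^3 = 0$.

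\emph{Main obstacle.} The hard part will be the rigorous justification of these integration-by-parts identities at the borderline regularity $s=3/2$, where neither $(\partial_x u)^2$ nor $\partial_x^2 u$ belongs to $L^2$ and formal products are merely distributions. The remedy is to rewrite each cancellation either as a pairing in a matched duality bracket (such as $H^{1/2}\times H^{-1/2}$ for the $\gamma$ term) or as an exact divergence of a function continuous on $\T$, and then to pass to the limit by multilinear continuity of the resulting form on $H^{3/2}$. The cubic identity $\int v\partial_x(v(\partial_x v)^2) = -\int v(\partial_x v)^3$ is the most delicate, since establishing continuity of the trilinear form $v\mapsto \int v(\partial_x v)^3$ on $H^{3/2}$ is precisely what licenses the $(\alpha-\beta)$-cancellation on which the whole argument hinges.
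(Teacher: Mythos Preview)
Your approach is correct and reaches the same conclusion, but it is organized differently from the paper's proof. Both arguments begin with the frequency truncation $u_N=P_{\le N}u$ and the identity $\frac{d}{dt}\|u_N\|_{L^2}^2=-2\langle u_N,\mathcal N(u)\rangle$ (the linear piece vanishing). From there the two proofs diverge. The paper \emph{opens up} $\langle u_N,\mathcal N(u)\rangle$ immediately: for each nonlinear term it writes $\int u_N\cdot(\text{term})$ as the would-be cancellation at the smooth level plus an explicit remainder carrying a factor $P_{>N}$, and then bounds each remainder by $\|P_{>N}u\|_{L^\infty_T H^{3/2}}$ times powers of $\|u\|_{L^\infty_T H^{3/2}}$ (for the $\gamma$ piece it uses the rewriting $\partial_x(u\partial_x^2(u^2))=\partial_x^2(u\partial_x(u^2))-\partial_x(\partial_x u\,\partial_x(u^2))$ and the $H^{1/2}\times H^{-1/2}$ pairing). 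Integrating in $t$ and sending $N\to\infty$ finishes the argument in a single limit.

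You instead take two limits: first pass $N\to\infty$ in the closed form to obtain $\|u(t)\|_{L^2}^2-\|\varphi\|_{L^2}^2=-2\int_0^t\langle u,\mathcal N(u)\rangle$, and then show $\langle u,\mathcal N(u)\rangle=0$ by a separate density argument on the limit object. This is cleaner in that it separates the algebra (integration-by-parts identities) from the analysis (multilinear continuity on $H^{3/2}$), at the cost of running a second approximation. One small slip: you write ``$u_N\to u$ in $L^2$'' where the $H^{3/2}\times H^{-3/2}$ pairing actually requires $u_N\to u$ in $H^{3/2}$; this of course holds. Your reduction from \eqref{5mKdV2} to \eqref{5mKdV1} is also fine---in fact the $C(t)\partial_x u$ pieces cancel among themselves, so only the $2\gamma(E_1(\varphi)-E_1(u))\partial_x^3 u$ term survives, which is exactly what the paper records; but either way the contribution to the $L^2$ balance is zero.
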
 
\begin{proof}
First, we suppose that $u$ is a solution to \eqref{5mKdV1} with \eqref{initial}.
For $N \in \mathbb{N}$, the projection operator $P_{\leq N}$ is defined as 
\begin{align*}
P_{\leq N} u(t,x)= \sum_{|k| \leq N} e^{ikx} \ha{u}(t,k). 
\end{align*}
$P_{ \leq N} u$ belongs to $C^{1} ([-T,T]:H^{\infty} (\mathbb{T}) )$ since 
\begin{align*}
\p_t P_{\leq N} u & =  -\p_x^5 P_{\leq N} u- 6 \delta P_{\leq N} \p_x (u^5)
- \alpha P_{\leq N} (\p_x u)^3 - \beta  P_{\leq N} \p_x (u (\p_x u)^2)\\
& - \gamma P_{\leq N} \{\p_x^2 (u \p_x (u^2) )-\p_x (\p_x u \p_x (u^2) )\} 
 \in C([-T,T]:H^{\infty}). 
\end{align*}
Thus we calculate $\frac{d}{dt} \| P_{\leq N} u(t) \|_{L^2}^2$ in the classical sense as follows:
\begin{equation}\label{es74}
\begin{split}
& \frac{d}{dt} \| P_{\leq N} u(t) \|_{L^2}^2= 2 \int_{\mathbb{T}} P_{\leq N} u \p_t P_{\leq N} u \,  dx \\
& = -12 \delta \int_{\mathbb{T}} P_{\leq N} u P_{\leq N} \p_x(u^5) \, dx
- 2 \alpha \int_{\mathbb{T}} P_{\leq N} u P_{\leq N} (\p_x u)^3 \, dx \\
& - 2 \beta \int_{\mathbb{T}} P_{\leq N} u P_{\leq N} \p_x (u (\p_x u)^2 ) \, dx \\
& -2 \gamma \int_{\mathbb{T}} P_{\leq N} u P_{\leq N} \{\p_x^2 (u \p_x (u^2) )-\p_x (\p_x u \p_x (u^2) )\} \, dx \\
&=: I_1+I_2+I_3+I_4. 
\end{split}
\end{equation}
We first estimate $I_1$. A direct calculation yields that 
\begin{align*}
I_1& = 12 \delta \int_{\mathbb{T}} P_{\leq N} \p_x u P_{\leq N} u^5 \, dx= 
12 \delta \int_{\mathbb{T}} u^5 \p_x u \, dx- 12 \delta \int_{\mathbb{T}} u^5 P_{>N} \p_x u \, dx \notag \\
& = -12 \de \int_{\T} u^5  P_{>N} \p_x u \, dx. 
\end{align*}
By the H\"{o}lder inequality and the Sobolev inequality, we have 
\begin{align*}
|I_{1}| \leq \| u \|_{L_{T}^{\infty}  L^{10}}^5 \| P_{> N} \p_x u \|_{L_T^{\infty} L^2} \lesssim 
\| u \|_{L_T^{\infty} H^{2/5}}^5 \| P_{> N} u \|_{ L_T^{\infty} H^1}. 
\end{align*} 
Next we estimate $I_2+I_3+I_4$. A simple computation yields that 
\begin{align*}
 I_2+I_3+I_4 & = 2(\be-\alpha) \int_{\mathbb{T}} u (\p_x u)^3 \, dx 
+2\alpha \int_{\mathbb{T}} (P_{>N}  u ) (\p_x u)^3~dx \notag \\
& -(2\beta- 4 \ga ) \int_{\mathbb{T}} \big( P_{>N} \p_x u \big)  \, u (\p_x u)^2  \, dx 
+4 \gamma \int_{\mathbb{T}} \big( P_{>N} \p_x^2 u \big) u^2 \p_x u \, dx \notag \\
&=: H_1+H_2+H_3+H_4. 
\end{align*}
By the assumption $\alpha= \beta$, $H_1$ is equal to $0$. 
By the H\"{o}lder inequality and the Sobolev inequality, we have
\begin{align*}
& |H_2| \lesssim \| P_{>N}  u \|_{L_T^{\infty} L^{\infty}} \| \p_x u \|_{L_T^{\infty} L^3}^3 
\lesssim  \| u \|_{ L_T^{\infty} H^{7/6}}^3 \| P_{>N} u \|_{L_T^{\infty} H^{7/6}}, \\ 
& |H_3| \lesssim \| u \|_{L_T^{\infty} L^{\infty}} \| P_{>N} \p_x u \|_{L_T^{\infty} L^3} \|  \p_x u  \|_{L_T^{\infty} L^3}^2 
\lesssim  \|  u \|_{L_T^{\infty} H^{7/6}}^3 \| P_{>N} u \|_{ L_T^{\infty} H^{7/6}} , \\
& |H_4| \lesssim \| u \|_{ L_T^{\infty} H^{1/2+}}^2 \| \p_x u \|_{L_T^{\infty} H^{1/2}} \| P_{>N} \p_x^2 u \|_{L_T^{\infty} H^{-1/2}} 
\lesssim \| u \|_{L_T^{\infty} H^{3/2}}^3 \| P_{>N} u \|_{L_T^{\infty} H^{3/2}}.
\end{align*}
Integrating \eqref{es74} over $[0,t]$ for $t\in [-T,T]$, we have 
\begin{equation*}
\bigl| \| P_{\leq N} u (t) \|_{L^2}^2 - \| P_{\leq N} \vp \|_{L^2}^2 \bigr|
\lesssim T \bigl(  \| u \|_{L^\infty_T H^{3/2} }^3 + \| u \|_{L^\infty_T H^{3/2}}^5 \bigr) \| P_{> N} u \|_{L^\infty_T H^{3/2} }. 
\end{equation*}
Take $N\to \infty$.
Then, by the dominated convergence theorem and uniform $H^{3/2}$-continuity on $[-T,T]$, the right-hand side goes to $0$.
Furthermore, $P_{\le N} u(t) \to u(t)$ for $t$ fixed and $P_{\le N} \vp \to \vp$ in $L^2(\T)$.
Thus, we obtain 
$$ \big| \| u (t) \|_{L^2}^2- \| \vp \|_{L^2}^2 \big| =0 $$  
for any $t \in [-T,T]$. 

Next, we suppose that $u$ is a solution to \eqref{5mKdV2} with \eqref{initial}.
We notice that \eqref{5mKdV2} is equivalent to
\EQQ{
&\p_t u +\p_x^5 u +2\ga(E_1(\vp)-E_1(u))\p_x^3 u \\
&+\al(\p_x u)^3+\be\p_x(u(\p_x u)^2)+\ga\p_x(u\p_x^2( u^2))+6\de \p_x (u^5)=0.
}
Since $2\ga(E_1(\vp)-E_1(u))\int_\T P_{\leq N} u \p_x^3 P_{\leq N} u\, dx=0$,
we have the same result as above in the same manner.
\end{proof}
Here, we define the translation operators $\mathcal{T}_{v}$ and $\mathcal{T}_{v}^{-1}$ by 
\begin{align*}
\mathcal{T}_v u(t, x) := u \Big( t,  x+ \int_0^t [K(v)] (t') \, dt'  \Big), \hspace{0.3cm}
\mathcal{T}_{v}^{-1} u(t,x):=u \Big(t, x- \int_0^t [K(v)] (t') \, dt'  \Big) 
\end{align*}
for $v \in C([-T, T]: H^1(\T))$. 
In a similar manner as Proposition 8.1 in \cite{KTT}, the following lemma holds. 
\begin{lem} \label{lem_homeo}
Let $s \ge 1$. Then, a map $\mathcal{T}: u \mapsto \mathcal{T}_u u$ is a homeomorphism on 
$C([-T,T]: H^s(\T))$.
\end{lem}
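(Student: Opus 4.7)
The plan is to exhibit an explicit inverse for $\mathcal{T}$ and then check continuity of both maps directly from the fact that translation in $x$ by a $t$-dependent shift is an $H^s$-isometry depending continuously on the shift function.

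First I would observe that the candidate for $\mathcal{T}^{-1}$ is the map $\mathcal{S}:w\mapsto \mathcal{T}_w^{-1}w$. The key algebraic fact is that $K$ is invariant under the translation $\mathcal{T}_v$: the three quantities $\int_\T u^2\,dx$, $\int_\T u^4\,dx$ and $\int_\T (\p_x u)^2\,dx$ are all unchanged when $u(t,\cdot)$ is replaced by its translate, and $E_1$ is analogously translation-invariant, so $[K(\mathcal{T}_u u)](t)=[K(u)](t)$ for every $t\in[-T,T]$ and every $u\in C([-T,T]:H^s(\T))$ with $s\ge 1$. Writing $w=\mathcal{T}_u u$ and using this identity,
\[
\mathcal{S}(\mathcal{T}u)(t,x)=w\!\left(t,x-\int_0^t [K(w)](t')\,dt'\right)=u\!\left(t,x-\int_0^t [K(u)](t')\,dt'+\int_0^t [K(u)](t')\,dt'\right)=u(t,x),
\]
and the analogous computation shows $\mathcal{T}\circ\mathcal{S}=\mathrm{id}$. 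This gives the bijectivity statement and identifies the inverse.

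Next I would check that $\mathcal{T}$ and $\mathcal{S}$ map $C([-T,T]:H^s(\T))$ into itself. For $u\in C([-T,T]:H^s(\T))$ the scalar function $t\mapsto [K(u)](t)$ is continuous on $[-T,T]$ (each of its constituents $E_1(u)$, $\int u^4\,dx$, $\int(\p_x u)^2\,dx$ is continuous in $t$ because $u\in C([-T,T]:H^1(\T))$ by the embedding $H^s\hookrightarrow H^1$ for $s\ge 1$, and these functionals are locally Lipschitz on $H^1$). Hence $a(t):=\int_0^t [K(u)](t')\,dt'$ is continuous. Since translation by $a(t)$ is an isometry on $H^s(\T)$ with $\|\mathcal{T}_u u(t)\|_{H^s}=\|u(t)\|_{H^s}$, and since $t\mapsto u(t,\cdot+a(t))$ is continuous in $H^s$ (a standard consequence of strong continuity of the translation group on $H^s(\T)$ combined with continuity of both $u(\cdot)$ and $a(\cdot)$), we conclude $\mathcal{T}u\in C([-T,T]:H^s(\T))$. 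The same argument applies to $\mathcal{S}$.

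Finally I would prove continuity of $\mathcal{T}$ (the proof for $\mathcal{S}$ is identical). Suppose $u_n\to u$ in $C([-T,T]:H^s(\T))$. From the local Lipschitz dependence of $K$ on $H^1$ one obtains
\[
\sup_{t\in[-T,T]}|[K(u_n)](t)-[K(u)](t)|\lec \bigl(1+\|u_n\|_{L^\infty_T H^1}+\|u\|_{L^\infty_T H^1}\bigr)^3\|u_n-u\|_{L^\infty_T H^1},
\]
so, setting $a_n(t):=\int_0^t[K(u_n)](t')\,dt'$ and $a(t):=\int_0^t[K(u)](t')\,dt'$, we have $\sup_t|a_n(t)-a(t)|\to 0$. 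Splitting
\[
\mathcal{T}u_n(t,\cdot)-\mathcal{T}u(t,\cdot)=\bigl[u_n(t,\cdot+a_n(t))-u(t,\cdot+a_n(t))\bigr]+\bigl[u(t,\cdot+a_n(t))-u(t,\cdot+a(t))\bigr],
\]
the first bracket has $H^s$-norm equal to $\|u_n(t)-u(t)\|_{H^s}\to 0$ uniformly in $t$ by isometry, and the second bracket tends to $0$ uniformly in $t$ by strong continuity of translation on $H^s(\T)$ applied to the compact set $\{u(t):t\in[-T,T]\}\subset H^s(\T)$ together with $\sup_t|a_n(t)-a(t)|\to 0$. The main (and only real) obstacle is the last uniform equicontinuity statement, which I would handle by a standard approximation: pick a finite $\eps$-net in $H^s$ for the compact image of $u$, use density of $C^\infty(\T)$ to approximate each net point to within $\eps$ by a smooth function (whose translation is trivially continuous in $H^s$), and choose $n$ so large that $\sup_t|a_n(t)-a(t)|$ is smaller than the common modulus of continuity of these finitely many smooth functions. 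This yields $\mathcal{T}u_n\to\mathcal{T}u$ in $C([-T,T]:H^s(\T))$, completing the proof.
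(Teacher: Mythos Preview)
Your proof is correct. The paper itself does not give a proof of this lemma but simply refers to Proposition~8.1 in \cite{KTT}; your argument---exploiting the translation-invariance of $K$ to identify the inverse map as $w\mapsto\mathcal{T}_w^{-1}w$, then checking continuity via the isometry of translation on $H^s(\T)$ together with a compactness/equicontinuity argument for the second bracket---is the natural direct route and is essentially what the referenced proof in \cite{KTT} does as well.
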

We notice that $\mathcal{T}^{-1}u = \mathcal{T}_{u}^{-1} u $ holds when $u \in C([-T, T]: H^s(\T))$ for $s \ge 1$. 
Thus, we have the following lemma. 
\begin{lem} \label{lem_equiv1}
Let $s \ge 3/2$ and $\al = \be$ or $\ga=0$. 

(i) If $u \in C([-T, T]: H^s(\T))$ is a solution to \eqref{5mKdV2} with \eqref{initial}, then $\mathcal{T} u \in C([-T, T]: H^s(\T))$ is 
a solution to \eqref{5mKdV3} with \eqref{initial}.

(ii) If $u \in C([-T, T]: H^s(\T))$ is a solution to \eqref{5mKdV3} with \eqref{initial}, then $\mathcal{T}^{-1} u \in C([-T, T]: H^s(\T))$ is 
a solution to \eqref{5mKdV2} with \eqref{initial}. 
\end{lem}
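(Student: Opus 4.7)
The two equations \eqref{5mKdV2} and \eqref{5mKdV3} differ only by the term $K(u)\p_x u$, which is precisely what is generated by differentiating the time-dependent $x$-shift inside $\mathcal{T}_u$. The first thing I will check is the invariance of the ingredients of $K$ under $\mathcal{T}$: since $K(u)$ depends on $u$ only through $\int_\T u^2\,dx$, $\int_\T(\p_x u)^2\,dx$ and $\int_\T u^4\,dx$, and these are $x$-translation invariant by periodicity, $K(\mathcal{T}_v u)(t)=K(u)(t)$ for every $v\in C([-T,T]:H^1(\T))$, and the same holds for $E_1$. In particular, $y(t):=\int_0^t [K(u)](t')\,dt'$ is unchanged under passage to $\mathcal{T}u$ or $\mathcal{T}^{-1}u$. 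Moreover, Lemma~\ref{lem_E1} (when $\al=\be$) or the triviality of the $\p_x^3$-term (when $\ga=0$) guarantees that the coefficient $2\ga E_1(\vp)$ in \eqref{5mKdV3} is unchanged by translation as well, since $E_1(\mathcal{T}_v u)(t)=E_1(u)(t)=E_1(\vp)$ on $[-T,T]$.

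\textbf{(i)} For $u\in C([-T,T]:H^s(\T))$ solving \eqref{5mKdV2}, I put $w:=\mathcal{T}u$, so $\ha{w}(t,k)=e^{iky(t)}\ha{u}(t,k)$. Since $K(u)\in C([-T,T])$, the shift $y$ is $C^1$ with $y'=K(u)$, and by Lemma~\ref{lem_homeo}, $w\in C([-T,T]:H^s(\T))$ with $w(0)=\vp$. I will verify the equation for $w$ mode-by-mode on the Fourier side, which neatly avoids requiring classical differentiability of $u$ in $t$:
\begin{equation*}
\p_t\ha{w}(t,k)=ik[K(u)](t)\ha{w}(t,k)+e^{iky(t)}\p_t\ha{u}(t,k).
\end{equation*}
The first term equals $\widehat{K(w)\p_x w}(t,k)$ because $K(u)=K(w)$ is $x$-independent. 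Multiplying the Fourier transform of \eqref{5mKdV2} by $e^{iky(t)}$ turns each $\p_x^j u$ into $\p_x^j w$, and each of $J_1(u),\dots,J_4(u)$ into $J_i(w)$, since the pointwise-product parts of $J_i$ commute with spatial translation and the integrals $\int_\T u^4$, $\int_\T(\p_x u)^2$ appearing in $J_i$ and $K$ are translation invariant. The $K\p_x w$ terms on the two sides then cancel, and $w$ satisfies \eqref{5mKdV3}.

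\textbf{(ii)} The reverse direction is completely symmetric. Given a solution $u$ of \eqref{5mKdV3}, I will set $v:=\mathcal{T}^{-1}u$. The identification $\mathcal{T}^{-1}u=\mathcal{T}_u^{-1}u$ follows from the invariance $K(\mathcal{T}_u^{-1}u)(t)=K(u)(t)$, which makes $\mathcal{T}_{\mathcal{T}_u^{-1}u}(\mathcal{T}_u^{-1}u)=u$ by a direct computation of the composite shift. Repeating the Fourier argument of (i) in reverse, differentiating $\ha{v}(t,k)=e^{-iky(t)}\ha{u}(t,k)$ produces precisely the extra term $-ikK(u)(t)\ha{v}(t,k)=-\widehat{K(v)\p_x v}(t,k)$, converting \eqref{5mKdV3} into \eqref{5mKdV2} for $v$. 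The main obstacle in both parts is essentially bookkeeping: one must carefully track that the shift $y(t)$ is self-consistent under $\mathcal{T}$, and that each of the four nonlinear terms $J_i$ commutes with spatial translation; both points reduce, via the Fourier formula $\widehat{f(\cdot+y)}(k)=e^{iky}\ha{f}(k)$, to the translation invariance of the integral coefficients combined with Lemma~\ref{lem_E1}.
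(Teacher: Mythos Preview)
Your proposal is correct and follows essentially the same approach as the paper. The paper treats this lemma as a direct consequence of the change of variables and states it without a detailed proof; the underlying argument (present in a suppressed earlier version) consists of the two observations $\mathcal{T}_u J_i(u)=J_i(\mathcal{T}_u u)$ and $\mathcal{T}_u(\p_t u+K(u)\p_x u)=\p_t(\mathcal{T}_u u)$, which is exactly what your Fourier-side computation verifies in more detail. One minor remark: your appeal to Lemma~\ref{lem_E1} is unnecessary here, since the coefficient $2\ga E_1(\vp)$ is a fixed constant in both \eqref{5mKdV2} and \eqref{5mKdV3} and is trivially unaffected by the spatial translation.
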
 
By \eqref{cle1} and Lemmas \ref{lem_homeo}--\ref{lem_equiv1}, Theorem ~\ref{thm_LWP} is equivalent to Proposition~\ref{prop_LWP} below 
and we only  need to show it.  
\begin{prop} \label{prop_LWP}
Let $s \geq 3/2$ and $\al=\be$ or $\ga=0$. Then, for any $\varphi \in H^s (\mathbb{T})$, 
there exists $T=T(\| \varphi \|_{H^{s}}) >0$ and a unique solution $u \in C([-T,T]: H^{s} (\T))$ to 
\eqref{5mKdV3} with \eqref{initial}. 
 Moreover the solution map, $H^s(\T) \ni \varphi \mapsto u \in C([-T,T]: H^s (\T))$ is continuous.  
\end{prop}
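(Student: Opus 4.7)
\medskip

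\noindent\textbf{Proof proposal.} My plan is to derive Proposition~\ref{prop_LWP} directly from the a priori bound \eqref{mes31} and the difference estimate \eqref{mes32} in Corollary~\ref{cor_mainest}, combined with the conservation law in Lemma~\ref{lem_E1} and an approximation by smooth solutions. All the analytic difficulty has already been packaged into Proposition~\ref{prop_main1}; what remains is to choose the parameters $L$ and $T$ carefully, bootstrap the a priori bound, and pass to limits. The four pieces to establish are: (i) an a priori $H^s$-bound on any candidate solution; (ii) existence via regularization; (iii) unconditional uniqueness; (iv) continuous dependence on the data.

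For (i) and (ii), given $\vp \in H^s$ I would first fix $L = L(\|\vp\|_{H^s})$ so large that $L \gg \max\{1, |\ga|E_1(\vp)\}$ and $C L^{-1}(1+4\|\vp\|_{H^s})^6 \le 1/4$, and then pick $T = T(\|\vp\|_{H^s}) > 0$ small enough that $CTL^3(1+4\|\vp\|_{H^s})^{10} \le 1/4$. Inserting these into \eqref{mes31} together with a standard continuity/bootstrap argument yields $\|u\|_{L^\infty_T H^s} \le 2\|\vp\|_{H^s}$ for any solution $u \in C([-T,T]:H^s)$ of \eqref{5mKdV3} with data $\vp$. For existence, I would regularize: take $\vp_n \in H^\infty(\T)$ with $\vp_n \to \vp$ in $H^s$, obtain a smooth local solution $u_n$ via the second author's result \cite{T} (which applies for sufficiently large $s$), and use the a priori bound (whose parameters $L,T$ can be taken uniform in $n$ once $n$ is large, since $\|\vp_n\|_{H^s}$ is bounded and $E_1(\vp_n)\to E_1(\vp)$) to extend each $u_n$ to the common interval $[-T,T]$ with $\|u_n\|_{L^\infty_T H^s} \le 2\|\vp\|_{H^s}$.

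I would then apply \eqref{mes32} to the pair $(u_n, u_m)$. Since $\vp_n \to \vp$ in $L^2$, the quantity $|E_1(\vp_n)-E_1(\vp_m)| \to 0$, so the additional error $C_\ast$ vanishes as $n,m \to \infty$. With the prefactors on the difference terms already chosen $\le 1/2$, \eqref{mes32} yields
\begin{equation*}
\|u_n - u_m\|_{L^\infty_T H^s} \le 2\|\vp_n - \vp_m\|_{H^s} + 2(1+T)\, C_\ast(n,m),
\end{equation*}
so $\{u_n\}$ is Cauchy in $C([-T,T]:H^s)$ with limit $u$. Using Lemma~\ref{lem_nl2} to handle the nonlinear terms, one passes to the limit in \eqref{5mKdV3} to verify that $u$ is a distributional solution, and $u(0)=\vp$ by continuity.

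For (iii), given two solutions $u_1,u_2 \in C([-T,T]:H^s)$ to \eqref{5mKdV3} sharing the initial datum $\vp$, their norms are automatically bounded by some finite $M$ on $[-T,T]$. Choose $L$ and a small $T_0 \le T$ so that the two prefactors on the right of \eqref{mes32} (with $\|u_j\|_{L^\infty_{T_0} H^s}$ replaced by $M$) are each $\le 1/4$. Since $\vp_1=\vp_2$, Proposition~\ref{prop_main1} gives $C_\ast = 0$, and \eqref{mes32} collapses to $\|u_1-u_2\|_{L^\infty_{T_0} H^s} \le \tfrac12 \|u_1-u_2\|_{L^\infty_{T_0} H^s}$, hence $u_1 \equiv u_2$ on $[-T_0,T_0]$. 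A standard connectedness/iteration argument based on the uniform bound $M$ extends the equality to all of $[-T,T]$. For (iv), the same argument applied to $(\vp_n,\vp)$ with $\vp_n \to \vp$ in $H^s$ yields $u_n \to u$ in $C([-T,T]:H^s)$, using once again that $E_1(\vp_n) \to E_1(\vp)$ forces $C_\ast \to 0$. The main obstacle I anticipate is not any one step in isolation but the coupled choice of $L$, $T$, and the bootstrap threshold that must be made simultaneously uniform over the approximating sequence; this is manageable because $\|\vp_n\|_{H^s}$ and $E_1(\vp_n)$ converge, but it requires stating the parameter dependencies in \eqref{mes31}--\eqref{mes32} with care.
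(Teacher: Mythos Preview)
Your proposal is correct and matches the paper's proof essentially line for line: the same parameter choices for $L$ and $T$, the same bootstrap on \eqref{mes31}, the Cauchy argument via \eqref{mes32}, and uniqueness/continuity from \eqref{mes32} with $C_*=0$ or $C_*\to 0$. The only detail the paper spells out that you leave implicit is that \cite{T} (Proposition~\ref{prop_existence}) yields smooth solutions $w_n$ of \eqref{5mKdV1}, not of \eqref{5mKdV3}; one must first transport them to solutions $u_n=\mathcal{T}w_n$ of \eqref{5mKdV3} (with coefficient $E_1(\vp_n)$ rather than $E_1(\vp)$) via Lemma~\ref{lem_equiv1} and the conservation law (Lemma~\ref{lem_E1}) before Corollary~\ref{cor_mainest} applies, and then also pass to the limit in that coefficient.
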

\begin{rem}
As we proved in Proposition \ref{prop_NF2}, for any solution $u \in C([-T,T]:H^s(\T))$ to \eqref{5mKdV3}, $\ha{v}=e^{-t\phi_\vp(k)}\ha{u}$ satisfies \eqref{NF21}.
By the standard argument with Proposition \ref{prop_main1}, 
we can prove the existence of solutions of \eqref{NF21}.
However, this argument is useless to show the existence of solutions to \eqref{5mKdV3} since we do not know whether $u:=U_\vp(t) v$  satisfy \eqref{5mKdV3} or not when $\ha{v}$ satisfy \eqref{NF21}.
To avoid this difficulty, we use the existence of the solution to \eqref{5mKdV1}--\eqref{initial} for sufficiently smooth initial data.
\end{rem}
In \cite{T},  the second author has proved the local well-posedness of fifth order dispersive equations for sufficiently smooth data by the modified energy method. 
We can easily check that the nonlinear term of \eqref{5mKdV1} is non-parabolic resonant type. 
Therefore, we have the following result by Theorem 1.1 in \cite{T}. 
\begin{prop}\label{prop_existence}
Let $m\in \N$ be sufficiently large.
Then, \eqref{5mKdV1}--\eqref{initial} is locally well-posed in $H^m(\T)$ on $[-T,T]$ without any condition on $\al, \be, \ga$ and $\de$.
The existence time $T=T(\|\vp\|_{H^m})>0$.
\end{prop}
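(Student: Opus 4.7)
The plan is to follow the modified energy scheme of \cite{T} directly, reducing the high-regularity well-posedness of \eqref{5mKdV1}--\eqref{initial} to a verification that the nonlinearity has non-parabolic resonant type. I would first introduce a parabolic regularization by adding a term $-\eps\p_x^6 u$ (with $\eps>0$) to the left-hand side of \eqref{5mKdV1}, obtaining smooth solutions $u^\eps\in C([-T_\eps,T_\eps]:H^\infty(\T))$ on a short time interval via Picard iteration, with existence time a priori depending on $\eps$. The goal is then to establish an $\eps$-uniform $H^m$ bound on a common interval $[-T,T]$ with $T=T(\|\vp\|_{H^m})$.

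For the uniform bound, I would set up the standard energy $\|\p_x^m u\|_{L^2}^2$ and compute its time derivative using the equation. The skew-adjointness of $\p_x^5$ kills the linear contribution, and the contributions from the quintic term $\p_x(u^5)$ are easily controlled by integration by parts and Sobolev embedding (for $m$ large enough) since there is only one derivative on a product of four functions. The delicate contributions come from the cubic part $\al(\p_x u)^3+\be\p_x(u(\p_x u)^2)+\ga \p_x(u\p_x^2(u^2))$, where $\ga\p_x(u\p_x^2(u^2))$ in particular has three derivatives, making naive integration by parts produce an apparent loss of one derivative at the top level $m$. I would therefore introduce a modified energy
\[
 \ti{E}_m(u):=\|\p_x^m u\|_{L^2}^2+\sum_{j} R_j(u),
\]
with correction terms $R_j(u)$ of the form $\int \si_j(k_1,k_2,k_3)\prod \ha{u}(k_l)\,dk$ (trilinear forms) whose symbols are chosen so that $\frac{d}{dt}R_j$ cancels the top-order contributions of the cubic part against $\p_x^m u\cdot\p_x^{m+1}$-terms, leaving only symmetric remainders that can be estimated by $\|u\|_{H^m}^{k}$ via H\"older and Sobolev.

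The key algebraic check is that the would-be resonant symbol at the top order is non-parabolic in the sense required by Theorem~1.1 of \cite{T}: the imaginary part of the resonant symbol does not have a definite sign that would force a parabolic condition, but instead can be absorbed by a bounded correction because the phase $\Phi^{(3)}$ vanishes only on lower-frequency diagonals. I would verify this by computing, for each of the cubic monomials $(\p_x u)^3$, $\p_x(u(\p_x u)^2)$ and $\p_x(u\p_x^2(u^2))$, the associated trilinear symbol $m^{(3)}(k_1,k_2,k_3)$ acting on $\p_x^m u$, and checking that $\mathrm{Re}\,m^{(3)}$ is anti-symmetric (so contributes nothing to the energy identity after symmetrization) while $\mathrm{Im}\,m^{(3)}$ is compensated by $\p_t R_j$ through the factorization formula \eqref{2eq2}. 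Once $\ti{E}_m(u^\eps)\sim\|u^\eps\|_{H^m}^2$ on a bounded ball and $\frac{d}{dt}\ti{E}_m(u^\eps)\le C(\|u^\eps\|_{H^m})\cdot\ti{E}_m(u^\eps)$, a Gronwall argument yields the uniform bound on $[-T,T]$ with $T=T(\|\vp\|_{H^m})$, independent of $\eps$.

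Passing $\eps\to 0$ by an Aubin--Lions compactness argument produces a solution $u\in L^\infty([-T,T]:H^m)\cap C([-T,T]:H^{m-\de})$ for any $\de>0$; the energy identity $\ti{E}_m(u)(t)\le \ti{E}_m(\vp)$ plus weak lower semicontinuity combined with the reversibility of the equation upgrades this to $u\in C([-T,T]:H^m)$. Uniqueness and continuous dependence for such $m$ follow from a parallel modified-energy estimate on the difference $u-v$ of two solutions, performed at a regularity level $m-1$ or $m-2$ where the same cancellation mechanism controls the trilinear forms in $(u-v)$ times polynomials in $u,v$. The main obstacle, and the heart of the matter, will be designing the correction $R_j$ for the $\ga$-term: the triple product $u\cdot \p_x^{m+1}u\cdot \p_x^{m+1}u$ appearing after differentiating $\|\p_x^m u\|_{L^2}^2$ must be reorganized via integration by parts and a carefully chosen symmetrization so that the only surviving top-order term is a perfect derivative, which is exactly the non-parabolic resonant condition in \cite{T}; once this algebraic cancellation is pinned down, everything else is routine.
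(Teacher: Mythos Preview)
Your proposal is correct and follows essentially the same route as the paper: the paper does not give an independent proof but simply invokes Theorem~1.1 of \cite{T}, noting that the nonlinearity of \eqref{5mKdV1} is of non-parabolic resonant type, and your outline reproduces precisely the modified-energy scheme (parabolic regularization, energy correction cancelling the top-order cubic losses, Gronwall, compactness) that underlies that theorem. The only thing to add is that you need not rederive the scheme here; it suffices to verify the non-parabolic resonance condition for the cubic symbols and then cite \cite{T}.
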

\begin{rem}\label{rem_time}
Since $u(T), u(-T)\in H^{m}(\T)$, we can extend the solution on $[-T-T',T+T']$
where $T'=T'(\|u(T)\|_{H^{m}},\|u(-T)\|_{H^{m}})>0$.
Iterating this process, the solution can be extended on $(-T^{max},T^{max})$ 
where $T^{max}$ satisfies
$$\liminf_{t\to T^{max}} \|u(t)\|_{H^{m}} =\infty, \ \liminf_{t\to -T^{max}} \|u(t)\|_{H^{m}} =\infty \ \ \text{or} \ \  T^{max}=\infty.$$
\end{rem}

\begin{proof}[Proof of Proposition~\ref{prop_LWP}]
Firstly, we show the existence of the solution. 
For any $\vp \in H^s(\T)$, there exists $\{ \vp_n  \}_{n=1}^{\infty} \subset H^{\infty} (\T)$ such that 
$\vp_n \to \vp$ in $H^s(\T)$, $\| \vp_n \|_{H^s} \le \| \vp \|_{H^s}$ and $E_1(\vp_n) \le E_1(\vp)$.  
By Proposition~\ref{prop_existence} and Remark ~\ref{rem_time}, 
we have the existence time $T_n^{max}>0$ and the solution 
$w_n \in C((-T_{n}^{max}, T_n^{max}):H^{\infty}(\T))$ to (\ref{5mKdV1}) with initial data $\vp_n$. 
By the conservation law \eqref{cle1} and (i) of Lemma~\ref{lem_equiv1}, 
$u_n:=\mT w_n$ is in $C((-T_n^{\max}, T_n^{\max}): H^{\infty} (\T))$ and satisfies
\begin{align} \label{5thmKdV}
\p_t u_n +\p_x^5 u_n +2 \ga E_1(\vp_n) \p_x^3 u_n  =J_1(u_n)+ J_2 (u_n)+ J_3(u_n)+J_4(u_n).
\end{align}
Thus, by (\ref{mes31}) with $u_1=u_n$ and $\vp_1= \vp_n$, 
there exists a constant $C \ge 1$ such that the following estimate holds for any 
$0< \delta < T_n^{max}$ and $L \gg \max \{ 1, |\ga|  E_1(\vp) \}$:
\EQS{ \label{es71}
\| u_n \|_{L_{\de}^{\infty}H^s} \leq &  \| \vp_n \|_{H^s}+ C L^{-1}  (1+ \| u_n \|_{L_{\de}^{\infty} H^{s} } )^6 \| u_n \|_{L_{\delta}^{\infty} H^s} \nonumber \\
& + C \delta L^3 (1+ \| u_n  \|_{L_{\delta}^{\infty} H^s })^{10} \| u_n \|_{L_{\delta}^{\infty} H^s}. 
}
Here, we take sufficiently large $L$ and small $T_0>0$ such that
\begin{align} \label{co71}
CL^{-1} (1 + 4\| \vp  \|_{H^s})^6 \le \frac{1}{4}, \ \ C T_0 L^{3} (1+4 \| \vp \|_{H^s})^{10} \le \frac{1}{4}.
\end{align}
If $0 < \delta < T:=\min\{T_n^{\max}, T_0\}$ and $\| u_n \|_{L_{\de}^{\infty} H^{s} } \le 4 \| \vp \|_{H^{s}}$, 
then, by (\ref{es71}) and (\ref{co71}), we have 
\EQ{ \label{es72}
\| u_n \|_{L_{\de}^{\infty} H^{s}} \le 2 \| \vp_n \|_{H^{s}} \le 2 \| \vp \|_{H^{s}}. 
}
Therefore, by continuity argument, 
we obtain (\ref{es72}) for $\de=T$ without the assumption $\| u_n \|_{L_{\de}^{\infty} H^{s}} \leq 4 \| \vp \|_{H^s} $. 
By Remark~\ref{rem_time}, it follows that $T< T_n^{max}$. Thus, $T=T_0$. 
Moreover, by (\ref{mes32}) with $u_1=u_m$, $u_2=u_n$, $\vp_1=\vp_m$ and $\vp_2 = \vp_n$, we have 
\begin{align} \label{es73}
& \| u_m- u_n \|_{L_T^{\infty} H^s}  \le \| \vp_m - \vp_n \|_{ H^s}  + (1+T)C_{*} \notag \\
& \hspace{0.5cm} 
+ CL^{-1} (1+ \| u_m \|_{L_T^{\infty} H^s}+ \|  u_n \|_{L_T^{\infty} H^s })^{6} (\| u_m - u_n \|_{L_T^{\infty} H^s}+C_{*}) \notag \\
& \hspace{0.5cm} + C T L^3  (1+ \| u_m \|_{L_T^{\infty} H^s} + 
\| u_n \|_{L_T^{\infty} H^s})^{10}  (\|  u_m- u_n \|_{L_T^{\infty} H^s}+C_{*}) 
\end{align}
Applying (\ref{co71}) and (\ref{es72}) with $\de=T$ to (\ref{es73}), we have  
\begin{align*}
\| u_m- u_n \|_{L_T^{\infty} H^s} \le 2 \| \vp_m -\vp_n  \|_{H^s} + (3+2T) C_{*}  \to 0
\end{align*}
as $m,n \to \infty$. 
Since $u_n$ satisfies \eqref{5thmKdV} and \eqref{es72} with $\de=T$, we also have
\begin{align*}
& \|\p_t (u_m-  u_n)\|_{L_T^{\infty} H^{s-5}} \\
&\lesssim  (1+\| \vp \|_{H^s}^2 + \| \vp \|_{H^s}^4)  \|u_m-u_n\|_{L_{T}^{\infty} H^{s}}
 + \| \vp \|_{H^s} |E_1(\vp_m) -E_1(\vp_n) | \to 0
\end{align*}
as $m, n \to \infty$.
Therefore, there exists $u \in C([-T, T]):H^s(\T)) \cap C^1([-T, T]:H^{s-5}(\T))$ such that $u_n \to u$ strongly in $C([-T, T]:H^s(\T))\cap C^1([-T, T]:H^{s-5}(\T))$.
Then $u$ satisfies \eqref{5mKdV3} in the sense of $C([-T, T]:H^{s-5}(\T))$. 

Next, we prove the uniqueness.
Assume that $u_1, u_2 \in C([-T, T] :H^s(\T))$ satisfy \eqref{5mKdV3} with \eqref{initial} for the same initial data $\vp\in H^s(\T)$.
Then, by (\ref{mes32}) with $\vp_1=\vp_2=\vp$, we have 
\begin{align*}
&\| u_1- u_2 \|_{L_\de^{\infty} H^s} \lec \| u_1- u_2 \|_{L_\de^{\infty} H^s}\\
& \times \left(L^{-1} (1+ \| u_1 \|_{L_\de^{\infty} H^s} + \| u_2 \|_{L_\de^{\infty} H^s})^{6} +
\de L^3  (1+ \| u_1 \|_{L_\de^{\infty} H^s} + \| u_2 \|_{L_\de^{\infty} H^s})^{10} \right)
\end{align*}
for any $0 < \de \le T$. 
Therefore, we obtain $u_1(t)=u_2(t)$ on $[-\de,\de]$ by taking sufficiently small $\de>0$ and large $L$.
Note that $\de$ and $L$ depend only on $\| u_j \|_{L_T^{\infty} H^s}$ for $j=1,2$ because
$\| u_j \|_{L_\de^{\infty} H^s} \le \| u_j \|_{L_T^{\infty} H^s}$ for $j=1,2$.
Iterating this argument, we obtain $u_1(t)=u_2(t)$ on $[-T,T]$. 

Finally, we can easily show the proof of the continuity of the solution map by Corollary \ref{cor_mainest}.
\end{proof}


\end{document}